\newtheorem{theorem}{Theorem}[section]
\newtheorem{lemma}[theorem]{Lemma}
\newtheorem{proposition}[theorem]{Proposition}
\newtheorem{remark}[theorem]{Remark}
\newcommand{\R}{\mathbb{R}}
\newcommand{\E}{\mathrm{E}}
\renewcommand{\P}{\mathrm{P}}
\newcommand{\ta}{\theta}
\title{LAN property for an ergodic Ornstein-Uhlenbeck process with Poisson jumps}
\date{\today}
\author[Ngoc Khue Tran]{Ngoc Khue Tran}
\address{Ngoc Khue Tran, Department of Mathematical Sciences - Ritsumeikan University and Japan Science and Technology Agency, 1-1-1 Nojihigashi, Kusatsu, Shiga, 525-8577, Japan and Department of Fundamental Sciences - Pham Van Dong University, Phan Dinh Phung, Quang Ngai City, Quang Ngai, Vietnam}
\email{tnkhueprob@gmail.com}
\subjclass[2010]{60H07; 60J75; 62F12; 62M05}
\keywords{Ornstein-Uhlenbeck process with jumps; local asymptotic normality property; Malliavin calculus; parametric estimation}
\begin{document}
\maketitle
\begin{abstract} 
In this paper, we consider an ergodic Ornstein-Uhlenbeck process with jumps driven by a Brownian motion and a compensated Poisson process, whose drift and diffusion coefficients as well as its jump intensity depend on unknown parameters. Considering the process discretely observed at high frequency, we derive the local asymptotic normality property. To obtain this result, Malliavin calculus and Girsanov's theorem are applied to write the log-likelihood ratio in terms of sums of conditional expectations, for which a central limit theorem for triangular arrays can be applied. 
\end{abstract}
	
\section{Introduction}
	
On a complete probability space $(\Omega, \mathcal{F}, \P)$ which will be specified later on, we consider an Ornstein-Uhlenbeck (O-U) process with Poisson jumps $X^{\theta,\sigma,\lambda}=(X_t^{\theta,\sigma,\lambda})_{t\geq 0}$ in $\R$ defined by 
\begin{equation}\label{c2eq1}
X_t^{\theta,\sigma,\lambda}=x_0-\theta\int_0^t X_s^{\theta,\sigma,\lambda}ds +\sigma B_t + N_t -\lambda t,
\end{equation}
where $x_0\in\R $, $B=(B_t)_{t \geq 0}$ is a Brownian motion, $N=(N_t)_{t \geq 0}$ is a Poisson process with intensity $\lambda >0$ independent of $B$, and $(\widetilde{N}_{t}^{\lambda})_{t \geq 0}$ denotes the compensated Poisson process  $\widetilde{N}_{t}^{\lambda}:=N_t-\lambda t$. The parameters $(\theta,\sigma,\lambda)\in \Theta\times \Sigma \times\Lambda$ are unknown, and $\Theta, \Sigma$ and $\Lambda$ are closed intervals of $\R^{\ast}_+$, where $\R^{\ast}_+=\R_+\setminus\{0\}$. Let $\{\widehat{\mathcal{F}}_t\}_{t\geq 0}$ denote the natural filtration generated by $B$ and $N$. We denote by $\P^{\theta,\sigma,\lambda}$ the probability law induced by the $\{\widehat{\mathcal{F}}_t\}_{t\geq 0}$-adapted c\`{a}dl\`{a}g stochastic process $X^{\theta,\sigma,\lambda}$, and by $\E^{\theta,\sigma,\lambda}$ the expectation with respect to (w.r.t.) $\P^{\theta,\sigma,\lambda}$. Let $\overset{\P^{\theta,\sigma,\lambda}}{\longrightarrow}$ and $\overset{\mathcal{L}(\P^{\theta,\sigma,\lambda})}{\longrightarrow}$ denote the convergence in $\P^{\theta,\sigma,\lambda}$-probability and in $\P^{\theta,\sigma,\lambda}$-law, respectively.
	
Since $\theta>0$, $X^{\theta,\sigma,\lambda}$ is ergodic with a unique invariant probability measure $\pi_{\theta,\sigma,\lambda}(dx)$. Furthermore, $\pi_{\theta,\sigma,\lambda}(dx)$ can be calculated explicitly (see \cite[Theorem 17.5 and Corollary 17.9]{SK}), and satisfies $\int_{\mathbb{R}}\vert x\vert^p\pi_{\theta,\sigma,\lambda}(dx)<\infty$, for any $p\geq 0$. By It\^o's formula, $X^{\theta,\sigma,\lambda}$ is given by
\begin{equation}\label{solution}
X_t^{\theta,\sigma,\lambda}=x_0e^{-\theta t}+\sigma\int_0^te^{-\theta (t-s)}dB_s + \int_0^te^{-\theta (t-s)}d\left(N_s -\lambda s\right).
\end{equation}
	
In this paper, we are interested in the estimation of the parameters $(\theta,\sigma,\lambda)\in \Theta\times \Sigma \times\Lambda$ when the observation of the process $X^{\theta,\sigma,\lambda}$ is the discretized path $(X_{k\Delta_n}^{\theta,\sigma,\lambda})_{0\leq k\leq n}$, where $\{\Delta_n\}_{n\in\mathbb{N}^{\ast}}$ is a sequence of time-step sizes. We denote by $\P^{\theta,\sigma,\lambda}_n$ the probability law of the random vector $(X_{k\Delta_n}^{\theta,\sigma,\lambda})_{0\leq k\leq n}$. Let us now introduce the fundamental concept in parametric estimation called the local asymptotic normality (LAN) property which was introduced by Le Cam \cite{LC60} and extended by Jeganathan \cite{JP82} to the local asymptotic mixed normality (LAMN) property. In our context, we say that the sequence $\{\P^{\theta,\sigma,\lambda}_n\}_{(\theta,\sigma,\lambda)\in \Theta\times \Sigma \times\Lambda}$ of parametric statistical models has the LAN property for the likelihood at $(\theta_0,\sigma_0,\lambda_0)\in \Theta\times \Sigma \times\Lambda$ with rate of convergence $(\sqrt{n\Delta_n},\sqrt{n},\sqrt{n\Delta_n})$ and with covariance matrix $\Gamma(\theta_0,\sigma_0,\lambda_0)$ if for any $z=(u,v,w)\in\R^3$,
\begin{align*}
&\log\dfrac{d\P^{\theta_0+\frac{u}{\sqrt{n\Delta_n}},\sigma_0+\frac{v}{\sqrt{n}},\lambda_0+\frac{w}{\sqrt{n\Delta_n}}}_n}{d\P^{\theta_0,\sigma_0,\lambda_0}_n}\left((X_{k\Delta_n}^{\theta_0,\sigma_0,\lambda_0})_{0\leq k\leq n}\right)\\
&\qquad\overset{\mathcal{L}(\P^{\theta_0,\sigma_0,\lambda_0})}{\longrightarrow} z^{\ast}\mathcal{N}\left(0,\Gamma(\theta_0,\sigma_0,\lambda_0)\right)-\dfrac{1}{2}z^{\ast} \Gamma(\theta_0,\sigma_0,\lambda_0)z,
\end{align*}
as $n\to\infty$, where $\mathcal{N}(0,\Gamma(\theta_0,\sigma_0,\lambda_0))$ is a centered $\R^3$-valued Gaussian vector with covariance matrix $\Gamma(\theta_0,\sigma_0,\lambda_0)$, and $\ast$ denotes the transpose. Here, $\Gamma(\theta_0,\sigma_0,\lambda_0)$ is called the asymptotic Fisher information matrix of the parametric statistical models.
	
Recall that one can define the notion of asymptotic efficiency of estimators in terms of classical Cram\'er-Rao lower bound. Another approach to define the asymptotic efficiency of the estimators is to study the lower bound for the asymptotic variance of the estimators via a convolution theorem. In fact, this problem is closely related to the consequence of the LAN property. Precisely, when the LAN property holds at $(\theta_0,\sigma_0,\lambda_0)$ with nonsingular $\Gamma(\theta_0,\sigma_0,\lambda_0)$, convolution and minimax theorems can be applied (see \cite{Haj72}, \cite{CY90}). Indeed, on one hand, the convolution theorem suggests the notion of asymptotically efficient estimators in terms of minimal asymptotic variance. That is, a sequence $\{(\widehat{\theta}_n,\widehat{\sigma}_n,\widehat{\lambda}_n)\}_{n\in\mathbb{N}^{\ast}}$ of unbiased estimators of the parameter $(\theta_0,\sigma_0,\lambda_0)$ is said to be asymptotically efficient at $(\theta_0,\sigma_0,\lambda_0)$ in the sense of H\'ajek-Le Cam convolution theorem if as $n\to\infty$,
$$
\varphi_n^{-1}(\theta_0,\sigma_0,\lambda_0)\left((\widehat{\theta}_n,\widehat{\sigma}_n,\widehat{\lambda}_n)-(\theta_0,\sigma_0,\lambda_0)\right)^{\ast}\overset{\mathcal{L}(\P^{\theta_0,\sigma_0,\lambda_0})}{\longrightarrow}\mathcal{N}\left(0,\Gamma(\theta_0,\sigma_0,\lambda_0)^{-1}\right),
$$
where $\varphi_n(\theta_0,\sigma_0,\lambda_0):=\text{diag}(\sqrt{n\Delta_n},\sqrt{n},\sqrt{n\Delta_n})$ is the diagonal matrix. Notice that a sequence of estimators which is asymptotically efficient in the sense of H\'ajek-Le Cam convolution theorem achieves asymptotically the Cram\'er-Rao lower bound $\Gamma(\theta_0,\sigma_0,\lambda_0)^{-1}$ for the estimation variance. On the other hand, the minimax theorem states that $\Gamma(\theta_0,\sigma_0,\lambda_0)^{-1}$ gives the lower bound for the asymptotic variance of any sequence of unbiased estimators. This justifies the importance of the LAN property in parametric estimation. 

The question of asymptotic efficiency of estimators for ergodic diffusions with jumps was solved e.g. in \cite{M14, SY06}. The estimators in \cite{SY06} are constructed from a contrast function which is based on a discretization of the likelihood function associated to the continuous observations of an ergodic diffusion with jumps whose unknown parameters appear in the drift and diffusion coefficients and in the jump coefficient as well. In \cite{M14}, to estimate the drift parameter of the Ornstein-Uhlenbeck processes driven by a L\'evy process, the estimators are constructed from a discretization of the time-continuous maximum likelihood estimators. These estimators are asymptotically efficient in the sense of H\'ajek-Le Cam convolution theorem (see \cite[Theorem 2.1, Remark 2.2]{SY06}, \cite[Theorem 3.5, Remark 3.6]{M14}).
	
Initiated by Gobet \cite{G01}, Malliavin calculus has recently been used to analyze the log-likelihood ratio of the discrete observation of diffusion processes (with jumps). Concretely, Gobet \cite{G01}, \cite{G02} obtained the LAMN and LAN properties respectively for multidimensional elliptic diffusions and ergodic diffusions on the basis of discrete observations at high frequency. In the presence of jumps, see e.g. A\"it-Sahalia and Jacod \cite{AJ07}, Kawai \cite{K13}, Cl\'ement et {\it al.} \cite{CDG14, CG15}, Kohatsu-Higa et {\it al.} \cite{KNT14, KNT15}. It seems that the validity of the LAN property for general stochastic differential equations (SDEs) with jumps whose unknown parameters appear in the drift and diffusion coefficients and in the jump component as well, has never been addressed in the literature. This is because the behaviour of the transition density, which is not explicit in general, changes strongly due to the presence of jumps. In this paper, we focus on the study of the global parameter estimation for the O-U process \eqref{c2eq1} whose unbounded drift and diffusion coefficients as well as its jump intensity depend on unknown parameters. Recall that in \cite{KNT14}, the global parameter estimation is considered for a L\'evy process with bounded drift coefficient. Recently, \cite{KNT15} has studied a multidimensional SDE with jumps whose unknown parameter appears only in the drift coefficient. Kawai \cite{K13} has dealt with an O-U process with jumps whose jump component does not depend on the unknown parameter. 
	
In physics, the O-U process describes the velocity of a massive Brownian particle under the influence of friction. In mathematical finance, it is widely used to model the evolution of interest rates, currency exchange rates (see e.g. \cite{BN01, DJ06}). The O-U process has a density which is highly tractable, the proof of our result, however, seems to be quite complicated. One of the motivations of writing this paper is to present a methodology which can be used to derive the LAN property for general SDEs with jumps with global parameters. 
	
The objective of this paper is to derive the LAN property for $X^{\theta,\sigma,\lambda}$ based on discrete observations. Towards this aim, our result uses the Girsanov's theorem and the Malliavin calculus w.r.t. the Brownian motion to derive an explicit expression for the logarithm derivatives of the transition density. Moreover, when treating the negligible terms, one difficulty comes from the fact that the conditional expectations are computed under the probability measures $\P^{\theta(\ell),\sigma_0,\lambda_0}$ (or $\P^{\theta_n,\sigma(\ell),\lambda_n}$, or $\P^{\theta_n,\sigma_0,\lambda(\ell)}$) which come from the application of the Malliavin calculus and the Girsanov's theorem, whereas the convergence is considered under $\P^{\theta_0,\sigma_0,\lambda_0}\neq \P^{\theta(\ell),\sigma_0,\lambda_0}$ (or $\neq\P^{\theta_n,\sigma(\ell),\lambda_n}$, or $\neq\P^{\theta_n,\sigma_0,\lambda(\ell)}$). To simplify the exposition, these parameters and corresponding measures will be specified in Section \ref{mainresult} and Subsection \ref{expand}. To this end, when two corresponding diffusion parameters are the same as $\sigma_0$, the Girsanov's theorem in Lemma \ref{c2Girsanov2} can be applied to change the measures. When they are different ($\sigma(\ell)\neq \sigma_0$), we need to condition on all the possible number of jumps and jump times of the Poisson process occurring on each interval $[t_k,t_{k+1}]$ in order that the change of measures can be done via the transition densities conditioned on the number of jumps and jump times (see Lemmas \ref{change}, \ref{lemma6} and \ref{lemma7}). As a consequence, the Gaussian type expression for these transition densities is strongly used. Moreover, our proof uses the large deviation type estimates which should be understood in the sense that we study the behaviour of rare events whose decreasing rate is exponential and polynomial (see Lemma \ref{deviation}). For this, the key argument consists in conditioning on the number of jumps within the conditional expectation and in the conditioning random variable, which expresses the transition density conditioned on the number of jumps. When these two conditionings relate to different jumps, we may use a large deviation principle in the estimate. When they are equal, we use the complementary set in order to apply the large deviation principle. Within all these arguments, the Gaussian type expression for the transition density conditioned on the jump structure is again strongly used. Let us mention here that the exponential decay comes from the fact that the study of the asymptotic behaviour of the transition density leads us to study the behaviour of the transition density under the additional condition on the number of jumps which has to be compared with another transition density with a different number of jumps. This will be seen in the estimate of the terms $M_{0,1,1,p}^{\theta,\sigma,\lambda}$ and $M_{1,1,0,1,p}^{\theta,\sigma,\lambda}$ in the proof of Lemma \ref{deviation}.
	
This paper is organized as follows. In Section 2, we state our main result in Theorem \ref{c2theorem}. Section 3 introduces preliminary results needed for the proof of Theorem \ref{c2theorem}, such as Girsanov's theorem, explicit expressions for the logarithm derivatives of the transition density using Malliavin calculus and Girsanov's theorem, and decompositions of the Skorohod integral. We prove our main result in Section 4. Section 5 is devoted to the conclusion and discussion of some future work. The proofs of some technical propositions and lemmas are presented in Section 6, where some crucial estimates related to the transition density conditioned on the jump structure and the large deviation type estimates are derived.
	
\section{Main result}
\label{mainresult}
For fixed $(\theta_0,\sigma_0,\lambda_0)\in \Theta\times \Sigma \times\Lambda$ and $n\geq 1$, we consider a discrete observation scheme at equidistant times $t_k=k \Delta_n$, $k \in \{0,\ldots,n\}$ of the process $X^{\theta_0,\sigma_0,\lambda_0}$, which is denoted by $X^{n}=(X_{t_0}, X_{t_1},\ldots,X_{t_n})$, where $\Delta_n\leq 1$ for all $n\geq 1$. We assume that the high-frequency, infinite horizon and decreasing rate conditions hold. That is, $\Delta_n\rightarrow 0$, $n\Delta_n\rightarrow\infty$, and $n\Delta_n^2\rightarrow\infty$ as $n\rightarrow\infty$. 
	
Given the process $X^{\theta,\sigma,\lambda}=(X_t^{\theta,\sigma,\lambda})_{t \geq 0}$, we denote by $p_n(\cdot;(\theta,\sigma,\lambda))$ the density of the random vector $(X_{t_0}^{\theta,\sigma,\lambda},X_{t_1}^{\theta,\sigma,\lambda},\ldots,X_{t_n}^{\theta,\sigma,\lambda})$. In particular, $p_n(\cdot;(\theta_0,\sigma_0,\lambda_0))$ denotes the density of the discrete observation $X^n$. For $(u,v,w)\in\R^3$, we set $\theta_n:=\theta_0+\frac{u}{\sqrt{n\Delta_n}}, \sigma_n:=\sigma_0+\frac{v}{\sqrt{n}}, \lambda_n:=\lambda_0+\frac{w}{\sqrt{n\Delta_n}}$. 
	
The main result of this paper is the following LAN property.
\begin{theorem}\label{c2theorem}
The LAN property holds for the likelihood at $(\theta_0,\sigma_0,\lambda_0)$ with rate of convergence $(\sqrt{n\Delta_n},\sqrt{n},\sqrt{n\Delta_n})$ and asymptotic Fisher information matrix $\Gamma(\theta_0,\sigma_0,\lambda_0)$. That is, for all $z=(u,v,w)\in\R^3$, as $n\to\infty$,
\begin{equation*} 
\log\dfrac{p_n\left(X^{n};\left(\theta_n,\sigma_n,\lambda_n\right)\right)}{p_n\left(X^{n};\left(\theta_0,\sigma_0,\lambda_0\right)\right)}\\
\overset{\mathcal{L}(\P^{\theta_0,\sigma_0,\lambda_0})}{\longrightarrow} z^{\ast}\mathcal{N}\left(0,\Gamma(\theta_0,\sigma_0,\lambda_0)\right)-\dfrac{1}{2}z^{\ast} \Gamma(\theta_0,\sigma_0,\lambda_0)z,
\end{equation*}
where $\mathcal{N}(0,\Gamma(\theta_0,\sigma_0,\lambda_0))$ is a centered $\R^3$-valued Gaussian vector with covariance matrix 
$$\Gamma(\theta_0,\sigma_0,\lambda_0):=\dfrac{1}{\sigma_0^2}\begin{pmatrix}\frac{1}{2\theta_0}\left(\sigma_0^2+1\right)&0&0\\0&2&0\\
0&0&1+\frac{\sigma_0^2}{\lambda_0}
\end{pmatrix}.$$
\end{theorem}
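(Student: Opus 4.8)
The plan is to exploit the Markov property of $X^{\theta,\sigma,\lambda}$, which factorizes the joint density $p_n$ into a product of transition densities and turns the log-likelihood ratio in Theorem \ref{c2theorem} into a sum of functionals of the increments:
\[
\log\frac{p_n(X^{n};(\theta_n,\sigma_n,\lambda_n))}{p_n(X^{n};(\theta_0,\sigma_0,\lambda_0))}=\sum_{k=0}^{n-1}\log\frac{p_{\Delta_n}(X_{t_k},X_{t_{k+1}};(\theta_n,\sigma_n,\lambda_n))}{p_{\Delta_n}(X_{t_k},X_{t_{k+1}};(\theta_0,\sigma_0,\lambda_0))},
\]
where $p_{\Delta_n}$ denotes the transition density. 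Writing each summand via the fundamental theorem of calculus along the segment joining $(\theta_0,\sigma_0,\lambda_0)$ to $(\theta_n,\sigma_n,\lambda_n)$ produces the three scores $\partial_\theta\log p_{\Delta_n}$, $\partial_\sigma\log p_{\Delta_n}$, $\partial_\lambda\log p_{\Delta_n}$ evaluated at intermediate parameters $(\theta(\ell),\sigma(\ell),\lambda(\ell))$, weighted respectively by $u/\sqrt{n\Delta_n}$, $v/\sqrt{n}$ and $w/\sqrt{n\Delta_n}$. Splitting each score into its value at $(\theta_0,\sigma_0,\lambda_0)$ plus a correction isolates a first-order (linear) term and a second-order remainder.

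Since the transition density is not explicit, the next step is to represent these logarithmic derivatives. Here I would use Malliavin calculus with respect to the Brownian motion together with Girsanov's theorem (Lemma \ref{c2Girsanov2}) to express each $\partial_\bullet\log p_{\Delta_n}(X_{t_k},X_{t_{k+1}})$ as a conditional expectation, given the endpoints, of a Skorohod integral. Because $\int\partial_\bullet p_{\Delta_n}\,dy=\partial_\bullet\int p_{\Delta_n}\,dy=0$, the array of normalized first-order terms $\{\zeta_{k,n}\}$ consists of $\widehat{\mathcal{F}}_{t_k}$-martingale differences under $\P^{\theta_0,\sigma_0,\lambda_0}$, exactly the structure required to apply a central limit theorem for triangular arrays. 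I would then verify its hypotheses: (i) the predictable means $\sum_k\E^{\theta_0,\sigma_0,\lambda_0}[\zeta_{k,n}\mid\widehat{\mathcal{F}}_{t_k}]$ vanish; (ii) the predictable quadratic variations $\sum_k\E^{\theta_0,\sigma_0,\lambda_0}[\zeta_{k,n}\zeta_{k,n}^{\ast}\mid\widehat{\mathcal{F}}_{t_k}]$ converge in $\P^{\theta_0,\sigma_0,\lambda_0}$-probability to $z^{\ast}\Gamma(\theta_0,\sigma_0,\lambda_0)z$; and (iii) a Lyapunov condition $\sum_k\E^{\theta_0,\sigma_0,\lambda_0}[\vert\zeta_{k,n}\vert^4\mid\widehat{\mathcal{F}}_{t_k}]\to 0$ holds. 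The second-order remainder of the expansion should converge, by the ergodic theorem and the information identity, to the deterministic shift $-\tfrac{1}{2}z^{\ast}\Gamma(\theta_0,\sigma_0,\lambda_0)z$, the same $\Gamma$ as in (ii).

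The entries of $\Gamma$ are read off from the limits in (ii). In the high-frequency regime the no-jump transition density is Gaussian with mean $X_{t_k}e^{-\theta\Delta_n}-\lambda\theta^{-1}(1-e^{-\theta\Delta_n})$ and variance $\sigma^2(1-e^{-2\theta\Delta_n})/(2\theta)\approx\sigma_0^2\Delta_n$; the standard computation for the volatility score (with $\mathrm{Var}(Z^2)=2$ for $Z\sim\mathcal{N}(0,1)$) produces the entry $2/\sigma_0^2$ at rate $\sqrt n$. The drift and intensity scores mix the Gaussian part with the Poisson jump part, and their predictable quadratic variations are ergodic averages that converge, through $\frac1n\sum_k f(X_{t_k})\to\int f\,d\pi_{\theta_0,\sigma_0,\lambda_0}$ combined with the contributions conditioned on the jump structure, to the remaining diagonal entries at rate $\sqrt{n\Delta_n}$; the asymptotic orthogonality of the three scores makes $\Gamma$ diagonal. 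Determining these constants precisely — in particular disentangling the Gaussian and jump contributions to $\Gamma_{11}$ and to $\Gamma_{33}$ — is itself part of the careful moment analysis, and the three distinct rates are exactly those that render these sums convergent.

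The main obstacle, and the bulk of the work, is to show that the remaining pieces are negligible. Two features make this delicate. First, the Malliavin--Girsanov representation forces the conditional expectations to be computed under the shifted measures $\P^{\theta(\ell),\sigma_0,\lambda_0}$, $\P^{\theta_n,\sigma(\ell),\lambda_n}$ or $\P^{\theta_n,\sigma_0,\lambda(\ell)}$, whereas convergence must be read under $\P^{\theta_0,\sigma_0,\lambda_0}$. When the two diffusion parameters coincide I would change measure through Girsanov (Lemma \ref{c2Girsanov2}); when they differ ($\sigma(\ell)\neq\sigma_0$) I would condition on the number and the locations of the Poisson jumps on each $[t_k,t_{k+1}]$ and change measure through the jump-conditioned Gaussian transition densities (Lemmas \ref{change}, \ref{lemma6}, \ref{lemma7}), exploiting their explicit Gaussian form. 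Second, controlling the tails requires the large-deviation estimates of Lemma \ref{deviation}: by conditioning on the number of jumps both inside the conditional expectation and in the conditioning variable, one compares a transition density carrying one jump count against another carrying a different count, and the mismatch is suppressed at an exponential-plus-polynomial rate. I expect precisely this comparison — propagating the shifted-measure conditional expectations back to $\P^{\theta_0,\sigma_0,\lambda_0}$ while keeping uniform control of the jump-conditioned Gaussian densities — to be the crux of the argument; once the predictable characteristics have been identified, the central limit theorem application is then routine.
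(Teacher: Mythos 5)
Your proposal is correct in substance and mirrors the paper's strategy: Markov factorization plus a mean-value expansion in the parameter, Malliavin--Girsanov score representations (Propositions \ref{c2prop1} and \ref{c2pro2}), a triangular-array CLT for the main terms, and Girsanov/jump-conditioning/large-deviation arguments for the remainders. The one genuine structural difference is the expansion step. You interpolate along the single diagonal segment from $(\theta_0,\sigma_0,\lambda_0)$ to $(\theta_n,\sigma_n,\lambda_n)$, so every score is evaluated at a point $(\theta(\ell),\sigma(\ell),\lambda(\ell))$ whose diffusion coefficient $\sigma(\ell)$ differs from $\sigma_0$ whenever $v\neq 0$ and $\ell>0$. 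The paper instead telescopes coordinate-wise in \eqref{eq:dec} and Lemma \ref{expansion}: the $\sigma$-score is taken at $(\theta_n,\sigma(\ell),\lambda_n)$, but the $\theta$- and $\lambda$-scores are taken at $(\theta(\ell),\sigma_0,\lambda_0)$ and $(\theta_n,\sigma_0,\lambda(\ell))$, i.e.\ with the \emph{same} diffusion coefficient $\sigma_0$ as the observed process. This is not cosmetic: the change of measure of Lemma \ref{c2Girsanov2} exists only between laws with identical $\sigma$ (otherwise the path laws are mutually singular), so the telescoping is what allows the centering of the $\theta$- and $\lambda$-remainders (Lemmas \ref{lemma1}--\ref{lemma5}) to be done by plain Girsanov, with the jump-conditioned Gaussian-density machinery and large-deviation estimates (Lemmas \ref{change}, \ref{lemma12}--\ref{lemma15}, \ref{deviation}) reserved for the $\sigma$-block (Lemmas \ref{lemma6}, \ref{lemma7}) and for the second-moment control of the jump-count terms. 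Your own dichotomy ``when the two diffusion parameters coincide \dots when they differ'' tacitly presupposes the coordinate-wise path and is vacuous along your diagonal one; to make the plan coherent you should replace the diagonal segment by \eqref{eq:dec}, after which the rest goes through as you describe. A second, harmless, difference is bookkeeping of the limit: you center the scores at $(\theta_0,\sigma_0,\lambda_0)$ to obtain martingale differences (vanishing predictable means) and recover $-\tfrac{1}{2}z^{\ast}\Gamma z$ from the second-order remainder via the information identity, whereas the paper keeps the non-centered terms $\xi_{k,n},\eta_{k,n},\beta_{k,n}$ and reads the drift directly off their conditional means, cf.\ \eqref{c2eql1} in Lemma \ref{main}, using a CLT that tolerates non-zero predictable means; the two bookkeepings are equivalent rearrangements of the same computation.
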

\begin{remark}
Our result can be viewed as an extension of the result obtained by Gobet \cite{G02} which is only valid for the one-dimensional linear case with the presence of jumps. Moreover, when the Poisson component is degenerate ($\lambda=0$), we recover the asymptotic Fisher information matrix of ergodic Ornstein-Uhlenbeck process without jumps (see Gobet \cite[Theorem 4.1]{G02}).
\end{remark}
	
\section{Preliminaries}
\label{sec:prelim}
	
In this section, we introduce some preliminary results needed for the proof of Theorem \ref{c2theorem}. Towards this aim, we consider the canonical filtered probability spaces $(\Omega^i, \mathcal{F}^i,\{\mathcal{F}_t^i\}_{t \geq 0}, \P^i)$, $i\in\{1,\ldots,4\}$, associated respectively to each of the four processes $B, N, W$ and $M$, where $W=(W_t)_{t\geq 0}$ is a Brownian motion, and $M=(M_t)_{t\geq 0}$ is a Poisson process with intensity $\lambda$. The processes $(B,N,W,M)$ are mutually independent. Let $(\Omega,\mathcal{F},\{\mathcal{F}_t\}_{t \geq 0},\P)$ be the product filtered probability space of these four canonical spaces. We set $\widehat{\Omega}=\Omega^1\times \Omega^2$, $\widehat{\mathcal{F}}=\mathcal{F}^1\otimes\mathcal{F}^2$, $\widehat{\P}=\P^1\otimes\P^2$, $\widehat{\mathcal{F}}_t=\mathcal{F}_t^1\otimes\mathcal{F}_t^2$, $\widetilde{\Omega}=\Omega^3\times \Omega^4$, $\widetilde{\mathcal{F}}=\mathcal{F}^3\otimes\mathcal{F}^4$, $\widetilde{\P}=\P^3\otimes\P^4$, and $\widetilde{\mathcal{F}}_t=\mathcal{F}_t^3\otimes\mathcal{F}_t^4$. Then, $\Omega=\widehat{\Omega}\times\widetilde{\Omega}$, $\mathcal{F}=\widehat{\mathcal{F}}\otimes\widetilde{\mathcal{F}}$, $\P=\widehat{\P}\otimes\widetilde{\P}$, $\mathcal{F}_t=\widehat{\mathcal{F}}_t\otimes\widetilde{\mathcal{F}}_t$, and $\E=\widehat{\E}\otimes\widetilde{\E}$, where $\E$, $\widehat{\E}$, $\widetilde{\E}$ denote the expectation w.r.t. $\P$, $\widehat{\P}$ and $\widetilde{\P}$, respectively.
	
In order to deal with the likelihood ratio in Theorem \ref{c2theorem}, we use the following decomposition 
\begin{equation}\begin{split}
\label{eq:dec} 
\log\dfrac{p_n\left(X^{n};\left(\theta_n,\sigma_n,\lambda_n\right)\right)}{p_n\left(X^{n};\left(\theta_0,\sigma_0,\lambda_0\right)\right)}&=\log\dfrac{p_n\left(X^{n};\left(\theta_n,\sigma_n,\lambda_n\right)\right)}{p_n\left(X^{n};\left(\theta_n,\sigma_0,\lambda_n\right)\right)}+\log\dfrac{p_n\left(X^{n};\left(\theta_n,\sigma_0,\lambda_n\right)\right)}{p_n\left(X^{n};\left(\theta_n,\sigma_0,\lambda_0\right)\right)}\\
&\qquad+\log\dfrac{p_n\left(X^{n};\left(\theta_n,\sigma_0,\lambda_0\right)\right)}{p_n\left(X^{n};\left(\theta_0,\sigma_0,\lambda_0\right)\right)}.
\end{split}
\end{equation}
For each of the above terms, we use the Markov property, the mean value theorem on the parameter space and then analyze each term, which leads to the logarithm derivatives of the transition density function w.r.t. the parameters. To analyze these logarithm derivatives, we start as in Gobet \cite{G01} by applying the Malliavin calculus integration by parts formula on each $[t_k, t_{k+1}]$ to derive an explicit expression for the logarithm derivatives of the transition density w.r.t. $\theta$ and $\sigma$. To avoid confusion with the observed process $X^{\theta,\sigma,\lambda}$, we introduce an extra probabilistic representation of $X^{\theta,\sigma,\lambda}$ for which the Malliavin calculus is applied. Explicitly, we consider on the same probability space $(\Omega, \mathcal{F}, \P)$ the flow $Y^{\theta,\sigma,\lambda}(s,x)=(Y_t^{\theta,\sigma,\lambda}(s,x), t\geq s)$, $x\in\R$ on the time interval $[s,\infty)$ and with initial condition $Y_{s}^{\theta,\sigma,\lambda}(s,x)=x$ satisfying
\begin{equation}\label{flow}
Y_t^{\theta,\sigma,\lambda}(s,x)=x-\theta\int_{s}^{t}Y_u^{\theta,\sigma,\lambda}(s,x)du+\sigma \left(W_t-W_s\right)+ \widetilde{M}_{t}^{\lambda}-\widetilde{M}_{s}^{\lambda},
\end{equation}
where $(\widetilde{M}_{t}^{\lambda})_{t \geq 0}$ is the compensated Poisson process $\widetilde{M}_{t}^{\lambda}:=M_t-\lambda t$. In particular, we write $Y_t^{\theta,\sigma,\lambda}\equiv Y_t^{\theta,\sigma,\lambda}(0,x_0)$, for all $t\geq 0$. That is, 
\begin{equation}\label{c2eq1rajoute}
Y_t^{\theta,\sigma,\lambda}=x_0-\theta \int_0^t Y_s^{\theta,\sigma,\lambda}ds +\sigma W_t+ \widetilde{M}_{t}^{\lambda}.
\end{equation}
	
We will apply the Malliavin calculus on the Wiener space induced by $W$. Let $D$ and $\delta$ denote the Malliavin derivative and the Skorohod integral w.r.t. $W$ on each interval $[t_k,t_{k+1}]$. We denote by $\mathbb{D}^{1,2}$ the space of random variables differentiable w.r.t. $W$ in the sense of Malliavin, and by $\textnormal{Dom}\ \delta$ the domain of $\delta$. The Malliavin calculus adapted to our model is discussed e.g. in \cite{DJ06, P08}. See Nualart \cite{N} for a detailed exposition of the classical Malliavin calculus and the notations we use in this paper. For any $k \in \{0,...,n-1\}$, $(Y_t^{\theta,\sigma,\lambda}(t_k,x), t\in [t_k,t_{k+1}])$ is differentiable w.r.t. $x$, $\theta$, $\sigma$, and we denote $(\partial_{x}Y_t^{\theta,\sigma,\lambda}(t_k,x), t\in [t_k,t_{k+1}])$, $(\partial_{\theta}Y_t^{\theta,\sigma,\lambda}(t_k,x), t\in [t_k,t_{k+1}])$ and $(\partial_{\sigma}Y_t^{\theta,\sigma,\lambda}(t_k,x), t\in [t_k,t_{k+1}])$, respectively. These processes are the solutions to the linear equations
\begin{align}
&\partial_xY_t^{\theta,\sigma,\lambda}(t_k,x)=1-\theta\int_{t_k}^{t}\partial_xY_u^{\theta,\sigma,\lambda}(t_k,x)du,\label{px}\\
&\partial_{\theta}Y_t^{\theta,\sigma,\lambda}(t_k,x)=-\int_{t_k}^{t}\left(Y_u^{\theta,\sigma,\lambda}(t_k,x)+\theta\partial_{\theta}Y_u^{\theta,\sigma,\lambda}(t_k,x)\right)du,\label{pt}\\
&\partial_{\sigma}Y_t^{\theta,\sigma,\lambda}(t_k,x)=-\theta\int_{t_k}^{t}\partial_{\sigma}Y_u^{\theta,\sigma,\lambda}(t_k,x)du+W_t-W_{t_k}.\label{ps}
\end{align}
For any $t\in [t_k,t_{k+1}]$, the random variables $Y_t^{\theta,\sigma,\lambda}(t_k,x)$, $\partial_xY_t^{\theta,\sigma,\lambda}(t_k,x)$, $(\partial_xY_t^{\theta,\sigma,\lambda}(t_k,x))^{-1}$, $\partial_{\theta}Y_t^{\theta,\sigma,\lambda}(t_k,x)$ and $\partial_{\sigma}Y_t^{\theta,\sigma,\lambda}(t_k,x)$ belong to $\mathbb{D}^{1,2}$. Furthermore, by \cite[Proposition 7]{P08}, the Malliavin derivative $D_sY_t^{\theta,\sigma,\lambda}(t_k,x)$ is given by
$$
D_sY_t^{\theta,\sigma,\lambda}(t_k,x)=\sigma\partial_xY_t^{\theta,\sigma,\lambda}(t_k,x)(\partial_xY_s^{\theta,\sigma,\lambda}(t_k,x))^{-1}{\bf 1}_{[t_k,t]}(s).
$$
	
For all $A\in \widetilde{\mathcal{F}}$ and $x\in\R$, we set $\widetilde{\P}_x^{\theta,\sigma,\lambda}(A)=\widetilde{\E}[{\bf 1}_{A}\vert Y_{t_{k}}^{\theta,\sigma,\lambda}=x]$. We denote by $\widetilde{\E}_x^{\theta,\sigma,\lambda}$ the expectation w.r.t. $\widetilde{\P}_x^{\theta,\sigma,\lambda}$. That is, for all $\widetilde{\mathcal{F}}$-measurable random variable $V$, we have that $\widetilde{\E}_x^{\theta,\sigma,\lambda}[V]=\widetilde{\E}[V\vert Y_{t_{k}}^{\theta,\sigma,\lambda}=x]$. 
	
For any $t>s$, the law of $Y_t^{\theta,\sigma,\lambda}$ conditioned on $Y_s^{\theta,\sigma,\lambda}=x$ admits a positive transition density $p^{\theta,\sigma,\lambda}(t-s,x,y)$, which is differentiable w.r.t. $\theta$, $\sigma$, and $\lambda$. As a consequence of \cite[Proposition 4.1]{G01}, we have the following explicit expression for the logarithm derivatives of the transition density w.r.t. $\theta$ and $\sigma$ in terms of a conditional expectation.
\begin{proposition} \label{c2prop1}
For all $(\theta, \sigma, \lambda)\in\Theta\times \Sigma \times\Lambda$, $k \in \{0,...,n-1\}$, $\beta\in\{\theta, \sigma\}$, and $x, y\in\R$,
\begin{align*}
\dfrac{\partial_{\beta}p^{\theta,\sigma,\lambda}}{p^{\theta,\sigma,\lambda}}\left(\Delta_n,x,y\right)=\dfrac{1}{\Delta_n}\widetilde{\E}_x^{\theta,\sigma,\lambda}\left[\delta\left(\partial_{\beta}Y_{t_{k+1}}^{\theta,\sigma,\lambda}(t_k,x)U^{\theta,\sigma,\lambda}(t_k,x)\right)\big\vert Y_{t_{k+1}}^{\theta,\sigma,\lambda}=y\right],
\end{align*}
where $U^{\theta,\sigma,\lambda}(t_k,x):=(U^{\theta,\sigma,\lambda}_t(t_k,x), t\in[t_k,t_{k+1}])$ with $U^{\theta,\sigma,\lambda}_t(t_k,x):=(D_tY_{t_{k+1}}^{\theta,\sigma,\lambda}(t_k,x))^{-1}$.
\end{proposition}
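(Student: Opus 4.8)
The plan is to follow the scheme of Gobet \cite[Proposition 4.1]{G01}: integrate the transition density against a test function, differentiate in the parameter, and then transfer the derivative onto the test function by a Malliavin integration by parts on the Wiener space of $W$, freezing the Poisson structure. Fix $k$, write $F:=Y_{t_{k+1}}^{\theta,\sigma,\lambda}(t_k,x)$, and recall that conditionally on $Y_{t_k}^{\theta,\sigma,\lambda}=x$ the law of $F$ has density $p^{\theta,\sigma,\lambda}(\Delta_n,x,\cdot)$, so that for any $f\in C_c^{\infty}(\R)$ one has $\widetilde{\E}_x^{\theta,\sigma,\lambda}[f(F)]=\int_{\R}f(y)\,p^{\theta,\sigma,\lambda}(\Delta_n,x,y)\,dy$. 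Differentiating in $\beta\in\{\theta,\sigma\}$ and interchanging derivative and integration gives $\int_{\R}f(y)\,\partial_{\beta}p^{\theta,\sigma,\lambda}(\Delta_n,x,y)\,dy$ on one side and $\widetilde{\E}_x^{\theta,\sigma,\lambda}[f'(F)\,\partial_{\beta}F]$ on the other, where $\partial_{\beta}F=\partial_{\beta}Y_{t_{k+1}}^{\theta,\sigma,\lambda}(t_k,x)$ solves \eqref{pt} or \eqref{ps}.

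Next I would manufacture the Skorohod integral. Solving \eqref{px} explicitly gives $\partial_xY_t^{\theta,\sigma,\lambda}(t_k,x)=e^{-\theta(t-t_k)}$, so the Malliavin derivative is the deterministic, strictly positive kernel $D_sF=\sigma e^{-\theta(t_{k+1}-s)}$ on $[t_k,t_{k+1}]$ and $U_s^{\theta,\sigma,\lambda}(t_k,x)=(D_sF)^{-1}=\sigma^{-1}e^{\theta(t_{k+1}-s)}$ is bounded; in particular $F$ is Malliavin-nondegenerate and $U$ lies in $\textnormal{Dom}\,\delta$. Since $D_sF\cdot U_s=1$, we have $\int_{t_k}^{t_{k+1}}D_sF\,U_s\,ds=\Delta_n$, so by the chain rule $f'(F)=\tfrac{1}{\Delta_n}f'(F)\int_{t_k}^{t_{k+1}}D_sF\,U_s\,ds=\tfrac{1}{\Delta_n}\int_{t_k}^{t_{k+1}}D_s\big(f(F)\big)\,U_s\,ds$. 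Multiplying by $\partial_{\beta}F$, which does not depend on $s$, and invoking the duality $\widetilde{\E}[\langle DG,u\rangle]=\widetilde{\E}[G\,\delta(u)]$ with $G=f(F)$ and $u=\partial_{\beta}F\,U$ yields $\widetilde{\E}_x^{\theta,\sigma,\lambda}[f'(F)\,\partial_{\beta}F]=\tfrac{1}{\Delta_n}\widetilde{\E}_x^{\theta,\sigma,\lambda}\big[f(F)\,\delta(\partial_{\beta}F\,U)\big]$.

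Finally I would condition on $F=y$, writing $\widetilde{\E}_x^{\theta,\sigma,\lambda}[f(F)\,\delta(\partial_{\beta}F\,U)]=\int_{\R}f(y)\,\widetilde{\E}_x^{\theta,\sigma,\lambda}[\delta(\partial_{\beta}F\,U)\mid F=y]\,p^{\theta,\sigma,\lambda}(\Delta_n,x,y)\,dy$. Equating the two expressions for $\partial_\beta\widetilde{\E}_x^{\theta,\sigma,\lambda}[f(F)]$ and using that $f$ is arbitrary gives $\partial_{\beta}p^{\theta,\sigma,\lambda}(\Delta_n,x,y)=\tfrac{1}{\Delta_n}\widetilde{\E}_x^{\theta,\sigma,\lambda}[\delta(\partial_{\beta}F\,U)\mid F=y]\,p^{\theta,\sigma,\lambda}(\Delta_n,x,y)$ for a.e.\ $y$, and dividing by the positive density $p^{\theta,\sigma,\lambda}$ produces the claimed identity. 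The steps needing the most care are the two interchanges: differentiation under the integral sign, which requires $\beta\mapsto p^{\theta,\sigma,\lambda}(\Delta_n,x,y)$ to be $C^1$ with a locally dominated derivative (guaranteed by the regularity of the flow in the parameters together with the explicit Gaussian-mixture structure of the density), and the Malliavin duality combined with the conditioning, which requires $\delta(\partial_{\beta}F\,U)$ to be integrable enough for the conditional expectation to be well defined. Since the operators $D$ and $\delta$ act only on $W$, all of this is carried out pathwise in the Poisson jumps and then averaged over $M$, the Malliavin derivative from \cite[Proposition 7]{P08} and the membership of $\partial_{\beta}Y$ and $U$ in $\mathbb{D}^{1,2}$ supplying the remaining hypotheses.
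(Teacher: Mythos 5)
Your proposal is correct and follows essentially the same route as the paper's proof: test function, Malliavin chain rule combined with $\int_{t_k}^{t_{k+1}}D_sF\,U_s\,ds=\Delta_n$ to produce the factor $1/\Delta_n$, Skorohod duality, and identification of densities after conditioning on $F=y$. The only (harmless) addition is that you make the nondegeneracy explicit by solving \eqref{px} to get the deterministic kernel $D_sF=\sigma e^{-\theta(t_{k+1}-s)}$, whereas the paper keeps this implicit via the flow representation $D_sY_{t_{k+1}}=\sigma\partial_xY_{t_{k+1}}(\partial_xY_s)^{-1}$ and checks $\partial_{\beta}Y_{t_{k+1}}U\in\mathbb{D}^{1,2}(H)\subset\textnormal{Dom}\ \delta$.
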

\begin{proof}
Let $f$ be a continuously differentiable function with compact support. The chain rule of the Malliavin calculus implies that $f'(Y_{t_{k+1}}^{\theta,\sigma,\lambda}(t_k,x))=D_t(f(Y_{t_{k+1}}^{\theta,\sigma,\lambda}(t_k,x)))U^{\theta,\sigma,\lambda}_t(t_k,x)$, for all $(\theta,\sigma,\lambda)\in \Theta\times \Sigma \times\Lambda$ and $t\in [t_k,t_{k+1}]$, where $U^{\theta,\sigma,\lambda}_t(t_k,x):=(D_tY_{t_{k+1}}^{\theta,\sigma,\lambda}(t_k,x))^{-1}$. Moreover, it is easy to check that $\partial_{\beta}Y_{t_{k+1}}^{\theta,\sigma,\lambda}(t_k,x)U^{\theta,\sigma,\lambda}(t_k,x)\in\mathbb{D}^{1,2}(H)$, where $H=L^2([t_k,t_{k+1}],\R)$. By \cite[Proposition 1.3.1]{N}, we have that $\partial_{\beta}Y_{t_{k+1}}^{\theta,\sigma,\lambda}(t_k,x)U^{\theta,\sigma,\lambda}(t_k,x)\in\textnormal{Dom}\ \delta$. Then, using the Malliavin calculus integration by parts formula on $[t_k,t_{k+1}]$, we get that
\begin{align*}
\partial_{\beta}\widetilde{\E}\left[f(Y_{t_{k+1}}^{\theta,\sigma,\lambda}(t_k,x))\right]&=\widetilde{\E}\left[f'(Y_{t_{k+1}}^{\theta,\sigma,\lambda}(t_k,x))\partial_{\beta}Y_{t_{k+1}}^{\theta,\sigma,\lambda}(t_k,x)\right]\\
&=\dfrac{1}{\Delta_n}\widetilde{\E}\left[\int_{t_k}^{t_{k+1}}D_t(f(Y_{t_{k+1}}^{\theta,\sigma,\lambda}(t_k,x)))U^{\theta,\sigma,\lambda}_t(t_k,x)\partial_{\beta}Y_{t_{k+1}}^{\theta,\sigma,\lambda}(t_k,x)dt\right]\\
&=\dfrac{1}{\Delta_n}\widetilde{\E}\left[f(Y_{t_{k+1}}^{\theta,\sigma,\lambda}(t_k,x))\delta\left(\partial_{\beta}Y_{t_{k+1}}^{\theta,\sigma,\lambda}(t_k,x)U^{\theta,\sigma,\lambda}(t_k,x)\right)\right].
\end{align*}
Note that $\delta(V)\equiv\delta(V{\bf 1}_{[t_k, t_{k+1}]}(\cdot))$ for any $V\in \textnormal{Dom}\ \delta$. Using the fact that $p^{\theta,\sigma,\lambda}(\Delta_n,x,y)$ and $\partial_{\beta}p^{\theta,\sigma,\lambda}(\Delta_n,x,y)$ are continuous for $(y,\beta)$, the stochastic flow property $Y_t^{\theta,\sigma,\lambda}=Y_t^{\theta,\sigma,\lambda}(s,Y_s^{\theta,\sigma,\lambda})$ for all $0\leq s\leq t$, and the Markov property of diffusion processes, we have that
\begin{align*}
\partial_{\beta}\widetilde{\E}\left[f(Y_{t_{k+1}}^{\theta,\sigma,\lambda}(t_k,x))\right]=\int_{\R}f(y)\partial_{\beta}p^{\theta,\sigma,\lambda}(\Delta_n,x,y)dy,
\end{align*}
and
\begin{align*}
&\widetilde{\E}\left[f(Y_{t_{k+1}}^{\theta,\sigma,\lambda}(t_k,x))\delta\left(\partial_{\beta}Y_{t_{k+1}}^{\theta,\sigma,\lambda}(t_k,x)U^{\theta,\sigma,\lambda}(t_k,x)\right)\right]\\
&=\widetilde{\E}\left[f(Y_{t_{k+1}}^{\theta,\sigma,\lambda})\delta\left(\partial_{\beta}Y_{t_{k+1}}^{\theta,\sigma,\lambda}(t_k,x)U^{\theta,\sigma,\lambda}(t_k,x)\right)\big\vert Y_{t_{k}}^{\theta,\sigma,\lambda}=x\right]\\
&=\int_{\R}f(y)\widetilde{\E}\left[\delta\left(\partial_{\beta}Y_{t_{k+1}}^{\theta,\sigma,\lambda}(t_k,x)U^{\theta,\sigma,\lambda}(t_k,x)\right)\big\vert Y_{t_{k+1}}^{\theta,\sigma,\lambda}=y,Y_{t_{k}}^{\theta,\sigma,\lambda}=x\right]p^{\theta,\sigma,\lambda}(\Delta_n,x,y)dy,
\end{align*}
which gives the desired result.
\end{proof}
	
For all $k \in \{0,...,n-1\}$, we set $\Delta V_{t_{k+1}}:=V_{t_{k+1}}-V_{t_k}$, for $V\in \{X^{\theta_0,\sigma_0,\lambda_0},B,N,W,M\}$. We next recall Girsanov's theorem on each interval $[t_k, t_{k+1}]$.
\begin{lemma} \label{c2Girsanov2}
For all $\theta, \theta_1, \lambda, \lambda_1, \sigma\in\R^{\ast}_+$ and $k \in \{0,...,n-1\}$, define the measure
\begin{align*}
&\widehat{Q}_k^{\theta_1,\lambda_1,\theta,\lambda,\sigma}(A):=\widehat{\E}\left[{\bf 1}_{A}e^{\int_{t_k}^{t_{k+1}}\frac{(\theta-\theta_1)X_s+\lambda-\lambda_1}{\sigma}dB_s+\frac{1}{2}\int_{t_k}^{t_{k+1}}\left(\frac{(\theta-\theta_1)X_s+\lambda-\lambda_1}{\sigma}\right)^2ds-\Delta N_{t_{k+1}}\log\frac{\lambda}{\lambda_1}+\left(\lambda-\lambda_1\right)\Delta_n}\right],
\end{align*}
for all $A\in \widehat{\mathcal{F}}$. Then $\widehat{Q}_k^{\theta_1,\lambda_1,\theta,\lambda,\sigma}$ is a probability measure and under $\widehat{Q}_k^{\theta_1,\lambda_1,\theta,\lambda,\sigma}$, the process $(B_t^{\widehat{Q}_k^{\theta_1,\lambda_1,\theta,\lambda,\sigma}}:=B_t-\int_{t_k}^{t}\frac{(\theta-\theta_1)X_s+\lambda-\lambda_1}{\sigma}ds, t\in[t_k,t_{k+1}])$ is a Brownian motion, and $(N_t, t\in[t_k,t_{k+1}])$ is a Poisson process with intensity $\lambda_1$. Moreover, all these statements remain valid for the measure $\widetilde{Q}_k^{\theta_1,\lambda_1,\theta,\lambda,\sigma}$ defined by, for all $A\in \widetilde{\mathcal{F}}$,
\begin{align*}
&\widetilde{Q}_k^{\theta_1,\lambda_1,\theta,\lambda,\sigma}(A):=\widetilde{\E}\left[{\bf 1}_{A}e^{\int_{t_k}^{t_{k+1}}\frac{(\theta-\theta_1)Y_s+\lambda-\lambda_1}{\sigma}dW_s+\frac{1}{2}\int_{t_k}^{t_{k+1}}\left(\frac{(\theta-\theta_1)Y_s+\lambda-\lambda_1}{\sigma}\right)^2ds-\Delta M_{t_{k+1}}\log\frac{\lambda}{\lambda_1}+\left(\lambda-\lambda_1\right)\Delta_n}\right].
\end{align*}
\end{lemma}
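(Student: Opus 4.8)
The plan is to exploit the independence of $B$ and $N$ under $\widehat{\P}$ to split the proposed density into a purely Brownian factor and a purely Poissonian factor, treat each by a classical change-of-measure result, and then recombine. Writing $\widehat{Q}:=\widehat{Q}_k^{\theta_1,\lambda_1,\theta,\lambda,\sigma}$ and $h_s:=\frac{(\theta-\theta_1)X_s+\lambda-\lambda_1}{\sigma}$, the exponent defining $\widehat{Q}$ factors as $Z=Z^B Z^N$, where $Z^B$ is the exponential martingale associated with the Girsanov transform of the stochastic integral $\int_{t_k}^{t_{k+1}}h_s\,dB_s$ and $Z^N:=\exp(-\Delta N_{t_{k+1}}\log\frac{\lambda}{\lambda_1}+(\lambda-\lambda_1)\Delta_n)$ is the Poissonian factor. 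Since $Z^B$ is $\mathcal{F}^1$-measurable, $Z^N$ is $\mathcal{F}^2$-measurable, and the two coordinates are independent under $\widehat{\P}$, it suffices to show that each factor has expectation one and induces the claimed law on its own coordinate; the product structure then yields $\widehat{\E}[Z]=\widehat{\E}[Z^B]\,\widehat{\E}[Z^N]=1$ together with the asserted joint behaviour, including the preservation of the independence of $B^{\widehat{Q}}$ and $N$.

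For the Poissonian factor I would argue directly. Since $\Delta N_{t_{k+1}}$ is, under $\widehat{\P}$, a Poisson random variable with mean $\lambda\Delta_n$, the probability generating function gives $\widehat{\E}[(\lambda_1/\lambda)^{\Delta N_{t_{k+1}}}]=e^{\lambda\Delta_n(\lambda_1/\lambda-1)}=e^{(\lambda_1-\lambda)\Delta_n}$, so that $\widehat{\E}[Z^N]=e^{(\lambda-\lambda_1)\Delta_n}e^{(\lambda_1-\lambda)\Delta_n}=1$. More than this, $Z^N$ is exactly the likelihood ratio of a Poisson process of intensity $\lambda_1$ with respect to one of intensity $\lambda$ on $[t_k,t_{k+1}]$; hence under the tilted measure not only the increment $\Delta N_{t_{k+1}}$ but the whole restricted path of $N$ is that of a Poisson process of intensity $\lambda_1$. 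This step is elementary and complete.

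For the Brownian factor I would invoke the Cameron--Martin--Girsanov theorem: $h$ is $\widehat{\mathcal{F}}_s$-adapted, and once the exponential martingale property of $Z^B$ is established, the process $B^{\widehat{Q}}_t=B_t-\int_{t_k}^{t}h_s\,ds$ is a Brownian motion on $[t_k,t_{k+1}]$ under $\widehat{Q}$. Substituting $dB_t=dB^{\widehat{Q}}_t+h_t\,dt$ into \eqref{c2eq1} then confirms that the change of measure replaces the drift/intensity pair $(\theta,\lambda)$ by $(\theta_1,\lambda_1)$ while leaving $\sigma$ fixed, consistently with the Poissonian part.

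The main obstacle is precisely the martingale (equivalently, integrability) property of $Z^B$: the integrand $h_s$ involves the unbounded process $X_s$, and because $X$ carries Poisson jumps the unconditional Novikov condition $\widehat{\E}[\exp(\tfrac12\int_{t_k}^{t_{k+1}}h_s^2\,ds)]<\infty$ fails, since $\sup_{[t_k,t_{k+1}]}X_s^2$ inherits a $(\Delta N_{t_{k+1}})^2$ contribution whose exponential moments diverge. I would circumvent this by conditioning on $\mathcal{F}^2=\sigma(N)$: given the jump times and their number, the explicit representation \eqref{solution} shows that on $[t_k,t_{k+1}]$ the process $X$ is a Gaussian process (an Ornstein--Uhlenbeck path shifted by a deterministic jump contribution), so $\int_{t_k}^{t_{k+1}}h_s^2\,ds$ is a quadratic functional of a Gaussian process whose conditional exponential moment is finite for $\Delta_n$ small enough; conditional Novikov then yields $\widehat{\E}[Z^B\mid\mathcal{F}^2]=1$ and the conditional Brownian-motion property of $B^{\widehat{Q}}$. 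Integrating over $N$ and recombining with the Poissonian factor completes the proof that $\widehat{Q}$ is a probability measure with the stated structure. Alternatively, one may bypass Novikov altogether by noting that \eqref{c2eq1} is a linear SDE admitting a unique strong solution for every parameter value, so the laws corresponding to $(\theta,\lambda)$ and $(\theta_1,\lambda_1)$ with common $\sigma$ are mutually absolutely continuous on $\widehat{\mathcal{F}}_{t_{k+1}}$ and $Z$ is their Radon--Nikodym derivative, which is automatically a true martingale. Finally, the statement for $\widetilde{Q}_k^{\theta_1,\lambda_1,\theta,\lambda,\sigma}$ follows verbatim, replacing $(B,N,X)$ by $(W,M,Y)$ and $\widehat{\P}$ by $\widetilde{\P}$, since \eqref{c2eq1rajoute} has exactly the same structure as \eqref{c2eq1}.
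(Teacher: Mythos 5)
You should first note that the paper itself contains no proof of this lemma: it is stated as a recalled, standard Girsanov theorem for a Brownian motion together with an independent Poisson process, and the text moves on immediately. So your proposal is not competing with an argument in the paper, and I judge it on its own. Its core — treat the Poisson factor by the explicit likelihood-ratio computation, and treat the Brownian factor by Girsanov \emph{conditionally on the jump path}, where $X$ becomes a Gaussian process and a conditional Novikov criterion applies for $\Delta_n$ small (or after subdividing $[t_k,t_{k+1}]$) — is the right way to handle the unbounded, jump-dependent integrand $h_s=\frac{(\theta-\theta_1)X_s+\lambda-\lambda_1}{\sigma}$, and your diagnosis that the unconditional Novikov condition fails because $\exp(c\Delta_n(\Delta N_{t_{k+1}})^2)$ is not integrable is accurate.

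There are, however, genuine defects. First, your opening reduction is false as written: $Z^B$ is \emph{not} $\mathcal{F}^1$-measurable and is \emph{not} independent of $Z^N$, precisely because $h_s$ contains $X_s$, which is driven by $N$ as well as by $B$; this contradicts your own fourth paragraph. The factorization must be run conditionally on $\mathcal{F}^2$ from the start: $\widehat{\E}[Z\mid\mathcal{F}^2]=Z^N\,\widehat{\E}[Z^B\mid\mathcal{F}^2]=Z^N$, and more generally, for bounded functionals $F,G$, one writes $\widehat{\E}\left[Z\,F(B^{\widehat{Q}})G(N)\right]=\widehat{\E}\left[Z^N G(N)\,\widehat{\E}\left[Z^B F(B^{\widehat{Q}})\mid\mathcal{F}^2\right]\right]$, where the inner conditional expectation is, by conditional Girsanov, the Wiener-measure expectation of $F$ (a constant). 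This single computation yields the normalization $\widehat{\E}[Z]=1$, both claimed marginal laws, \emph{and} the independence of $B^{\widehat{Q}}$ and $N$ under $\widehat{Q}$ — an independence the paper relies on later (e.g.\ in the proof of Lemma \ref{lemma1}) and which your coordinate-by-coordinate phrasing does not actually deliver. Second, you never address the sign of the exponent: as printed, the density carries $+\frac{1}{2}\int_{t_k}^{t_{k+1}}h_s^2\,ds$ against $\int_{t_k}^{t_{k+1}}h_s\,dB_s$, and this object cannot define a probability measure — take $\theta=\theta_1$, so that $h=(\lambda-\lambda_1)/\sigma$ is constant; then the total mass is $e^{h^2\Delta_n}\neq 1$. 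The intended density is the exponential martingale $\exp\left(\int_{t_k}^{t_{k+1}}h_s\,dB_s-\frac{1}{2}\int_{t_k}^{t_{k+1}}h_s^2\,ds\right)$, which equals $\exp\left(\int_{t_k}^{t_{k+1}}h_s\,dB_s^{\widehat{Q}}+\frac{1}{2}\int_{t_k}^{t_{k+1}}h_s^2\,ds\right)$ with the integral taken against the \emph{new} Brownian motion; your $Z^B$ is silently this corrected object, so you prove the intended statement rather than the literal one, and a complete proof must say so. Finally, the proposed shortcut in your last paragraph — that $Z$ is ``automatically a true martingale'' because the two parameter laws are mutually absolutely continuous — is circular: that absolute continuity is exactly what Girsanov establishes and does not follow from strong well-posedness of the linear SDE alone, so the martingale property still has to be proved by your conditional Novikov argument.
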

	
As a consequence, we derive the following explicit expression for the logarithm derivative of the transition density w.r.t. $\lambda$ in the form of conditional expectation. 
\begin{proposition}\label{c2pro2} For all $(\theta, \sigma, \lambda)\in\Theta\times \Sigma \times\Lambda$, $k \in \{0,...,n-1\}$, and $x, y\in\R$,
\begin{align*}
\dfrac{\partial_{\lambda}p^{\theta,\sigma,\lambda}}{p^{\theta,\sigma,\lambda}}(\Delta_n,x,y)=\widetilde{\E}_x^{\theta,\sigma,\lambda}\left[-\dfrac{\Delta W_{t_{k+1}}}{\sigma}+\dfrac{\widetilde{M}_{t_{k+1}}^{\lambda}-\widetilde{M}_{t_k}^{\lambda}}{\lambda}\big\vert Y_{t_{k+1}}^{\theta,\sigma,\lambda}=y\right].
\end{align*}
\end{proposition}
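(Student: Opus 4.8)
\emph{Strategy.} The plan is to obtain the weight by a Girsanov change of the jump intensity (the ``as a consequence'' flagged before the statement), rather than by differentiating the flow pathwise, which is delicate because of the Poisson component. Throughout I fix $(\theta,\sigma,\lambda)\in\Theta\times\Sigma\times\Lambda$, $k$, and a bounded continuously differentiable test function $f$, and I work on $(\widetilde{\Omega},\widetilde{\mathcal F},\widetilde{\P})$, under which $W$ is a Brownian motion and $M$ is a Poisson process of intensity $\lambda$. I then let $\lambda_1$ range over a neighbourhood of $\lambda$, keeping $\theta_1=\theta$ and $\sigma$ fixed. The key point of freezing $\lambda$ as the intensity of $\widetilde{\P}$ and moving all the $\lambda_1$-dependence into the explicit likelihood is that the random variable $f(Y_{t_{k+1}}^{\theta,\sigma,\lambda}(t_k,x))$ then does not depend on the differentiation variable $\lambda_1$, so no pathwise differentiation of a jump process is needed.

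\emph{Identification of the law under the shifted measure.} First I would record the effect of $\widetilde{Q}_k^{\theta,\lambda_1,\theta,\lambda,\sigma}$ from Lemma \ref{c2Girsanov2} with $\theta_1=\theta$. Then the Brownian shift $\frac{(\theta-\theta_1)Y_s+\lambda-\lambda_1}{\sigma}$ reduces to the constant $\frac{\lambda-\lambda_1}{\sigma}$, and $M$ acquires intensity $\lambda_1$. Substituting $W_t=W_t^{\widetilde{Q}}+\frac{\lambda-\lambda_1}{\sigma}(t-t_k)$ into \eqref{flow} and using $\widetilde{M}_t^{\lambda}=M_t-\lambda t$, the drift terms $-\lambda(t-t_k)$ and $(\lambda-\lambda_1)(t-t_k)$ combine to $-\lambda_1(t-t_k)$, so that under $\widetilde{Q}_k^{\theta,\lambda_1,\theta,\lambda,\sigma}$ the process $Y^{\theta,\sigma,\lambda}(t_k,x)$ solves the flow equation \eqref{flow} with parameters $(\theta,\sigma,\lambda_1)$; hence $Y_{t_{k+1}}^{\theta,\sigma,\lambda}(t_k,x)$ has density $p^{\theta,\sigma,\lambda_1}(\Delta_n,x,\cdot)$ under $\widetilde{Q}_k^{\theta,\lambda_1,\theta,\lambda,\sigma}$. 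Writing $G(\lambda_1)$ for the exponential density defining this measure, which because $\theta_1=\theta$ depends on the randomness only through $\Delta W_{t_{k+1}}$ and $\Delta M_{t_{k+1}}$,
\begin{equation*}
G(\lambda_1)=\exp\!\left(\frac{\lambda-\lambda_1}{\sigma}\,\Delta W_{t_{k+1}}+\frac12\Big(\frac{\lambda-\lambda_1}{\sigma}\Big)^2\Delta_n-\Delta M_{t_{k+1}}\log\frac{\lambda}{\lambda_1}+(\lambda-\lambda_1)\Delta_n\right),
\end{equation*}
the change of measure yields, for every such $f$,
\begin{equation*}
\int_{\R}f(y)\,p^{\theta,\sigma,\lambda_1}(\Delta_n,x,y)\,dy=\widetilde{\E}\left[f\big(Y_{t_{k+1}}^{\theta,\sigma,\lambda}(t_k,x)\big)\,G(\lambda_1)\right].
\end{equation*}

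\emph{Differentiation and the weight.} Next I would differentiate this identity in $\lambda_1$ and evaluate at $\lambda_1=\lambda$. Since $G(\lambda)=1$ and $G$ is an explicit smooth function of $\lambda_1$, a direct computation gives
\begin{equation*}
\partial_{\lambda_1}\log G(\lambda_1)\big\vert_{\lambda_1=\lambda}=-\frac{\Delta W_{t_{k+1}}}{\sigma}+\frac{\Delta M_{t_{k+1}}}{\lambda}-\Delta_n=-\frac{\Delta W_{t_{k+1}}}{\sigma}+\frac{\widetilde{M}_{t_{k+1}}^{\lambda}-\widetilde{M}_{t_k}^{\lambda}}{\lambda},
\end{equation*}
where the last equality uses $\Delta M_{t_{k+1}}-\lambda\Delta_n=\widetilde{M}_{t_{k+1}}^{\lambda}-\widetilde{M}_{t_k}^{\lambda}$. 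Using $\partial_{\lambda_1}p^{\theta,\sigma,\lambda_1}\big\vert_{\lambda_1=\lambda}=\partial_{\lambda}p^{\theta,\sigma,\lambda}$ and passing $\partial_{\lambda_1}$ through the expectation on the right (so that $f(Y_{t_{k+1}}^{\theta,\sigma,\lambda}(t_k,x))$, independent of $\lambda_1$, is untouched),
\begin{equation*}
\int_{\R}f(y)\,\partial_{\lambda}p^{\theta,\sigma,\lambda}(\Delta_n,x,y)\,dy=\widetilde{\E}\left[f\big(Y_{t_{k+1}}^{\theta,\sigma,\lambda}(t_k,x)\big)\left(-\frac{\Delta W_{t_{k+1}}}{\sigma}+\frac{\widetilde{M}_{t_{k+1}}^{\lambda}-\widetilde{M}_{t_k}^{\lambda}}{\lambda}\right)\right].
\end{equation*}
Finally I would condition on $\{Y_{t_{k+1}}^{\theta,\sigma,\lambda}=y\}$ exactly as in the proof of Proposition \ref{c2prop1}, invoking the flow and Markov properties to rewrite the right-hand side as an integral against $\widetilde{\E}_x^{\theta,\sigma,\lambda}[\,\cdot\mid Y_{t_{k+1}}^{\theta,\sigma,\lambda}=y\,]\,p^{\theta,\sigma,\lambda}(\Delta_n,x,y)$, equate integrands because $f$ is arbitrary, and divide by the strictly positive density to obtain the asserted formula.

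\emph{Main obstacle.} The step requiring the most care is the interchange of $\partial_{\lambda_1}$ with the expectation, together with the $\lambda_1$-differentiability of $\lambda_1\mapsto p^{\theta,\sigma,\lambda_1}$ on the left. For the right-hand side one must dominate $\partial_{\lambda_1}G(\lambda_1)=G(\lambda_1)\,\partial_{\lambda_1}\log G(\lambda_1)$ uniformly for $\lambda_1$ in a neighbourhood of $\lambda$; here the explicit form of $G$ helps decisively, since its only randomness enters through the Gaussian increment $\Delta W_{t_{k+1}}$ and the Poisson increment $\Delta M_{t_{k+1}}$, both of which have exponential moments, so a standard dominated-convergence argument applies. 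The differentiability of the transition density in $\lambda$ is part of the regularity already recorded before the statement, and the remaining steps—the identification of the law under $\widetilde{Q}_k^{\theta,\lambda_1,\theta,\lambda,\sigma}$ and the final conditioning—are routine once Lemma \ref{c2Girsanov2} is available.
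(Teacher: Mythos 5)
Your proof is correct and takes essentially the same route as the paper's: both apply Lemma \ref{c2Girsanov2} with $\theta_1=\theta$ so that the intensity dependence sits in an explicit exponential weight depending only on $\Delta W_{t_{k+1}}$ and $\Delta M_{t_{k+1}}$, differentiate that weight to produce the factor $-\frac{\Delta W_{t_{k+1}}}{\sigma}+\frac{\widetilde{M}_{t_{k+1}}^{\lambda}-\widetilde{M}_{t_k}^{\lambda}}{\lambda}$, and finish by conditioning on $Y_{t_{k+1}}^{\theta,\sigma,\lambda}=y$ exactly as in Proposition \ref{c2prop1}. The only (cosmetic) difference is the direction of the parametrization: the paper writes $\widetilde{\E}[f(Y_{t_{k+1}}^{\theta,\sigma,\lambda}(t_k,x))]=\widetilde{\E}[f(Y_{t_{k+1}}^{\theta,\sigma,\lambda_1}(t_k,x))\,\frac{d\widetilde{\P}}{d\widetilde{Q}_k^{\theta,\lambda_1,\theta,\lambda,\sigma}}]$ and differentiates in $\lambda$ before evaluating at $\lambda_1=\lambda$, whereas you keep the process at the base intensity $\lambda$ and differentiate the weight in the auxiliary parameter $\lambda_1$ at $\lambda_1=\lambda$; the two computations coincide, and your explicit domination argument for exchanging derivative and expectation is a point the paper leaves implicit.
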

\begin{proof} Let $f$ be a continuously differentiable bounded function. Girsanov's theorem yields
\begin{equation*}\begin{split}
\widetilde{\E}\left[f(Y_{t_{k+1}}^{\theta,\sigma,\lambda}(t_k,x))\right]=\widetilde{\E}\left[f(Y_{t_{k+1}}^{\theta,\sigma,\lambda_1}(t_k,x))\dfrac{d\widetilde{\P}}{d\widetilde{Q}_k^{\theta,\lambda_1,\theta,\lambda,\sigma}}\right].
\end{split}
\end{equation*}
Taking the derivative w.r.t. $\lambda$ in both hand sides of this equality and using Lemma \ref{c2Girsanov2}, 
\begin{align*}
&\partial_{\lambda}\widetilde{\E}\left[f(Y_{t_{k+1}}^{\theta,\sigma,\lambda}(t_k,x))\right]=\widetilde{\E}\left[f(Y_{t_{k+1}}^{\theta,\sigma,\lambda_1}(t_k,x))\partial_{\lambda}\left(\dfrac{d\widetilde{\P}}{d \widetilde{Q}_k^{\theta,\lambda_1,\theta,\lambda,\sigma}}\right)\right]\\
&=\widetilde{\E}\left[f(Y_{t_{k+1}}^{\theta,\sigma,\lambda_1}(t_k,x))\left(-\dfrac{\Delta W_{t_{k+1}}}{\sigma}-\dfrac{\lambda-\lambda_1}{\sigma^2}\Delta_n+\dfrac{\widetilde{M}_{t_{k+1}}^{\lambda}-\widetilde{M}_{t_k}^{\lambda}}{\lambda}\right)\dfrac{d\widetilde{\P}}{d \widetilde{Q}_k^{\theta,\lambda_1,\theta,\lambda,\sigma}}\right]\\
&=\widetilde{\E}\left[f(Y_{t_{k+1}}^{\theta,\sigma,\lambda}(t_k,x))\left(-\dfrac{\Delta W_{t_{k+1}}}{\sigma}+\dfrac{\widetilde{M}_{t_{k+1}}^{\lambda}-\widetilde{M}_{t_k}^{\lambda}}{\lambda}\right)\right].
\end{align*}
Then proceeding similarly as in the proof of Proposition \ref{c2prop1}, the desired result follows.
\end{proof}
	
From the decomposition \ref{eq:dec}, the Markov property and Propositions \ref{c2prop1}, \ref{c2pro2}, the log-likelihood ratio will be represented as sums of conditional expectations involving Skorohod integrals. Therefore, in order to control the convergence of the log-likelihood ratio, one needs to have an appropriate stochastic expansion of the Skorohod integrals $\delta(\partial_{\beta}Y_{t_{k+1}}^{\theta,\sigma,\lambda}(t_k,x)U^{\theta,\sigma,\lambda}(t_k,x))$ appearing in Proposition \ref{c2prop1}, with $\beta\in\{\theta, \sigma\}$, and of the conditional expectations appearing in Proposition \ref{c2pro2}. In fact, these random variables can be decomposed in terms of some function $g(\theta,\sigma,\lambda,\Delta_n,Y_{t_k}^{\theta,\sigma,\lambda},Y_{t_{k+1}}^{\theta,\sigma,\lambda})$ and remainder terms. Then $g(\theta,\sigma,\lambda,\Delta_n,Y_{t_k}^{\theta,\sigma,\lambda},Y_{t_{k+1}}^{\theta,\sigma,\lambda})$ whose conditional expectation can be computed easily corresponds to the main terms that will contribute to the limit of the log-likelihood ratio. Some remainder terms which have to be centered random variables (see \eqref{es1} and \eqref{es4}) will have no contribution in the limit of the log-likelihood ratio. These facts will be seen in Subsections \ref{expand}-\ref{maincontri}. Consequently, we have the following decompositions of the aforementioned Skorohod integrals.
\begin{lemma}\label{delta} For all $(\theta,\sigma,\lambda) \in \Theta\times\Sigma\times\Lambda$, $k \in \{0,...,n-1\}$, and $x\in\R$,
\begin{align*}
\delta\left(\partial_{\theta}Y_{t_{k+1}}^{\theta,\sigma,\lambda}(t_k,x)U^{\theta,\sigma,\lambda}(t_k,x)\right)&=-\Delta_n\sigma^{-2}x\left(Y_{t_{k+1}}^{\theta,\sigma,\lambda}-Y_{t_{k}}^{\theta,\sigma,\lambda}+\Delta_n\theta Y_{t_k}^{\theta,\sigma,\lambda}\right)\\
&\qquad+R_1^{\theta,\sigma,\lambda}+R_2^{\theta,\sigma,\lambda}+R_3^{\theta,\sigma,\lambda}-R_4^{\theta,\sigma,\lambda}-R_5^{\theta,\sigma,\lambda},
\end{align*}
where 
\begin{align*}
R_1^{\theta,\sigma,\lambda}&=\frac{1}{\sigma}\int_{t_k}^{t_{k+1}}\int_{s}^{t_{k+1}}D_s\left(\dfrac{Y_{u}^{\theta,\sigma,\lambda}(t_k,x)}{\partial_xY_{u}^{\theta,\sigma,\lambda}(t_k,x)}\right)\partial_xY_{s}^{\theta,\sigma,\lambda}(t_k,x)duds,\\
R_2^{\theta,\sigma,\lambda}&=-\frac{1}{\sigma}\int_{t_k}^{t_{k+1}}\dfrac{Y_{s}^{\theta,\sigma,\lambda}(t_k,x)}{\partial_xY_{s}^{\theta,\sigma,\lambda}(t_k,x)}ds\int_{t_k}^{t_{k+1}}\left(\partial_xY_{s}^{\theta,\sigma,\lambda}(t_k,x)-\partial_xY_{t_k}^{\theta,\sigma,\lambda}(t_k,x)\right)dW_s,\\
R_3^{\theta,\sigma,\lambda}&=-\frac{1}{\sigma}\int_{t_k}^{t_{k+1}}\left(\dfrac{Y_{s}^{\theta,\sigma,\lambda}(t_k,x)}{\partial_xY_{s}^{\theta,\sigma,\lambda}(t_k,x)}-\dfrac{Y_{t_k}^{\theta,\sigma,\lambda}(t_k,x)}{\partial_xY_{t_k}^{\theta,\sigma,\lambda}(t_k,x)}\right)ds \int_{t_k}^{t_{k+1}}\partial_xY_{t_k}^{\theta,\sigma,\lambda}(t_k,x)dW_s,\\
R_4^{\theta,\sigma,\lambda}&=\Delta_n\sigma^{-2}\theta x\int_{t_k}^{t_{k+1}}\left(Y_{s}^{\theta,\sigma,\lambda}-Y_{t_k}^{\theta,\sigma,\lambda}\right)ds,\ R_5^{\theta,\sigma,\lambda}=-\Delta_n\sigma^{-2}x\left(\widetilde{M}_{t_{k+1}}^{\lambda}-\widetilde{M}_{t_{k}}^{\lambda}\right).
\end{align*}
\end{lemma}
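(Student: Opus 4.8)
The plan is to reduce the Skorohod integral to a single scalar random variable multiplied by a \emph{deterministic} Wiener integral, apply the Malliavin product rule, and then expand everything around the left endpoint $t_k$ so as to read off the five remainder terms. First I would solve the linear equations \eqref{px} and \eqref{pt} explicitly: \eqref{px} gives $\partial_xY_t^{\theta,\sigma,\lambda}(t_k,x)=e^{-\theta(t-t_k)}$, and solving \eqref{pt} with the integrating factor $e^{\theta(t-t_k)}$ yields
\begin{equation*}
\partial_{\theta}Y_t^{\theta,\sigma,\lambda}(t_k,x)=-\partial_xY_t^{\theta,\sigma,\lambda}(t_k,x)\int_{t_k}^{t}\frac{Y_u^{\theta,\sigma,\lambda}(t_k,x)}{\partial_xY_u^{\theta,\sigma,\lambda}(t_k,x)}\,du.
\end{equation*}
Combined with $U_t^{\theta,\sigma,\lambda}(t_k,x)=(D_tY_{t_{k+1}}^{\theta,\sigma,\lambda}(t_k,x))^{-1}=\sigma^{-1}(\partial_xY_{t_{k+1}}^{\theta,\sigma,\lambda}(t_k,x))^{-1}\partial_xY_t^{\theta,\sigma,\lambda}(t_k,x)$, read off from the explicit Malliavin derivative before Proposition \ref{c2prop1}, the factors $\partial_xY_{t_{k+1}}^{\theta,\sigma,\lambda}(t_k,x)$ cancel and the integrand collapses to the deterministic kernel $t\mapsto-\sigma^{-1}a\,\partial_xY_t^{\theta,\sigma,\lambda}(t_k,x)$, where $a:=\int_{t_k}^{t_{k+1}}Y_s^{\theta,\sigma,\lambda}(t_k,x)/\partial_xY_s^{\theta,\sigma,\lambda}(t_k,x)\,ds\in\mathbb{D}^{1,2}$.

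Next I would apply the Skorohod product rule $\delta(Fh)=F\delta(h)-\int_{t_k}^{t_{k+1}}D_sF\,h_s\,ds$ with the scalar $F=-\sigma^{-1}a$ and the deterministic kernel $h_s=\partial_xY_s^{\theta,\sigma,\lambda}(t_k,x)$. Since $h$ is deterministic (hence adapted), $\delta(h)$ is the ordinary Wiener integral $\int_{t_k}^{t_{k+1}}\partial_xY_s^{\theta,\sigma,\lambda}(t_k,x)\,dW_s$, while the correction term $\sigma^{-1}\int_{t_k}^{t_{k+1}}D_sa\,\partial_xY_s^{\theta,\sigma,\lambda}(t_k,x)\,ds$ is exactly $R_1^{\theta,\sigma,\lambda}$ once I write $D_sa=\int_s^{t_{k+1}}D_s(Y_u^{\theta,\sigma,\lambda}(t_k,x)/\partial_xY_u^{\theta,\sigma,\lambda}(t_k,x))\,du$, using causality $D_sY_u=0$ for $u<s$. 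This leaves the single product $-\sigma^{-1}a\int_{t_k}^{t_{k+1}}\partial_xY_s^{\theta,\sigma,\lambda}(t_k,x)\,dW_s$ to be matched with the main term and with $R_2,\ldots,R_5$.

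Finally I would expand both factors of this product around $t_k$, using $\partial_xY_{t_k}^{\theta,\sigma,\lambda}(t_k,x)=1$ and $Y_{t_k}^{\theta,\sigma,\lambda}(t_k,x)=x$. Splitting $a=x\Delta_n+\int_{t_k}^{t_{k+1}}(Y_s^{\theta,\sigma,\lambda}(t_k,x)/\partial_xY_s^{\theta,\sigma,\lambda}(t_k,x)-x)\,ds$ and the Wiener integral as $\Delta W_{t_{k+1}}+\int_{t_k}^{t_{k+1}}(\partial_xY_s^{\theta,\sigma,\lambda}(t_k,x)-1)\,dW_s$, the four cross terms regroup into $R_2^{\theta,\sigma,\lambda}$ (the two terms carrying $\int(\partial_xY_s-1)dW_s$, which recombine into $-\sigma^{-1}a\int(\partial_xY_s-1)dW_s$), $R_3^{\theta,\sigma,\lambda}$ (the centered time integral against $\Delta W_{t_{k+1}}$), and the leading piece $-\sigma^{-1}x\Delta_n\,\Delta W_{t_{k+1}}$. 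For the latter I would substitute the defining equation \eqref{flow}, i.e. $\sigma\,\Delta W_{t_{k+1}}=Y_{t_{k+1}}^{\theta,\sigma,\lambda}(t_k,x)-x+\theta\int_{t_k}^{t_{k+1}}Y_u^{\theta,\sigma,\lambda}(t_k,x)\,du-(\widetilde{M}_{t_{k+1}}^{\lambda}-\widetilde{M}_{t_k}^{\lambda})$, and then split $\theta\int_{t_k}^{t_{k+1}}Y_u\,du=\theta\Delta_n x+\theta\int_{t_k}^{t_{k+1}}(Y_u-x)\,du$; the three resulting pieces are precisely the stated main term $-\Delta_n\sigma^{-2}x(Y_{t_{k+1}}^{\theta,\sigma,\lambda}-Y_{t_k}^{\theta,\sigma,\lambda}+\Delta_n\theta Y_{t_k}^{\theta,\sigma,\lambda})$ together with $-R_4^{\theta,\sigma,\lambda}$ and $-R_5^{\theta,\sigma,\lambda}$ (under the convention $Y_{t_k}^{\theta,\sigma,\lambda}=x$). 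The only genuinely delicate point is the legitimacy of the product rule, namely verifying that $-\sigma^{-1}a\,\partial_xY^{\theta,\sigma,\lambda}(t_k,x)\in\textnormal{Dom}\ \delta$ and that the correction integral is a well-defined element of $L^2$; this follows from the $\mathbb{D}^{1,2}$-regularity recorded just before Proposition \ref{c2prop1} together with the explicit and bounded form of $\partial_xY^{\theta,\sigma,\lambda}(t_k,x)$, after which the remainder of the argument is purely algebraic bookkeeping.
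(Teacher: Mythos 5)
Your proposal is correct and follows essentially the same route as the paper's proof: the key identity $\partial_{\theta}Y_{t_{k+1}}^{\theta,\sigma,\lambda}(t_k,x)\,U_t^{\theta,\sigma,\lambda}(t_k,x)=-\sigma^{-1}\bigl(\int_{t_k}^{t_{k+1}}Y_s^{\theta,\sigma,\lambda}(t_k,x)/\partial_xY_s^{\theta,\sigma,\lambda}(t_k,x)\,ds\bigr)\partial_xY_t^{\theta,\sigma,\lambda}(t_k,x)$, the Skorohod product rule producing $R_1^{\theta,\sigma,\lambda}$, the recentering at $t_k$ giving $R_2^{\theta,\sigma,\lambda}$ and $R_3^{\theta,\sigma,\lambda}$, and the substitution of the equation for $\sigma\Delta W_{t_{k+1}}$ yielding the main term, $R_4^{\theta,\sigma,\lambda}$ and $R_5^{\theta,\sigma,\lambda}$. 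The only (cosmetic) difference is that you obtain the intermediate identity by solving the linear equations \eqref{px} and \eqref{pt} explicitly with an integrating factor, whereas the paper derives it via It\^o's formula applied to $(\partial_xY_t^{\theta,\sigma,\lambda}(t_k,x))^{-1}$ and the quotient.
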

\begin{proof}
From \eqref{px} and It\^o's formula, 
\begin{equation}\label{ipx}\begin{split}
(\partial_xY_{t}^{\theta,\sigma,\lambda}(t_k,x))^{-1}&=1+\theta\int_{t_k}^t(\partial_xY_s^{\theta,\sigma,\lambda}(t_k,x))^{-1}ds,
\end{split}
\end{equation}
which, together with \eqref{pt} and It\^o's formula again, implies that
$$
\dfrac{\partial_{\theta}Y_{t_{k+1}}^{\theta,\sigma,\lambda}(t_k,x)}{\partial_xY_{t_{k+1}}^{\theta,\sigma,\lambda}(t_k,x)}=-\int_{t_k}^{t_{k+1}}\dfrac{Y_{s}^{\theta,\sigma,\lambda}(t_k,x)}{\partial_xY_{s}^{\theta,\sigma,\lambda}(t_k,x)}ds.
$$ 
Then, using  $U^{\theta,\sigma,\lambda}_t(t_k,x)=(D_tY_{t_{k+1}}^{\theta,\sigma,\lambda}(t_k,x))^{-1}=\sigma^{-1}(\partial_xY_{t_{k+1}}^{\theta,\sigma,\lambda}(t_k,x))^{-1}\partial_xY_t^{\theta,\sigma,\lambda}(t_k,x)$ and the product rule \cite[(1.48)]{N}, we obtain that
\begin{align*}
&\delta\left(\partial_{\theta}Y_{t_{k+1}}^{\theta,\sigma,\lambda}(t_k,x)U^{\theta,\sigma,\lambda}(t_k,x)\right)
=-\frac{1}{\sigma}\int_{t_k}^{t_{k+1}}\frac{Y_{s}^{\theta,\sigma,\lambda}(t_k,x)}{\partial_xY_{s}^{\theta,\sigma,\lambda}(t_k,x)}ds \int_{t_k}^{t_{k+1}}\partial_xY_{s}^{\theta,\sigma,\lambda}(t_k,x)dW_s\\
&\qquad+\frac{1}{\sigma}\int_{t_k}^{t_{k+1}}\int_{s}^{t_{k+1}}D_s\left(\dfrac{Y_{u}^{\theta,\sigma,\lambda}(t_k,x)}{\partial_xY_{u}^{\theta,\sigma,\lambda}(t_k,x)}\right)\partial_xY_{s}^{\theta,\sigma,\lambda}(t_k,x)duds.
\end{align*}
		
We next add and subtract the term $\partial_xY_{t_k}^{\theta,\sigma,\lambda}(t_k,x)$ in the second integral, and the term $\frac{Y_{t_k}^{\theta,\sigma,\lambda}(t_k,x)}{\partial_xY_{t_k}^{\theta,\sigma,\lambda}(t_k,x)}$ in the first integral. This, together with  $Y_{t_k}^{\theta,\sigma,\lambda}(t_k,x)=x$, yields 
\begin{equation}\label{e0}
\delta\left(\partial_{\theta}Y_{t_{k+1}}^{\theta,\sigma,\lambda}(t_k,x)U^{\theta,\sigma,\lambda}(t_k,x)\right)
=-\Delta_n\sigma^{-1}x\Delta W_{t_{k+1}}+R_1^{\theta,\sigma,\lambda}+R_2^{\theta,\sigma,\lambda}+R_3^{\theta,\sigma,\lambda}.
\end{equation}
		
On the other hand, by equation \eqref{c2eq1rajoute} we have that
\begin{equation}\label{incrementw}
\sigma\Delta W_{t_{k+1}}=Y_{t_{k+1}}^{\theta,\sigma,\lambda}-Y_{t_{k}}^{\theta,\sigma,\lambda}+\Delta_n\theta Y_{t_k}^{\theta,\sigma,\lambda}+\theta\int_{t_k}^{t_{k+1}}\left(Y_{s}^{\theta,\sigma,\lambda}-Y_{t_k}^{\theta,\sigma,\lambda}\right)ds-\left(\widetilde{M}_{t_{k+1}}^{\lambda}-\widetilde{M}_{t_{k}}^{\lambda}\right).
\end{equation}
This, together with \eqref{e0}, gives the desired result.
\end{proof}
	
\begin{lemma}\label{delta2} For all $(\theta,\sigma,\lambda) \in \Theta\times\Sigma\times\Lambda$, $k \in \{0,...,n-1\}$, and $x\in\R$,
\begin{align*}
&\delta\left(\partial_{\sigma}Y_{t_{k+1}}^{\theta,\sigma,\lambda}(t_k,x)U^{\theta,\sigma,\lambda}(t_k,x)\right)=\dfrac{1}{\sigma^3}\left(Y_{t_{k+1}}^{\theta,\sigma,\lambda}-Y_{t_{k}}^{\theta,\sigma,\lambda}\right)^2-\dfrac{\Delta_n}{\sigma}+H_1^{\theta,\sigma,\lambda}+H_2^{\theta,\sigma,\lambda}+H_3^{\theta,\sigma,\lambda}\\
&\qquad-\dfrac{1}{\sigma^3}\left\{\left(H_4^{\theta,\sigma,\lambda}+H_{5}^{\theta,\sigma,\lambda}\right)^2+2\sigma\Delta W_{t_{k+1}}\left(H_4^{\theta,\sigma,\lambda}+H_{5}^{\theta,\sigma,\lambda}\right)\right\},
\end{align*}
where 
\begin{align*}
H_1^{\theta,\sigma,\lambda}&=-\frac{1}{\sigma}\int_{t_k}^{t_{k+1}}\int_{s}^{t_{k+1}}D_s\left(\dfrac{1}{\partial_xY_u^{\theta,\sigma,\lambda}(t_k,x)}\right)dW_u\partial_xY_s^{\theta,\sigma,\lambda}(t_k,x)ds,\\
H_2^{\theta,\sigma,\lambda}&=\frac{1}{\sigma}\int_{t_k}^{t_{k+1}}\left(\dfrac{1}{\partial_xY_s^{\theta,\sigma,\lambda}(t_k,x)}-\dfrac{1}{\partial_xY_{t_{k}}^{\theta,\sigma,\lambda}(t_k,x)}\right)dW_s\int_{t_k}^{t_{k+1}}\partial_x Y_s^{\theta,\sigma,\lambda}(t_k,x)dW_s,\\
H_3^{\theta,\sigma,\lambda}&=\frac{1}{\sigma}\int_{t_k}^{t_{k+1}}\dfrac{1}{\partial_xY_{t_{k}}^{\theta,\sigma,\lambda}(t_k,x)}dW_s\int_{t_k}^{t_{k+1}}\left(\partial_x Y_s^{\theta,\sigma,\lambda}(t_k,x)-\partial_x Y_{t_k}^{\theta,\sigma,\lambda}(t_k,x)\right)dW_s,\\
H_{4}^{\theta,\sigma,\lambda}&=-\theta\int_{t_k}^{t_{k+1}}Y_{s}^{\theta,\sigma,\lambda}ds,\ \ H_{5}^{\theta,\sigma,\lambda}=\widetilde{M}_{t_{k+1}}^{\lambda}-\widetilde{M}_{t_{k}}^{\lambda}.
\end{align*}
\end{lemma}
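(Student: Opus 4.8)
The plan is to follow the same route as in the proof of Lemma \ref{delta}, the essential new feature being that the ratio $\partial_\sigma Y_{t_{k+1}}^{\theta,\sigma,\lambda}(t_k,x)/\partial_x Y_{t_{k+1}}^{\theta,\sigma,\lambda}(t_k,x)$ is now a stochastic integral rather than a Lebesgue integral, which is what produces the quadratic main term. First I would record the identity $U_t^{\theta,\sigma,\lambda}(t_k,x)=\sigma^{-1}(\partial_x Y_{t_{k+1}}^{\theta,\sigma,\lambda}(t_k,x))^{-1}\partial_x Y_t^{\theta,\sigma,\lambda}(t_k,x)$, so that $\delta(\partial_\sigma Y_{t_{k+1}}^{\theta,\sigma,\lambda}(t_k,x)U^{\theta,\sigma,\lambda}(t_k,x))=\sigma^{-1}\delta(F\,\partial_x Y_\cdot^{\theta,\sigma,\lambda}(t_k,x))$, pulling out the scalar coefficient $F:=\partial_\sigma Y_{t_{k+1}}^{\theta,\sigma,\lambda}(t_k,x)/\partial_x Y_{t_{k+1}}^{\theta,\sigma,\lambda}(t_k,x)$. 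To evaluate $F$, I would combine \eqref{ipx} with \eqref{ps} and apply It\^o's formula to the product $\partial_\sigma Y_t^{\theta,\sigma,\lambda}(t_k,x)(\partial_x Y_t^{\theta,\sigma,\lambda}(t_k,x))^{-1}$; the drift terms cancel exactly as for $\partial_\theta$ in Lemma \ref{delta}, leaving $F=\int_{t_k}^{t_{k+1}}(\partial_x Y_s^{\theta,\sigma,\lambda}(t_k,x))^{-1}dW_s$. Since $\partial_x Y^{\theta,\sigma,\lambda}(t_k,x)$ is adapted, this is a genuine It\^o integral, and likewise $\delta(\partial_x Y_\cdot^{\theta,\sigma,\lambda}(t_k,x))=\int_{t_k}^{t_{k+1}}\partial_x Y_s^{\theta,\sigma,\lambda}(t_k,x)\,dW_s$.

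Next I would apply the product rule \cite[(1.48)]{N}, $\delta(F\,\partial_x Y_\cdot)=F\,\delta(\partial_x Y_\cdot)-\int_{t_k}^{t_{k+1}}(D_sF)\,\partial_x Y_s^{\theta,\sigma,\lambda}(t_k,x)\,ds$. For the divergence piece I would use the rule for the Malliavin derivative of an It\^o integral, $D_sF=(\partial_x Y_s^{\theta,\sigma,\lambda}(t_k,x))^{-1}+\int_s^{t_{k+1}}D_s((\partial_x Y_u^{\theta,\sigma,\lambda}(t_k,x))^{-1})\,dW_u$, so that $\int_{t_k}^{t_{k+1}}(D_sF)\partial_x Y_s^{\theta,\sigma,\lambda}(t_k,x)\,ds=\Delta_n+\int_{t_k}^{t_{k+1}}\int_s^{t_{k+1}}D_s((\partial_x Y_u^{\theta,\sigma,\lambda}(t_k,x))^{-1})\,dW_u\,\partial_x Y_s^{\theta,\sigma,\lambda}(t_k,x)\,ds$; multiplying by $-\sigma^{-1}$ produces precisely the terms $-\Delta_n/\sigma$ and $H_1^{\theta,\sigma,\lambda}$.

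For the martingale piece $\sigma^{-1}F\,\delta(\partial_x Y_\cdot)$, I would center both stochastic integrals at the value $\partial_x Y_{t_k}^{\theta,\sigma,\lambda}(t_k,x)=1$ (from \eqref{px}), writing each factor as its centered part plus $\Delta W_{t_{k+1}}$. Expanding the product of the two centered-plus-$\Delta W_{t_{k+1}}$ factors splits it into three cross terms which, after multiplication by $\sigma^{-1}$, are exactly $H_2^{\theta,\sigma,\lambda}+H_3^{\theta,\sigma,\lambda}$, together with the remaining term $\sigma^{-1}(\Delta W_{t_{k+1}})^2$. Finally I would convert this last term using the increment identity \eqref{incrementw}, which rearranges to $\sigma\Delta W_{t_{k+1}}=(Y_{t_{k+1}}^{\theta,\sigma,\lambda}-Y_{t_k}^{\theta,\sigma,\lambda})-(H_4^{\theta,\sigma,\lambda}+H_5^{\theta,\sigma,\lambda})$: squaring and solving for $\sigma^2(\Delta W_{t_{k+1}})^2$ yields $\sigma^{-1}(\Delta W_{t_{k+1}})^2=\sigma^{-3}(Y_{t_{k+1}}^{\theta,\sigma,\lambda}-Y_{t_k}^{\theta,\sigma,\lambda})^2-\sigma^{-3}\{(H_4^{\theta,\sigma,\lambda}+H_5^{\theta,\sigma,\lambda})^2+2\sigma\Delta W_{t_{k+1}}(H_4^{\theta,\sigma,\lambda}+H_5^{\theta,\sigma,\lambda})\}$, which is exactly the main term and the final bracket in the statement.

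The bookkeeping is routine once these four mechanisms are in place, so the only delicate point is the divergence computation: one must correctly combine the anticipating product rule with the Malliavin derivative of a stochastic integral and check that the non-adapted contribution is captured entirely by $H_1^{\theta,\sigma,\lambda}$, while the adapted part collapses to the deterministic $\Delta_n$. Verifying that the centering at $\partial_x Y_{t_k}^{\theta,\sigma,\lambda}(t_k,x)=1$ reproduces $H_2^{\theta,\sigma,\lambda}$ and $H_3^{\theta,\sigma,\lambda}$ with the correct factors and no leftover cross term is the other place where sign and indexing slips are easy to make.
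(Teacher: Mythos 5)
Your proposal is correct and follows essentially the same route as the paper's proof: computing $\partial_{\sigma}Y_{t_{k+1}}^{\theta,\sigma,\lambda}(t_k,x)/\partial_xY_{t_{k+1}}^{\theta,\sigma,\lambda}(t_k,x)=\int_{t_k}^{t_{k+1}}(\partial_xY_s^{\theta,\sigma,\lambda}(t_k,x))^{-1}dW_s$ via \eqref{ps}, \eqref{ipx} and It\^o's formula, applying the product rule \cite[(1.48)]{N} together with the Malliavin derivative of an It\^o integral to produce $-\Delta_n/\sigma$ and $H_1^{\theta,\sigma,\lambda}$, centering both stochastic integrals at $\partial_xY_{t_k}^{\theta,\sigma,\lambda}(t_k,x)=1$ to extract $H_2^{\theta,\sigma,\lambda}$, $H_3^{\theta,\sigma,\lambda}$ and $\sigma^{-1}(\Delta W_{t_{k+1}})^2$, and finally substituting the increment identity from \eqref{c2eq1rajoute}. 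All four mechanisms, their grouping, and the signs match the paper's argument exactly.
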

\begin{proof}
From \eqref{ps}, \eqref{ipx} and It\^o's formula, 
\begin{align*}
\dfrac{\partial_{\sigma}Y_{t_{k+1}}^{\theta,\sigma,\lambda}(t_k,x)}{\partial_xY_{t_{k+1}}^{\theta,\sigma,\lambda}(t_k,x)}=\int_{t_k}^{t_{k+1}}\dfrac{1}{\partial_xY_s^{\theta,\sigma,\lambda}(t_k,x)}dW_s.
\end{align*}
		
Then, using  $U^{\theta,\sigma,\lambda}_t(t_k,x)=\sigma^{-1}(\partial_xY_{t_{k+1}}^{\theta,\sigma,\lambda}(t_k,x))^{-1}\partial_xY_t^{\theta,\sigma,\lambda}(t_k,x)$ and the product rule \cite[(1.48)]{N}, we obtain that
\begin{align*}
&\delta\left(\partial_{\sigma}Y_{t_{k+1}}^{\theta,\sigma,\lambda}(t_k,x)U^{\theta,\sigma,\lambda}(t_k,x)\right)=\frac{1}{\sigma}\int_{t_k}^{t_{k+1}}\dfrac{1}{\partial_xY_s^{\theta,\sigma,\lambda}(t_k,x)}dW_s\int_{t_k}^{t_{k+1}}\partial_x Y_s^{\theta,\sigma,\lambda}(t_k,x)dW_s\\
&-\frac{1}{\sigma}\int_{t_k}^{t_{k+1}}\left\{\dfrac{1}{\partial_xY_s^{\theta,\sigma,\lambda}(t_k,x)}+\int_{s}^{t_{k+1}}D_s\left(\dfrac{1}{\partial_xY_u^{\theta,\sigma,\lambda}(t_k,x)}\right)dW_u\right\} \partial_xY_s^{\theta,\sigma,\lambda}(t_k,x)ds.
\end{align*}
We next add and subtract the term $
\frac{1}{\partial_xY_{t_k}^{\theta,\sigma,\lambda}(t_k,x)}$ in the first integral, and the term $\partial_xY_{t_k}^{\theta,\sigma,\lambda}(t_k,x)$ in the second integral. This yields 
\begin{equation}\label{H} \begin{split}
\delta\left(\partial_{\sigma}Y_{t_{k+1}}^{\theta,\sigma,\lambda}(t_k,x)U^{\theta,\sigma,\lambda}(t_k,x)\right)&=\dfrac{1}{\sigma}(\Delta W_{t_{k+1}})^2-\dfrac{\Delta_n}{\sigma}+H_1^{\theta,\sigma,\lambda}+H_2^{\theta,\sigma,\lambda}+H_3^{\theta,\sigma,\lambda}.
\end{split}
\end{equation}
On the other hand, by equation \eqref{c2eq1rajoute} we have that
\begin{align*}
\Delta W_{t_{k+1}}=\sigma^{-1}\left(Y_{t_{k+1}}^{\theta,\sigma,\lambda}-Y_{t_{k}}^{\theta,\sigma,\lambda}+\theta\int_{t_k}^{t_{k+1}}Y_{s}^{\theta,\sigma,\lambda}ds-\left(\widetilde{M}_{t_{k+1}}^{\lambda}-\widetilde{M}_{t_{k}}^{\lambda}\right)\right),
\end{align*}
which, together with \eqref{H}, gives the desired result.
\end{proof}
	
We will use the following estimates for the solution to \eqref{flow}.
\begin{lemma}\label{moment3}  
\begin{itemize}\item[\textnormal{(i)}] For any $p\geq 1$ and $(\theta,\sigma,\lambda) \in \Theta\times\Sigma\times\Lambda$, there exists a constant $C_p>0$ such that for all $k \in \{0,...,n-1\}$ and $t\in[t_k,t_{k+1}]$, 
\begin{equation*}
\E\left[\left\vert Y_t^{\theta,\sigma,\lambda}(t_k,x)-Y_{t_k}^{\theta,\sigma,\lambda}(t_k,x)\right\vert^p\big\vert Y_{t_{k}}^{\theta,\sigma,\lambda}(t_k,x)=x\right] \leq C_p\left\vert t-t_k\right\vert^{\frac{p}{2}\wedge 1}\left(1+\vert x\vert^p\right).
\end{equation*}
			
\item[\textnormal{(ii)}] For any function $g$ defined on $\Theta\times\Sigma\times\Lambda\times\R$ with polynomial growth in $x$ uniformly in $(\theta,\sigma,\lambda)$, there exist constants $C, q>0$ such that for $k\in\{0,...,n-1\}$ and $t\in[t_k,t_{k+1}]$, 
$$
\E\left[\left\vert g(\theta,\sigma,\lambda,Y_t^{\theta,\sigma,\lambda}(t_k,x))\right\vert\big\vert Y_{t_{k}}^{\theta,\sigma,\lambda}(t_k,x)=x\right]\leq C\left(1+\vert x\vert^q\right).
$$
Moreover, all these statements remain valid for $X^{\theta,\sigma,\lambda}$.
\end{itemize}
\end{lemma}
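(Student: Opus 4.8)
The plan is to use the explicit solution of the linear equation \eqref{flow}. Exactly as in \eqref{solution}, variation of constants gives the representation
$$Y_t^{\theta,\sigma,\lambda}(t_k,x)=xe^{-\theta(t-t_k)}+\sigma\int_{t_k}^te^{-\theta(t-u)}dW_u+\int_{t_k}^te^{-\theta(t-u)}d\widetilde{M}_u^{\lambda}.$$
Since $Y_{t_k}^{\theta,\sigma,\lambda}(t_k,x)=x$ holds by the very definition of the flow, the conditioning in the statement is automatic and it suffices to bound the unconditional $p$-th moment of the increment. Writing $h:=t-t_k\leq\Delta_n\leq1$, I would decompose
$$Y_t^{\theta,\sigma,\lambda}(t_k,x)-x=x\left(e^{-\theta h}-1\right)+\sigma\int_{t_k}^te^{-\theta(t-u)}dW_u+\int_{t_k}^te^{-\theta(t-u)}d\widetilde{M}_u^{\lambda}=:A+B+C,$$
and estimate the three terms separately via $|a+b+c|^p\leq3^{p-1}(|a|^p+|b|^p+|c|^p)$.

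For the drift term, since $\theta$ ranges over the compact set $\Theta$ we have $|e^{-\theta h}-1|\leq\theta h\leq Ch$, so $\E[|A|^p]\leq Ch^p|x|^p$. For the Brownian term, the Burkholder-Davis-Gundy inequality together with the boundedness of $\sigma$ and of $e^{-\theta(t-u)}$ yields
$$\E\left[|B|^p\right]\leq C_p\E\left[\left(\int_{t_k}^te^{-2\theta(t-u)}du\right)^{p/2}\right]\leq C_ph^{p/2}.$$

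The crux is the compensated Poisson integral $C$, which is exactly what forces the exponent $(p/2)\wedge1$. Here I would invoke Kunita's inequality (the Burkholder-Davis-Gundy inequality for discontinuous martingales). For $1\leq p\leq2$ it gives $\E[|C|^p]\leq C_p\int_{t_k}^t e^{-p\theta(t-u)}\lambda\,du\leq C_ph$, while for $p\geq2$ it produces two contributions, $\E[(\int_{t_k}^t e^{-2\theta(t-u)}\lambda\,du)^{p/2}]\leq C_ph^{p/2}$ and $\int_{t_k}^t e^{-p\theta(t-u)}\lambda\,du\leq C_ph$; since $h\leq1$ forces $h^{p/2}\leq h$ when $p\geq2$, in every case $\E[|C|^p]\leq C_ph$. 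A single large jump of $\widetilde{M}^{\lambda}$ thus contributes at rate $h$ rather than $h^{p/2}$, and this is precisely what degrades the Gaussian rate. Because $h\leq1$ and each of the exponents $p$, $p/2$, $1$ dominates $(p/2)\wedge1$, we have $h^p\leq h^{(p/2)\wedge1}$, $h^{p/2}\leq h^{(p/2)\wedge1}$, and $h\leq h^{(p/2)\wedge1}$; collecting the three bounds gives
$$\E\left[\left|Y_t^{\theta,\sigma,\lambda}(t_k,x)-x\right|^p\right]\leq C_ph^{(p/2)\wedge1}\left(1+|x|^p\right),$$
which is (i).

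For (ii), let $|g(\theta,\sigma,\lambda,y)|\leq C(1+|y|^r)$ be the polynomial growth bound, uniform in $(\theta,\sigma,\lambda)$. Then $\E[|g(\theta,\sigma,\lambda,Y_t^{\theta,\sigma,\lambda}(t_k,x))|\mid Y_{t_k}^{\theta,\sigma,\lambda}(t_k,x)=x]\leq C(1+\E[|Y_t^{\theta,\sigma,\lambda}(t_k,x)|^r])$, and the elementary inequality $|Y_t|^r\leq2^{r-1}(|x|^r+|Y_t-x|^r)$ combined with (i) (applied with $p=\max(r,1)$, using Jensen's inequality when $r<1$) and $h\leq1$ gives $\E[|Y_t^{\theta,\sigma,\lambda}(t_k,x)|^r]\leq C(1+|x|^r)$, hence the claim with $q=r$. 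Finally, the statements for $X^{\theta,\sigma,\lambda}$ follow verbatim from the representation \eqref{solution}, which has exactly the same structure with $(B,N)$ in place of $(W,M)$. The main obstacle is the jump term in (i): one must use the correct moment inequality for discontinuous martingales and treat the regimes $1\leq p\leq2$ and $p\geq2$ separately in order to see why the exponent cannot be improved beyond $(p/2)\wedge1$.
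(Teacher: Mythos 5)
Your proof is correct. Note that the paper states Lemma \ref{moment3} \emph{without} proof---it is presented as a standard preliminary estimate---so there is no in-paper argument to compare against; your write-up supplies the missing (folklore) proof, and it does so by the natural route. The variation-of-constants representation of the flow (the analogue of \eqref{solution} started at $(t_k,x)$) legitimately reduces the conditional expectation to an unconditional one, since $Y_{t_k}^{\theta,\sigma,\lambda}(t_k,x)=x$ almost surely and the stochastic integrals over $[t_k,t]$ are independent of $\mathcal{F}_{t_k}$. Your three-term estimate is sound: the drift part gives $h^p\vert x\vert^p$, the Wiener integral gives $h^{p/2}$ by Gaussianity (or BDG), and the compensated Poisson integral gives order $h$, with the correct case split---for $1\le p\le 2$ via BDG plus subadditivity of $x\mapsto x^{p/2}$ applied to the quadratic variation, and for $p\ge 2$ via Kunita's two terms $(\lambda h)^{p/2}$ and $\lambda h$. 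Since $h\le\Delta_n\le 1$, each of the exponents $p$, $p/2$, $1$ dominates $(p/2)\wedge 1$, which yields exactly the stated bound, and you correctly identify the jump contribution as the reason the rate saturates at $(p/2)\wedge 1$ rather than $p/2$. Part (ii) follows from (i) by the triangle/Jensen argument you give, and the transfer to $X^{\theta,\sigma,\lambda}$ is justified by the Markov property together with the identical representation \eqref{solution}. One small remark worth making explicit: the constants your argument produces depend on $(\theta,\sigma,\lambda)$ only through bounds on these parameters, hence are uniform over bounded parameter sets; this uniformity is implicitly needed later in the paper, where the lemma is invoked with the $n$-dependent parameters $\theta(\ell)$, $\sigma(\ell)$, $\lambda(\ell)$.
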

	
Using Gronwall's inequality, one can easily check that for any $(\theta,\sigma,\lambda) \in \Theta\times\Sigma\times\Lambda$ and $p\geq 2$, there exist constant $C_p, q>0$ such that for all $k \in \{0,...,n-1\}$ and $t\in[t_k,t_{k+1}]$, 
\begin{align*}
&\E\left[\left\vert \partial_xY_t^{\theta,\sigma,\lambda}(t_k,x)\right\vert^p+\dfrac{1}{\left\vert \partial_xY_t^{\theta,\sigma,\lambda}(t_k,x)\right\vert^p}+\left\vert \partial_{\sigma}Y_t^{\theta,\sigma,\lambda}(t_k,x)\right\vert^p\big\vert Y_{t_{k}}^{\theta,\sigma,\lambda}(t_k,x)=x\right]\\
&\qquad+\sup_{s\in [t_k,t_{k+1}]}\E\left[\left\vert D_sY_t^{\theta,\sigma,\lambda}(t_k,x) \right\vert^p\big\vert Y_{t_{k}}^{\theta,\sigma,\lambda}(t_k,x)=x\right]\leq C_p,\\
&\E\left[\left\vert \partial_{\theta}Y_t^{\theta,\sigma,\lambda}(t_k,x)\right\vert^p\big\vert Y_{t_{k}}^{\theta,\sigma,\lambda}(t_k,x)=x\right]\\
&\qquad+\sup_{s\in [t_k,t_{k+1}]}\E\left[\left\vert D_s\left(\partial_xY_t^{\theta,\sigma,\lambda}(t_k,x)\right)\right \vert^p\big\vert Y_{t_{k}}^{\theta,\sigma,\lambda}(t_k,x)=x\right]\leq C_p\left(1+\vert x\right\vert^q).
\end{align*}
As a consequence, we have the following estimates, which follow easily from \eqref{e0}, \eqref{H}, Lemma \ref{moment3} and properties of the moments of the Brownian motion.
\begin{lemma} \label{estimate}
For any $(\theta,\sigma,\lambda) \in \Theta\times\Sigma\times\Lambda$ and $p\geq 2$, there exist constants $C_p, q>0$ such that for all $k \in \{0,...,n-1\}$,
\begin{align}
&\E\left[R_1^{\theta,\sigma,\lambda}+R_2^{\theta,\sigma,\lambda}+R_3^{\theta,\sigma,\lambda}\big\vert Y_{t_{k}}^{\theta,\sigma,\lambda}(t_k,x)=x\right]=0,\label{es1}\\
&\E\left[\left\vert R_1^{\theta,\sigma,\lambda}+R_2^{\theta,\sigma,\lambda}+R_3^{\theta,\sigma,\lambda}\right\vert^p\big\vert Y_{t_{k}}^{\theta,\sigma,\lambda}(t_k,x)=x\right]\leq C_p\Delta_n^{\frac{3p+1}{2}}\left(1+\vert x\vert^q\right),\label{es2}\\
&\E\left[\left\vert \delta\left(\partial_{\theta}Y_{t_{k+1}}^{\theta,\sigma,\lambda}(t_k,x)U^{\theta,\sigma,\lambda}(t_k,x)\right)\right\vert^{p}\big\vert Y_{t_{k}}^{\theta,\sigma,\lambda}(t_k,x)=x\right]\leq C_p\Delta_n^{\frac{3p}{2}}\left(1+\vert x\vert^q\right),\label{es3}\\
&\E\left[H_1^{\theta,\sigma,\lambda}+H_2^{\theta,\sigma,\lambda}+H_3^{\theta,\sigma,\lambda}\big\vert Y_{t_{k}}^{\theta,\sigma,\lambda}(t_k,x)=x\right]=0,\label{es4}\\
&\E\left[\left\vert H_1^{\theta,\sigma,\lambda}+H_2^{\theta,\sigma,\lambda}+H_3^{\theta,\sigma,\lambda}\right\vert^p\big\vert Y_{t_{k}}^{\theta,\sigma,\lambda}(t_k,x)=x\right]\leq C_p\Delta_n^{p+\frac{1}{2}}\left(1+\vert x\vert^q\right),\label{es6}\\
&\E\left[\left\vert \delta\left(\partial_{\sigma}Y_{t_{k+1}}^{\theta,\sigma,\lambda}(t_k,x)U^{\theta,\sigma,\lambda}(t_k,x)\right)\right\vert^{p}\big\vert Y_{t_{k}}^{\theta,\sigma,\lambda}(t_k,x)=x\right]\leq C_p\Delta_n^{p}\left(1+\vert x\vert^q\right).\label{es5}
\end{align}
\end{lemma}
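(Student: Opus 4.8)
The plan is to split the six assertions into two groups: the two centering identities \eqref{es1} and \eqref{es4}, which rest on the zero-mean property of the Skorohod integral, and the four $L^p$ bounds \eqref{es2}, \eqref{es3}, \eqref{es6}, \eqref{es5}, which are obtained by estimating each summand in the decompositions \eqref{e0} and \eqref{H} term by term. Throughout, conditioning on $Y_{t_k}^{\theta,\sigma,\lambda}(t_k,x)=x$ merely fixes the initial datum of the flow on $[t_k,t_{k+1}]$, so all expectations below are taken over the freshly generated increments of $W$ and $M$ on that interval.

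For \eqref{es1} I would start from \eqref{e0}, which reads $\delta(\partial_\theta Y_{t_{k+1}}^{\theta,\sigma,\lambda}(t_k,x)U^{\theta,\sigma,\lambda}(t_k,x))=-\Delta_n\sigma^{-1}x\Delta W_{t_{k+1}}+R_1^{\theta,\sigma,\lambda}+R_2^{\theta,\sigma,\lambda}+R_3^{\theta,\sigma,\lambda}$. The duality relation $\E[\delta(u)]=\E[\langle u,D1\rangle]=0$ shows that the conditional expectation of the Skorohod integral on the left vanishes, and $\E[\Delta W_{t_{k+1}}\mid Y_{t_k}^{\theta,\sigma,\lambda}(t_k,x)=x]=0$ because $\Delta W_{t_{k+1}}$ is a centered increment; subtracting gives \eqref{es1}. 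The identity \eqref{es4} follows in the same way from \eqref{H}: its leading term $\sigma^{-1}((\Delta W_{t_{k+1}})^2-\Delta_n)$ has zero conditional expectation since $\E[(\Delta W_{t_{k+1}})^2]=\Delta_n$, while the Skorohod integral $\delta(\partial_\sigma Y_{t_{k+1}}^{\theta,\sigma,\lambda}(t_k,x)U^{\theta,\sigma,\lambda}(t_k,x))$ again integrates to zero.

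For the $L^p$ bounds I would apply the triangle inequality to $R_1+R_2+R_3$ and to $H_1+H_2+H_3$ and bound each term via H\"older's inequality (to separate each product of two integrals), the Burkholder--Davis--Gundy inequality (for the stochastic integrals), Lemma \ref{moment3}(i), and the Gronwall-type moment bounds for $\partial_xY^{\theta,\sigma,\lambda}$, its inverse, $\partial_\theta Y^{\theta,\sigma,\lambda}$, $\partial_\sigma Y^{\theta,\sigma,\lambda}$, $D_sY^{\theta,\sigma,\lambda}$ and $D_s(\partial_xY^{\theta,\sigma,\lambda})$ recorded just before the statement. The accounting rests on a handful of scaling rules: a $ds$-integral over $[t_k,t_{k+1}]$ contributes $\Delta_n$; a $dW$-integral with $L^p$-bounded integrand contributes $\Delta_n^{1/2}$; the differences $\partial_xY_s^{\theta,\sigma,\lambda}(t_k,x)-\partial_xY_{t_k}^{\theta,\sigma,\lambda}(t_k,x)$ and $(\partial_xY_s^{\theta,\sigma,\lambda}(t_k,x))^{-1}-(\partial_xY_{t_k}^{\theta,\sigma,\lambda}(t_k,x))^{-1}$ are of order $\Delta_n$, whereas $\frac{Y_s^{\theta,\sigma,\lambda}(t_k,x)}{\partial_xY_s^{\theta,\sigma,\lambda}(t_k,x)}-\frac{Y_{t_k}^{\theta,\sigma,\lambda}(t_k,x)}{\partial_xY_{t_k}^{\theta,\sigma,\lambda}(t_k,x)}$ is of order $\Delta_n^{1/2}$ by Lemma \ref{moment3}(i); and $\partial_xY_{t_k}^{\theta,\sigma,\lambda}(t_k,x)=1$ by \eqref{px}, so the factor $\int_{t_k}^{t_{k+1}}\partial_xY_{t_k}^{\theta,\sigma,\lambda}(t_k,x)\,dW_s$ appearing in $R_3$ and $H_3$ reduces to $\Delta W_{t_{k+1}}$. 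Tracking these powers through each of $R_1,R_2,R_3$ and $H_1,H_2,H_3$ yields \eqref{es2} and \eqref{es6}. The remaining bounds \eqref{es3} and \eqref{es5} then follow at once from \eqref{e0} and \eqref{H}: in \eqref{es3} the leading term $-\Delta_n\sigma^{-1}x\Delta W_{t_{k+1}}$ is of order $\Delta_n^{3/2}$ in $L^p$ and dominates the remainder bound \eqref{es2}, while in \eqref{es5} the leading term $\sigma^{-1}((\Delta W_{t_{k+1}})^2-\Delta_n)$ is of order $\Delta_n$ and dominates \eqref{es6}.

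No single estimate is hard; the main obstacle is the bookkeeping. One must consistently use the vanishing of each increment at $s=t_k$ together with the identity $\partial_xY_{t_k}^{\theta,\sigma,\lambda}(t_k,x)=1$ in order to extract the extra half-powers of $\Delta_n$ that let every summand meet the stated orders, and one must keep the polynomial-in-$x$ factors under control through Lemma \ref{moment3}(ii) so that they coalesce into the single factor $1+\vert x\vert^q$. The conceptually important point, as in Gobet's approach, is that the two centering identities are precisely what will later force the corresponding remainder sums to have no contribution in the limit of the log-likelihood ratio, so they must be isolated first rather than absorbed into the moment bounds.
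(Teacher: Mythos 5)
Your overall architecture is correct and is, in substance, exactly the argument the paper leaves implicit: the paper offers no proof of Lemma \ref{estimate} beyond asserting that it follows from \eqref{e0}, \eqref{H}, Lemma \ref{moment3} and Brownian moment properties, and your two-step plan --- the centerings \eqref{es1}, \eqref{es4} via $\E[\delta(u)]=0$ applied to the decompositions \eqref{e0} and \eqref{H} (together with $\E[\Delta W_{t_{k+1}}]=0$ and $\E[(\Delta W_{t_{k+1}})^2]=\Delta_n$), then term-by-term H\"older/BDG bookkeeping for the $L^p$ bounds --- is that route, correctly carried out at the structural level.

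There is, however, one step whose literal reading is false, and it is precisely the point where the jumps enter. You assert that $\frac{Y_{s}^{\theta,\sigma,\lambda}(t_k,x)}{\partial_xY_{s}^{\theta,\sigma,\lambda}(t_k,x)}-\frac{Y_{t_k}^{\theta,\sigma,\lambda}(t_k,x)}{\partial_xY_{t_k}^{\theta,\sigma,\lambda}(t_k,x)}$ is ``of order $\Delta_n^{1/2}$ by Lemma \ref{moment3}(i)''. But Lemma \ref{moment3}(i) bounds the $p$-th moment of the increment by $C_p\Delta_n^{(\frac{p}{2})\wedge 1}(1+\vert x\vert^p)$, and for $p\geq 2$ the exponent is $1$, not $\frac{p}{2}$: a jump of size one occurs with probability $\approx\lambda\Delta_n$, and on that event the increment is of order $1$. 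So in $L^{2p}$ the increment is $O(\Delta_n^{1/(2p)})$, not $O(\Delta_n^{1/2})$. The correct accounting for $R_3^{\theta,\sigma,\lambda}$ is Cauchy--Schwarz plus Jensen: $\E\big[\big\vert\int_{t_k}^{t_{k+1}}\big(\frac{Y_{s}^{\theta,\sigma,\lambda}(t_k,x)}{\partial_xY_{s}^{\theta,\sigma,\lambda}(t_k,x)}-x\big)ds\big\vert^{2p}\big]\leq C\Delta_n^{2p+1}(1+\vert x\vert^{2p})$ and $\E[\vert\Delta W_{t_{k+1}}\vert^{2p}]\leq C\Delta_n^{p}$, whence $\E[\vert R_3^{\theta,\sigma,\lambda}\vert^{p}]\leq C\Delta_n^{p+\frac{1}{2}}\Delta_n^{\frac{p}{2}}(1+\vert x\vert^{q})=C\Delta_n^{\frac{3p+1}{2}}(1+\vert x\vert^{q})$: the jump-limited bound contributes a single flat factor $\Delta_n^{1/2}$ to the $p$-th moment, and this is exactly where the ``$+\frac{1}{2}$'' in \eqref{es2} comes from. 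If instead your rule is applied per power (increment $=O(\Delta_n^{1/2})$ in every $L^p$), you would conclude $\E[\vert R_3^{\theta,\sigma,\lambda}\vert^{p}]\leq C\Delta_n^{2p}$, which is false for $p>2$: conditioning on the one-jump event shows the true order is $\Delta_n^{\frac{3p}{2}+1}$, which exceeds $\Delta_n^{2p}$ (though it is still below the required $\Delta_n^{\frac{3p+1}{2}}$). So your conclusion survives, but that intermediate claim must be repaired as above. The remaining terms are unproblematic under your rules: $R_1^{\theta,\sigma,\lambda}$ is in fact deterministic of order $\Delta_n^{2}$ here (since $\partial_xY$ is deterministic, $D_s(Y_u/\partial_xY_u)=\sigma(\partial_xY_s)^{-1}$), $R_2^{\theta,\sigma,\lambda}$ involves $Y_s$ itself rather than an increment, and $H_1^{\theta,\sigma,\lambda},H_2^{\theta,\sigma,\lambda},H_3^{\theta,\sigma,\lambda}$ involve only the deterministic $\partial_xY$ and Wiener integrals, hence are $O(\Delta_n^{2p})\leq O(\Delta_n^{p+\frac{1}{2}})$ in $p$-th moment.
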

	
Next, we recall a discrete ergodic theorem.
\begin{lemma}\textnormal{\cite[Lemma 3.1]{G02}}\label{c3ergodic} Consider a differentiable function $g: \R \to \R$, whose derivatives have polynomial growth in $x$. Then, as $n\to\infty$,
\begin{equation*}
\dfrac{1}{n}\sum_{k=0}^{n-1}g(X_{t_k})\overset{\P^{\ta_0,\sigma_0,\lambda_0}}{\longrightarrow}\int_{\mathbb{R}}g(x)\pi_{\theta_0,\sigma_0,\lambda_0}(dx).
\end{equation*}
\end{lemma}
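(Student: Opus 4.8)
The plan is to reduce this discrete-time Birkhoff average to a continuous-time ergodic average and to control the Riemann-sum discretization error separately. Writing $T_n:=n\Delta_n$ and noting that $X=X^{\theta_0,\sigma_0,\lambda_0}$, I decompose
$$
\dfrac{1}{n}\sum_{k=0}^{n-1} g(X_{t_k})=\dfrac{1}{T_n}\int_0^{T_n} g(X_s)\,ds+\dfrac{1}{T_n}\sum_{k=0}^{n-1}\int_{t_k}^{t_{k+1}}\bigl(g(X_{t_k})-g(X_s)\bigr)\,ds.
$$
It then suffices to prove (a) that the continuous-time average $\frac{1}{T_n}\int_0^{T_n}g(X_s)\,ds$ converges in $\P^{\theta_0,\sigma_0,\lambda_0}$-probability to $\int_{\R}g(x)\pi_{\theta_0,\sigma_0,\lambda_0}(dx)$, and (b) that the discretization error on the right vanishes.

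For step (b): since $g'$ has polynomial growth, the mean value theorem gives $|g(X_s)-g(X_{t_k})|\leq C(1+|X_s|^q+|X_{t_k}|^q)|X_s-X_{t_k}|$ on $[t_k,t_{k+1}]$. By Cauchy--Schwarz, Lemma \ref{moment3}(i) applied to $X^{\theta_0,\sigma_0,\lambda_0}$, and the uniform-in-time moment bound $\sup_{t}\E^{\theta_0,\sigma_0,\lambda_0}[|X_t|^p]<\infty$ (which follows from the explicit representation \eqref{solution} together with $\int_{\R}|x|^p\pi_{\theta_0,\sigma_0,\lambda_0}(dx)<\infty$), one gets $\E^{\theta_0,\sigma_0,\lambda_0}|g(X_s)-g(X_{t_k})|\leq C\Delta_n^{1/2}$ uniformly in $k$ and in $s\in[t_k,t_{k+1}]$. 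Summing over $k$ and dividing by $T_n=n\Delta_n$ shows that the $L^1$-norm of the error term is $O(\Delta_n^{1/2})\to 0$, hence the error tends to zero in probability.

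For step (a), I would argue via the mean and the variance. Because $\theta_0>0$, equation \eqref{px} gives the pathwise contraction $\partial_xY_t^{\theta_0,\sigma_0,\lambda_0}(s,x)=e^{-\theta_0(t-s)}$, so two copies of the flow \eqref{flow} driven by the same noise but different initial points satisfy $|Y_t(s,x)-Y_t(s,y)|=|x-y|\,e^{-\theta_0(t-s)}$. This exponential coupling yields geometric ergodicity of the semigroup $P_tg(x):=\E^{\theta_0,\sigma_0,\lambda_0}[g(X_t)\mid X_0=x]$, namely $|P_tg(x)-\int_{\R}g\,d\pi_{\theta_0,\sigma_0,\lambda_0}|\leq C(1+|x|^q)e^{-\rho t}$ for some $\rho>0$, together with the covariance decay $|\mathrm{Cov}^{\theta_0,\sigma_0,\lambda_0}(g(X_s),g(X_t))|\leq Ce^{-\rho|t-s|}$ (using that moments are uniformly bounded). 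The mean of the continuous average is $\frac{1}{T_n}\int_0^{T_n}P_sg(x_0)\,ds\to\int_{\R}g\,d\pi_{\theta_0,\sigma_0,\lambda_0}$ by Ces\`aro averaging, while
$$
\mathrm{Var}^{\theta_0,\sigma_0,\lambda_0}\!\left(\dfrac{1}{T_n}\int_0^{T_n}g(X_s)\,ds\right)=\dfrac{2}{T_n^2}\int_0^{T_n}\!\!\int_0^t \mathrm{Cov}^{\theta_0,\sigma_0,\lambda_0}(g(X_s),g(X_t))\,ds\,dt\leq \dfrac{C}{T_n}\to 0.
$$
Chebyshev's inequality then delivers the convergence in (a).

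The main obstacle is step (a), specifically establishing the geometric ergodicity and the attendant covariance decay of the jump O-U process; the pathwise contraction coming from \eqref{px} is the key input that makes this tractable, reducing the matter to a standard coupling argument. One could alternatively invoke \cite[Lemma 3.1]{G02} directly, or the ergodicity results underlying \cite{SK}. The remaining ingredients --- the Riemann approximation of step (b) and the uniform moment bounds --- are routine given Lemma \ref{moment3} and the explicit form \eqref{solution}.
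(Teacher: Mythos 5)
Your argument is correct, but it is not the paper's argument: the paper never proves this lemma at all, it simply imports it as \cite[Lemma 3.1]{G02}, i.e.\ as a black-box discrete ergodic theorem. Note that Gobet's lemma is stated and proved for continuous ergodic diffusions, so the paper's citation implicitly extends it to the jump O--U setting; your self-contained proof supplies exactly that verification, which is a real (if modest) added value. Concretely, you split $\frac{1}{n}\sum_{k}g(X_{t_k})$ into the continuous-time average $\frac{1}{T_n}\int_0^{T_n}g(X_s)ds$ plus a Riemann error, kill the error in $L^1$ at rate $O(\Delta_n^{1/2})$ using Lemma \ref{moment3}(i) and the uniform moment bound $\sup_t\E^{\theta_0,\sigma_0,\lambda_0}[|X_t|^p]<\infty$ from \eqref{solution}, and then handle the continuous average by the first/second moment method: the linearity of the equation gives the synchronous-coupling contraction $|Y_t(s,x)-Y_t(s,y)|=|x-y|e^{-\theta_0(t-s)}$, hence geometric ergodicity of the semigroup with polynomial weight, hence (via the Markov property and the uniform moments, a step you assert rather than write out, but which is two lines) the covariance bound $|\mathrm{Cov}^{\theta_0,\sigma_0,\lambda_0}(g(X_s),g(X_t))|\leq Ce^{-\rho|t-s|}$, so that the variance of the time average is $O(1/T_n)\to 0$ because $n\Delta_n\to\infty$, and Chebyshev concludes. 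This is the standard route (and essentially the spirit of Gobet's own proof), exploiting that the hypotheses on $g$ (differentiable with polynomially growing derivative) give both the local Lipschitz bound needed for the Riemann error and the growth control needed for the coupling estimate. What the citation buys the paper is brevity; what your proof buys is self-containedness and a justification valid in the presence of the compensated Poisson component, where \cite[Lemma 3.1]{G02} does not literally apply.
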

	
For each $n\in\mathbb{N}^{\ast}$, let $(Z_{k,n})_{k\geq 1}$ be a sequence of random variables defined on the filtered probability space $(\Omega, \mathcal{F}, \{\mathcal{F}_t\}_{t\geq 0}, \P)$, and assume that they are $\mathcal{F}_{t_{k+1}}$-measurable, for all $k$.
\begin{lemma}\label{zero} \textnormal{\cite[Lemma 3.2]{G02}} Assume that as $n  \rightarrow \infty$,  
\begin{equation*} 
\textnormal{(i)}\;  \sum_{k=0}^{n-1}\E\left[Z_{k,n}\vert \mathcal{F}_{t_k}\right] \overset{\P}{\longrightarrow} 0, \quad \text{ and } \quad \textnormal{(ii)} \,  \sum_{k=0}^{n-1}\E\left[Z_{k,n}^2\vert \mathcal{F}_{t_k} \right]\overset{\P}{\longrightarrow} 0.
\end{equation*}
Then as $n  \rightarrow \infty$, 
$
\sum_{k=0}^{n-1}Z_{k,n}\overset{\P}{\longrightarrow} 0.
$
\end{lemma}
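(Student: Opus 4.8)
The plan is to split the sum into a \emph{predictable} part, controlled directly by hypothesis (i), and a \emph{martingale} part, controlled by hypothesis (ii) through a second-moment argument. Setting $\xi_{k,n}:=Z_{k,n}-\E[Z_{k,n}\vert\mathcal{F}_{t_k}]$, I would start from the decomposition
\[
\sum_{k=0}^{n-1}Z_{k,n}=\sum_{k=0}^{n-1}\xi_{k,n}+\sum_{k=0}^{n-1}\E[Z_{k,n}\vert\mathcal{F}_{t_k}],
\]
where the last sum tends to $0$ in $\P$-probability by (i). Since each $Z_{k,n}$ is $\mathcal{F}_{t_{k+1}}$-measurable, the centered variables satisfy $\E[\xi_{k,n}\vert\mathcal{F}_{t_k}]=0$, so that $(S_m:=\sum_{k=0}^{m-1}\xi_{k,n})_{0\le m\le n}$ is a discrete-time $(\mathcal{F}_{t_k})$-martingale with $S_0=0$. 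It therefore remains to prove that $S_n\overset{\P}{\longrightarrow}0$.

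For the martingale part, I would first record the pointwise bound on the conditional variances: since $\E[\xi_{k,n}^2\vert\mathcal{F}_{t_k}]=\E[Z_{k,n}^2\vert\mathcal{F}_{t_k}]-(\E[Z_{k,n}\vert\mathcal{F}_{t_k}])^2\le\E[Z_{k,n}^2\vert\mathcal{F}_{t_k}]$, hypothesis (ii) implies that the predictable quadratic variation $V_m:=\sum_{k=0}^{m-1}\E[\xi_{k,n}^2\vert\mathcal{F}_{t_k}]$ satisfies $V_n\overset{\P}{\longrightarrow}0$. Because $V_n$ converges only in probability and need not be bounded in $L^1$, a direct $L^2$ estimate on $S_n$ is unavailable, so I would localize. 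Fix $\varepsilon,\eta>0$ and, noting that $V_{m+1}$ is $\mathcal{F}_{t_m}$-measurable, define the stopping time $\tau:=\inf\{m\ge 0:V_{m+1}>\eta\}$. By construction $V_{\tau\wedge n}\le\eta$, and optional stopping applied to the martingale $(S_m^2-V_m)_m$ yields
\[
\E\big[S_{\tau\wedge n}^2\big]=\E\big[V_{\tau\wedge n}\big]\le\eta.
\]
On the event $\{\tau\ge n\}$ one has $S_n=S_{\tau\wedge n}$, while $\{\tau<n\}=\{V_n>\eta\}$, so Markov's inequality gives
\[
\P\big(\vert S_n\vert>\varepsilon\big)\le\P\big(\vert S_{\tau\wedge n}\vert>\varepsilon\big)+\P\big(V_n>\eta\big)\le\frac{\eta}{\varepsilon^2}+\P\big(V_n>\eta\big).
\]

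Letting $n\to\infty$ and using $V_n\overset{\P}{\longrightarrow}0$, the second term vanishes, so that $\limsup_n\P(\vert S_n\vert>\varepsilon)\le\eta/\varepsilon^2$; since $\eta>0$ is arbitrary, $S_n\overset{\P}{\longrightarrow}0$. Combining this with the convergence of the predictable part established from (i) completes the proof. The main obstacle, and the reason the argument is more than a one-line Chebyshev estimate, is precisely that the hypotheses control the conditional moments only \emph{in probability} rather than in expectation; the stopping-time truncation is exactly what converts the in-probability control of the quadratic variation into an in-probability bound on the martingale itself, after which everything reduces to an optional-stopping computation.
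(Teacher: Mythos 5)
Your proof is correct. Note that the paper gives no proof of this lemma at all: it is quoted from Gobet \cite{G02} (Lemma 3.2 there, going back to Lemma 9 of Genon-Catalot and Jacod, 1993), and the proof of that cited result is precisely the argument you give—center to obtain a martingale, dominate its predictable bracket using hypothesis (ii), and localize with the stopping time $\tau=\inf\{m:V_{m+1}>\eta\}$ so that in-probability control of the bracket becomes a Chebyshev bound on the stopped martingale—so your route coincides with the standard, intended one.
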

	
\begin{lemma}\label{zero2} \textnormal{\cite[Lemma 4.1]{J11}}  Assume that as $n  \rightarrow \infty$, $\sum_{k=0}^{n-1}\E\left[\vert Z_{k,n}\vert\vert \mathcal{F}_{t_k}\right] \overset{\P}{\longrightarrow} 0$. Then as $n  \rightarrow \infty$, 
$\sum_{k=0}^{n-1}Z_{k,n}\overset{\P}{\longrightarrow} 0$.
\end{lemma}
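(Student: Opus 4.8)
The plan is to prove the \emph{stronger} statement that $U_n := \sum_{k=0}^{n-1}|Z_{k,n}|$ converges to $0$ in $\P$-probability; since $\bigl|\sum_{k=0}^{n-1} Z_{k,n}\bigr| \le U_n$, this immediately yields the conclusion. Write $V_n := \sum_{k=0}^{n-1}\E[|Z_{k,n}| \mid \mathcal{F}_{t_k}]$, so that by hypothesis $V_n \overset{\P}{\longrightarrow} 0$. Because each $Z_{k,n}$ is $\mathcal{F}_{t_{k+1}}$-measurable and integrable, every conditional expectation is well defined and $\mathcal{F}_{t_k}$-measurable, and the partial sums $S_m := \sum_{k=0}^{m}\E[|Z_{k,n}| \mid \mathcal{F}_{t_k}]$ are nondecreasing and $\mathcal{F}_{t_m}$-adapted.

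The first key step is to introduce, for a fixed threshold $\eta>0$, the random index
\[
\tau_n := \inf\Bigl\{ m\ge 0 : S_m > \eta \Bigr\},
\]
which is a stopping time for the discrete filtration $(\mathcal{F}_{t_m})$ because $\{\tau_n \le m\}=\{S_m>\eta\}\in\mathcal{F}_{t_m}$. Consequently $\{k < \tau_n\} = \{\tau_n \ge k+1\}$ belongs to $\mathcal{F}_{t_k}$, and on the event $\{V_n \le \eta\}$ one has $\tau_n \ge n$, so the stopped sum $\sum_{k=0}^{(\tau_n-1)\wedge(n-1)}|Z_{k,n}|$ coincides with $U_n$ there.

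The second step bounds the expectation of this stopped sum. Using the $\mathcal{F}_{t_k}$-measurability of ${\bf 1}_{\{k<\tau_n\}}$ together with the tower property,
\[
\E\Bigl[\sum_{k=0}^{(\tau_n-1)\wedge(n-1)}|Z_{k,n}|\Bigr] = \E\Bigl[\sum_{k=0}^{n-1}{\bf 1}_{\{k<\tau_n\}}\,\E[|Z_{k,n}| \mid \mathcal{F}_{t_k}]\Bigr]=\E\bigl[S_{(\tau_n-1)\wedge(n-1)}\bigr] \le \eta,
\]
where the final inequality is exactly the definition of $\tau_n$ (the conditional expectations accumulated strictly before the crossing time do not exceed $\eta$, and on $\{\tau_n\ge n\}$ the whole sum $S_{n-1}=V_n\le\eta$). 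Markov's inequality then gives, for any $\varepsilon>0$, $\P\bigl(\sum_{k=0}^{(\tau_n-1)\wedge(n-1)}|Z_{k,n}| > \varepsilon\bigr) \le \eta/\varepsilon$.

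Finally, splitting on whether $V_n$ exceeds $\eta$ and using that on $\{V_n\le\eta\}$ the stopped sum equals $U_n$,
\[
\P(U_n > \varepsilon) \le \P(V_n > \eta) + \P\Bigl(\sum_{k=0}^{(\tau_n-1)\wedge(n-1)}|Z_{k,n}| > \varepsilon\Bigr) \le \P(V_n > \eta) + \frac{\eta}{\varepsilon}.
\]
Letting $n\to\infty$ kills the first term by hypothesis, and then letting $\eta\downarrow 0$ forces $\limsup_n \P(U_n>\varepsilon)=0$, which proves $U_n\overset{\P}{\longrightarrow}0$. The single obstacle worth flagging is that one cannot simply apply Markov's inequality to $V_n$ directly: convergence in probability of $V_n$ does not entail $\E[V_n]\to 0$ in the absence of uniform integrability, and the stopping-time truncation above is precisely the device that circumvents this lack of integrability.
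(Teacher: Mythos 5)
The paper never proves this lemma -- it is quoted verbatim from Jacod's lecture notes \cite[Lemma 4.1]{J11} -- so there is no in-paper argument to compare against; what you have done is supply the missing proof, and your proof is correct. It is also, in substance, the standard one: your stopping-time truncation $\tau_n$, the bound $\E\bigl[S_{(\tau_n-1)\wedge(n-1)}\bigr]\le\eta$, and the final estimate $\P(U_n>\varepsilon)\le\P(V_n>\eta)+\eta/\varepsilon$ amount to a self-contained discrete-time proof of Lenglart's domination inequality, which is exactly the device behind the cited result. The bookkeeping is sound: $\{k<\tau_n\}=\{S_k\le\eta\}\in\mathcal{F}_{t_k}$, so the tower property applies term by term; the stopped sum of conditional expectations never exceeds $\eta$ by minimality of $\tau_n$ (including the edge cases $\tau_n=0$, where the sum is empty, and $\tau_n>n-1$, where it equals $V_n\le\eta$); and on $\{V_n\le\eta\}$ the truncated sum coincides with $U_n$, which is what legitimizes the union bound. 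Your closing remark also pinpoints the real obstacle: one cannot apply Markov to $U_n$ directly because $\E[U_n]=\E[V_n]$ need not tend to zero when $V_n\to 0$ only in probability. One small refinement: the integrability of $Z_{k,n}$ that you invoke is neither assumed in the statement nor needed -- conditional expectation and the tower identity $\E\bigl[{\bf 1}_{\{k<\tau_n\}}|Z_{k,n}|\bigr]=\E\bigl[{\bf 1}_{\{k<\tau_n\}}\E[|Z_{k,n}|\mid\mathcal{F}_{t_k}]\bigr]$ are valid in $[0,\infty]$ for nonnegative variables, so your argument goes through verbatim without that hypothesis.
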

	
\section{Proof of Theorem \ref{c2theorem}}
\label{sec:proof}
	
In this section, the proof of Theorem \ref{c2theorem} will be divided into three steps. We begin deriving a stochastic expansion of the log-likelihood ratio using Propositions \ref{c2prop1}, \ref{c2pro2} and Lemmas \ref{delta}, \ref{delta2}. The second step treats the negligible contributions of the expansion. Finally, we apply the central limit theorem for triangular arrays in order to show the LAN property.
	
\subsection{Expansion of the log-likelihood ratio}
\label{expand}
\begin{lemma}\label{expansion} The log-likelihood ratio at $(\theta_0,\sigma_0,\lambda_0)$ can be expressed as
\begin{align*} 
&\log\frac{p_n\left(X^{n};\left(\theta_n,\sigma_n,\lambda_n\right)\right)}{p_n\left(X^{n};\left(\theta_0,\sigma_0,\lambda_0\right)\right)}=\sum_{k=0}^{n-1}\left(\xi_{k,n}+\eta_{k,n}+\beta_{k,n}\right)+\sum_{k=0}^{n-1}\dfrac{u}{\sqrt{n\Delta_n^3}}\int_0^1 \bigg\{ Z^{4,\ell}_{k,n}+Z^{5,\ell}_{k,n}\\
&+\widetilde{\E}_{X_{t_{k}}}^{\theta(\ell),\sigma_0,\lambda_0}\left[R^{\theta(\ell),\sigma_0,\lambda_0}-R_4^{\theta(\ell),\sigma_0,\lambda_0}-R_5^{\theta(\ell),\sigma_0,\lambda_0}\big\vert Y_{t_{k+1}}^{\theta(\ell),\sigma_0,\lambda_0}=X_{t_{k+1}}\right]\bigg\}d\ell+\sum_{k=0}^{n-1}(T_{k,n}-R_{k,n})\\
&+\sum_{k=0}^{n-1}\dfrac{v}{\sqrt{n\Delta_n^2}}\int_0^1\widetilde{\E}_{X_{t_k}}^{\theta_n,\sigma(\ell),\lambda_n}\left[H^{\theta_n,\sigma(\ell),\lambda_n}\big\vert Y^{\theta_n,\sigma(\ell),\lambda_n}_{t_{k+1}}=X_{t_{k+1}}\right]d\ell+\sum_{k=0}^{n-1}\dfrac{v}{\sqrt{n\Delta_n^2}}\int_0^1\dfrac{1}{\sigma(\ell)^3}\\
&\qquad\times\bigg\{\left(H_{6}+H_{7}\right)^2+2\sigma_0\Delta B_{t_{k+1}}\left(H_{6}+H_{7}\right)-\widetilde{\E}_{X_{t_k}}^{\theta_n,\sigma(\ell),\lambda_n}\bigg[\left(H_4^{\theta_n,\sigma(\ell),\lambda_n}+H_{5}^{\theta_n,\sigma(\ell),\lambda_n}\right)^2\\
&\qquad\qquad+2\sigma(\ell)\Delta W_{t_{k+1}}\left(H_4^{\theta_n,\sigma(\ell),\lambda_n}+H_{5}^{\theta_n,\sigma(\ell),\lambda_n}\right)\big\vert Y^{\theta_n,\sigma(\ell),\lambda_n}_{t_{k+1}}=X_{t_{k+1}}\bigg]\bigg\}d\ell,
\end{align*}
where $\theta(\ell):=\theta_0+\dfrac{\ell u}{\sqrt{n\Delta_n}}$,  $\sigma(\ell):=\sigma_0+\dfrac{\ell v}{\sqrt{n}}$, $\lambda(\ell):=\lambda_0+\dfrac{\ell w}{\sqrt{n\Delta_n}}$, and
\begin{align*}
&\xi_{k,n}=-\dfrac{u}{\sigma_0^{2}\sqrt{n\Delta_n}}X_{t_k}\left(\sigma_0\Delta B_{t_{k+1}}+\dfrac{u\Delta_n}{2\sqrt{n\Delta_n}}X_{t_k}\right),\\
&\eta_{k,n}=\dfrac{v}{\sqrt{n\Delta_n^2}}\int_0^1\left(\dfrac{\sigma_0^2}{\sigma(\ell)^3}(\Delta B_{t_{k+1}})^2-\dfrac{\Delta_n}{\sigma(\ell)}\right)d\ell,\\
&\beta_{k,n}=-\dfrac{w}{\sigma_0^2\sqrt{n\Delta_n}}\left(\sigma_0\Delta B_{t_{k+1}}+\dfrac{w\Delta_n}{2\sqrt{n\Delta_n}}+\dfrac{u\Delta_n}{\sqrt{n\Delta_n}}X_{t_k}\right)\notag\\
&\qquad\qquad+\dfrac{w}{\sqrt{n\Delta_n}}\int_0^1\widetilde{\E}_{X_{t_k}}^{\theta_n,\sigma_0,\lambda(\ell)}\left[\dfrac{\widetilde{M}_{t_{k+1}}^{\lambda(\ell)}-\widetilde{M}_{t_k}^{\lambda(\ell)}}{\lambda(\ell)}\big\vert Y_{t_{k+1}}^{\theta_n,\sigma_0,\lambda(\ell)}=X_{t_{k+1}}\right]d\ell,\\
&Z_{k,n}^{4,\ell}=\Delta_n\sigma_0^{-2}\theta_0X_{t_k}\int_{t_k}^{t_{k+1}}\left(X_{s}^{\theta_0,\sigma_0,\lambda_0}-X_{t_k}\right)ds,\ Z_{k,n}^{5,\ell}=-\Delta_n\sigma_0^{-2}X_{t_k}\left(\widetilde{N}_{t_{k+1}}^{\lambda_0}-\widetilde{N}_{t_{k}}^{\lambda_0}\right),\\
&R^{\theta(\ell),\sigma_0,\lambda_0}=R_1^{\theta(\ell),\sigma_0,\lambda_0}+R_2^{\theta(\ell),\sigma_0,\lambda_0}+R_3^{\theta(\ell),\sigma_0,\lambda_0},\\
&T_{k,n}=\dfrac{w}{\sigma_0^2\sqrt{n\Delta_n}}\int_0^1\bigg\{\theta_0\int_{t_k}^{t_{k+1}}\left(X_s^{\theta_0,\sigma_0,\lambda_0}-X_{t_k}\right)ds\\
&\qquad\qquad-\widetilde{\E}_{X_{t_k}}^{\theta_n,\sigma_0,\lambda(\ell)}\left[\theta_n\int_{t_k}^{t_{k+1}}\left(Y_s^{\theta_n,\sigma_0,\lambda(\ell)}-Y_{t_k}^{\theta_n,\sigma_0,\lambda(\ell)}\right)ds\big\vert Y_{t_{k+1}}^{\theta_n,\sigma_0,\lambda(\ell)}=X_{t_{k+1}}\right]\bigg\}d\ell,\\
&R_{k,n}=\dfrac{w}{\sigma_0^2\sqrt{n\Delta_n}}\int_0^1\left(\Delta N_{t_{k+1}}-\widetilde{\E}_{X_{t_k}}^{\theta_n,\sigma_0,\lambda(\ell)}\left[\Delta  M_{t_{k+1}}\big\vert Y^{\theta_n,\sigma_0,\lambda(\ell)}_{t_{k+1}}=X_{t_{k+1}}\right]\right)d\ell,\\
&H^{\theta_n,\sigma(\ell),\lambda_n}=H_1^{\theta_n,\sigma(\ell),\lambda_n}+H_2^{\theta_n,\sigma(\ell),\lambda_n}+H_3^{\theta_n,\sigma(\ell),\lambda_n},\\
&H_{6}=-\theta_0\int_{t_k}^{t_{k+1}}X_{s}^{\theta_0,\sigma_0,\lambda_0}ds,\ \ H_{7}=\widetilde{N}_{t_{k+1}}^{\lambda_0}-\widetilde{N}_{t_{k}}^{\lambda_0}.
\end{align*}
\end{lemma}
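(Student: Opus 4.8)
The plan is to establish the expansion starting from the three-term decomposition \eqref{eq:dec}, treating each of the three factors separately and then regrouping. For each factor, I would apply the Markov property to write the log-likelihood ratio as a sum over $k$ of logarithms of ratios of transition densities $p^{\theta,\sigma,\lambda}(\Delta_n, X_{t_k}, X_{t_{k+1}})$, and then invoke the mean value theorem on the parameter space. For instance, for the third factor $\log\frac{p_n(X^n;(\theta_n,\sigma_0,\lambda_0))}{p_n(X^n;(\theta_0,\sigma_0,\lambda_0))}$, writing $\theta_n-\theta_0 = \frac{u}{\sqrt{n\Delta_n}}$ and applying the integral form of the mean value theorem along the segment $\theta(\ell) = \theta_0 + \frac{\ell u}{\sqrt{n\Delta_n}}$ produces $\sum_k \frac{u}{\sqrt{n\Delta_n}}\int_0^1 \frac{\partial_\theta p^{\theta(\ell),\sigma_0,\lambda_0}}{p^{\theta(\ell),\sigma_0,\lambda_0}}(\Delta_n, X_{t_k}, X_{t_{k+1}})\, d\ell$. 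Analogous segments $\sigma(\ell)$ and $\lambda(\ell)$ handle the diffusion and intensity factors.

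Next I would substitute the explicit expressions for the logarithm derivatives. For $\theta$ and $\sigma$ I would use Proposition \ref{c2prop1}, which expresses $\frac{\partial_\beta p}{p}$ as $\frac{1}{\Delta_n}\widetilde{\E}_x^{\theta,\sigma,\lambda}[\delta(\partial_\beta Y_{t_{k+1}}U)\mid Y_{t_{k+1}}=y]$; for $\lambda$ I would use Proposition \ref{c2pro2}. The crucial step is then to insert the decompositions of the Skorohod integrals from Lemmas \ref{delta} and \ref{delta2}. For the $\theta$-term, Lemma \ref{delta} splits $\delta(\partial_\theta Y_{t_{k+1}} U)$ into the leading drift piece $-\Delta_n\sigma^{-2} x(Y_{t_{k+1}}-Y_{t_k}+\Delta_n\theta Y_{t_k})$ plus the remainders $R^{\theta,\sigma,\lambda}=R_1+R_2+R_3$ and the terms $-R_4-R_5$. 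Conditioning the leading piece and using that $\widetilde{\E}_{X_{t_k}}^{\theta,\sigma,\lambda}[Y_{t_{k+1}}\mid Y_{t_{k+1}}=X_{t_{k+1}}] = X_{t_{k+1}}$ is what produces the main term $\xi_{k,n}$ (after a further mean-value/Girsanov adjustment that changes $\theta(\ell)$ back to $\theta_0$ in the dominant contribution and converts the $W$-increments into $B$-increments). The genuinely remaining conditional expectations of $R$, $R_4$, $R_5$ are collected into the $Z^{4,\ell}, Z^{5,\ell}$ and the explicit $\widetilde{\E}[\cdots]$ bracket appearing in the statement. Similarly, Lemma \ref{delta2} feeds the $\sigma$-term, yielding $\eta_{k,n}$ from the leading $\frac{1}{\sigma^3}(Y_{t_{k+1}}-Y_{t_k})^2 - \frac{\Delta_n}{\sigma}$ piece together with the $H$-bracket quadratic correction, and Proposition \ref{c2pro2} feeds the $\lambda$-term, producing $\beta_{k,n}$, $T_{k,n}$, and $R_{k,n}$.

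The principal bookkeeping difficulty—and the main obstacle—lies precisely in the measure-change subtleties flagged in the introduction: the conditional expectations $\widetilde{\E}_{X_{t_k}}^{\theta(\ell),\sigma_0,\lambda_0}$ (and the analogous ones at $\sigma(\ell)$ and $\lambda(\ell)$) are taken under the auxiliary process $Y$ with the shifted parameter, whereas the observed increments $X^{\theta_0,\sigma_0,\lambda_0}$, and hence $\Delta B_{t_{k+1}}$, $\widetilde{N}^{\lambda_0}$, live under $\P^{\theta_0,\sigma_0,\lambda_0}$. Extracting the explicit main terms $\xi_{k,n}, \eta_{k,n}, \beta_{k,n}$ requires replacing, in the dominant contribution only, the $Y$-increments conditioned at the shifted parameter by the corresponding $X$-increments at $(\theta_0,\sigma_0,\lambda_0)$, which is legitimate because the leading piece of each Skorohod integral is a deterministic function of $(Y_{t_k}, Y_{t_{k+1}})$ and the conditioning pins $Y_{t_{k+1}} = X_{t_{k+1}}$, $Y_{t_k}=X_{t_k}$; the differences generated by the parameter shift are then of higher order and are absorbed into $T_{k,n}$ and the $Z$-terms. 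I would organize this carefully term by term, using $H_6, H_7$ to record the $X$-side versions of $H_4, H_5$ and $Z^{4,\ell}, Z^{5,\ell}$ the $X$-side versions of $R_4, R_5$, so that each leading contribution is written in observed quantities while every genuinely residual conditional expectation is left explicit. The remaining computations (substituting \eqref{incrementw}, collecting constants $\sigma_0^{-2}$, and matching the $\frac{1}{\sqrt{n\Delta_n^3}}$ and $\frac{1}{\sqrt{n\Delta_n^2}}$ normalizations) are routine algebraic regrouping once the correct correspondence between the shifted-parameter and base-parameter quantities has been fixed.
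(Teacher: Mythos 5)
Your proposal follows essentially the same route as the paper: the decomposition \eqref{eq:dec}, the Markov property and mean value theorem along the segments $\theta(\ell),\sigma(\ell),\lambda(\ell)$, substitution of Propositions \ref{c2prop1}--\ref{c2pro2} and Lemmas \ref{delta}--\ref{delta2}, and the key observation that the leading pieces are deterministic functions of $(Y_{t_k},Y_{t_{k+1}})$ which the conditioning pins to $(X_{t_k},X_{t_{k+1}})$, after which \eqref{incrementw} and the observed equation \eqref{c2eq1} are used to produce $\xi_{k,n},\eta_{k,n},\beta_{k,n}$ and the named residuals exactly. One small correction: no Girsanov change of measure is needed (or used) in this lemma---the $\Delta B_{t_{k+1}}$ and $\widetilde{N}^{\lambda_0}$ increments arise purely from substituting \eqref{c2eq1} for $X_{t_{k+1}}-X_{t_k}$ after the pinning, and the parameter shift $\theta(\ell)-\theta_0$ is kept exactly (it yields the $\frac{u\Delta_n}{2\sqrt{n\Delta_n}}X_{t_k}$ term in $\xi_{k,n}$) rather than being adjusted away; Girsanov only enters later, when the residual terms are shown to be negligible.
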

\begin{proof}
Recall that the log-likelihood ratio is decomposed as in \eqref{eq:dec}. First, using the Markov property, Proposition \ref{c2prop1}, Lemma \ref{delta} and equation \eqref{c2eq1} for the term $X_{t_{k+1}}-X_{t_{k}}$ that we obtain from the term $Y_{t_{k+1}}^{\theta(\ell),\sigma_0,\lambda_0}-Y_{t_{k}}^{\theta(\ell),\sigma_0,\lambda_0}$ in Lemma \ref{delta} when taking the conditional expectation, we obtain that
\begin{align*} 
&\log\dfrac{p_n\left(X^{n};\left(\theta_n,\sigma_0,\lambda_0\right)\right)}{p_n\left(X^{n};\left(\theta_0,\sigma_0,\lambda_0\right)\right)}=\sum_{k=0}^{n-1}\log\dfrac{p^{\theta_n,\sigma_0,\lambda_0}}{p^{\theta_0,\sigma_0,\lambda_0}}\left(\Delta_n,X_{t_k},X_{t_{k+1}}\right)\\
&=\sum_{k=0}^{n-1}\dfrac{u}{\sqrt{n\Delta_n}}\int_0^1\dfrac{\partial_{\theta}p^{\theta(\ell),\sigma_0,\lambda_0}}{p^{\theta(\ell),\sigma_0,\lambda_0}}\left(\Delta_n,X_{t_k},X_{t_{k+1}}\right)d\ell \\
&=\sum_{k=0}^{n-1}\dfrac{u}{\sqrt{n\Delta_n^3}}\int_0^1\widetilde{\E}_{X_{t_k}}^{\theta(\ell),\sigma_0,\lambda_0}\big[\delta\big(\partial_{\theta}Y_{t_{k+1}}^{\theta(\ell),\sigma_0,\lambda_0}(t_k,X_{t_{k}})U^{\theta(\ell),\sigma_0,\lambda_0}(t_k,X_{t_{k}})\big)\big\vert Y_{t_{k+1}}^{\theta(\ell),\sigma_0,\lambda_0}=X_{t_{k+1}}\big]d\ell\\
&=\sum_{k=0}^{n-1}\xi_{k,n}+\sum_{k=0}^{n-1}\dfrac{u}{\sqrt{n\Delta_n^3}}\int_0^1 \bigg\{ Z^{4,\ell}_{k,n}+Z^{5,\ell}_{k,n}\\
&\qquad+\widetilde{\E}_{X_{t_{k}}}^{\theta(\ell),\sigma_0,\lambda_0}\left[R^{\theta(\ell),\sigma_0,\lambda_0}-R_4^{\theta(\ell),\sigma_0,\lambda_0}-R_5^{\theta(\ell),\sigma_0,\lambda_0}\big\vert Y_{t_{k+1}}^{\theta(\ell),\sigma_0,\lambda_0}=X_{t_{k+1}}\right]\bigg\}d\ell.
\end{align*}
Next, using the Markov property, Proposition \ref{c2prop1}, Lemma \ref{delta2} and  equation \eqref{c2eq1} for the term $X_{t_{k+1}}-X_{t_{k}}$ that we obtain from the term $Y_{t_{k+1}}^{\theta_n,\sigma(\ell),\lambda_n}-Y_{t_{k}}^{\theta_n,\sigma(\ell),\lambda_n}$ in Lemma \ref{delta2} when taking the conditional expectation, we obtain that
\begin{align*}  
&\log\dfrac{p_n\left(X^{n};\left(\theta_n,\sigma_n,\lambda_n\right)\right)}{p_n\left(X^{n};\left(\theta_n,\sigma_0,\lambda_n\right)\right)}=\sum_{k=0}^{n-1}\dfrac{v}{\sqrt{n}}\int_0^1\dfrac{\partial_{\sigma}p^{\theta_n,\sigma(\ell),\lambda_n}}{p^{\theta_n,\sigma(\ell),\lambda_n}}\left(\Delta_n,X_{t_k},X_{t_{k+1}}\right)d\ell\\
&=\sum_{k=0}^{n-1}\dfrac{v}{\sqrt{n\Delta_n^2}}\int_0^1\widetilde{\E}_{X_{t_k}}^{\theta_n,\sigma(\ell),\lambda_n}\big[\delta\big(\partial_{\sigma}Y_{t_{k+1}}^{\theta_n,\sigma(\ell),\lambda_n}(t_k,X_{t_{k}})U^{\theta_n,\sigma(\ell),\lambda_n}(t_k,X_{t_{k}})\big)\vert Y_{t_{k+1}}^{\theta_n,\sigma(\ell),\lambda_n}=X_{t_{k+1}}\big]d\ell\\
&=\sum_{k=0}^{n-1}\eta_{k,n}+\sum_{k=0}^{n-1}\dfrac{v}{\sqrt{n\Delta_n^2}}\int_0^1\widetilde{\E}_{X_{t_k}}^{\theta_n,\sigma(\ell),\lambda_n}\left[H^{\theta_n,\sigma(\ell),\lambda_n}\big\vert Y^{\theta_n,\sigma(\ell),\lambda_n}_{t_{k+1}}=X_{t_{k+1}}\right]d\ell\\
&\qquad+\sum_{k=0}^{n-1}\dfrac{v}{\sqrt{n\Delta_n^2}}\int_0^1\dfrac{1}{\sigma(\ell)^3}\bigg\{\left(H_{6}+H_{7}\right)^2+2\sigma_0\Delta B_{t_{k+1}}\left(H_{6}+H_{7}\right)\\
&\qquad-\widetilde{\E}_{X_{t_k}}^{\theta_n,\sigma(\ell),\lambda_n}\bigg[\left(H_4^{\theta_n,\sigma(\ell),\lambda_n}+H_{5}^{\theta_n,\sigma(\ell),\lambda_n}\right)^2+2\sigma(\ell)\Delta W_{t_{k+1}}\\
&\qquad\qquad\times\left(H_4^{\theta_n,\sigma(\ell),\lambda_n}+H_{5}^{\theta_n,\sigma(\ell),\lambda_n}\right)\big\vert Y^{\theta_n,\sigma(\ell),\lambda_n}_{t_{k+1}}=X_{t_{k+1}}\bigg]\bigg\}d\ell.
\end{align*}
Finally, using Proposition \ref{c2pro2}, equation \eqref{incrementw} with $(\theta_n,\sigma_0,\lambda(\ell))$ instead of $(\theta,\sigma,\lambda)$, and equation \eqref{c2eq1} for the term $X_{t_{k+1}}-X_{t_{k}}$ that we obtain from the term $Y_{t_{k+1}}^{\theta_n,\sigma_0,\lambda(\ell)}-Y_{t_{k}}^{\theta_n,\sigma_0,\lambda(\ell)}$ in \eqref{incrementw} when taking the conditional expectation, we get that
\begin{align*} 
&\log\dfrac{p_n\left(X^{n};\left(\theta_n,\sigma_0,\lambda_n\right)\right)}{p_n\left(X^{n};\left(\theta_n,\sigma_0,\lambda_0\right)\right)}=\sum_{k=0}^{n-1}\dfrac{w}{\sqrt{n\Delta_n}}\int_0^1\dfrac{\partial_{\lambda}p^{\theta_n,\sigma_0,\lambda(\ell)}}{p^{\theta_n,\sigma_0,\lambda(\ell)}}\left(\Delta_n,X_{t_k},X_{t_{k+1}}\right)d\ell\\
&=\sum_{k=0}^{n-1}\dfrac{w}{\sqrt{n\Delta_n}}\int_0^1\widetilde{\E}_{X_{t_k}}^{\theta_n,\sigma_0,\lambda(\ell)}\left[-\dfrac{\Delta W_{t_{k+1}}}{\sigma_0}+\dfrac{\widetilde{M}_{t_{k+1}}^{\lambda(\ell)}-\widetilde{M}_{t_k}^{\lambda(\ell)}}{\lambda(\ell)}\big\vert Y_{t_{k+1}}^{\theta_n,\sigma_0,\lambda(\ell)}=X_{t_{k+1}}\right]d\ell\\&=\sum_{k=0}^{n-1}\beta_{k,n}+\sum_{k=0}^{n-1}(T_{k,n}-R_{k,n}).
\end{align*}
Therefore, we have shown the desired expansion of the log-likelihood ratio. 
\end{proof}
We show that $\xi_{k,n}, \eta_{k,n}, \beta_{k,n}$ are the terms that contribute to the limit, and all the others are negligible. In all what follows, Lemma \ref{c3ergodic} will be used repeatedly without being quoted.

\subsection{Negligible contributions}
\begin{lemma}\label{lemma1} As $n\to\infty$,
\begin{align*}
\sum_{k=0}^{n-1}\dfrac{u}{\sqrt{n\Delta_n^3}}\int_0^1\widetilde{\E}_{X_{t_{k}}}^{\theta(\ell),\sigma_0,\lambda_0}\left[R^{\theta(\ell),\sigma_0,\lambda_0}\big\vert Y_{t_{k+1}}^{\theta(\ell),\sigma_0,\lambda_0}=X_{t_{k+1}}\right]d\ell\overset{\P^{\theta_0,\sigma_0,\lambda_0}}{\longrightarrow}0.
\end{align*}
\end{lemma}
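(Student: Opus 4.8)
The plan is to apply Lemma \ref{zero} to the triangular array
\[
Z_{k,n}:=\frac{u}{\sqrt{n\Delta_n^3}}\int_0^1 \widetilde{\E}_{X_{t_k}}^{\theta(\ell),\sigma_0,\lambda_0}\!\left[R^{\theta(\ell),\sigma_0,\lambda_0}\big\vert Y_{t_{k+1}}^{\theta(\ell),\sigma_0,\lambda_0}=X_{t_{k+1}}\right]d\ell,
\]
which is $\mathcal{F}_{t_{k+1}}$-measurable. Write $\phi_\ell(x,y):=\widetilde{\E}_x^{\theta(\ell),\sigma_0,\lambda_0}[R^{\theta(\ell),\sigma_0,\lambda_0}\vert Y_{t_{k+1}}^{\theta(\ell),\sigma_0,\lambda_0}=y]$, a deterministic function of the two endpoints. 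The decisive observation is that, since $\phi_\ell(x,\cdot)$ is exactly the conditional expectation of $R^{\theta(\ell),\sigma_0,\lambda_0}$ given $Y_{t_{k+1}}^{\theta(\ell),\sigma_0,\lambda_0}$, integrating it against the $\theta(\ell)$-transition density and invoking the tower property yields $\E^{\theta(\ell),\sigma_0,\lambda_0}[\phi_\ell(x,X_{t_{k+1}})\vert X_{t_k}=x]=\widetilde{\E}_x^{\theta(\ell),\sigma_0,\lambda_0}[R^{\theta(\ell),\sigma_0,\lambda_0}]=0$ by the centering identity \eqref{es1}. The point is that convergence is computed under $\P^{\theta_0,\sigma_0,\lambda_0}$ rather than under the matched measure $\P^{\theta(\ell),\sigma_0,\lambda_0}$, so this vanishing must be transferred through a change of measure.

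For condition (i) of Lemma \ref{zero}, I would compute $\E^{\theta_0,\sigma_0,\lambda_0}[Z_{k,n}\vert\mathcal{F}_{t_k}]$. Since the diffusion and jump parameters coincide ($\sigma_0$ and $\lambda_0$), Girsanov's theorem (Lemma \ref{c2Girsanov2}) provides the density $\mathcal{G}_k:=d\P^{\theta_0,\sigma_0,\lambda_0}/d\P^{\theta(\ell),\sigma_0,\lambda_0}$ on $[t_k,t_{k+1}]$, whose exponent involves only the Brownian part with weight $(\theta_0-\theta(\ell))/\sigma_0$, the jump contribution cancelling. Using the vanishing above to subtract the constant $1$, one gets
\[
\E^{\theta_0,\sigma_0,\lambda_0}[\phi_\ell(X_{t_k},X_{t_{k+1}})\vert\mathcal{F}_{t_k}]=\E^{\theta(\ell),\sigma_0,\lambda_0}\!\left[\phi_\ell(X_{t_k},X_{t_{k+1}})(\mathcal{G}_k-1)\big\vert\mathcal{F}_{t_k}\right],
\]
so by Cauchy--Schwarz this is bounded by $\|\phi_\ell\|_{L^2}\,\|\mathcal{G}_k-1\|_{L^2}$ (conditional norms). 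Conditional Jensen and \eqref{es2} (with $p=2$) give $\|\phi_\ell\|_{L^2}\le C\Delta_n^{7/4}(1+\vert X_{t_k}\vert^q)$, while $\theta_0-\theta(\ell)=O((n\Delta_n)^{-1/2})$ forces $\|\mathcal{G}_k-1\|_{L^2}\le Cn^{-1/2}(1+\vert X_{t_k}\vert)$. Multiplying by the prefactor $(n\Delta_n^3)^{-1/2}$, summing over $k$ and using the ergodic theorem (Lemma \ref{c3ergodic}) to replace $n^{-1}\sum_k(1+\vert X_{t_k}\vert^{q'})$ by a constant, the sum of conditional means is of order $\Delta_n^{1/4}\to0$.

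For condition (ii), I would bound $\E^{\theta_0,\sigma_0,\lambda_0}[Z_{k,n}^2\vert\mathcal{F}_{t_k}]$ directly; no cancellation is available here, but none is needed. After Cauchy--Schwarz in $\ell$ and conditional Jensen, it suffices to estimate $\E^{\theta_0,\sigma_0,\lambda_0}[\phi_\ell^2\vert\mathcal{F}_{t_k}]$, which the same change of measure turns into $\E^{\theta(\ell),\sigma_0,\lambda_0}[\phi_\ell^2\mathcal{G}_k\vert\mathcal{F}_{t_k}]$. Hölder's inequality together with the high-moment version of \eqref{es2} (yielding $\|\phi_\ell\|_{L^{2p}}^2\le C\Delta_n^{3+1/(2p)}(1+\vert X_{t_k}\vert^q)$) and the uniform $L^{p'}$-boundedness of $\mathcal{G}_k$ give $\E^{\theta_0,\sigma_0,\lambda_0}[\phi_\ell^2\vert\mathcal{F}_{t_k}]\le C\Delta_n^{3+1/(2p)}(1+\vert X_{t_k}\vert^{q'})$; the prefactor $(n\Delta_n^3)^{-1}$ and the ergodic theorem then make the sum $O(\Delta_n^{1/(2p)})\to0$. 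Both conditions verified, Lemma \ref{zero} yields the claim.

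The main obstacle is condition (i): a naive absolute-value estimate via Lemma \ref{zero2} fails, since $\E^{\theta_0,\sigma_0,\lambda_0}[\vert Z_{k,n}\vert\vert\mathcal{F}_{t_k}]$ only produces order $\sqrt{n\Delta_n}\to\infty$ after summation. One must genuinely exploit the exact centering \eqref{es1} under the matched parameter $\theta(\ell)$ and convert it, through the Girsanov density whose deviation from $1$ carries the crucial gain $(n\Delta_n)^{-1/2}$, into a bound under the observation measure $\P^{\theta_0,\sigma_0,\lambda_0}$. The remaining technical care lies in verifying that $\mathcal{G}_k$ has moments bounded uniformly in $k,\ell,n$, which follows from the smallness of $\theta_0-\theta(\ell)$ and the finiteness of all polynomial moments of the invariant law.
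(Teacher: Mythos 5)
Your overall plan (verify the two conditions of Lemma \ref{zero}, exploit the exact centering \eqref{es1} under the matched parameter $\theta(\ell)$, and transfer it to the observation measure through a Girsanov weight) is the right one, and your identity $\E^{\theta_0,\sigma_0,\lambda_0}[\phi_\ell\vert\widehat{\mathcal{F}}_{t_k}]=\E^{\theta(\ell),\sigma_0,\lambda_0}[\phi_\ell(\mathcal{G}_k-1)\vert\widehat{\mathcal{F}}_{t_k}]$ and your bound $\Vert\phi_\ell\Vert_{L^2}\le C\Delta_n^{7/4}(1+\vert X_{t_k}\vert^{q})$ are correct. The gap is in the two moment bounds on $\mathcal{G}_k$ that carry all the weight: the claim $\Vert\mathcal{G}_k-1\Vert_{L^2}\le Cn^{-1/2}(1+\vert X_{t_k}\vert)$ in condition (i) and the ``uniform $L^{p'}$-boundedness of $\mathcal{G}_k$'' in condition (ii). Both are false here: for every $p>1$, every fixed $n$, and $\ell u\neq 0$, one has $\E^{\theta(\ell),\sigma_0,\lambda_0}[\mathcal{G}_k^{p}\vert X_{t_k}=x]=+\infty$. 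Indeed, although the explicit jump terms cancel in the Girsanov exponent, the jumps survive inside $X_s$: writing $\delta=\theta(\ell)-\theta_0$, under $\P^{\theta_0,\sigma_0,\lambda_0}$ one has $\log\mathcal{G}_k=\frac{\delta}{\sigma_0}\int_{t_k}^{t_{k+1}}X_s\,dB_s+\frac{\delta^2}{2\sigma_0^2}\int_{t_k}^{t_{k+1}}X_s^2\,ds$, and, for instance, $\E^{\theta(\ell),\sigma_0,\lambda_0}[\mathcal{G}_k^{2}\vert X_{t_k}=x]=\E^{\theta_0,\sigma_0,\lambda_0}[\mathcal{G}_k\vert X_{t_k}=x]$. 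Conditioning on $\Delta N_{t_{k+1}}=j$ (under which the stochastic integral is centered and each jump pushes $X_s$ up by at least $e^{-\theta_0\Delta_n}$), Jensen's inequality gives $\E^{\theta_0,\sigma_0,\lambda_0}[\mathcal{G}_k\vert \Delta N_{t_{k+1}}=j,\,X_{t_k}=x]\geq\exp\{\frac{\delta^2}{2\sigma_0^2}c\,\Delta_n j^2\}$ for $j\geq j_0(x)$, whence
\begin{equation*}
\E^{\theta_0,\sigma_0,\lambda_0}\left[\mathcal{G}_k\vert X_{t_k}=x\right]\;\geq\;\sum_{j\geq j_0(x)}e^{-\lambda_0\Delta_n}\frac{(\lambda_0\Delta_n)^j}{j!}\exp\Big\{\frac{c\,\ell^2u^2}{2\sigma_0^2}\,\frac{j^2}{n}\Big\}=+\infty,
\end{equation*}
since $e^{cj^2/n}$ beats $1/j!\approx e^{-j\log j}$ for every fixed $n$. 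The smallness of $\delta$ only shrinks the constant; what your argument needs is a Gaussian-type exponential moment of $(\Delta N_{t_{k+1}})^2/n$, which no Poisson variable possesses, and polynomial moments of the invariant law are beside the point. Consequently your Cauchy--Schwarz estimate in (i) and your H\"older estimate in (ii) both read ``finite $\times\,\infty$'' and prove nothing.

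This is exactly the obstruction the paper's proof is built to avoid: it never raises the path-wise weight to a power larger than one. In the paper's proof of (i), after changing to $\widehat{Q}_k^{\theta(\ell),\lambda_0,\theta_0,\lambda_0,\sigma_0}$ the weight $d\widehat{\P}/d\widehat{Q}_k$ enters linearly, and the product-space structure is invoked: $R^{\theta(\ell),\sigma_0,\lambda_0}$ is a functional of $(W,M)$ on $\widetilde{\Omega}$, the weight a functional of $(B,N)$ on $\widehat{\Omega}$, so with $\E_{\widehat{Q}_k}[d\widehat{\P}/d\widehat{Q}_k\vert X_{t_k}]=1$ the term of Lemma \ref{zero}(i) collapses to $\widetilde{\E}_x[R^{\theta(\ell),\sigma_0,\lambda_0}]=0$ by \eqref{es1}, i.e.\ it vanishes identically rather than being merely small; for (ii) Jensen is applied inside the conditional expectation first and the same device reduces everything to $\widetilde{\E}_x[(R^{\theta(\ell),\sigma_0,\lambda_0})^2]\le C\Delta_n^{7/2}(1+\vert x\vert^q)$ from \eqref{es2} --- again only the first moment of the weight is used. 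Where the paper genuinely does need moments of density ratios (the $\sigma$-terms in Lemmas \ref{lemma6}, \ref{lemma7}), it first conditions on the number of jumps and the jump times so that only the Gaussian densities $q_{(j)}^{\theta,\sigma,\lambda}$ appear (Lemmas \ref{change}, \ref{lemma12}, \ref{lemma15}); that conditioning exists precisely because the unconditioned ratios are not $p$-integrable for any $p>1$, as shown above. To salvage your quantitative route you would have to replace $\mathcal{G}_k$ by its conditional expectation given $(X_{t_k},X_{t_{k+1}})$, i.e.\ the transition-density ratio $p^{\theta_0,\sigma_0,\lambda_0}/p^{\theta(\ell),\sigma_0,\lambda_0}(\Delta_n,X_{t_k},\cdot)$, and prove a chi-square-type bound for it via the jump-conditioned densities; that is the substantive missing content, not a routine verification.
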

\begin{proof} 
It suffices to show that conditions (i) and (ii) of Lemma \ref{zero} hold under the measure $\P^{\theta_0,\sigma_0,\lambda_0}$. We start showing (i). Applying Girsanov's theorem and \eqref{es1}, we get that
\begin{align*} 
&\sum_{k=0}^{n-1}\dfrac{u}{\sqrt{n\Delta_n^3}}\int_0^1\E\left[\widetilde{\E}_{X_{t_k}}^{\theta(\ell),\sigma_0,\lambda_0}
\left[R^{\theta(\ell),\sigma_0,\lambda_0}\big\vert Y_{t_{k+1}}^{\theta(\ell),\sigma_0,\lambda_0}=
X_{t_{k+1}}\right]\big\vert \widehat{\mathcal{F}}_{t_k}\right]d\ell\\
&=\sum_{k=0}^{n-1}\dfrac{u}{\sqrt{n\Delta_n^3}}\int_0^1\E_{\widehat{Q}_k^{\theta(\ell),\lambda_0,\theta_0,\lambda_0,\sigma_0}}\left[R^{\theta(\ell),\sigma_0,\lambda_0}\dfrac{d\widehat{\P}}{d \widehat{Q}_k^{\theta(\ell),\lambda_0,\theta_0,\lambda_0,\sigma_0}}\big\vert X_{t_k}\right]d\ell=0,
\end{align*}
where we have used the independence between $R^{\theta(\ell),\sigma_0,\lambda_0}$ and $\frac{d\widehat{\P}}{d \widehat{Q}_k^{\theta(\ell),\lambda_0,\theta_0,\lambda_0,\sigma_0}}$, together with $\E_{\widehat{Q}_k^{\theta(\ell),\lambda_0,\theta_0,\lambda_0,\sigma_0}}[\frac{d\widehat{\P}}{d \widehat{Q}_k^{\theta(\ell),\lambda_0,\theta_0,\lambda_0,\sigma_0}}\vert X_{t_k}]=1$. Thus the term appearing in condition (i) of Lemma \ref{zero} actually equals zero. 

Next, applying Jensen's inequality, Girsanov's theorem, and \eqref{es2} with $p=2$, we obtain 
\begin{align*}
&\sum_{k=0}^{n-1}\dfrac{u^2}{n\Delta_n^3}\E\left[\left(\int_0^1\widetilde{\E}_{X_{t_k}}^{\theta(\ell),\sigma_0,\lambda_0}
\left[R^{\theta(\ell),\sigma_0,\lambda_0}\big\vert Y_{t_{k+1}}^{\theta(\ell),\sigma_0,\lambda_0}=X_{t_{k+1}}\right]d\ell\right)^2\big\vert \widehat{\mathcal{F}}_{t_k}\right]\\
& \leq\sum_{k=0}^{n-1}\dfrac{u^2}{n\Delta_n^3}\int_0^1\E_{\widehat{Q}_k^{\theta(\ell),\lambda_0,\theta_0,\lambda_0,\sigma_0}}
\left[\left(R^{\theta(\ell),\sigma_0,\lambda_0}\right)^2\dfrac{d\widehat{\P}}{d \widehat{Q}_k^{\theta(\ell),\lambda_0,\theta_0,\lambda_0,\sigma_0}}\big\vert X_{t_k}\right]d\ell\\
& =\sum_{k=0}^{n-1}\dfrac{u^2}{n\Delta_n^3}\int_0^1\E_{\widehat{Q}_k^{\theta(\ell),\lambda_0,\theta_0,\lambda_0,\sigma_0}}
\left[\left(R^{\theta(\ell),\sigma_0,\lambda_0}\right)^2\big\vert X_{t_k}\right]d\ell\\
&\leq \dfrac{C u^2\sqrt{\Delta_n}}{n}\sum_{k=0}^{n-1}\left(1+\vert X_{t_k}\vert^{q}\right),
\end{align*}
for some constants $C, q>0$, which gives the desired result.
\end{proof}

\begin{lemma}\label{lemma2} As $n\to\infty$,
\begin{align*}
\sum_{k=0}^{n-1}\dfrac{u}{\sqrt{n\Delta_n^3}}\int_0^1\left(Z^{4,\ell}_{k,n}-\widetilde{\E}_{X_{t_{k}}}^{\theta(\ell),\sigma_0,\lambda_0}\left[R_4^{\theta(\ell),\sigma_0,\lambda_0}\big\vert Y_{t_{k+1}}^{\theta(\ell),\sigma_0,\lambda_0}=X_{t_{k+1}}\right]\right)d\ell\overset{\P^{\theta_0,\sigma_0,\lambda_0}}{\longrightarrow}0.
\end{align*}
\end{lemma}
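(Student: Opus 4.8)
The plan is to put
\[
Z_{k,n}:=\dfrac{u}{\sqrt{n\Delta_n^3}}\int_0^1\left(Z^{4,\ell}_{k,n}-\widetilde{\E}_{X_{t_{k}}}^{\theta(\ell),\sigma_0,\lambda_0}\left[R_4^{\theta(\ell),\sigma_0,\lambda_0}\big\vert Y_{t_{k+1}}^{\theta(\ell),\sigma_0,\lambda_0}=X_{t_{k+1}}\right]\right)d\ell,
\]
which is $\mathcal{F}_{t_{k+1}}$-measurable, and to verify conditions (i) and (ii) of Lemma \ref{zero} under $\P^{\theta_0,\sigma_0,\lambda_0}$. Condition (ii) is the routine part. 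Since $Z^{4,\ell}_{k,n}$ and $R_4^{\theta(\ell),\sigma_0,\lambda_0}$ are both of the form $\Delta_n\sigma^{-2}\theta x\int_{t_k}^{t_{k+1}}(\,\cdot_s-\cdot_{t_k})ds$, Lemma \ref{moment3}(i) shows each is of order $\Delta_n^{5/2}$ in conditional $L^2$-norm given $X_{t_k}$. Using Jensen's inequality for the $d\ell$-integral, and Girsanov's theorem (Lemma \ref{c2Girsanov2}) to rewrite the inner conditional expectation of $R_4^{\theta(\ell),\sigma_0,\lambda_0}$ as an expectation under $\widehat{Q}_k^{\theta(\ell),\lambda_0,\theta_0,\lambda_0,\sigma_0}$ exactly as in the proof of Lemma \ref{lemma1}, I would bound $\E^{\theta_0,\sigma_0,\lambda_0}[Z_{k,n}^2\vert\mathcal{F}_{t_k}]\leq \frac{C}{n\Delta_n^3}\Delta_n^5(1+\vert X_{t_k}\vert^q)=\frac{C\Delta_n^2}{n}(1+\vert X_{t_k}\vert^q)$, so that summing over $k$ and invoking Lemma \ref{c3ergodic} gives a bound of order $\Delta_n^2\to 0$.

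The heart of the matter is condition (i), that is, $\sum_{k=0}^{n-1}\E^{\theta_0,\sigma_0,\lambda_0}[Z_{k,n}\vert\mathcal{F}_{t_k}]\overset{\P^{\theta_0,\sigma_0,\lambda_0}}{\longrightarrow}0$. For the $Z^{4,\ell}_{k,n}$ part I would compute the conditional mean directly: from \eqref{c2eq1}, $\E^{\theta_0,\sigma_0,\lambda_0}[X_s^{\theta_0,\sigma_0,\lambda_0}\vert\mathcal{F}_{t_k}]=X_{t_k}e^{-\theta_0(s-t_k)}$, whence $\E^{\theta_0,\sigma_0,\lambda_0}[Z^{4,\ell}_{k,n}\vert\mathcal{F}_{t_k}]=\Delta_n\sigma_0^{-2}\theta_0X_{t_k}^2\int_{t_k}^{t_{k+1}}(e^{-\theta_0(s-t_k)}-1)ds=-\frac{\theta_0^2}{2\sigma_0^2}\Delta_n^3X_{t_k}^2+O(\Delta_n^4)(1+\vert X_{t_k}\vert^q)$. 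For the $R_4$ part I would apply the Markov property and then change measure via Lemma \ref{c2Girsanov2}, writing $\E^{\theta_0,\sigma_0,\lambda_0}[\widetilde{\E}_{X_{t_k}}^{\theta(\ell),\sigma_0,\lambda_0}[R_4^{\theta(\ell),\sigma_0,\lambda_0}\vert Y_{t_{k+1}}^{\theta(\ell),\sigma_0,\lambda_0}=X_{t_{k+1}}]\vert\widehat{\mathcal{F}}_{t_k}]=\E_{\widehat{Q}_k^{\theta(\ell),\lambda_0,\theta_0,\lambda_0,\sigma_0}}[R_4^{\theta(\ell),\sigma_0,\lambda_0}\frac{d\widehat{\P}}{d\widehat{Q}_k^{\theta(\ell),\lambda_0,\theta_0,\lambda_0,\sigma_0}}\vert X_{t_k}]$. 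Since here the diffusion and jump parameters coincide ($\sigma_0$ and $\lambda_0$), the Radon--Nikodym derivative reduces to a pure drift-change density, and under $\widehat{Q}_k^{\theta(\ell),\lambda_0,\theta_0,\lambda_0,\sigma_0}$ the observed process follows the $\theta(\ell)$-dynamics; approximating $\frac{d\widehat{\P}}{d\widehat{Q}_k^{\theta(\ell),\lambda_0,\theta_0,\lambda_0,\sigma_0}}$ by $1$ yields the leading order $-\frac{\theta(\ell)^2}{2\sigma_0^2}\Delta_n^3X_{t_k}^2$.

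The decisive observation is then that these two leading terms cancel up to the discrepancy $\theta_0^2-\theta(\ell)^2=-2\theta_0\frac{\ell u}{\sqrt{n\Delta_n}}+O(\frac{1}{n\Delta_n})$; this cancellation is essential, because the uncancelled $O(\Delta_n^3)$ term alone would contribute at order $(n\Delta_n^3)^{1/2}$, which the hypotheses do not force to zero. After cancellation the residual of $\E^{\theta_0,\sigma_0,\lambda_0}[Z_{k,n}\vert\mathcal{F}_{t_k}]$ is of order $\frac{u}{\sqrt{n\Delta_n^3}}\cdot\frac{\Delta_n^{5/2}}{\sqrt{n}}(1+\vert X_{t_k}\vert^q)=\frac{\Delta_n}{n}(1+\vert X_{t_k}\vert^q)$, so summing over $k$ and using Lemma \ref{c3ergodic} gives a bound of order $\Delta_n\cdot\frac{1}{n}\sum_{k}(1+\vert X_{t_k}\vert^q)\to 0$. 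The main obstacle is exactly this comparison: I must show that, in the expansion of $\E_{\widehat{Q}_k^{\theta(\ell),\lambda_0,\theta_0,\lambda_0,\sigma_0}}[R_4^{\theta(\ell),\sigma_0,\lambda_0}\frac{d\widehat{\P}}{d\widehat{Q}_k^{\theta(\ell),\lambda_0,\theta_0,\lambda_0,\sigma_0}}\vert X_{t_k}]$, both replacing the density by $1$ and replacing $\theta(\ell)$ by $\theta_0$ incur only residuals negligible after the prefactor and the summation. Controlling the first of these requires estimating the deviation of the Radon--Nikodym derivative from $1$: its exponent carries the small factor $(\theta_0-\theta(\ell))/\sigma_0=O((n\Delta_n)^{-1/2})$, so a Cauchy--Schwarz bound together with Lemma \ref{moment3} gives a correction of order $\Delta_n^{5/2}n^{-1/2}$, matching the $\theta(\ell)$-mismatch residual; combining these with the rate conditions $\Delta_n\to 0$, $n\Delta_n\to\infty$ and $n\Delta_n^2\to\infty$ is where the care lies.
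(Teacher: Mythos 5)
Your reduction to Lemma \ref{zero} and your verification of condition (ii) are fine and consistent with the paper's estimates; the divergence, and the problem, is in condition (i). The paper never expands the conditional means in powers of $\Delta_n$: substituting $\theta_0=\theta(\ell)-\frac{\ell u}{\sqrt{n\Delta_n}}$ inside $Z^{4,\ell}_{k,n}$, it splits the summand \emph{exactly} as $M_{k,n,1}+M_{k,n,2}$, where $M_{k,n,1}$ carries the explicit small factor $\frac{u^2}{n\Delta_n}$ and is disposed of by the crude $L^1$ bound of Lemma \ref{zero2}, while $M_{k,n,2}$ is $\theta(\ell)$ times the difference between $\int_{t_k}^{t_{k+1}}(X_s-X_{t_k})ds$ and the conditional expectation of the \emph{identical} functional of $Y^{\theta(\ell),\sigma_0,\lambda_0}$; by the Girsanov argument of Lemma \ref{lemma1} its conditional expectation vanishes identically, so only its conditional variance needs estimating. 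You replace this exact rearrangement by a Taylor expansion: you match the two leading terms $-\frac{\theta_0^2}{2\sigma_0^2}\Delta_n^3X_{t_k}^2$ and $-\frac{\theta(\ell)^2}{2\sigma_0^2}\Delta_n^3X_{t_k}^2$ and lump everything else into $O(\Delta_n^4)$ remainders on each side.

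That truncation is the gap. Two unmatched $O(\Delta_n^4)$ remainders differ only by $O(\Delta_n^4)$, and after the prefactor $\frac{u}{\sqrt{n\Delta_n^3}}$ and summation over $k$ this contributes $O_{\P}(\sqrt{n\Delta_n^5})$. Under the hypotheses you invoke ($\Delta_n\to0$, $n\Delta_n\to\infty$, $n\Delta_n^2\to\infty$) this need not vanish: $\Delta_n=n^{-1/6}$ satisfies all three, yet $\sqrt{n\Delta_n^5}=n^{1/12}\to\infty$. So the very argument you use to call the $\Delta_n^3$-cancellation ``essential'' applies verbatim one order down and defeats your remainder control: the regime in which your cancellation matters is exactly the regime in which your expansion is insufficient. (Note also that the paper is internally inconsistent about the rate: Section \ref{mainresult} states $n\Delta_n^2\to\infty$, but the proof of Lemma \ref{lemma7} uses the decreasing-rate condition $n\Delta_n^2\to 0$, which is the intended one. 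Under $n\Delta_n^2\to0$ one has $\sqrt{n\Delta_n^3}\to0$, and then no cancellation is needed at all: the crude per-term bound $O(\Delta_n^{5/2})\cdot\frac{\vert u\vert}{\sqrt{n\Delta_n^3}}(1+\vert X_{t_k}\vert^q)$ sums to $O_{\P}(\sqrt{n\Delta_n^2})$ and Lemma \ref{zero2} settles the lemma outright.)

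The repair inside your own framework is to avoid truncating. Up to the Radon--Nikodym correction that you already control by Cauchy--Schwarz, both conditional means are equal to $\frac{\Delta_n}{\sigma_0^2}X_{t_k}^2 f(\theta)$ with $f(\theta)=1-e^{-\theta\Delta_n}-\theta\Delta_n$, evaluated at $\theta_0$ and at $\theta(\ell)$ respectively. Since $\vert f'(\theta)\vert=\Delta_n(1-e^{-\theta\Delta_n})\leq C\Delta_n^2$ uniformly on $\Theta$, the mean value theorem gives $\vert f(\theta_0)-f(\theta(\ell))\vert\leq C\Delta_n^2\vert\theta_0-\theta(\ell)\vert$, hence a bound $C\frac{\Delta_n^{5/2}}{\sqrt{n}}X_{t_k}^2$ valid at \emph{all} orders simultaneously, which is what your scheme needs. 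This is, in substance, what the paper's exact algebraic decomposition accomplishes, with the extra advantage that after Girsanov the matched part vanishes identically rather than up to a controlled error.
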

\begin{proof} We rewrite
\begin{align*}
\dfrac{u}{\sqrt{n\Delta_n^3}}\int_0^1\left(Z^{4,\ell}_{k,n}-\widetilde{\E}_{X_{t_{k}}}^{\theta(\ell),\sigma_0,\lambda_0}\left[R_4^{\theta(\ell),\sigma_0,\lambda_0}\big\vert Y_{t_{k+1}}^{\theta(\ell),\sigma_0,\lambda_0}=X_{t_{k+1}}\right]\right)d\ell=M_{k,n,1}+M_{k,n,2},
\end{align*}
where
\begin{align*}
M_{k,n,1}:&=-\dfrac{u^2}{\sigma_0^2n\Delta_n}X_{t_k}\int_0^1\ell\int_{t_k}^{t_{k+1}}\left(X_s^{\theta_0,\sigma_0,\lambda_0}-X_{t_k}\right)dsd\ell,\\
M_{k,n,2}:&=\dfrac{u}{\sigma_0^2\sqrt{n\Delta_n}}X_{t_k}\int_0^1\theta(\ell)\bigg\{\int_{t_k}^{t_{k+1}}\left(X_s^{\theta_0,\sigma_0,\lambda_0}-X_{t_k}\right)ds\\
&\qquad-\widetilde{\E}_{X_{t_{k}}}^{\theta(\ell),\sigma_0,\lambda_0}\left[\int_{t_k}^{t_{k+1}}\left(Y_s^{\theta(\ell),\sigma_0,\lambda_0}-Y_{t_k}^{\theta(\ell),\sigma_0,\lambda_0}\right)ds\big\vert Y_{t_{k+1}}^{\theta(\ell),\sigma_0,\lambda_0}=X_{t_{k+1}}\right]\bigg\}d\ell.
\end{align*}

First, using Lemma \ref{moment3}(i), we get that
\begin{align*}
\sum_{k=0}^{n-1}\E\left[\left\vert M_{k,n,1}\right\vert\big\vert \widehat{\mathcal{F}}_{t_k}\right]\leq \dfrac{Cu^2\sqrt{\Delta_n}}{n}\sum_{k=0}^{n-1}\left(1+\vert X_{t_k}\vert^{q}\right),
\end{align*}
for some constants $C, q>0$. Therefore, by Lemma \ref{zero2}, $\sum_{k=0}^{n-1}M_{k,n,1}\overset{\P^{\theta_0,\sigma_0,\lambda_0}}{\longrightarrow}0$ as $n\to\infty$. 

Now, we treat $M_{k,n,2}$. Using Girsanov's theorem, we have that
\begin{align*}
&\sum_{k=0}^{n-1}\E\left[M_{k,n,2}\big\vert\widehat{\mathcal{F}}_{t_k}\right]=\dfrac{u}{\sigma_0^2\sqrt{n\Delta_n}}\sum_{k=0}^{n-1}X_{t_k}\int_0^1\theta(\ell)\bigg\{\E\left[\int_{t_k}^{t_{k+1}}\left(X_s^{\theta_0,\sigma_0,\lambda_0}-X_{t_k}\right)ds\big\vert X_{t_k}\right]\\
&\qquad-\E_{\widehat{Q}_k^{\theta(\ell),\lambda_0,\theta_0,\lambda_0,\sigma_0}}\left[\int_{t_k}^{t_{k+1}}\left(Y_s^{\theta(\ell),\sigma_0,\lambda_0}-Y_{t_k}^{\theta(\ell),\sigma_0,\lambda_0}\right)ds\dfrac{d\widehat{\P}}{d \widehat{Q}_k^{\theta(\ell),\lambda_0,\theta_0,\lambda_0,\sigma_0}}\big\vert X_{t_{k}}\right]\bigg\}d\ell\\
&=0,
\end{align*}
where we use the independence between $\int_{t_k}^{t_{k+1}}(Y_s^{\theta(\ell),\sigma_0,\lambda_0}-Y_{t_k}^{\theta(\ell),\sigma_0,\lambda_0})ds$ and $\frac{d\widehat{\P}}{d \widehat{Q}_k^{\theta(\ell),\lambda_0,\theta_0,\lambda_0,\sigma_0}}$, together with $\E_{\widehat{Q}_k^{\theta(\ell),\lambda_0,\theta_0,\lambda_0,\sigma_0}}[\frac{d\widehat{\P}}{d \widehat{Q}_k^{\theta(\ell),\lambda_0,\theta_0,\lambda_0,\sigma_0}}\vert X_{t_k}]=1$.

Next, we proceed as in the proof of Lemma \ref{lemma1} to get that for some constants $C, q>0$,
\begin{align*}
\sum_{k=0}^{n-1}\E\left[M_{k,n,2}^2\big\vert\widehat{\mathcal{F}}_{t_k}\right]\leq\dfrac{Cu^2\Delta_n^2}{n}\sum_{k=0}^{n-1}\left(1+\vert X_{t_k}\vert^{q}\right).
\end{align*}
Therefore, by Lemma \ref{zero}, $\sum_{k=0}^{n-1}M_{k,n,2}\overset{\P^{\theta_0,\sigma_0,\lambda_0}}{\longrightarrow}0$ as $n\to\infty$. Thus, the result follows.
\end{proof}

\begin{lemma}\label{lemma3} As $n\to\infty$,
\begin{align*}
\sum_{k=0}^{n-1}\dfrac{u}{\sqrt{n\Delta_n^3}}\int_0^1\left(Z^{5,\ell}_{k,n}-\widetilde{\E}_{X_{t_{k}}}^{\theta(\ell),\sigma_0,\lambda_0}\left[R_5^{\theta(\ell),\sigma_0,\lambda_0}\big\vert Y_{t_{k+1}}^{\theta(\ell),\sigma_0,\lambda_0}=X_{t_{k+1}}\right]\right)d\ell\overset{\P^{\theta_0,\sigma_0,\lambda_0}}{\longrightarrow}0.
\end{align*}
\end{lemma}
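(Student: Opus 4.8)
The plan is to mirror the treatment of $M_{k,n,2}$ in Lemma \ref{lemma2}, after first exploiting the explicit forms of $Z^{5,\ell}_{k,n}$ and $R_5$. I would begin by noting that $\widetilde{N}_{t_{k+1}}^{\lambda_0}-\widetilde{N}_{t_k}^{\lambda_0}=\Delta N_{t_{k+1}}-\lambda_0\Delta_n$ and $\widetilde{M}_{t_{k+1}}^{\lambda_0}-\widetilde{M}_{t_k}^{\lambda_0}=\Delta M_{t_{k+1}}-\lambda_0\Delta_n$, so that the deterministic compensators $\lambda_0\Delta_n$ cancel in the difference $Z^{5,\ell}_{k,n}-\widetilde{\E}_{X_{t_k}}^{\theta(\ell),\sigma_0,\lambda_0}[R_5^{\theta(\ell),\sigma_0,\lambda_0}\vert Y_{t_{k+1}}^{\theta(\ell),\sigma_0,\lambda_0}=X_{t_{k+1}}]$. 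Inserting the prefactor and using $\Delta_n(n\Delta_n^3)^{-1/2}=(n\Delta_n)^{-1/2}$, the $k$-th summand reduces to
\begin{align*}
Z_{k,n}:=-\dfrac{u}{\sigma_0^2\sqrt{n\Delta_n}}X_{t_k}\int_0^1\left(\Delta N_{t_{k+1}}-\widetilde{\E}_{X_{t_k}}^{\theta(\ell),\sigma_0,\lambda_0}\left[\Delta M_{t_{k+1}}\big\vert Y_{t_{k+1}}^{\theta(\ell),\sigma_0,\lambda_0}=X_{t_{k+1}}\right]\right)d\ell,
\end{align*}
and I would verify conditions (i) and (ii) of Lemma \ref{zero} for $Z_{k,n}$ under $\P^{\theta_0,\sigma_0,\lambda_0}$.

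For condition (i) I would argue exactly as in the $M_{k,n,2}$ computation. Under $\P^{\theta_0,\sigma_0,\lambda_0}$ one has $\E[\Delta N_{t_{k+1}}\vert\widehat{\mathcal{F}}_{t_k}]=\lambda_0\Delta_n$. For the remaining term I would apply Girsanov's theorem (Lemma \ref{c2Girsanov2}) with $\widehat{Q}_k^{\theta(\ell),\lambda_0,\theta_0,\lambda_0,\sigma_0}$, under which $X$ carries the $(\theta(\ell),\sigma_0,\lambda_0)$-law, so that its endpoint distribution matches that of $Y^{\theta(\ell),\sigma_0,\lambda_0}$; together with $\E_{\widehat{Q}_k^{\theta(\ell),\lambda_0,\theta_0,\lambda_0,\sigma_0}}[\frac{d\widehat{\P}}{d\widehat{Q}_k^{\theta(\ell),\lambda_0,\theta_0,\lambda_0,\sigma_0}}\vert X_{t_k}]=1$, the conditioning on the common endpoint is undone and the jump-count functional of $M$ is transferred to that of $N$ under $\widehat{\P}$. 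This gives $\E[\widetilde{\E}_{X_{t_k}}^{\theta(\ell),\sigma_0,\lambda_0}[\Delta M_{t_{k+1}}\vert Y_{t_{k+1}}^{\theta(\ell),\sigma_0,\lambda_0}=X_{t_{k+1}}]\vert\widehat{\mathcal{F}}_{t_k}]=\lambda_0\Delta_n$ as well, so the two contributions cancel and $\sum_{k=0}^{n-1}\E[Z_{k,n}\vert\widehat{\mathcal{F}}_{t_k}]=0$ identically.

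Condition (ii) is the crux, and here the obstacle is sharp. The crude bound $\E[(\Delta N_{t_{k+1}}-\widetilde{\E}_{X_{t_k}}^{\theta(\ell),\sigma_0,\lambda_0}[\Delta M_{t_{k+1}}\vert Y_{t_{k+1}}^{\theta(\ell),\sigma_0,\lambda_0}=X_{t_{k+1}}])^2\vert\widehat{\mathcal{F}}_{t_k}]\le C\Delta_n(1+\vert X_{t_k}\vert^q)$, obtained from the Poisson second moment together with Jensen's inequality and the same Girsanov transfer, only yields $\sum_{k=0}^{n-1}\E[Z_{k,n}^2\vert\widehat{\mathcal{F}}_{t_k}]\le\frac{Cu^2}{\sigma_0^4}\cdot\frac{1}{n}\sum_{k=0}^{n-1}X_{t_k}^2(1+\vert X_{t_k}\vert^q)$, which by Lemma \ref{c3ergodic} tends to a nonzero constant rather than to $0$. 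What is genuinely required is the sharper estimate that the conditional $L^2$-norm of the jump-count residual is of order $o(\Delta_n)$, say $\le C\Delta_n^{1+a}(1+\vert X_{t_k}\vert^q)$ for some $a>0$, uniformly in $\ell$. This is a large-deviation type estimate of the kind underlying Lemma \ref{deviation}: because the Poisson jumps have size one whereas the diffusive increment over $[t_k,t_{k+1}]$ has standard deviation of order $\sqrt{\Delta_n}$, the number of jumps is pinned down by the endpoint $X_{t_{k+1}}$ except on an event whose probability decays faster than any power of $\Delta_n$, so that $\widetilde{\E}_{X_{t_k}}^{\theta(\ell),\sigma_0,\lambda_0}[\Delta M_{t_{k+1}}\vert Y_{t_{k+1}}^{\theta(\ell),\sigma_0,\lambda_0}=X_{t_{k+1}}]$ reproduces $\Delta N_{t_{k+1}}$ up to an $L^2$-error of order $o(\Delta_n)$. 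I expect establishing this estimate (comparable to the bounds on $M_{0,1,1,p}^{\theta,\sigma,\lambda}$ and $M_{1,1,0,1,p}^{\theta,\sigma,\lambda}$) to be the main difficulty; granting it, $\sum_{k=0}^{n-1}\E[Z_{k,n}^2\vert\widehat{\mathcal{F}}_{t_k}]\le\frac{Cu^2\Delta_n^a}{\sigma_0^4}\cdot\frac{1}{n}\sum_{k=0}^{n-1}X_{t_k}^2(1+\vert X_{t_k}\vert^q)\to0$, and Lemma \ref{zero} yields the assertion.
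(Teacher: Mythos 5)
Your proposal is correct and matches the paper's own proof in all essentials: the same reduction to the jump-count residual $\Delta N_{t_{k+1}}-\widetilde{\E}_{X_{t_k}}^{\theta(\ell),\sigma_0,\lambda_0}\left[\Delta M_{t_{k+1}}\big\vert Y_{t_{k+1}}^{\theta(\ell),\sigma_0,\lambda_0}=X_{t_{k+1}}\right]$, the same Girsanov/independence argument making the Lemma \ref{zero}(i) term identically zero, and for condition (ii) the paper does exactly what you anticipate, multiplying inside the expectation by ${\bf 1}_{\widehat{J}_{0,k}}+{\bf 1}_{\widehat{J}_{1,k}}+{\bf 1}_{\widehat{J}_{\geq 2,k}}$ and applying Lemma \ref{deviation} with $p=1$ to obtain the bound $C\left(\Delta_n^{-1}e^{-C_1\Delta_n^{2\alpha-1}}+\Delta_n^{\frac{2}{p_1q_1}-1}+\Delta_n^{\frac{2}{\mu_1}-1}\right)\frac{u^2}{n}\sum_{k=0}^{n-1}X_{t_k}^2\to 0$. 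The sharpened $o(\Delta_n)$ estimate you flag as the main remaining difficulty is precisely the stated content of Lemma \ref{deviation} (your conditional second moment is exactly $M_{0,1}^{\theta(\ell),\sigma_0,\lambda_0}+M_{1,1}^{\theta(\ell),\sigma_0,\lambda_0}+M_{\geq 2,1}^{\theta(\ell),\sigma_0,\lambda_0}$), so it can be cited directly rather than re-established.
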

\begin{proof} 
First, applying Girsanov's theorem,
\begin{align*}
&\E\left[Z^{5,\ell}_{k,n}-\widetilde{\E}_{X_{t_{k}}}^{\theta(\ell),\sigma_0,\lambda_0}\left[R_5^{\theta(\ell),\sigma_0,\lambda_0}\big\vert Y_{t_{k+1}}^{\theta(\ell),\sigma_0,\lambda_0}=X_{t_{k+1}}\right]\big\vert\widehat{\mathcal{F}}_{t_k}\right]\\
&=\Delta_n\sigma_0^{-2}X_{t_{k}}\E_{\widehat{Q}_k^{\theta(\ell),\lambda_0,\theta_0,\lambda_0,\sigma_0}}\left[\left(\widetilde{M}_{t_{k+1}}^{\lambda_0}-\widetilde{M}_{t_{k}}^{\lambda_0}\right)\dfrac{d\widehat{\P}}{d \widehat{Q}_k^{\theta(\ell),\lambda_0,\theta_0,\lambda_0,\sigma_0}}\big\vert X_{t_k}\right]=0,
\end{align*}
where we use the independence between $\widetilde{M}_{t_{k+1}}^{\lambda_0}-\widetilde{M}_{t_{k}}^{\lambda_0}$ and $\frac{d\widehat{\P}}{d \widehat{Q}_k^{\theta(\ell),\lambda_0,\theta_0,\lambda_0,\sigma_0}}$. Thus the term appearing in condition (i) of Lemma \ref{zero} actually equals zero. Next, multiplying the random variable inside the expectation by ${\bf 1}_{\widehat{J}_{0,k}} +{\bf 1}_{\widehat{J}_{1,k}}+ {\bf 1}_{\widehat{J}_{\geq 2,k}}$, and applying Lemma \ref{deviation}, we get that for $n$ large enough, for any $\alpha\in(0,\frac{1}{2})$, $p_1>1$, $q_1>1$ with $p_1q_1<2$, and $\mu_1\in(1,2)$,
\begin{align*}
&\dfrac{u^2}{n\Delta_n^3}\sum_{k=0}^{n-1}\E\left[\left(\int_0^1\left(Z^{5,\ell}_{k,n}-\widetilde{\E}_{X_{t_{k}}}^{\theta(\ell),\sigma_0,\lambda_0}\left[R_5^{\theta(\ell),\sigma_0,\lambda_0}\big\vert Y_{t_{k+1}}^{\theta(\ell),\sigma_0,\lambda_0}=X_{t_{k+1}}\right]\right)d\ell\right)^2\big\vert\widehat{\mathcal{F}}_{t_k}\right]\\
&\leq \dfrac{u^2}{\sigma_0^4n\Delta_n}\sum_{k=0}^{n-1}X_{t_k}^2\int_0^1\E\left[\left(\Delta N_{t_{k+1}}-\widetilde{\E}_{X_{t_k}}^{\theta(\ell),\sigma_0,\lambda_0}\left[\Delta M_{t_{k+1}}\big\vert Y^{\theta(\ell),\sigma_0,\lambda_0}_{t_{k+1}}=X_{t_{k+1}}\right]\right)^2\big\vert X_{t_k}\right]d\ell\\
&=\dfrac{u^2}{\sigma_0^4n\Delta_n}\sum_{k=0}^{n-1}X_{t_k}^2\int_0^1\left(M_{0,1}^{\theta(\ell),\sigma_0,\lambda_0}+M_{1,1}^{\theta(\ell),\sigma_0,\lambda_0}+M_{\geq 2,1}^{\theta(\ell),\sigma_0,\lambda_0}\right)d\ell\\
&\leq \left(\Delta_n^{-1}e^{-C_1\Delta_n^{2\alpha-1}}+\Delta_n^{\frac{2}{p_1q_1}-1}+\Delta_n^{\frac{2}{\mu_1}-1}\right)\dfrac{Cu^2}{n}\sum_{k=0}^{n-1}X_{t_k}^2,
\end{align*}
for some constants $C, C_1>0$. This converges to zero in $\P^{\theta_0,\sigma_0,\lambda_0}$-probability as $n\to\infty$. Note that the counting events $\widehat{J}_{0,k}$, $\widehat{J}_{1,k}$, $\widehat{J}_{\geq 2,k}$ and the terms $M_{0,1}^{\theta(\ell),\sigma_0,\lambda_0}$, $M_{1,1}^{\theta(\ell),\sigma_0,\lambda_0}$, $M_{\geq 2,1}^{\theta(\ell),\sigma_0,\lambda_0}$ are defined in Subsections \ref{condensity} and \ref{largeest}. Thus, by Lemma \ref{zero}, the desired result follows.
\end{proof}

\begin{lemma}\label{lemma4} As $n\to\infty$, $\sum_{k=0}^{n-1}T_{k,n}\overset{\P^{\theta_0,\sigma_0,\lambda_0}}{\longrightarrow}0$.
\end{lemma}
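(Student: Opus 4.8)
The plan is to verify conditions (i) and (ii) of Lemma \ref{zero} for $Z_{k,n}=T_{k,n}$ under $\P^{\theta_0,\sigma_0,\lambda_0}$, exactly along the lines of Lemmas \ref{lemma1}--\ref{lemma3}; note that $T_{k,n}$ is $\widehat{\mathcal{F}}_{t_{k+1}}$-measurable. The structural feature that makes this tractable is that the two relevant diffusion coefficients both equal $\sigma_0$, so Girsanov's theorem (Lemma \ref{c2Girsanov2}) applies through the measure $\widehat{Q}_k^{\theta_n,\lambda(\ell),\theta_0,\lambda_0,\sigma_0}$, under which $X$ acquires the tilted dynamics $(\theta_n,\sigma_0,\lambda(\ell))$.

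For condition (i) I would follow the Girsanov computation in the proof of Lemma \ref{lemma2}. Since $\int_{t_k}^{t_{k+1}}(Y_s^{\theta_n,\sigma_0,\lambda(\ell)}-Y_{t_k}^{\theta_n,\sigma_0,\lambda(\ell)})ds$ depends only on $(W,M)$, it is independent of $\frac{d\widehat{\P}}{d\widehat{Q}_k^{\theta_n,\lambda(\ell),\theta_0,\lambda_0,\sigma_0}}$, whose $\widehat{Q}_k$-conditional expectation given $X_{t_k}$ is $1$; this reduces, up to a negligible remainder, the tilted piece of $\E[T_{k,n}\,|\,\widehat{\mathcal{F}}_{t_k}]$ to $\theta_n\,\E^{\theta_0,\sigma_0,\lambda_0}[\int_{t_k}^{t_{k+1}}(X_s-X_{t_k})\,ds\,|\,X_{t_k}]$. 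Writing $m(x):=\E^{\theta_0,\sigma_0,\lambda_0}[\int_{t_k}^{t_{k+1}}(X_s-X_{t_k})\,ds\,|\,X_{t_k}=x]=x\big(\frac{1-e^{-\theta_0\Delta_n}}{\theta_0}-\Delta_n\big)$, which is $O(\Delta_n^2)\,x$ because the compensated Poisson increment is mean zero (so $\E[X_s-X_{t_k}\,|\,X_{t_k}=x]=x(e^{-\theta_0(s-t_k)}-1)$ carries no $\lambda$-dependence), the two pieces of $\E[T_{k,n}\,|\,\widehat{\mathcal{F}}_{t_k}]$ combine into $\frac{w}{\sigma_0^2\sqrt{n\Delta_n}}(\theta_0-\theta_n)\,m(X_{t_k})$. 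As $\theta_0-\theta_n=-u/\sqrt{n\Delta_n}$, this is of order $\Delta_n/n$ times $X_{t_k}$, so by Lemma \ref{c3ergodic}, $\sum_{k=0}^{n-1}\E[T_{k,n}\,|\,\widehat{\mathcal{F}}_{t_k}]=O(\Delta_n)\cdot\frac1n\sum_{k=0}^{n-1}X_{t_k}\to 0$.

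For condition (ii) I would bound $\sum_k\E[T_{k,n}^2\,|\,\widehat{\mathcal{F}}_{t_k}]$ crudely. By Jensen's inequality (to remove the $d\ell$-integral and the inner conditional expectation from the square), Cauchy--Schwarz in the time variable, and Lemma \ref{moment3}(i), the true drift integral obeys $\E[(\int_{t_k}^{t_{k+1}}(X_s-X_{t_k})\,ds)^2\,|\,X_{t_k}]\le C\Delta_n^{3}(1+|X_{t_k}|^q)$, and Girsanov transfers the same bound to the tilted conditional-expectation piece. Hence $\E[T_{k,n}^2\,|\,\widehat{\mathcal{F}}_{t_k}]\le \frac{Cw^2}{n\Delta_n}\,\Delta_n^{3}(1+|X_{t_k}|^q)=\frac{Cw^2\Delta_n^2}{n}(1+|X_{t_k}|^q)$, and the sum is $Cw^2\Delta_n^2\cdot\frac1n\sum_k(1+|X_{t_k}|^q)\to 0$ since $\Delta_n\to0$ and the empirical average converges by Lemma \ref{c3ergodic}. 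Conditions (i) and (ii) then give the claim through Lemma \ref{zero}.

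The main obstacle, compared with Lemma \ref{lemma2}, is the Girsanov reduction in condition (i) when $\lambda(\ell)\neq\lambda_0$: the Radon--Nikodym derivative now carries an extra jump factor $e^{\Delta N_{t_{k+1}}\log(\lambda(\ell)/\lambda_0)-(\lambda_0-\lambda(\ell))\Delta_n}$ beside the usual Brownian factor. One must check that this does not disturb the argument: the factor is built from $N$ and is hence still independent of the $(W,M)$-measurable integral $\int(Y_s-Y_{t_k})ds$, and it does not alter the normalisation $\E_{\widehat{Q}_k}[\frac{d\widehat{\P}}{d\widehat{Q}_k}\,|\,X_{t_k}]=1$. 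One must also confirm that the remainder produced by integrating the tilted quantity against $p^{\theta_0,\sigma_0,\lambda_0}$ rather than $p^{\theta_n,\sigma_0,\lambda(\ell)}$ is summably small, which follows because it carries an additional factor $\theta_0-\theta_n=O((n\Delta_n)^{-1/2})$; the explicit Gaussian--Poisson form of the transition density and the mean-zero property of $\widetilde{M}^{\lambda(\ell)}$ are what make these estimates close.
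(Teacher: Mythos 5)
Your proposal is correct and follows essentially the same route as the paper: the paper writes $T_{k,n}=T_{k,n,1}+T_{k,n,2}$, isolating the $(\theta_0-\theta_n)$-discrepancy piece and the $\theta_n$-matched piece, and treats them exactly as $M_{k,n,1}$ and $M_{k,n,2}$ in Lemma \ref{lemma2}, which is the same Girsanov cancellation plus Jensen/moment bounds that you carry out — your only deviation is computing the Ornstein--Uhlenbeck conditional mean $m(x)=x\big(\frac{1-e^{-\theta_0\Delta_n}}{\theta_0}-\Delta_n\big)=O(\Delta_n^2)x$ explicitly inside condition (i), where the paper instead bounds the discrepancy term in $L^1$ and invokes Lemma \ref{zero2}, a harmless refinement. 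One trivial slip in your closing remark: the jump factor in $d\widehat{\P}/d\widehat{Q}_k^{\theta_n,\lambda(\ell),\theta_0,\lambda_0,\sigma_0}$ is $e^{\Delta N_{t_{k+1}}\log(\lambda_0/\lambda(\ell))-(\lambda_0-\lambda(\ell))\Delta_n}$, with the logarithm inverted relative to what you wrote, but this affects nothing since you only use that this factor is built from $(B,N)$ (hence independent of the $(W,M)$-functionals) and that $\E_{\widehat{Q}_k}\big[d\widehat{\P}/d\widehat{Q}_k\,\big\vert\,X_{t_k}\big]=1$.
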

\begin{proof} We write $T_{k,n}=T_{k,n,1}+T_{k,n,2}$, where
\begin{align*}
T_{k,n,1}:&=-\dfrac{uw}{\sigma_0^2n\Delta_n}\int_0^1\int_{t_k}^{t_{k+1}}\left(X_s^{\theta_0,\sigma_0,\lambda_0}-X_{t_k}\right)dsd\ell,\\
T_{k,n,2}:&=\dfrac{\theta_nw}{\sigma_0^2\sqrt{n\Delta_n}}\int_0^1\bigg\{\int_{t_k}^{t_{k+1}}\left(X_s^{\theta_0,\sigma_0,\lambda_0}-X_{t_k}\right)ds\\
&\qquad-\widetilde{\E}_{X_{t_k}}^{\theta_n,\sigma_0,\lambda(\ell)}\left[\int_{t_k}^{t_{k+1}}\left(Y_s^{\theta_n,\sigma_0,\lambda(\ell)}-Y_{t_k}^{\theta_n,\sigma_0,\lambda(\ell)}\right)ds\big\vert Y_{t_{k+1}}^{\theta_n,\sigma_0,\lambda(\ell)}=X_{t_{k+1}}\right]\bigg\}d\ell.
\end{align*}

In the same way the terms $M_{k,n,1}$ and $M_{k,n,2}$ of Lemma \ref{lemma2} are treated, we obtain that $\sum_{k=0}^{n-1}T_{k,n,1}\overset{\P^{\theta_0,\sigma_0,\lambda_0}}{\longrightarrow}0$, and $\sum_{k=0}^{n-1}T_{k,n,2}\overset{\P^{\theta_0,\sigma_0,\lambda_0}}{\longrightarrow}0$ as $n\to\infty$. Thus, the result follows.
\end{proof}

In all what follows, we set $U_k:=\Delta N_{t_{k+1}}-\widetilde{\E}_{X_{t_k}}^{\theta_n,\sigma_0,\lambda(\ell)}[\Delta M_{t_{k+1}}\vert Y_{t_{k+1}}^{\theta_n,\sigma_0,\lambda(\ell)}=X_{t_{k+1}}]$.
\begin{lemma}\label{lemma5} As $n\to\infty$, $\sum_{k=0}^{n-1}R_{k,n}\overset{\P^{\theta_0,\sigma_0,\lambda_0}}{\longrightarrow}0$.
\end{lemma}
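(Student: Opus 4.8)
The plan is to apply Lemma \ref{zero} to the array $Z_{k,n}:=R_{k,n}=\frac{w}{\sigma_0^2\sqrt{n\Delta_n}}\int_0^1 U_k\,d\ell$ under $\P^{\theta_0,\sigma_0,\lambda_0}$, where $U_k$ is the jump-count discrepancy fixed just above the lemma. Structurally $U_k$ is identical to the term treated in Lemma \ref{lemma3}, except that the prediction $\widetilde{\E}_{X_{t_k}}^{\theta_n,\sigma_0,\lambda(\ell)}[\Delta M_{t_{k+1}}\vert Y_{t_{k+1}}^{\theta_n,\sigma_0,\lambda(\ell)}=X_{t_{k+1}}]$ is now computed at intensity $\lambda(\ell)$ rather than at $\lambda_0$. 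Since the diffusion parameter is $\sigma_0$ on both sides, the relevant change of measure is $\widehat{Q}_k:=\widehat{Q}_k^{\theta_n,\lambda(\ell),\theta_0,\lambda_0,\sigma_0}$ of Lemma \ref{c2Girsanov2}, under which the observed $X$ on $[t_k,t_{k+1}]$ acquires exactly the law of $Y^{\theta_n,\sigma_0,\lambda(\ell)}$ and $N$ acquires intensity $\lambda(\ell)$. I would verify conditions (i) and (ii) of Lemma \ref{zero} separately; condition (ii) is routine, while condition (i) is the genuinely new difficulty.

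For condition (ii) I would mimic the second half of the proof of Lemma \ref{lemma3}. By Jensen's inequality $\E[R_{k,n}^2\vert\widehat{\mathcal{F}}_{t_k}]\leq\frac{w^2}{\sigma_0^4 n\Delta_n}\int_0^1\E[U_k^2\vert X_{t_k}]\,d\ell$, and inserting the partition ${\bf 1}_{\widehat{J}_{0,k}}+{\bf 1}_{\widehat{J}_{1,k}}+{\bf 1}_{\widehat{J}_{\geq 2,k}}$ splits $\E[U_k^2\vert X_{t_k}]$ into the terms $M_{0,1}^{\theta_n,\sigma_0,\lambda(\ell)}+M_{1,1}^{\theta_n,\sigma_0,\lambda(\ell)}+M_{\geq 2,1}^{\theta_n,\sigma_0,\lambda(\ell)}$ estimated in Lemma \ref{deviation}. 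For $n$ large and suitable $\alpha\in(0,\frac12)$, $p_1q_1<2$ and $\mu_1\in(1,2)$, this produces a bound of the form $\big(\Delta_n^{-1}e^{-C_1\Delta_n^{2\alpha-1}}+\Delta_n^{\frac{2}{p_1q_1}-1}+\Delta_n^{\frac{2}{\mu_1}-1}\big)\frac{Cw^2}{n}\sum_{k=0}^{n-1}(1+\vert X_{t_k}\vert^q)$, which tends to $0$ in $\P^{\theta_0,\sigma_0,\lambda_0}$-probability by Lemma \ref{c3ergodic} and the rate conditions. Compared with Lemma \ref{lemma3} there is no extra factor $X_{t_k}^2$, so this step is if anything simpler.

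The hard part is condition (i), $\sum_{k=0}^{n-1}\E[R_{k,n}\vert\widehat{\mathcal{F}}_{t_k}]\to 0$. In Lemma \ref{lemma3} the analogous conditional mean vanished identically because both counting terms carried the common intensity $\lambda_0$; here $\lambda(\ell)\neq\lambda_0$, so the naive conditional mean is nonzero. The key observation is that under $\widehat{Q}_k$ the prediction parameters match the law of $X$, so $\E_{\widehat{Q}_k}[U_k\vert X_{t_k}]=\lambda(\ell)\Delta_n-\lambda(\ell)\Delta_n=0$; consequently, writing $L_k:=\frac{d\widehat{\P}}{d\widehat{Q}_k}$ and using $\E_{\widehat{Q}_k}[L_k\vert X_{t_k}]=1$, I would obtain the centered representation $\E[U_k\vert\widehat{\mathcal{F}}_{t_k}]=\E_{\widehat{Q}_k}[U_k(L_k-1)\vert X_{t_k}]$. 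Cauchy--Schwarz then bounds this by $\sqrt{\E_{\widehat{Q}_k}[U_k^2\vert X_{t_k}]}\,\sqrt{\E_{\widehat{Q}_k}[(L_k-1)^2\vert X_{t_k}]}$, where the first factor is again controlled by Lemma \ref{deviation} and the second factor is $O(1/n)$ (up to polynomial growth in $X_{t_k}$) because the one-step log-likelihood ratio has Fisher information of order $\Delta_n$ against perturbations of size $(n\Delta_n)^{-1/2}$. Summing with the prefactor $\frac{w}{\sigma_0^2\sqrt{n\Delta_n}}$ and invoking Lemma \ref{c3ergodic} then yields a contribution of order $\sqrt{\Delta_n^{-1}e^{-C_1\Delta_n^{2\alpha-1}}+\Delta_n^{\frac{2}{p_1q_1}-1}+\Delta_n^{\frac{2}{\mu_1}-1}}\to 0$. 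The main obstacle is precisely this: unlike in Lemma \ref{lemma3} there is no exact cancellation, so one must combine the smallness of $U_k$ (jumps are correctly identified from the endpoints with overwhelming probability, quantified by Lemma \ref{deviation}) with the $O(1/n)$ fluctuation of the change-of-measure density to render the residual bias summably small. Once (i) and (ii) hold, Lemma \ref{zero} gives $\sum_{k=0}^{n-1}R_{k,n}\to 0$ in $\P^{\theta_0,\sigma_0,\lambda_0}$-probability.
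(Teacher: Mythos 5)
Your treatment of condition (ii) of Lemma \ref{zero} is exactly the paper's: Jensen's inequality, the decomposition ${\bf 1}_{\widehat{J}_{0,k}}+{\bf 1}_{\widehat{J}_{1,k}}+{\bf 1}_{\widehat{J}_{\geq 2,k}}$, and Lemma \ref{deviation}. The gap is in condition (i), and it starts with your premise. You assert that "unlike in Lemma \ref{lemma3} there is no exact cancellation," but there is one: this is precisely what Lemma \ref{c2lemma8} is for. Under $\P^{\theta_0,\sigma_0,\lambda_0}$ one has $\E[\Delta N_{t_{k+1}}\vert X_{t_k}]=\lambda_0\Delta_n$, while writing $\Delta M_{t_{k+1}}=\widetilde{M}_{t_{k+1}}^{\lambda(\ell)}-\widetilde{M}_{t_k}^{\lambda(\ell)}+\lambda(\ell)\Delta_n$ and applying Lemma \ref{c2lemma8} gives
\begin{equation*}
\E\left[\widetilde{\E}_{X_{t_k}}^{\theta_n,\sigma_0,\lambda(\ell)}\left[\Delta M_{t_{k+1}}\big\vert Y_{t_{k+1}}^{\theta_n,\sigma_0,\lambda(\ell)}=X_{t_{k+1}}\right]\big\vert X_{t_k}\right]=\lambda(\ell)\Delta_n-\dfrac{\ell w}{\sqrt{n\Delta_n}}\Delta_n=\lambda_0\Delta_n,
\end{equation*}
so $\E[U_k\vert X_{t_k}]=0$ and $\sum_k\E[R_{k,n}\vert\widehat{\mathcal{F}}_{t_k}]$ is identically zero. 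The mechanism is the same one you correctly identify under $\widehat{Q}_k$ (matched laws plus the tower property), but the intensity mismatch $\lambda(\ell)-\lambda_0$ is exactly compensated by the mean $-\frac{\ell w}{\sqrt{n\Delta_n}}\Delta_n$ of the compensated increment; no residual bias survives.

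Worse, your substitute argument for bounding the (in fact nonexistent) bias is not valid as stated. The key quantitative claim, $\E_{\widehat{Q}_k}[(L_k-1)^2\vert X_{t_k}]=O\big((1+\vert X_{t_k}\vert^q)/n\big)$, is false in this model: since $\E_{\widehat{Q}_k}[(L_k-1)^2\vert X_{t_k}]=\E_{\P}[L_k\vert X_{t_k}]-1$ is a $\chi^2$-type divergence, and the Girsanov exponent contains $\frac{1}{2}\int_{t_k}^{t_{k+1}}a_s^2\,ds$ with $a_s=\sigma_0^{-1}\big((\theta_0-\theta_n)X_s+\lambda_0-\lambda(\ell)\big)$ \emph{linear in the unbounded state} $X_s$, on the event $\{\Delta N_{t_{k+1}}=j\}$ the process satisfies $\sup_{s}\vert X_s\vert\gtrsim j$, so $\E_{\P}[L_k\vert X_{t_k},\Delta N_{t_{k+1}}=j]$ grows like $e^{cj^2/n}$; the sum $\sum_j e^{cj^2/n}e^{-\lambda_0\Delta_n}(\lambda_0\Delta_n)^j/j!$ diverges for every fixed $n$, hence $\E_{\widehat{Q}_k}[(L_k-1)^2\vert X_{t_k}]=+\infty$ and your Cauchy--Schwarz bound reads $\infty\cdot o(1)$. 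The Fisher-information heuristic ("variance of order $\Delta_n\times$ perturbation$^2$") fails exactly because of the Poisson tails interacting with the unbounded drift; this is why the paper never takes unconditional exponential moments of likelihood ratios but always conditions on the jump structure first (Lemmas \ref{change}, \ref{lemma12}, \ref{deviation}) or, as in condition (i) here, exploits an exact identity. A secondary, repairable point: you also invoke Lemma \ref{deviation} under $\widehat{Q}_k$, whereas it is stated under $\P^{\theta_0,\sigma_0,\lambda_0}$; this would need a remark on uniformity of the constants over the local parameter window.
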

\begin{proof} 
Using $\E[\Delta N_{t_{k+1}}\vert X_{t_k}]=\lambda_0\Delta_n$, $\Delta M_{t_{k+1}}=\widetilde{M}_{t_{k+1}}^{\lambda(\ell)}-\widetilde{M}_{t_k}^{\lambda(\ell)}+\lambda(\ell)\Delta_n$, and Lemma \ref{c2lemma8}, 
\begin{align*}
\sum_{k=0}^{n-1}\E\left[R_{k,n}\vert \widehat{\mathcal{F}}_{t_k}\right]&=\dfrac{w}{\sigma_0^2\sqrt{n\Delta_n}}\sum_{k=0}^{n-1}\int_0^1\E\left[U_k\vert X_{t_k}\right]d\ell\\
&=\dfrac{w}{\sigma_0^2\sqrt{n\Delta_n}}\sum_{k=0}^{n-1}\int_0^1\left(\lambda_0\Delta_n-\lambda_0\Delta_n\right)d\ell=0.
\end{align*}
Thus the term appearing in condition (i) of Lemma \ref{zero} actually equals zero.

Next, proceeding as in Lemma \ref{lemma3} by multiplying the random variable inside the expectation by ${\bf 1}_{\widehat{J}_{0,k}} +{\bf 1}_{\widehat{J}_{1,k}}+ {\bf 1}_{\widehat{J}_{\geq 2,k}}$, and applying Lemma \ref{deviation}, we get that for $n$ large enough, for any $\alpha\in(0,\frac{1}{2})$, $p_1>1$, $q_1>1$ with $p_1q_1<2$, and $\mu_1\in(1,2)$,
\begin{align*} 
\sum_{k=0}^{n-1}\E\left[R_{k,n}^2\vert \widehat{\mathcal{F}}_{t_k}\right]&\leq\sum_{k=0}^{n-1}\dfrac{w^2}{\sigma_0^4n\Delta_n}\int_0^1\E\left[U_k^2\vert X_{t_k}\right]d\ell\\
&=\sum_{k=0}^{n-1}\dfrac{w^2}{\sigma_0^4n\Delta_n}\int_0^1\left(M_{0,1}^{\theta_n,\sigma_0,\lambda(\ell)}+M_{1,1}^{\theta_n,\sigma_0,\lambda(\ell)}+M_{\geq 2,1}^{\theta_n,\sigma_0,\lambda(\ell)}\right)d\ell\\
&\leq Cw^2\left(\Delta_n^{-1}e^{-C_1\Delta_n^{2\alpha-1}}+\Delta_n^{\frac{2}{p_1q_1}-1}+\Delta_n^{\frac{2}{\mu_1}-1}\right),
\end{align*}
for some constants $C, C_1>0$, which converges to zero in $\P^{\theta_0,\sigma_0,\lambda_0}$-probability as $n\to\infty$. Thus, by Lemma \ref{zero}, the result follows.
\end{proof}

\begin{lemma}\label{lemma6} As $n\to\infty$,
\begin{align*} 
\sum_{k=0}^{n-1}\dfrac{v}{\sqrt{n\Delta_n^2}}\int_0^1\widetilde{\E}_{X_{t_k}}^{\theta_n,\sigma(\ell),\lambda_n}\left[H^{\theta_n,\sigma(\ell),\lambda_n}\big\vert Y^{\theta_n,\sigma(\ell),\lambda_n}_{t_{k+1}}=X_{t_{k+1}}\right]d\ell\overset{\P^{\theta_0,\sigma_0,\lambda_0}}{\longrightarrow}0.
\end{align*}
\end{lemma}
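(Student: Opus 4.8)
The plan is to verify the two conditions of Lemma \ref{zero} under $\P^{\theta_0,\sigma_0,\lambda_0}$ for the triangular array $Z_{k,n}:=\frac{v}{\sqrt{n\Delta_n^2}}\int_0^1\widetilde{\E}_{X_{t_k}}^{\theta_n,\sigma(\ell),\lambda_n}[H^{\theta_n,\sigma(\ell),\lambda_n}\vert Y^{\theta_n,\sigma(\ell),\lambda_n}_{t_{k+1}}=X_{t_{k+1}}]\,d\ell$. The two structural inputs are the centering identity \eqref{es4}, which gives $\widetilde{\E}_x^{\theta_n,\sigma(\ell),\lambda_n}[H^{\theta_n,\sigma(\ell),\lambda_n}]=0$, and the moment bound \eqref{es6}, which controls $\widetilde{\E}_x^{\theta_n,\sigma(\ell),\lambda_n}[\vert H^{\theta_n,\sigma(\ell),\lambda_n}\vert^p]$ by $C_p\Delta_n^{p+\frac{1}{2}}(1+\vert x\vert^q)$. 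The crucial difference with Lemma \ref{lemma1} is that here the diffusion coefficient $\sigma(\ell)$ differs from $\sigma_0$, so the plain Girsanov change of measure of Lemma \ref{c2Girsanov2} is no longer available: the passage from the true law to $\P^{\theta_n,\sigma(\ell),\lambda_n}$ must instead be performed through the transition densities conditioned on the number of jumps and the jump times (Lemma \ref{change}), and this is where the main work will lie.

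Condition (ii) is the more routine one. Since the quantity $\widetilde{\E}_{X_{t_k}}^{\theta_n,\sigma(\ell),\lambda_n}[H^{\theta_n,\sigma(\ell),\lambda_n}\vert Y^{\theta_n,\sigma(\ell),\lambda_n}_{t_{k+1}}=X_{t_{k+1}}]$ is itself a conditional expectation, two successive applications of the conditional Jensen inequality (first in the $d\ell$-integration, then in the bridge conditioning) bound $\E[Z_{k,n}^2\vert\widehat{\mathcal{F}}_{t_k}]$ by $\frac{v^2}{n\Delta_n^2}\int_0^1 \E[\widetilde{\E}_{X_{t_k}}^{\theta_n,\sigma(\ell),\lambda_n}[(H^{\theta_n,\sigma(\ell),\lambda_n})^2\vert Y^{\theta_n,\sigma(\ell),\lambda_n}_{t_{k+1}}=X_{t_{k+1}}]\vert\widehat{\mathcal{F}}_{t_k}]\,d\ell$. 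The inner conditional expectation, integrated against the true density $p^{\theta_0,\sigma_0,\lambda_0}(\Delta_n,X_{t_k},\cdot)$, is rewritten by Lemma \ref{change} as $\widetilde{\E}_{X_{t_k}}^{\theta_n,\sigma(\ell),\lambda_n}[(H^{\theta_n,\sigma(\ell),\lambda_n})^2\,\Phi_{k,\ell}]$, where $\Phi_{k,\ell}$ is the ratio of transition densities produced by the measure change. Using the $L^2$-control of $\Phi_{k,\ell}$ provided by Lemma \ref{change} together with \eqref{es6}, each term is of order $\frac{\Delta_n^{\kappa}}{n}(1+\vert X_{t_k}\vert^q)$ for some $\kappa>0$, so that $\sum_{k=0}^{n-1}\E[Z_{k,n}^2\vert\widehat{\mathcal{F}}_{t_k}]\leq \frac{Cv^2\Delta_n^{\kappa}}{n}\sum_{k=0}^{n-1}(1+\vert X_{t_k}\vert^q)$, which tends to $0$ in $\P^{\theta_0,\sigma_0,\lambda_0}$-probability by Lemma \ref{c3ergodic} and $\Delta_n\to 0$.

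For condition (i) I would exploit the centering \eqref{es4}. Taking $\E[\,\cdot\,\vert\widehat{\mathcal{F}}_{t_k}]$ amounts to integrating the bridge functional against $p^{\theta_0,\sigma_0,\lambda_0}(\Delta_n,X_{t_k},\cdot)$; rewriting this as an expectation against $p^{\theta_n,\sigma(\ell),\lambda_n}$ weighted by $\Phi_{k,\ell}$ and using \eqref{es4} to drop the constant, the conditional mean becomes $\widetilde{\E}_{X_{t_k}}^{\theta_n,\sigma(\ell),\lambda_n}[H^{\theta_n,\sigma(\ell),\lambda_n}(\Phi_{k,\ell}-1)]$. Since $\sigma(\ell)\neq\sigma_0$, I cannot linearize $\Phi_{k,\ell}-1$ by Girsanov as in Lemma \ref{lemma1}; instead, as in Lemmas \ref{lemma3} and \ref{lemma5}, I would insert the partition ${\bf 1}_{\widehat{J}_{0,k}}+{\bf 1}_{\widehat{J}_{1,k}}+{\bf 1}_{\widehat{J}_{\geq 2,k}}$ according to the number of Poisson jumps on $[t_k,t_{k+1}]$ and invoke Lemma \ref{change} to express $\Phi_{k,\ell}$ through the Gaussian transition densities conditioned on the jump structure. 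On $\widehat{J}_{0,k}$ the ratio is an explicit ratio of Gaussian densities whose deviation from $1$ is governed by the $O(1/\sqrt{n})$ and $O(1/\sqrt{n\Delta_n})$ parameter gaps and is handled by the Cauchy--Schwarz inequality together with \eqref{es6}; on the jump events the large-deviation estimates of Lemma \ref{deviation} supply the required exponential and polynomial smallness. Summing and using the ergodic average $\frac{1}{n}\sum_{k=0}^{n-1}(1+\vert X_{t_k}\vert^q)=O(1)$ in probability then yields $\sum_{k=0}^{n-1}\E[Z_{k,n}\vert\widehat{\mathcal{F}}_{t_k}]\to 0$. The control of $\Phi_{k,\ell}-1$ across this jump decomposition is the main obstacle, precisely because the mismatch $\sigma(\ell)\neq\sigma_0$ rules out the clean Girsanov cancellation available in Lemma \ref{lemma1}. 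With (i) and (ii) established, Lemma \ref{zero} gives the claim.
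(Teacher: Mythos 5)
Your skeleton --- Lemma \ref{zero}, the centering \eqref{es4}, the moment bound \eqref{es6}, and a change of measure through jump-conditioned transition densities because $\sigma(\ell)\neq\sigma_0$ --- is the paper's, and your condition (ii) is essentially the paper's argument, up to a misattribution: the $L^p$-control of the density ratios is supplied by Lemma \ref{lemma12} (via H\"older, with exponents admissible for $n$ large since $\sigma(\ell)\to\sigma_0$), not by Lemma \ref{change}, which is only the change-of-measure identity. The genuine gap is in condition (i), on the jump events. Lemma \ref{deviation} cannot be invoked there: it bounds the specific quantities $M_{i,p}^{\theta,\sigma,\lambda}$, i.e. conditional second moments of differences of jump counts $(\Delta N_{t_{k+1}})^p-\widetilde{\E}_{X_{t_k}}^{\theta,\sigma,\lambda}[(\Delta M_{t_{k+1}})^p\vert\cdot\,]$, which is exactly the structure of the summands in Lemmas \ref{lemma3} and \ref{lemma5}; your summand $h_1(X_{t_{k+1}})(\Phi_{k,\ell}-1)$, with $h_1$ a bridge expectation of the Malliavin remainder $H^{\theta_n,\sigma(\ell),\lambda_n}$, is not of this form, so the large deviation estimates simply do not apply to it. Moreover, the coarse partition $\widehat{J}_{0,k}$, $\widehat{J}_{1,k}$, $\widehat{J}_{\geq 2,k}$ does not mesh with Lemma \ref{change}: on $\widehat{J}_{\geq 2,k}$ there is no single Gaussian density ratio, and the change of measure forces the full decomposition over exact jump numbers $j$ together with the jump-time integrals.

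Even if you replace Lemma \ref{deviation} by the bounds that are actually available on the jump strata (H\"older with conjugate $p_1,p_2$, Jensen and \eqref{es6} for $h_1$, and Lemma \ref{lemma12} for the ratio, which yields the jump-probability factor $(\lambda\Delta_n)^{1/p_2}$), the resulting rate is insufficient: the $\widehat{J}_{1,k}$ contribution to $\sum_{k}\E[Z_{k,n}\vert\widehat{\mathcal{F}}_{t_k}]$ is of order $\frac{\sqrt{n}}{\Delta_n}\cdot\Delta_n^{1+\frac{1}{2p_1}+\frac{1}{p_2}}=\sqrt{n}\,\Delta_n^{1-\frac{1}{2p_1}}$, and since $1-\frac{1}{2p_1}<1$ this need not vanish under the paper's qualitative rate assumptions (for instance with $n\Delta_n^2=1/\log n$ one has $\sqrt{n}\Delta_n\to 0$ yet $\sqrt{n}\,\Delta_n^{1-\frac{1}{2p_1}}\to\infty$ for every fixed $p_1$). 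What is missing is the factor $O(1/\sqrt{n})$ coming from the parameter gaps on \emph{every} jump stratum, not only on $\widehat{J}_{0,k}$ as you propose. This is precisely the content of the paper's Lemma \ref{lemma15}: by the mean value theorem, $\frac{q_{(j)}^{\theta_0,\sigma_0,\lambda_0}}{q_{(j)}^{\theta_n,\sigma(\ell),\lambda_n}}-1$ is written as a parameter integral of the logarithmic derivatives $\partial_{\beta}q_{(j)}/q_{(j)}$, whose moments are controlled through their Malliavin/Girsanov representations (Lemma \ref{lemma13}) and the estimates of Lemma \ref{lemma14}; the factor $1/\sqrt{n}$ then multiplies every stratum $j\geq 0$, and the sum over $j$ converges thanks to the Poisson weights. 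With Lemma \ref{lemma15} playing this role in place of Lemma \ref{deviation}, your condition (i) closes exactly as in the paper.
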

\begin{proof} 
For $\mu\in\{1,2\}$, we set $h_{\mu}(X_{t_{k+1}}):=\widetilde{\E}_{X_{t_k}}^{\theta_n,\sigma(\ell),\lambda_n}[(H^{\theta_n,\sigma(\ell),\lambda_n})^{\mu}\big\vert Y^{\theta_n,\sigma(\ell),\lambda_n}_{t_{k+1}}=X_{t_{k+1}}]$. First, using  Lemma \ref{change}, \eqref{es4}, Lemma \ref{lemma15}, Jensen's inequality, and \eqref{es6}, for $n$ large enough,
\begin{align*}
&\left\vert\sum_{k=0}^{n-1}\dfrac{v}{\sqrt{n\Delta_n^2}}\int_0^1\E\left[\widetilde{\E}_{X_{t_k}}^{\theta_n,\sigma(\ell),\lambda_n}\left[H^{\theta_n,\sigma(\ell),\lambda_n}\big\vert Y^{\theta_n,\sigma(\ell),\lambda_n}_{t_{k+1}}=X_{t_{k+1}}\right]\big\vert \widehat{\mathcal{F}}_{t_k}\right]d\ell\right\vert\\
&=\bigg\vert\sum_{k=0}^{n-1}\dfrac{ve^{w\sqrt{\frac{\Delta_n}{n}}}}{\sqrt{n\Delta_n^2}}\int_0^1\bigg\{\E\left[h_1(X_{t_{k+1}}^{\theta_n,\sigma(\ell),\lambda_n}){\bf{1}}_{\widehat{J}_{0,k}}\Big(\frac{q_{(0)}^{\theta_0,\sigma_0,\lambda_0}}{q_{(0)}^{\theta_n,\sigma(\ell),\lambda_n}}-1\Big)\big\vert X_{t_k}^{\theta_n,\sigma(\ell),\lambda_n}=X_{t_k}\right]\\
&+\sum_{j=1}^{\infty}\frac{\lambda_0^j}{\lambda_n^j}\int_{\Sigma_k^j}\E\left[h_1(X_{t_{k+1}}^{\theta_n,\sigma(\ell),\lambda_n}){\bf{1}}_{\{\widehat{J}_{j,k},s_1,.,s_j\}}\Big(\frac{q_{(j)}^{\theta_0,\sigma_0,\lambda_0}}{q_{(j)}^{\theta_n,\sigma(\ell),\lambda_n}}-1\Big)\big\vert X_{t_k}^{\theta_n,\sigma(\ell),\lambda_n}=X_{t_k}\right]ds_1\cdot ds_j\bigg\}d\ell\bigg\vert\\
&\leq C\Delta_n^{\frac{1}{2\max\{\overline{q}_1,\widetilde{q}_1,q_1\}}}\bigg(1+\sum_{j=1}^{\infty}\frac{\lambda_0^j}{\lambda_n^j}\frac{\Delta_n^j}{j!}\bigg\{\left(\frac{\lambda_n}{\lambda_0}\right)^{\frac{j}{\overline{q}_2}}(e^{-\lambda_0\Delta_n}\lambda_0^j)^{\frac{1}{\overline{p}_1 \overline{q}_2}}(e^{-\lambda_n\Delta_n}\lambda_n^j)^{\frac{1}{\overline{q}_3}}\\
&\qquad+\left(\frac{\lambda_n}{\lambda_0}\right)^{\frac{j}{\widetilde{q}_2}}(e^{-\lambda_0\Delta_n}\lambda_0^j)^{\frac{1}{\widetilde{p}_1 \widetilde{q}_2}}(e^{-\lambda_n\Delta_n}\lambda_n^j)^{\frac{1}{\widetilde{q}_3}}+\int_0^1\Big(\frac{\lambda_n}{\lambda_0+\frac{wh}{\sqrt{n\Delta_n}}}\Big)^{\frac{j}{q_2}}\\
&\qquad\times\Big(e^{-(\lambda_0+\frac{wh}{\sqrt{n\Delta_n}})\Delta_n}(\lambda_0+\frac{wh}{\sqrt{n\Delta_n}})^j\Big)^{\frac{1}{p_1 q_2}}(e^{-\lambda_n\Delta_n}\lambda_n^j)^{\frac{1}{q_3}}dh\bigg\}\bigg)\dfrac{\vert v\vert}{n}\sum_{k=0}^{n-1}\left(1+\vert X_{t_k}\vert^q\right),
\end{align*}
for some $C, q>0$, where $(\overline{q}_1,\overline{q}_2,\overline{q}_3)$, $(\widetilde{q}_1,\widetilde{q}_2,\widetilde{q}_3)$, $(q_1,q_2,q_3)$, $\overline{p}_1, \widetilde{p}_1, p_1$ are as in Lemma \ref{lemma15}. Here  $q_{(j)}^{\theta,\sigma,\lambda}(t-s,x,y)$ are defined in Subsection \ref{condensity},  $\frac{q_{(0)}^{\theta_0,\sigma_0,\lambda_0}}{q_{(0)}^{\theta_n,\sigma(\ell),\lambda_n}}\equiv\frac{q_{(0)}^{\theta_0,\sigma_0,\lambda_0}}{q_{(0)}^{\theta_n,\sigma(\ell),\lambda_n}}(\Delta_n,X_{t_k}^{\theta_n,\sigma(\ell),\lambda_n},X^{\theta_n,\sigma(\ell),\lambda_n}_{t_{k+1}})$, $\frac{q_{(j)}^{\theta_0,\sigma_0,\lambda_0}}{q_{(j)}^{\theta_n,\sigma(\ell),\lambda_n}}\equiv\frac{q_{(j)}^{\theta_0,\sigma_0,\lambda_0}}{q_{(j)}^{\theta_n,\sigma(\ell),\lambda_n}}(\Delta_n,X_{t_k}^{\theta_n,\sigma(\ell),\lambda_n},X^{\theta_n,\sigma(\ell),\lambda_n}_{t_{k+1}};s_1,\ldots,s_j)$. Then using the finiteness of the sum w.r.t. $j$, the right hand side converges to zero in $\P^{\theta_0,\sigma_0,\lambda_0}$-probability as $n\to\infty$.

Next, applying Jensen's and H\"older's inequalities with $p_1, p_2$ conjugate, together with Lemmas \ref{change}, \ref{lemma12}, and \eqref{es6}, we get that for $n$ large enough, and some constants $C, q>0$,
\begin{align*}
&\sum_{k=0}^{n-1}\dfrac{v^2}{n\Delta_n^2}\E\left[\left(\int_0^1\widetilde{\E}_{X_{t_k}}^{\theta_n,\sigma(\ell),\lambda_n}\left[H^{\theta_n,\sigma(\ell),\lambda_n}\big\vert Y^{\theta_n,\sigma(\ell),\lambda_n}_{t_{k+1}}=X_{t_{k+1}}\right]d\ell\right)^2\big\vert \widehat{\mathcal{F}}_{t_k}\right]\\
&\leq\sum_{k=0}^{n-1}\dfrac{v^2}{n\Delta_n^2}e^{w\sqrt{\frac{\Delta_n}{n}}}\int_0^1\bigg\{\E\left[h_2(X_{t_{k+1}}^{\theta_n,\sigma(\ell),\lambda_n}){\bf{1}}_{\widehat{J}_{0,k}}\frac{q_{(0)}^{\theta_0,\sigma_0,\lambda_0}}{q_{(0)}^{\theta_n,\sigma(\ell),\lambda_n}}\big\vert X_{t_k}^{\theta_n,\sigma(\ell),\lambda_n}=X_{t_k}\right]\\
&+\sum_{j=1}^{\infty}\left(\frac{\lambda_0}{\lambda_n}\right)^j\int_{\Sigma_k^j}\E\left[h_2(X_{t_{k+1}}^{\theta_n,\sigma(\ell),\lambda_n}){\bf{1}}_{\{\widehat{J}_{j,k},s_1,\ldots,s_j\}}\frac{q_{(j)}^{\theta_0,\sigma_0,\lambda_0}}{q_{(j)}^{\theta_n,\sigma(\ell),\lambda_n}}\big\vert X_{t_k}^{\theta_n,\sigma(\ell),\lambda_n}=X_{t_k}\right]ds_1\cdots ds_j\bigg\}d\ell\\
&\leq\sum_{k=0}^{n-1}\dfrac{Cv^2}{n\Delta_n^2}\int_0^1\left(\E\left[\left\vert h_2(X_{t_{k+1}}^{\theta_n,\sigma(\ell),\lambda_n})\right\vert^{p_1}\big\vert X_{t_k}^{\theta_n,\sigma(\ell),\lambda_n}=X_{t_k}\right]\right)^{\frac{1}{p_1}}\\
&\qquad\times\bigg\{\left(\E\left[{\bf{1}}_{\widehat{J}_{0,k}}\Big(\frac{q_{(0)}^{\theta_0,\sigma_0,\lambda_0}}{q_{(0)}^{\theta_n,\sigma(\ell),\lambda_n}}\Big)^{p_2}\big\vert X_{t_k}^{\theta_n,\sigma(\ell),\lambda_n}=X_{t_k}\right]\right)^{\frac{1}{p_2}}\\
&+\sum_{j=1}^{\infty}\left(\frac{\lambda_0}{\lambda_n}\right)^j\int_{\Sigma_k^j}\left(\E\left[{\bf{1}}_{\{\widehat{J}_{j,k},s_1,\ldots,s_j\}}\Big(\frac{q_{(j)}^{\theta_0,\sigma_0,\lambda_0}}{q_{(j)}^{\theta_n,\sigma(\ell),\lambda_n}}\Big)^{p_2}\big\vert X_{t_k}^{\theta_n,\sigma(\ell),\lambda_n}=X_{t_k}\right]\right)^{\frac{1}{p_2}}ds_1\cdots ds_j\bigg\}d\ell\\
&\leq\sum_{k=0}^{n-1}\dfrac{Cv^2}{n\Delta_n^2}\int_0^1\big(\E[\vert H^{\theta_n,\sigma(\ell),\lambda_n}\vert^{2p_1}\big\vert Y_{t_k}^{\theta_n,\sigma(\ell),\lambda_n}=X_{t_k}]\big)^{\frac{1}{p_1}}d\ell\Big\{1+\sum_{j=1}^{\infty}\big(\frac{\lambda_0}{\lambda_n}\big)^j\frac{\Delta_n^j}{j!}(e^{-\lambda_n\Delta_n}\lambda_n^j)^{\frac{1}{p_2}}\Big\}\\
&\leq C\Delta_n^{\frac{1}{2p_1}}\dfrac{v^2}{n}\sum_{k=0}^{n-1}\left(1+\vert X_{t_k}\vert^q\right),
\end{align*}
which converges to zero in $\P^{\theta_0,\sigma_0,\lambda_0}$-probability as $n\to\infty$. Here we have used the finiteness of the sum w.r.t. $j$. Thus, by Lemma \ref{zero}, the result desired follows.
\end{proof}

\begin{lemma}\label{lemma7} As $n\to\infty$,
\begin{align*}
&\sum_{k=0}^{n-1}\dfrac{v}{\sqrt{n\Delta_n^2}}\int_0^1\dfrac{1}{\sigma(\ell)^3}\bigg\{\left(H_{6}+H_{7}\right)^2+2\sigma_0\Delta B_{t_{k+1}}\left(H_{6}+H_{7}\right)\\
&\qquad-\widetilde{\E}_{X_{t_k}}^{\theta_n,\sigma(\ell),\lambda_n}\bigg[\left(H_4^{\theta_n,\sigma(\ell),\lambda_n}+H_{5}^{\theta_n,\sigma(\ell),\lambda_n}\right)^2+2\sigma(\ell)\Delta W_{t_{k+1}}\\
&\qquad\qquad\times\left(H_4^{\theta_n,\sigma(\ell),\lambda_n}+H_{5}^{\theta_n,\sigma(\ell),\lambda_n}\right)\big\vert Y^{\theta_n,\sigma(\ell),\lambda_n}_{t_{k+1}}=X_{t_{k+1}}\bigg]\bigg\}d\ell\overset{\P^{\theta_0,\sigma_0,\lambda_0}}{\longrightarrow}0.
\end{align*}
\end{lemma}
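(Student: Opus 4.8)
The plan is to verify the two conditions of Lemma \ref{zero} under $\P^{\theta_0,\sigma_0,\lambda_0}$ for the summands
\begin{equation*}
Z_{k,n}:=\frac{v}{\sqrt{n\Delta_n^2}}\int_0^1\frac{1}{\sigma(\ell)^3}\big(G_k-B_{k,\ell}\big)\,d\ell,
\end{equation*}
where $G_k:=(H_6+H_7)^2+2\sigma_0\Delta B_{t_{k+1}}(H_6+H_7)$ is the observed functional and $B_{k,\ell}:=\widetilde{\E}_{X_{t_k}}^{\theta_n,\sigma(\ell),\lambda_n}[(H_4^{\theta_n,\sigma(\ell),\lambda_n}+H_5^{\theta_n,\sigma(\ell),\lambda_n})^2+2\sigma(\ell)\Delta W_{t_{k+1}}(H_4^{\theta_n,\sigma(\ell),\lambda_n}+H_5^{\theta_n,\sigma(\ell),\lambda_n})\mid Y_{t_{k+1}}^{\theta_n,\sigma(\ell),\lambda_n}=X_{t_{k+1}}]$ is its conditional counterpart under the shifted parameters. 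The key structural remark is that, by \eqref{c2eq1} and \eqref{c2eq1rajoute}, the triple $(H_6,H_7,\Delta B_{t_{k+1}})$ attached to $X^{\theta_0,\sigma_0,\lambda_0}$ has, jointly with $(X_{t_k},X_{t_{k+1}})$, the same law as $(H_4^{\theta_0,\sigma_0,\lambda_0},H_5^{\theta_0,\sigma_0,\lambda_0},\Delta W_{t_{k+1}})$ attached to $Y^{\theta_0,\sigma_0,\lambda_0}$ jointly with its endpoints. Consequently the $\P^{\theta_0,\sigma_0,\lambda_0}$-conditional expectation of $G_k$ given $(X_{t_k},X_{t_{k+1}})$ equals $D_k:=\widetilde{\E}_{X_{t_k}}^{\theta_0,\sigma_0,\lambda_0}[(H_4^{\theta_0,\sigma_0,\lambda_0}+H_5^{\theta_0,\sigma_0,\lambda_0})^2+2\sigma_0\Delta W_{t_{k+1}}(H_4^{\theta_0,\sigma_0,\lambda_0}+H_5^{\theta_0,\sigma_0,\lambda_0})\mid Y_{t_{k+1}}^{\theta_0,\sigma_0,\lambda_0}=X_{t_{k+1}}]$, and I would split $G_k-B_{k,\ell}=(G_k-D_k)+(D_k-B_{k,\ell})$.

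For condition (i), the Markov property and the tower property give $\E[G_k-D_k\mid\widehat{\mathcal F}_{t_k}]=0$ exactly, so the centered piece contributes nothing. The remaining term $D_k-B_{k,\ell}$ is the difference of two conditional expectations of the same functional taken under $(\theta_0,\sigma_0,\lambda_0)$ and under $(\theta_n,\sigma(\ell),\lambda_n)$. Because $\sigma(\ell)\neq\sigma_0$ in general, Girsanov's theorem is not available, and I would instead change measure exactly as in Lemma \ref{lemma6}: condition on the number of jumps and jump times on $[t_k,t_{k+1}]$ and use the Gaussian-type conditional transition densities $q_{(j)}^{\theta,\sigma,\lambda}$ of Lemma \ref{change}. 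The factor $\frac{q_{(j)}^{\theta_0,\sigma_0,\lambda_0}}{q_{(j)}^{\theta_n,\sigma(\ell),\lambda_n}}-1$ isolates the parameter shift and is estimated by Lemma \ref{lemma15}; together with the moment bounds of Lemma \ref{moment3} for the functional and the summability of the resulting series in $j$, this produces a bound that vanishes after multiplication by $\frac{1}{\sqrt{n\Delta_n^2}}$ and summation over $k$.

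For condition (ii), Jensen's inequality reduces matters to bounding $\frac{v^2}{n\Delta_n^2}\sum_k\int_0^1\E[(G_k-B_{k,\ell})^2\mid\widehat{\mathcal F}_{t_k}]\,d\ell$. The contribution of the drift and diffusion parts (those built from $H_6$, $H_4$, $\Delta B$, $\Delta W$) is of the admissible order through Lemma \ref{moment3} and the moment estimates \eqref{es6}, \eqref{es5}, and the $\sigma$-shift is absorbed via the conditional densities and Lemma \ref{lemma12}, as in the second half of Lemma \ref{lemma6}. The genuinely delicate contribution is the squared-jump part $(H_6+H_7)^2$: its conditional second moment given only $\widehat{\mathcal F}_{t_k}$ is of order $\Delta_n$, which the prefactor $\frac{v^2}{\Delta_n^2}$ would blow up. The resolution is that conditioning in addition on $X_{t_{k+1}}$ essentially fixes the number of jumps, since by \eqref{c2eq1} a jump forces $X_{t_{k+1}}-X_{t_k}$ to be of order one whereas without a jump it is of order $\sqrt{\Delta_n}$. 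I would therefore insert the partition ${\bf 1}_{\widehat{J}_{0,k}}+{\bf 1}_{\widehat{J}_{1,k}}+{\bf 1}_{\widehat{J}_{\geq 2,k}}$ on the number of jumps and apply the large deviation type estimates of Lemma \ref{deviation}, exactly as in Lemmas \ref{lemma3} and \ref{lemma5}, to bound the residual conditional variance by $\Delta_n^{-1}e^{-C_1\Delta_n^{2\alpha-1}}+\Delta_n^{\frac{2}{p_1q_1}-1}+\Delta_n^{\frac{2}{\mu_1}-1}$ for suitable $\alpha\in(0,\tfrac12)$, $p_1q_1<2$ and $\mu_1\in(1,2)$; each of these exponents being positive (and the exponential superpolynomially small), this defeats the $\frac{v^2}{\Delta_n^2}$ prefactor once averaged against the ergodic sum $\frac1n\sum_k(1+|X_{t_k}|^q)$.

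The main obstacle is exactly this last estimate. The squared increment $(H_6+H_7)^2$ carries an $O(\Delta_n)$ conditional variance that is far too large by itself, and only the large deviation control of Lemma \ref{deviation} — quantifying that the endpoint $X_{t_{k+1}}$ determines the jump structure up to exponentially and polynomially small probability, together with the Gaussian-type form of the transition density conditioned on the jumps — brings it to an admissible order. The secondary, pervasive difficulty, already present in Lemma \ref{lemma6}, is that the conditioning defining $B_{k,\ell}$ is performed under a diffusion coefficient $\sigma(\ell)\neq\sigma_0$, so every change of measure must go through the conditional densities $q_{(j)}^{\theta,\sigma,\lambda}$ rather than through Girsanov's theorem.
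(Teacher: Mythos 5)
Your framework (Lemma \ref{zero}) and your centering observation are both correct: $D_k=\E[G_k\mid X_{t_k},X_{t_{k+1}}]$ does coincide with the conditional expectation computed on the independent copy $Y^{\theta_0,\sigma_0,\lambda_0}$, so $\E[G_k-D_k\mid\widehat{\mathcal{F}}_{t_k}]=0$ exactly; this is a clean device that the paper does not use. The first genuine gap is in your treatment of $D_k-B_{k,\ell}$ for condition (i). A Lemma \ref{lemma6}--style change of measure splits $\E[B_{k,\ell}\mid\widehat{\mathcal{F}}_{t_k}]$ into the unconditional expectation $\E[K_{n,\ell}+F_{n,\ell}\mid Y_{t_k}^{\theta_n,\sigma(\ell),\lambda_n}=X_{t_k}]$ plus correction terms carrying the factors $\frac{q_{(j)}^{\theta_0,\sigma_0,\lambda_0}}{q_{(j)}^{\theta_n,\sigma(\ell),\lambda_n}}-1$, and your plan is to bound the two pieces separately. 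For the jump component both pieces diverge. Indeed, the unconditional mismatch contains $\E[F_0\mid X_{t_k}]-\E[F_{n,\ell}\mid Y_{t_k}=X_{t_k}]=(\lambda_0-\lambda_n)\Delta_n$, and $\sum_{k=0}^{n-1}\frac{|v|}{\sqrt{n\Delta_n^2}}\,|\lambda_0-\lambda_n|\,\Delta_n=\frac{|vw|}{\sqrt{\Delta_n}}\to\infty$; and Lemma \ref{lemma15} applied to $Z=B_{k,\ell}$ requires the unrestricted moment $(\E[|Z|^{\overline{q}_1}\mid\cdot])^{1/\overline{q}_1}$, which for the jump part is of order $\Delta_n^{1/\overline{q}_1}$ (one jump has probability $\sim\lambda\Delta_n$ but contributes $O(1)$, and Lemma \ref{moment3} caps the increment moments at order $\Delta_n$), so the correction bound totals $C|v|\Delta_n^{\frac{1}{\overline{q}_1}-1}\to\infty$ since $\overline{q}_1>1$. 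These two divergences cancel each other --- the weights $(\lambda_0/\lambda_n)^j$ in Lemma \ref{change} convert the intensity $\lambda_n$ back into $\lambda_0$ when paired with the events $\widehat{J}_{j,k}$ --- but a scheme that bounds them separately in absolute value cannot see the cancellation. This is exactly why the paper applies the $S_1+S_2$ decomposition only to the drift part $K_0,K_{n,\ell}$ (whose conditional moments are genuinely small), and for the jump part first substitutes $\sigma_0\Delta B_{t_{k+1}}$ and $\sigma(\ell)\Delta W_{t_{k+1}}$ by the process increments via \eqref{c2eq1} and \eqref{c2eq1rajoute}, so that only matched differences such as $\Delta N_{t_{k+1}}-\widetilde{\E}[\Delta M_{t_{k+1}}\mid\cdot]$ and $(\Delta N_{t_{k+1}})^2-\widetilde{\E}[(\Delta M_{t_{k+1}})^2\mid\cdot]$ remain (terms \eqref{F1}--\eqref{F6}), for which the reweighting cancellation is built in and Lemma \ref{deviation} applies; the cross terms \eqref{F5}--\eqref{F6} additionally need the rate condition on $n\Delta_n^2$. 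Your proposal never produces these matched differences, so condition (i) is not established.

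The second gap is arithmetic, in condition (ii). The bounds you quote, $\Delta_n^{-1}e^{-C_1\Delta_n^{2\alpha-1}}+\Delta_n^{\frac{2}{p_1q_1}-1}+\Delta_n^{\frac{2}{\mu_1}-1}$, are the final bounds of Lemmas \ref{lemma3} and \ref{lemma5}, where the summands carry the prefactor $\frac{1}{n\Delta_n}$; Lemma \ref{deviation} itself only gives the per-term estimates $M_{0,p}+M_{1,p}\leq C(e^{-C_1\Delta_n^{2\alpha-1}}+\Delta_n^{\frac{2}{p_1q_1}})$ and $M_{\geq 2,p}\leq C\Delta_n^{\frac{2}{\mu_1}}$. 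In the present lemma the prefactor is $\frac{v^2}{n\Delta_n^2}$, so the three-way partition ${\bf 1}_{\widehat{J}_{0,k}}+{\bf 1}_{\widehat{J}_{1,k}}+{\bf 1}_{\widehat{J}_{\geq 2,k}}$ can only yield $\Delta_n^{-2}e^{-C_1\Delta_n^{2\alpha-1}}+\Delta_n^{\frac{2}{p_1q_1}-2}+\Delta_n^{\frac{2}{\mu_1}-2}$, and both polynomial exponents are negative for every admissible $p_1q_1>1$ and $\mu_1>1$: the bound diverges, being off by one full power of $\Delta_n$ from what you claim. This is precisely why the paper, for the terms \eqref{F3}--\eqref{F4} which carry the extra $\Delta_n^{-1}$, replaces the partition by the finer one ${\bf 1}_{\widehat{J}_{0,k}}+{\bf 1}_{\widehat{J}_{1,k}}+{\bf 1}_{\widehat{J}_{2,k}}+{\bf 1}_{\widehat{J}_{\geq 3,k}}$, which upgrades the polynomial rate to $\Delta_n^{\frac{3}{p_1q_1}}$-type (exponent larger than $2$ once $p_1q_1<\tfrac{3}{2}$) and restores convergence. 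So even granting condition (i), the variance estimate as you wrote it does not close; you would need the four-event decomposition (and the accompanying refinement of the large deviation estimates) rather than the one used in Lemmas \ref{lemma3} and \ref{lemma5}.
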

\begin{proof}
We split the term inside the conditional expectation as
\begin{align*}
&\left(H_4^{\theta_n,\sigma(\ell),\lambda_n}+H_{5}^{\theta_n,\sigma(\ell),\lambda_n}\right)^2+2\sigma(\ell)\Delta W_{t_{k+1}}\left(H_4^{\theta_n,\sigma(\ell),\lambda_n}+H_{5}^{\theta_n,\sigma(\ell),\lambda_n}\right)=K_{n,\ell}+F_{n,\ell},
\end{align*}
where
\begin{align*}
K_{n,\ell}:&=\theta_n^2\left(\int_{t_k}^{t_{k+1}}Y_{s}^{\theta_n,\sigma(\ell),\lambda_n}ds\right)^2-2\theta_n\int_{t_k}^{t_{k+1}}Y_{s}^{\theta_n,\sigma(\ell),\lambda_n}ds\left(\sigma(\ell)\Delta W_{t_{k+1}}+\widetilde{M}_{t_{k+1}}^{\lambda_n}-\widetilde{M}_{t_{k}}^{\lambda_n}\right),\\
F_{n,\ell}:&=\left(\widetilde{M}_{t_{k+1}}^{\lambda_n}-\widetilde{M}_{t_{k}}^{\lambda_n}\right)^2+2\sigma(\ell)\Delta W_{t_{k+1}}\left(\widetilde{M}_{t_{k+1}}^{\lambda_n}-\widetilde{M}_{t_{k}}^{\lambda_n}\right).
\end{align*}

Similarly, we split $\left(H_{6}+H_{7}\right)^2+2\sigma_0\Delta B_{t_{k+1}}\left(H_{6}+H_{7}\right)=K_0+F_0$, where
\begin{align*}
K_0:&=\theta_0^2\left(\int_{t_k}^{t_{k+1}}X_{s}^{\theta_0,\sigma_0,\lambda_0}ds\right)^2-2\theta_0\int_{t_k}^{t_{k+1}}X_{s}^{\theta_0,\sigma_0,\lambda_0}ds\left(\sigma_0\Delta B_{t_{k+1}}+\widetilde{N}_{t_{k+1}}^{\lambda_0}-\widetilde{N}_{t_{k}}^{\lambda_0}\right),\\
F_0:&=\left(\widetilde{N}_{t_{k+1}}^{\lambda_0}-\widetilde{N}_{t_{k}}^{\lambda_0}\right)^2+2\sigma_0\Delta B_{t_{k+1}}\left(\widetilde{N}_{t_{k+1}}^{\lambda_0}-\widetilde{N}_{t_{k}}^{\lambda_0}\right).
\end{align*}

Therefore, in order to prove Lemma \ref{lemma7}, it suffices to show that as $n\to\infty$,
\begin{align}
&\sum_{k=0}^{n-1}\dfrac{v}{\sqrt{n\Delta_n^2}}\int_0^1\dfrac{1}{\sigma(\ell)^3}\left(K_0-\widetilde{\E}_{X_{t_k}}^{\theta_n,\sigma(\ell),\lambda_n}\left[K_{n,\ell}\big\vert Y^{\theta_n,\sigma(\ell),\lambda_n}_{t_{k+1}}=X_{t_{k+1}}\right]\right)d\ell\overset{\P^{\theta_0,\sigma_0,\lambda_0}}{\longrightarrow} 0,\label{K}\\
&\sum_{k=0}^{n-1}\dfrac{v}{\sqrt{n\Delta_n^2}}\int_0^1\dfrac{1}{\sigma(\ell)^3}\left(F_0-\widetilde{\E}_{X_{t_k}}^{\theta_n,\sigma(\ell),\lambda_n}\left[F_{n,\ell}\big\vert Y^{\theta_n,\sigma(\ell),\lambda_n}_{t_{k+1}}=X_{t_{k+1}}\right]\right)d\ell\overset{\P^{\theta_0,\sigma_0,\lambda_0}}{\longrightarrow} 0 \label{F}.
\end{align}

We first show \eqref{K} by proving that conditions (i) and (ii) of Lemma \ref{zero} hold under $\P^{\theta_0,\sigma_0,\lambda_0}$. We set $g(X_{t_{k+1}}):=\widetilde{\E}_{X_{t_k}}^{\theta_n,\sigma(\ell),\lambda_n}[K_{n,\ell}\big\vert Y^{\theta_n,\sigma(\ell),\lambda_n}_{t_{k+1}}=X_{t_{k+1}}]$. Applying Lemma \ref{change} to the conditional expectation, we get that
\begin{equation*}
\E\left[K_0-\widetilde{\E}_{X_{t_k}}^{\theta_n,\sigma(\ell),\lambda_n}\left[K_{n,\ell}\big\vert Y^{\theta_n,\sigma(\ell),\lambda_n}_{t_{k+1}}=X_{t_{k+1}}\right]\big\vert \widehat{\mathcal{F}}_{t_k}\right]=S_1+S_2,
\end{equation*}
where
\begin{align*}
&S_1:=-e^{w\sqrt{\frac{\Delta_n}{n}}}\bigg\{\E\left[g(X_{t_{k+1}}^{\theta_n,\sigma(\ell),\lambda_n}){\bf{1}}_{\widehat{J}_{0,k}}\left(\frac{q_{(0)}^{\theta_0,\sigma_0,\lambda_0}}{q_{(0)}^{\theta_n,\sigma(\ell),\lambda_n}}-1\right)\big\vert X_{t_k}^{\theta_n,\sigma(\ell),\lambda_n}=X_{t_k}\right]\\
&+\sum_{j=1}^{\infty}\frac{\lambda_0^j}{\lambda_n^j}\int_{\Sigma_k^j}\E\left[g(X_{t_{k+1}}^{\theta_n,\sigma(\ell),\lambda_n}){\bf{1}}_{\{\widehat{J}_{j,k},s_1,.,s_j\}}\Big(\frac{q_{(j)}^{\theta_0,\sigma_0,\lambda_0}}{q_{(j)}^{\theta_n,\sigma(\ell),\lambda_n}}-1\Big)\big\vert X_{t_k}^{\theta_n,\sigma(\ell),\lambda_n}=X_{t_k}\right]ds_1\cdots ds_j\bigg\},\\
&S_2:=\E\left[K_0\big\vert X_{t_k}\right]-\E\left[K_{n,\ell}\big\vert Y_{t_k}^{\theta_n,\sigma(\ell),\lambda_n}=X_{t_k}\right].
\end{align*}

Proceeding as in Lemma \ref{lemma6}, we obtain that
\begin{align*}
&\left\vert\sum_{k=0}^{n-1}\dfrac{v}{\sqrt{n\Delta_n^2}}\int_0^1\dfrac{1}{\sigma(\ell)^3}S_1d\ell\right\vert\leq C\Delta_n^{\frac{1}{2\max\{\overline{q}_1,\widetilde{q}_1,q_1\}}}\bigg(1+ \sum_{j=1}^{\infty}\frac{\lambda_0^j}{\lambda_n^j}\frac{\Delta_n^j}{j!}\bigg\{\left(\frac{\lambda_n}{\lambda_0}\right)^{\frac{j}{\overline{q}_2}}(e^{-\lambda_0\Delta_n}\lambda_0^j)^{\frac{1}{\overline{p}_1 \overline{q}_2}}\\
&\qquad\times(e^{-\lambda_n\Delta_n}\lambda_n^j)^{\frac{1}{\overline{q}_3}}+\left(\frac{\lambda_n}{\lambda_0}\right)^{\frac{j}{\widetilde{q}_2}}(e^{-\lambda_0\Delta_n}\lambda_0^j)^{\frac{1}{\widetilde{p}_1 \widetilde{q}_2}}(e^{-\lambda_n\Delta_n}\lambda_n^j)^{\frac{1}{\widetilde{q}_3}}+\int_0^1\Big(\frac{\lambda_n}{\lambda_0+\frac{wh}{\sqrt{n\Delta_n}}}\Big)^{\frac{j}{q_2}}\\
&\qquad\times\Big(e^{-(\lambda_0+\frac{wh}{\sqrt{n\Delta_n}})\Delta_n}(\lambda_0+\frac{wh}{\sqrt{n\Delta_n}})^j\Big)^{\frac{1}{p_1 q_2}}(e^{-\lambda_n\Delta_n}\lambda_n^j)^{\frac{1}{q_3}}dh\bigg\}\bigg)\dfrac{\vert v\vert}{n}\sum_{k=0}^{n-1}\left(1+\vert X_{t_k}\vert^q\right),
\end{align*}
for some constants $C, q>0$, where $(\overline{q}_1,\overline{q}_2,\overline{q}_3)$, $(\widetilde{q}_1,\widetilde{q}_2,\widetilde{q}_3)$, $(q_1,q_2,q_3)$, and $\overline{p}_1, \widetilde{p}_1, p_1$ are as in Lemma \ref{lemma15}. This converges to zero in $\P^{\theta_0,\sigma_0,\lambda_0}$-probability as $n\to\infty$. On the other hand, 
\begin{align*}
\E\left[K_0\big\vert X_{t_k}\right]&=2\theta_0^2\int_{t_k}^{t_{k+1}}\int_{t_k}^{s}\E\left[X_{s}^{\theta_0,\sigma_0,\lambda_0}X_{u}^{\theta_0,\sigma_0,\lambda_0}\big\vert X_{t_k}\right]duds\\
&\qquad-2\theta_0\int_{t_k}^{t_{k+1}}\E\left[X_{s}^{\theta_0,\sigma_0,\lambda_0}\left(\sigma_0\left(B_{s}-B_{t_{k}}\right)+\widetilde{N}_{s}^{\lambda_0}-\widetilde{N}_{t_{k}}^{\lambda_0}\right)\big\vert X_{t_k}\right]ds\\
&=-2\theta_0\int_{t_k}^{t_{k+1}}\E\left[X_{s}^{\theta_0,\sigma_0,\lambda_0}\left(X_s^{\theta_0,\sigma_0,\lambda_0}-X_{t_k}\right)\big\vert X_{t_k}\right]ds,
\end{align*}
which implies that $S_{2}=-2\theta_0S_{2,1}-S_{2,2}$, where
\begin{align*}
S_{2,1}:&=\int_{t_k}^{t_{k+1}}\bigg(\E\left[X_{s}^{\theta_0,\sigma_0,\lambda_0}\left(X_s^{\theta_0,\sigma_0,\lambda_0}-X_{t_k}\right)\big\vert X_{t_k}\right]\\
&\qquad-\E\left[Y_{s}^{\theta_n,\sigma(\ell),\lambda_n}\left(Y_s^{\theta_n,\sigma(\ell),\lambda_n}-Y_{t_k}^{\theta_n,\sigma(\ell),\lambda_n}\right)\big\vert Y_{t_k}^{\theta_n,\sigma(\ell),\lambda_n}=X_{t_k}\right]\bigg)ds,\\
S_{2,2}:&=2\frac{u}{\sqrt{n\Delta_n}}\int_{t_k}^{t_{k+1}}\E\left[Y_{s}^{\theta_n,\sigma(\ell),\lambda_n}\left(Y_s^{\theta_n,\sigma(\ell),\lambda_n}-Y_{t_k}^{\theta_n,\sigma(\ell),\lambda_n}\right)\big\vert Y_{t_k}^{\theta_n,\sigma(\ell),\lambda_n}=X_{t_k}\right]ds.
\end{align*}

Setting $h(X_s^{\theta_n,\sigma(\ell),\lambda_n}):=X_{s}^{\theta_n,\sigma(\ell),\lambda_n}(X_s^{\theta_n,\sigma(\ell),\lambda_n}-X_{t_k}^{\theta_n,\sigma(\ell),\lambda_n})$, $\widehat{J}_{j,t_k,s}:=\{N_{s}-N_{t_{k}}=j\}$, $\Sigma_{t_k,s}^j:=\{(s_1,\ldots,s_j): t_k<s_1<\cdots<s_j<s\}$, and proceeding as in Lemma \ref{change}, we have 
\begin{align*}
&S_{2,1}=e^{w\sqrt{\frac{\Delta_n}{n}}}\int_{t_k}^{t_{k+1}}\bigg\{\E\Big[h(X_s^{\theta_n,\sigma(\ell),\lambda_n}){\bf{1}}_{\widehat{J}_{0,t_k,s}}\Big(\frac{q_{(0)}^{\theta_0,\sigma_0,\lambda_0}}{q_{(0)}^{\theta_n,\sigma(\ell),\lambda_n}}-1\Big)\big\vert X_{t_k}^{\theta_n,\sigma(\ell),\lambda_n}=X_{t_k}\Big]+\sum_{j=1}^{\infty}\frac{\lambda_0^j}{\lambda_n^j}\\
&\times\int_{\Sigma_{t_k,s}^j}\E\left[h(X_s^{\theta_n,\sigma(\ell),\lambda_n}){\bf{1}}_{\{\widehat{J}_{j,t_k,s},s_1,\ldots,s_j\}}\Big(\frac{q_{(j)}^{\theta_0,\sigma_0,\lambda_0}}{q_{(j)}^{\theta_n,\sigma(\ell),\lambda_n}}-1\Big)\big\vert X_{t_k}^{\theta_n,\sigma(\ell),\lambda_n}=X_{t_k}\right]ds_1\cdots ds_j\bigg\}ds,
\end{align*}
where we denote $\frac{q_{(0)}^{\theta_0,\sigma_0,\lambda_0}}{q_{(0)}^{\theta_n,\sigma(\ell),\lambda_n}}\equiv\frac{q_{(0)}^{\theta_0,\sigma_0,\lambda_0}}{q_{(0)}^{\theta_n,\sigma(\ell),\lambda_n}}(s-t_k,X_{t_k}^{\theta_n,\sigma(\ell),\lambda_n},X^{\theta_n,\sigma(\ell),\lambda_n}_{s})$, $\frac{q_{(j)}^{\theta_0,\sigma_0,\lambda_0}}{q_{(j)}^{\theta_n,\sigma(\ell),\lambda_n}}\equiv\frac{q_{(j)}^{\theta_0,\sigma_0,\lambda_0}}{q_{(j)}^{\theta_n,\sigma(\ell),\lambda_n}}(s-t_k,X_{t_k}^{\theta_n,\sigma(\ell),\lambda_n},X^{\theta_n,\sigma(\ell),\lambda_n}_{s};s_1,\ldots,s_j)$.
Then proceeding as in Lemma \ref{lemma6}, we obtain that
\begin{align*}
&\sum_{k=0}^{n-1}\dfrac{v}{\sqrt{n\Delta_n^2}}\int_0^1\dfrac{1}{\sigma(\ell)^3}S_{2,1}d\ell\overset{\P^{\theta_0,\sigma_0,\lambda_0}}{\longrightarrow} 0.
\end{align*}

Next, adding and subtracting $Y_{t_k}^{\theta_n,\sigma(\ell),\lambda_n}$ and use \eqref{c2eq1rajoute} to get $S_{2,2}=S_{2,2,1}+S_{2,2,2}$, where
\begin{align*}
S_{2,2,1}:&=2\frac{u}{\sqrt{n\Delta_n}}\int_{t_k}^{t_{k+1}}\E\left[\left(Y_{s}^{\theta_n,\sigma(\ell),\lambda_n}-Y_{t_k}^{\theta_n,\sigma(\ell),\lambda_n}\right)^2\big\vert Y_{t_k}^{\theta_n,\sigma(\ell),\lambda_n}=X_{t_k}\right]ds,\\
S_{2,2,2}:&=-2\theta_n\frac{u}{\sqrt{n\Delta_n}}X_{t_k}\int_{t_k}^{t_{k+1}}\int_{t_k}^{s}\E\left[Y_u^{\theta_n,\sigma(\ell),\lambda_n}\big\vert Y_{t_k}^{\theta_n,\sigma(\ell),\lambda_n}=X_{t_k}\right]duds.
\end{align*}
Therefore, using Lemma \ref{moment3}, we get that 
\begin{align*}
&\left\vert\sum_{k=0}^{n-1}\dfrac{v}{\sqrt{n\Delta_n^2}}\int_0^1\dfrac{1}{\sigma(\ell)^3}S_{2,2}d\ell\right\vert\leq C\vert uv\vert\dfrac{\sqrt{\Delta_n}}{n}\sum_{k=0}^{n-1}\left(1+\vert X_{t_k}\vert^q\right),
\end{align*}
for some constants $C, q>0$, which concludes that as $n\to\infty$,
\begin{align*}
\sum_{k=0}^{n-1}\dfrac{v}{\sqrt{n\Delta_n^2}}\int_0^1\dfrac{1}{\sigma(\ell)^3}S_2d\ell\overset{\P^{\theta_0,\sigma_0,\lambda_0}}{\longrightarrow} 0.
\end{align*}
This finishes the proof of Lemma \ref{zero} (i). 

Next, proceeding as in the proof of Lemma \ref{lemma6} by applying Jensen's and H\"older's inequalities with $p_1, p_2$ conjugate, and Lemmas \ref{change}, \ref{lemma12}, we get that for $n$ large enough,
\begin{align*}
&\sum_{k=0}^{n-1}\dfrac{v^2}{n\Delta_n^2}\E\left[\left(\int_0^1\dfrac{1}{\sigma(\ell)^3}\left(K_0-\widetilde{\E}_{X_{t_k}}^{\theta_n,\sigma(\ell),\lambda_n}\left[K_{n,\ell}\big\vert Y^{\theta_n,\sigma(\ell),\lambda_n}_{t_{k+1}}=X_{t_{k+1}}\right]\right)d\ell\right)^2\big\vert \widehat{\mathcal{F}}_{t_k}\right]\\
&\leq\sum_{k=0}^{n-1}\dfrac{2 v^2}{n\Delta_n^2}\int_0^1\dfrac{1}{\sigma(\ell)^6}\left\{\E\left[K_0^2\big\vert X_{t_k}\right]+\E\bigg[\widetilde{\E}_{X_{t_k}}^{\theta_n,\sigma(\ell),\lambda_n}\left[K_{n,\ell}^2\big\vert Y^{\theta_n,\sigma(\ell),\lambda_n}_{t_{k+1}}=X_{t_{k+1}}\right]\big\vert X_{t_k}\bigg]\right\}d\ell\\
&\leq Cv^2\sqrt{\Delta_n}+C\Delta_n^{\frac{1}{2p_1}}\frac{v^2}{n}\sum_{k=0}^{n-1}\left(1+\vert X_{t_k}\vert^q\right),
\end{align*}
for some constants $C, q>0$, which converges to zero in $\P^{\theta_0,\sigma_0,\lambda_0}$-probability as $n\to\infty$. This finishes the proof of \eqref{K}.

Finally, it remains to treat \eqref{F}. For this, using equations \eqref{c2eq1rajoute} and \eqref{c2eq1} to rewrite the Brownian increments $\Delta W_{t_{k+1}}$, $\Delta B_{t_{k+1}}$ in terms of the process increments $\Delta Y^{\theta_n,\sigma(\ell),\lambda_n}_{t_{k+1}}$ and $\Delta X^{\theta_0,\sigma_0,\lambda_0}_{t_{k+1}}$, Poisson increments, and drift terms, it suffices to show that the following terms converge to zero in $\P^{\theta_0,\sigma_0,\lambda_0}$-probability as $n\to\infty$:
\begin{align}
&\sum_{k=0}^{n-1}\dfrac{v}{\sqrt{n}}\int_0^1\dfrac{1}{\sigma(\ell)^3}\left(\Delta N_{t_{k+1}}-\widetilde{\E}_{X_{t_k}}^{\theta_n,\sigma(\ell),\lambda_n}\left[\Delta M_{t_{k+1}}\big\vert Y^{\theta_n,\sigma(\ell),\lambda_n}_{t_{k+1}}=X_{t_{k+1}}\right]\right)d\ell,\label{F1}\\
&\sum_{k=0}^{n-1}\dfrac{v}{\sqrt{n}}\int_0^1\dfrac{X_{t_{k}}}{\sigma(\ell)^3}\left(\Delta N_{t_{k+1}}-\widetilde{\E}_{X_{t_k}}^{\theta_n,\sigma(\ell),\lambda_n}\left[\Delta M_{t_{k+1}}\big\vert Y^{\theta_n,\sigma(\ell),\lambda_n}_{t_{k+1}}=X_{t_{k+1}}\right]\right)d\ell,\label{F2}\\
&\sum_{k=0}^{n-1}\dfrac{v}{\sqrt{n\Delta_n^2}}\int_0^1\dfrac{1}{\sigma(\ell)^3}\left((\Delta N_{t_{k+1}})^2-\widetilde{\E}_{X_{t_k}}^{\theta_n,\sigma(\ell),\lambda_n}\left[(\Delta M_{t_{k+1}})^2\big\vert Y^{\theta_n,\sigma(\ell),\lambda_n}_{t_{k+1}}=X_{t_{k+1}}\right]\right)d\ell,\label{F3}\\
&\sum_{k=0}^{n-1}\dfrac{v}{\sqrt{n\Delta_n^2}}\int_0^1\dfrac{\Delta X_{t_{k+1}}}{\sigma(\ell)^3}\left(\Delta N_{t_{k+1}}-\widetilde{\E}_{X_{t_k}}^{\theta_n,\sigma(\ell),\lambda_n}\left[\Delta M_{t_{k+1}}\big\vert Y^{\theta_n,\sigma(\ell),\lambda_n}_{t_{k+1}}=X_{t_{k+1}}\right]\right)d\ell,\label{F4}\\
&\sum_{k=0}^{n-1}\dfrac{v\theta_0}{\sqrt{n\Delta_n^2}}\int_0^1\dfrac{d\ell}{\sigma(\ell)^3}\int_{t_k}^{t_{k+1}}(X_{s}^{\theta_0,\sigma_0,\lambda_0}-X_{t_k})ds(\widetilde{N}_{t_{k+1}}^{\lambda_0}-\widetilde{N}_{t_{k}}^{\lambda_0}),\label{F5}\\
&\sum_{k=0}^{n-1}\dfrac{v\theta_n}{\sqrt{n\Delta_n^2}}\int_0^1\dfrac{1}{\sigma(\ell)^3}\widetilde{\E}_{X_{t_k}}^{\theta_n,\sigma(\ell),\lambda_n}\bigg[\int_{t_k}^{t_{k+1}}(Y_{s}^{\theta_n,\sigma(\ell),\lambda_n}-Y_{t_k}^{\theta_n,\sigma(\ell),\lambda_n})ds\notag\\
&\qquad\times(\widetilde{M}_{t_{k+1}}^{\lambda_n}-\widetilde{M}_{t_{k}}^{\lambda_n})\big\vert Y^{\theta_n,\sigma(\ell),\lambda_n}_{t_{k+1}}=X_{t_{k+1}}\bigg]d\ell.\label{F6}
\end{align} 
We then proceed similarly as in the proof of Lemmas \ref{lemma3} and \ref{lemma5} using the arguments of large deviation type estimates established in Lemma \ref{deviation} in order to deal with \eqref{F1}-\eqref{F4}. Moreover, due to $\Delta_n$ appearing in the denominator of \eqref{F3}-\eqref{F4}, we need to use in this case the decomposition ${\bf 1}_{\widehat{J}_{0,k}} +{\bf 1}_{\widehat{J}_{1,k}}+ {\bf 1}_{\widehat{J}_{2,k}}+ {\bf 1}_{\widehat{J}_{\geq 3,k}}$ instead of ${\bf 1}_{\widehat{J}_{0,k}} +{\bf 1}_{\widehat{J}_{1,k}}+ {\bf 1}_{\widehat{J}_{\geq 2,k}}$, where $\widehat{J}_{\geq 3,k}:=\{\Delta N_{t_{k+1}}\geq 3\}$. On the other hand, when treating \eqref{F5}-\eqref{F6}, the decreasing rate condition $n\Delta_n^2\to 0$ as $n\to\infty$ is needed for showing Lemma \ref{zero} (i). Thus, the desired proof is now completed.
\end{proof}

\subsection{Main contributions: LAN property}
\label{maincontri}
\begin{lemma}\label{main} Let $\Gamma(\theta_0,\sigma_0,\lambda_0)$ be defined in Theorem \ref{c2theorem}. Then as $n\to\infty$,
\begin{align*}
\sum_{k=0}^{n-1}\left(\xi_{k,n}+\eta_{k,n}+\beta_{k,n}\right)\overset{\mathcal{L}(\P^{\theta_0,\sigma_0,\lambda_0})}{\longrightarrow} z^{\ast}\mathcal{N}\left(0,\Gamma(\theta_0,\sigma_0,\lambda_0)\right)-\dfrac{1}{2}z^{\ast} \Gamma(\theta_0,\sigma_0,\lambda_0)z.
\end{align*}
\end{lemma}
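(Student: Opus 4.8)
The plan is to view $\sum_{k=0}^{n-1}(\xi_{k,n}+\eta_{k,n}+\beta_{k,n})$ as a triangular array of asymptotic martingale differences with respect to $\{\widehat{\mathcal{F}}_{t_k}\}$ and to apply the central limit theorem for such arrays, the deterministic drift $-\frac{1}{2}z^{\ast}\Gamma(\theta_0,\sigma_0,\lambda_0)z$ being produced by the predictable compensators and the Gaussian fluctuation $z^{\ast}\mathcal{N}(0,\Gamma(\theta_0,\sigma_0,\lambda_0))$ by the quadratic characteristics. First I would rewrite each summand so that it is explicit in the observed increments $\Delta B_{t_{k+1}}$ and $\widetilde{N}^{\lambda_0}_{t_{k+1}}-\widetilde{N}^{\lambda_0}_{t_k}$. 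The terms $\xi_{k,n}$ and $\eta_{k,n}$ are already of this form, whereas $\beta_{k,n}$ carries a conditional expectation of $\widetilde{M}^{\lambda(\ell)}/\lambda(\ell)$ under the auxiliary measure $\P^{\theta_n,\sigma_0,\lambda(\ell)}$. Using $\widetilde{M}^{\lambda(\ell)}_{t_{k+1}}-\widetilde{M}^{\lambda(\ell)}_{t_k}=\Delta M_{t_{k+1}}-\lambda(\ell)\Delta_n$ together with the change-of-measure and large-deviation estimates already employed for $R_{k,n}$ in Lemma \ref{lemma5} (Lemmas \ref{change} and \ref{deviation}), I would replace the conditional expectation of $\Delta M_{t_{k+1}}$ by the observed count $\Delta N_{t_{k+1}}$ up to a $\P^{\theta_0,\sigma_0,\lambda_0}$-negligible remainder, and Taylor expand $\lambda(\ell)^{-1}$ around $\lambda_0$. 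This splits $\beta_{k,n}$ into a predictable piece and a martingale piece whose leading order is $\frac{w}{\lambda_0\sqrt{n\Delta_n}}(\widetilde{N}^{\lambda_0}_{t_{k+1}}-\widetilde{N}^{\lambda_0}_{t_k})-\frac{w}{\sigma_0\sqrt{n\Delta_n}}\Delta B_{t_{k+1}}$.

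Writing $\zeta_{k,n}:=\xi_{k,n}+\eta_{k,n}+\beta_{k,n}=A_{k,n}+D_{k,n}$ with $A_{k,n}:=\E[\zeta_{k,n}\,\vert\,\widehat{\mathcal{F}}_{t_k}]$ and $D_{k,n}:=\zeta_{k,n}-A_{k,n}$, I would first treat the predictable part. Using $\E[\Delta B_{t_{k+1}}\,\vert\,\widehat{\mathcal{F}}_{t_k}]=0$, $\E[(\Delta B_{t_{k+1}})^2\,\vert\,\widehat{\mathcal{F}}_{t_k}]=\Delta_n$ and $\E[\widetilde{N}^{\lambda_0}_{t_{k+1}}-\widetilde{N}^{\lambda_0}_{t_k}\,\vert\,\widehat{\mathcal{F}}_{t_k}]=0$, the quadratic terms already present in $\xi_{k,n}$ and $\beta_{k,n}$ (which give the $u^2$ and the Brownian $w^2$ halves of the drift), the first-order expansion in $\ell$ of $\sigma(\ell)^{-3}-\sigma(\ell)^{-1}$ inside $\eta_{k,n}$ (the $v^2$ half), and the Poisson compensator $-\lambda(\ell)\Delta_n$ extracted in the previous step (which supplies the $1/\lambda_0$ part of $\Gamma_{33}$), together with the discrete ergodic theorem Lemma \ref{c3ergodic}, I would show $\sum_{k=0}^{n-1}A_{k,n}\overset{\P^{\theta_0,\sigma_0,\lambda_0}}{\longrightarrow}-\frac{1}{2}z^{\ast}\Gamma(\theta_0,\sigma_0,\lambda_0)z$. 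The cancellation of the $uw$-contribution here is exactly $\int_{\R}x\,\pi_{\theta_0,\sigma_0,\lambda_0}(dx)=0$, which is what makes the corresponding off-diagonal entry of $\Gamma(\theta_0,\sigma_0,\lambda_0)$ vanish.

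The core of the argument is the convergence of the quadratic characteristic $\sum_{k=0}^{n-1}\E[D_{k,n}^2\,\vert\,\widehat{\mathcal{F}}_{t_k}]\overset{\P^{\theta_0,\sigma_0,\lambda_0}}{\longrightarrow}z^{\ast}\Gamma(\theta_0,\sigma_0,\lambda_0)z$. I would exploit that the three elementary increments $\Delta B_{t_{k+1}}$, $(\Delta B_{t_{k+1}})^2-\Delta_n$ and $\widetilde{N}^{\lambda_0}_{t_{k+1}}-\widetilde{N}^{\lambda_0}_{t_k}$ are pairwise conditionally orthogonal: each mixed product has zero $\widehat{\mathcal{F}}_{t_k}$-conditional mean, by the vanishing of odd Gaussian moments and by the independence of $B$ and $N$. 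Hence all cross terms drop and the sum reduces to three diagonal pieces; by Lemma \ref{c3ergodic} these converge to $\Gamma_{11}u^2$, $\Gamma_{22}v^2$ and $\Gamma_{33}w^2$, the $B$-increments of $\xi_{k,n}$ and $\beta_{k,n}$ combining into $\frac{1}{\sigma_0^2}\int_{\R}(ux+w)^2\,\pi_{\theta_0,\sigma_0,\lambda_0}(dx)$ whose $uw$-part again vanishes because $\int_{\R}x\,\pi_{\theta_0,\sigma_0,\lambda_0}(dx)=0$. I would then verify the conditional Lyapunov bound $\sum_{k=0}^{n-1}\E[D_{k,n}^4\,\vert\,\widehat{\mathcal{F}}_{t_k}]\overset{\P^{\theta_0,\sigma_0,\lambda_0}}{\longrightarrow}0$, which follows from the fourth-moment estimates of $\Delta B_{t_{k+1}}$, $(\Delta B_{t_{k+1}})^2$ and $\widetilde{N}^{\lambda_0}_{t_{k+1}}-\widetilde{N}^{\lambda_0}_{t_k}$ against the explicit prefactors and the regime $n\Delta_n\to\infty$; this simultaneously forces the individual jumps to be asymptotically negligible, so that the limit law is a genuine continuous Gaussian rather than a mixed-type one.

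Applying the central limit theorem for triangular arrays of martingale differences under these two conditions gives $\sum_{k=0}^{n-1}D_{k,n}\overset{\mathcal{L}(\P^{\theta_0,\sigma_0,\lambda_0})}{\longrightarrow}\mathcal{N}(0,z^{\ast}\Gamma(\theta_0,\sigma_0,\lambda_0)z)$, and Slutsky's lemma combined with the deterministic limit of $\sum_{k=0}^{n-1}A_{k,n}$ yields the stated limit $z^{\ast}\mathcal{N}(0,\Gamma(\theta_0,\sigma_0,\lambda_0))-\frac{1}{2}z^{\ast}\Gamma(\theta_0,\sigma_0,\lambda_0)z$. I expect the main obstacle to be the very first step: turning the $\P^{\theta_n,\sigma_0,\lambda(\ell)}$-conditional expectation in $\beta_{k,n}$ into an expression in the observed compensated Poisson increment while correctly tracking the deterministic $O(1/n)$-per-step contribution it produces (the origin of the $1/\lambda_0$ term of $\Gamma_{33}$), since the shift of jump intensity from $\lambda(\ell)$ to $\lambda_0$ cannot be removed by a Girsanov change of measure and must instead be controlled uniformly through the large-deviation type estimates of Lemma \ref{deviation}.
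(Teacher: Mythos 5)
Your plan is, in substance, the paper's own proof: the paper applies the central limit theorem for triangular arrays \cite[Lemma 4.3]{J11} to $\zeta_{k,n}=\xi_{k,n}+\eta_{k,n}+\beta_{k,n}$, which is exactly your predictable/martingale-difference decomposition, and then verifies the drift limit \eqref{c2eql1}, the conditional variances \eqref{c2eql2}, \eqref{c2eql5}, \eqref{c2eql8}, the vanishing of the cross terms \eqref{c2eql10}--\eqref{c2eql12} (using the orthogonality of $\Delta B_{t_{k+1}}$, $(\Delta B_{t_{k+1}})^2-\Delta_n$ and the compensated Poisson increments, together with $\int_{\R}x\,\pi_{\theta_0,\sigma_0,\lambda_0}(dx)=0$ and $\int_{\R}x^2\pi_{\theta_0,\sigma_0,\lambda_0}(dx)=\frac{1}{2\theta_0}(\sigma_0^2+1)$), and the fourth-moment condition \eqref{c2eql3}. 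Your ergodic-limit computations of the drift and of the quadratic characteristic agree with the paper's.

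There is, however, one genuine error, and it sits precisely at the point you single out as the main obstacle. You claim that ``the shift of jump intensity from $\lambda(\ell)$ to $\lambda_0$ cannot be removed by a Girsanov change of measure and must instead be controlled uniformly through the large-deviation type estimates of Lemma \ref{deviation}.'' This is backwards. The Girsanov theorem of Lemma \ref{c2Girsanov2} changes \emph{both} the Brownian drift and the Poisson intensity, and Lemma \ref{c2lemma8} exploits exactly this to get the exact identity
\begin{equation*}
\E\left[\widetilde{\E}_{X_{t_k}}^{\theta_n,\sigma_0,\lambda(\ell)}\left[\widetilde{M}_{t_{k+1}}^{\lambda(\ell)}-\widetilde{M}_{t_k}^{\lambda(\ell)}\big\vert Y_{t_{k+1}}^{\theta_n,\sigma_0,\lambda(\ell)}=X_{t_{k+1}}\right]\big\vert X_{t_k}\right]=-\dfrac{\ell w}{\sqrt{n\Delta_n}}\Delta_n,
\end{equation*}
equivalently $\E[U_k\vert X_{t_k}]=0$ exactly. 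This exact cancellation is indispensable for your replacement step: it is what makes condition (i) of Lemma \ref{zero} hold for the remainder created when you substitute $\Delta N_{t_{k+1}}$ for $\widetilde{\E}_{X_{t_k}}^{\theta_n,\sigma_0,\lambda(\ell)}[\Delta M_{t_{k+1}}\vert\cdots]$, and it is equally what produces the term $-\frac{w^2}{2\lambda_0}$ in \eqref{c2eql1}. The estimates of Lemma \ref{deviation} cannot substitute for it, because they only bound second moments: they give $\E[U_k^2\vert X_{t_k}]\leq C\Delta_n^{1+\epsilon}$ with $\epsilon=\min\{\frac{2}{p_1q_1},\frac{2}{\mu_1}\}-1<1$, so Cauchy--Schwarz yields at best $\vert\E[U_k\vert X_{t_k}]\vert\leq C\Delta_n^{(1+\epsilon)/2}$, and then
\begin{equation*}
\sum_{k=0}^{n-1}\dfrac{1}{\sqrt{n\Delta_n}}\left\vert\E[U_k\vert X_{t_k}]\right\vert\leq C\sqrt{n}\,\Delta_n^{\epsilon/2}\geq C\sqrt{n\Delta_n}\longrightarrow\infty .
\end{equation*}
So if you genuinely renounce Girsanov for the intensity shift, the conditional-mean part of your replacement (hence the $w$-contribution to the drift and the negligibility of your remainder) collapses; Lemma \ref{deviation} suffices only for the quadratic and cross terms, where the scaling $\frac{1}{n\Delta_n}$ (or an extra factor $\sqrt{\Delta_n}$ from $\Delta B_{t_{k+1}}$) is more forgiving. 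The fix is simply to keep the Girsanov-based Lemma \ref{c2lemma8}, i.e.\ to follow the proof of Lemma \ref{lemma5} in full: its condition (i) rests on Lemma \ref{c2lemma8}, not on Lemma \ref{deviation}; with that correction the rest of your argument goes through and coincides with the paper's.
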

\begin{proof}
Applying \cite[Lemma 4.3]{J11} to $\zeta_{k,n}=\xi_{k,n}+\eta_{k,n}+\beta_{k,n}$, it suffices to show that
\begin{align}
&\sum_{k=0}^{n-1}\E\left[\xi_{k,n}+\eta_{k,n}+\beta_{k,n}\vert \widehat{\mathcal{F}}_{t_k}\right]\overset{\P^{\theta_0,\sigma_0,\lambda_0}}{\longrightarrow}-\dfrac{u^2}{2}\dfrac{1}{2\theta_0}\left(1+\dfrac{1}{\sigma_0^2}\right)-\dfrac{v^2}{2}\dfrac{2}{\sigma_0^2}-\dfrac{w^2}{2\sigma_0^2}\left(1+\dfrac{\sigma_0^2}{\lambda_0}\right), \label{c2eql1}\\
&\sum_{k=0}^{n-1}\left(\E\left[\xi_{k,n}^2\vert \widehat{\mathcal{F}}_{t_k}\right]-\left(\E\left[\xi_{k,n}\vert\widehat{\mathcal{F}}_{t_k}\right]\right)^2\right)\overset{\P^{\theta_0,\sigma_0,\lambda_0}}{\longrightarrow}\dfrac{u^2}{2\theta_0}\left(1+\dfrac{1}{\sigma_0^2}\right), \label{c2eql2}\\
&\sum_{k=0}^{n-1}\left(\E\left[\eta_{k,n}^2\vert \widehat{\mathcal{F}}_{t_k}\right]-\left(\E\left[\eta_{k,n}\vert \widehat{\mathcal{F}}_{t_k}\right]\right)^2\right)\overset{\P^{\theta_0,\sigma_0,\lambda_0}}{\longrightarrow}v^2\dfrac{2}{\sigma_0^2}, \label{c2eql5}\\
&\sum_{k=0}^{n-1}\left(\E\left[\beta_{k,n}^2\vert \widehat{\mathcal{F}}_{t_k}\right]-\left(\E\left[\beta_{k,n}\vert \widehat{\mathcal{F}}_{t_k}\right]\right)^2\right)\overset{\P^{\theta_0,\sigma_0,\lambda_0}}{\longrightarrow}\dfrac{w^2}{\sigma_0^2}\left(1+\dfrac{\sigma_0^2}{\lambda_0}\right), \label{c2eql8}\\
&\sum_{k=0}^{n-1}\left(\E\left[\xi_{k,n}\eta_{k,n}\vert \widehat{\mathcal{F}}_{t_k}\right]-\E\left[\xi_{k,n}\vert \widehat{\mathcal{F}}_{t_k}\right]\E\left[\eta_{k,n}\vert \widehat{\mathcal{F}}_{t_k}\right]\right)\overset{\P^{\theta_0,\sigma_0,\lambda_0}}{\longrightarrow}0, \label{c2eql10}\\
&\sum_{k=0}^{n-1}\left(\E\left[\xi_{k,n}\beta_{k,n}\vert \widehat{\mathcal{F}}_{t_k}\right]-\E\left[\xi_{k,n}\vert \widehat{\mathcal{F}}_{t_k}\right]\E\left[\beta_{k,n}\vert \widehat{\mathcal{F}}_{t_k}\right]\right)\overset{\P^{\theta_0,\sigma_0,\lambda_0}}{\longrightarrow}0, \label{c2eql11}\\
&\sum_{k=0}^{n-1}\left(\E\left[\eta_{k,n}\beta_{k,n}\vert \widehat{\mathcal{F}}_{t_k}\right]-\E\left[\eta_{k,n}\vert \widehat{\mathcal{F}}_{t_k}\right]\E\left[\beta_{k,n}\vert \widehat{\mathcal{F}}_{t_k}\right]\right)\overset{\P^{\theta_0,\sigma_0,\lambda_0}}{\longrightarrow}0,\label{c2eql12}\\
&\sum_{k=0}^{n-1}\E\left[\xi_{k,n}^4+\eta_{k,n}^4+\beta_{k,n}^4\vert \widehat{\mathcal{F}}_{t_k}\right]\overset{\P^{\theta_0,\sigma_0,\lambda_0}}{\longrightarrow}0. \label{c2eql3}
\end{align}
\vskip 5pt
{\it Proof of \eqref{c2eql1}.} Using $\E[\Delta B_{t_{k+1}}\vert \widehat{\mathcal{F}}_{t_k}]=0$ and Lemma \ref{c3ergodic}, as $n \rightarrow \infty$,
\begin{align*}
\sum_{k=0}^{n-1}\E\left[\xi_{k,n}\vert \widehat{\mathcal{F}}_{t_k}\right]=-\dfrac{u^2}{2\sigma_0^2}\dfrac{1}{n}\sum_{k=0}^{n-1}X_{t_k}^2&\overset{\P^{\theta_0,\sigma_0,\lambda_0}}{\longrightarrow}-\dfrac{u^2}{2\sigma_0^2}\int_{\R}x^2\pi_{\theta_0,\sigma_0,\lambda_0}(dx).
\end{align*}

On the other hand, using the ergodicity of $X^{\theta_0,\sigma_0,\lambda_0}$ and It\^o's formula, we get that
\begin{equation}\label{e1}\begin{split}
\int_{\R}x^2\pi_{\theta_0,\sigma_0,\lambda_0}(dx)=\lim_{t\to+\infty}\E\left[\left(X_t^{\theta_0,\sigma_0,\lambda_0}\right)^2\right]=\dfrac{1}{2\theta_0}\left(\sigma_0^2+1\right).
\end{split}
\end{equation}

Next, since $\E[(\Delta B_{t_{k+1}})^2\vert \widehat{\mathcal{F}}_{t_k}]=\Delta_n$, we have that as $n \rightarrow \infty$,
\begin{align*}
&\sum_{k=0}^{n-1}\E\left[\eta_{k,n}\vert \widehat{\mathcal{F}}_{t_k}\right]=\dfrac{v}{\sqrt{n}}\sum_{k=0}^{n-1}\int_0^1\dfrac{\sigma_0^2-\sigma(\ell)^2}{\sigma(\ell)^3}d\ell\longrightarrow-\dfrac{v^2}{2}\dfrac{2}{\sigma_0^2}.
\end{align*}

By Lemma \ref{c2lemma8}, we get that as $n\to\infty$,
\begin{align*}
\sum_{k=0}^{n-1}\E\left[\beta_{k,n}\vert \widehat{\mathcal{F}}_{t_k}\right]=-\dfrac{w^2}{2\sigma_0^2}-\dfrac{uw}{\sigma_0^2}\dfrac{1}{n}\sum_{k=0}^{n-1}X_{t_k}-w^2\int_0^1\dfrac{\ell}{\lambda(\ell)}d\ell
\overset{\P^{\theta_0,\sigma_0,\lambda_0}}{\longrightarrow}-\dfrac{w^2}{2\sigma_0^2}-\dfrac{w^2}{2\lambda_0}.
\end{align*}
Here, we have used Lemma \ref{c3ergodic}, the ergodicity of $X^{\theta_0,\sigma_0,\lambda_0}$, and \eqref{solution} to get that as $n \rightarrow \infty$,
\begin{equation}\label{ergodic2}\begin{split}
\dfrac{1}{n}\sum_{k=0}^{n-1}X_{t_k}\overset{\P^{\theta_0,\sigma_0,\lambda_0}}{\longrightarrow}\int_{\R}x\pi_{\theta_0,\sigma_0,\lambda_0}(dx)=\lim_{t\to+\infty}\E\left[X_t^{\theta_0,\sigma_0,\lambda_0}\right]=0.
\end{split}
\end{equation}
Thus, we have shown \eqref{c2eql1}.
\vskip 5pt
{\it Proof of \eqref{c2eql2}.} Observe that as $n \rightarrow \infty$,
\begin{align*}
\sum_{k=0}^{n-1}\left(\E\left[\xi_{k,n}\vert\widehat{\mathcal{F}}_{t_k}\right]\right)^2=\dfrac{u^4}{4\sigma_0^4}\dfrac{1}{n^2}\sum_{k=0}^{n-1}X_{t_k}^4\overset{\P^{\theta_0,\sigma_0,\lambda_0}}{\longrightarrow}0.
\end{align*}

Using properties of the moments of the Brownian motion, Lemma \ref{c3ergodic} and \eqref{e1}, we get 
\begin{align*}
\sum_{k=0}^{n-1}\E\left[\xi_{k,n}^2\vert\widehat{\mathcal{F}}_{t_k}\right]=\dfrac{u^2}{\sigma_0^4n}\sum_{k=0}^{n-1}\left(\sigma_0^2X_{t_k}^2+\dfrac{u^2X_{t_k}^4}{4n}\right)\overset{\P^{\theta_0,\sigma_0,\lambda_0}}{\longrightarrow}\dfrac{u^2}{2\theta_0}\left(1+\dfrac{1}{\sigma_0^2}\right),
\end{align*}
which concludes \eqref{c2eql2}.

\vskip 5pt
{\it Proof of \eqref{c2eql5}.} First, observe that for some constant $C>0$,
\begin{align*}
\sum_{k=0}^{n-1}\left(\E\left[\eta_{k,n}\vert \widehat{\mathcal{F}}_{t_k}\right]\right)^2&=\dfrac{v^2}{n}\sum_{k=0}^{n-1}\left(\int_0^1\dfrac{\sigma_0^2-\sigma(\ell)^2}{\sigma(\ell)^3}d\ell\right)^2 \leq \frac{C}{n}.
\end{align*}

On the other hand, since  $\E[(\Delta B_{t_{k+1}})^2\vert \widehat{\mathcal{F}}_{t_k}]=\Delta_n$ and $\E[(\Delta B_{t_{k+1}})^4\vert \widehat{\mathcal{F}}_{t_k}]=3\Delta_n^2$, we get
\begin{align*}
\sum_{k=0}^{n-1}\E\left[\eta_{k,n}^2\vert \widehat{\mathcal{F}}_{t_k}\right]&=\dfrac{v^2}{n\Delta_n^2}\sum_{k=0}^{n-1}\bigg\{\left(\int_0^1\dfrac{\sigma_0^2}{\sigma(\ell)^3}d\ell\right)^2\E\left[(\Delta B_{t_{k+1}})^4\vert \widehat{\mathcal{F}}_{t_k}\right]+\left(\int_0^1\dfrac{\Delta_n}{\sigma(\ell)}d\ell\right)^2\\
&\qquad-2\int_0^1\dfrac{\Delta_n}{\sigma(\ell)}d\ell\int_0^1\dfrac{\sigma_0^2}{\sigma(\ell)^3}d\ell\E\left[(\Delta B_{t_{k+1}})^2\vert \widehat{\mathcal{F}}_{t_k}\right]\bigg\}\\
&\to v^2\dfrac{2}{\sigma_0^2}, 
\end{align*}
as $n \rightarrow \infty$. This concludes \eqref{c2eql5}.

\vskip 5pt
{\it Proof of \eqref{c2eql8}.} First, using \eqref{e1} and Lemma \ref{c2lemma8}, we get that as $n\to\infty$,
\begin{align*}
\sum_{k=0}^{n-1}\left(\E\left[\beta_{k,n}\vert \widehat{\mathcal{F}}_{t_k}\right]\right)^2=\sum_{k=0}^{n-1}\left(-\dfrac{w^2}{2\sigma_0^2n}-\dfrac{uw}{\sigma_0^2n}X_{t_k}-\dfrac{w^2}{n}\int_0^1\dfrac{\ell}{\lambda(\ell)}d\ell\right)^2\overset{\P^{\theta_0,\sigma_0,\lambda_0}}{\longrightarrow} 0.
\end{align*}

Next, we write $\sum_{k=0}^{n-1}\E[\beta_{k,n}^2\vert\widehat{\mathcal{F}}_{t_k}]=S_{n,1}+S_{n,2}-2S_{n,3}$,
where
\begin{align*}
S_{n,1}:&=\dfrac{w^2}{\sigma_0^4n\Delta_n}\sum_{k=0}^{n-1}\E\left[\left(\sigma_0\Delta B_{t_{k+1}}+\dfrac{w\Delta_n}{2\sqrt{n\Delta_n}}+\dfrac{u\Delta_n}{\sqrt{n\Delta_n}}X_{t_k}\right)^2\big\vert X_{t_k}\right],\\
S_{n,2}:&=\dfrac{w^2}{n\Delta_n}\sum_{k=0}^{n-1}\E\left[\left(\int_0^1\dfrac{1}{\lambda(\ell)}\widetilde{\E}_{X_{t_k}}^{\theta_n,\sigma_0,\lambda(\ell)}\left[\widetilde{M}_{t_{k+1}}^{\lambda(\ell)}-\widetilde{M}_{t_k}^{\lambda(\ell)}\big\vert Y_{t_{k+1}}^{\theta_n,\sigma_0,\lambda(\ell)}=X_{t_{k+1}}\right]d\ell\right)^2\big\vert X_{t_k}\right],\\
S_{n,3}:&=\dfrac{w^2}{\sigma_0^2n\Delta_n}\sum_{k=0}^{n-1}\E\bigg[\left(\sigma_0\Delta B_{t_{k+1}}+\dfrac{w\Delta_n}{2\sqrt{n\Delta_n}}+\dfrac{u\Delta_n}{\sqrt{n\Delta_n}}X_{t_k}\right)\\
&\qquad \times\int_0^1\dfrac{1}{\lambda(\ell)}\widetilde{\E}_{X_{t_k}}^{\theta_n,\sigma_0,\lambda(\ell)}\left[\widetilde{M}_{t_{k+1}}^{\lambda(\ell)}-\widetilde{M}_{t_k}^{\lambda(\ell)}\big\vert Y_{t_{k+1}}^{\theta_n,\sigma_0,\lambda(\ell)}=X_{t_{k+1}}\right]d\ell\big\vert X_{t_k}\bigg].
\end{align*}

Using properties of the moments of the Brownian motion, \eqref{e1} and \eqref{ergodic2}, we get that $S_{n,1}\overset{\P^{\theta_0,\sigma_0,\lambda_0}}{\longrightarrow}\frac{w^2}{\sigma_0^2}$ as $n\to\infty$.

Since $\widetilde{M}_{t_{k+1}}^{\lambda(\ell)}-\widetilde{M}_{t_k}^{\lambda(\ell)}=\Delta M_{t_{k+1}}-\lambda(\ell)\Delta_n$, we write 
$
S_{n,2}=S_{n,2,1}-2S_{n,2,2}+w^2\Delta_n,
$
where
\begin{align*}
S_{n,2,1}:&=\dfrac{w^2}{n\Delta_n}\sum_{k=0}^{n-1}\E\left[\left(\int_0^1\dfrac{1}{\lambda(\ell)}\widetilde{\E}_{X_{t_k}}^{\theta_n,\sigma_0,\lambda(\ell)}\left[\Delta M_{t_{k+1}}\big\vert Y_{t_{k+1}}^{\theta_n,\sigma_0,\lambda(\ell)}=X_{t_{k+1}}\right]d\ell\right)^2\big\vert X_{t_k}\right],\\
S_{n,2,2}:&=\dfrac{w^2}{n}\sum_{k=0}^{n-1}\int_0^1\dfrac{1}{\lambda(\ell)}\E\left[\widetilde{\E}_{X_{t_k}}^{\theta_n,\sigma_0,\lambda(\ell)}\left[\Delta M_{t_{k+1}}\big\vert Y_{t_{k+1}}^{\theta_n,\sigma_0,\lambda(\ell)}=X_{t_{k+1}}\right]\big\vert X_{t_k}\right]d\ell.
\end{align*}

Observe that Lemma \ref{c2lemma8} yields $S_{n,2,2}\overset{\P^{\theta_0,\sigma_0,\lambda_0}}{\longrightarrow}0$. Moreover, adding and subtracting the term $\Delta N_{t_{k+1}}$ inside the integral, we have $S_{n,2,1}=S_{n,2,1,1}+S_{n,2,1,2}-2S_{n,2,1,3}$, where
\begin{align*}
&S_{n,2,1,1}:=\dfrac{w^2}{n\Delta_n}\sum_{k=0}^{n-1}\E\left[\left(\int_0^1\dfrac{1}{\lambda(\ell)}\Delta N_{t_{k+1}}d\ell\right)^2\big\vert X_{t_k}\right],\\
&S_{n,2,1,2}:=\dfrac{w^2}{n\Delta_n}\sum_{k=0}^{n-1}\E\left[\left(\int_0^1\dfrac{1}{\lambda(\ell)}U_kd\ell\right)^2\big\vert X_{t_k}\right],\\
&S_{n,2,1,3}:=\dfrac{w^2}{n\Delta_n}\sum_{k=0}^{n-1}\E\left[\int_0^1\dfrac{1}{\lambda(\ell)}\Delta N_{t_{k+1}}d\ell\int_0^1\dfrac{1}{\lambda(\ell)}U_kd\ell\big\vert X_{t_k}\right].
\end{align*}

Proceeding as in the proof of Lemma \ref{lemma5}, one can show that $S_{n,2,1,2}$ and $S_{n,2,1,3}$ converge to zero in $\P^{\theta_0,\sigma_0,\lambda_0}$-probability as $n\to\infty$. Moreover, since $\E[(\Delta N_{t_{k+1}})^2\vert X_{t_k}]=\lambda_0\Delta_n+(\lambda_0\Delta_n)^2$, we deduce that $S_{n,2,1,1}\overset{\P^{\theta_0,\sigma_0,\lambda_0}}{\longrightarrow}\frac{w^2}{\lambda_0}$,
which implies that $S_{n,2}\overset{\P^{\theta_0,\sigma_0,\lambda_0}}{\longrightarrow}\frac{w^2}{\lambda_0}$ as $n\to\infty$.

Next, we show that 
$S_{n,3}\overset{\P^{\theta_0,\sigma_0,\lambda_0}}{\longrightarrow}0$ as $n\to\infty$. Using Lemma \ref{c2lemma8}, it suffices to show that
\begin{align*}
&S_{n,3,1}=\dfrac{w^2}{\sigma_0n\Delta_n}\sum_{k=0}^{n-1}\int_0^1\dfrac{1}{\lambda(\ell)}\E\left[\Delta B_{t_{k+1}}\widetilde{\E}_{X_{t_k}}^{\theta_n,\sigma_0,\lambda(\ell)}\left[\widetilde{M}_{t_{k+1}}^{\lambda(\ell)}-\widetilde{M}_{t_k}^{\lambda(\ell)}\big\vert Y_{t_{k+1}}^{\theta_n,\sigma_0,\lambda(\ell)}=X_{t_{k+1}}\right]\big\vert X_{t_k}\right]d\ell
\end{align*}
converges to zero in $\P^{\theta_0,\sigma_0,\lambda_0}$-probability. For this, using the independence between $B$ and $N$, and Cauchy-Schwarz inequality, we get that
\begin{align*}
\vert S_{n,3,1}\vert&=\dfrac{w^2}{\sigma_0n\Delta_n}\left\vert\sum_{k=0}^{n-1}\int_0^1\dfrac{1}{\lambda(\ell)}\E\left[\Delta B_{t_{k+1}}U_k\vert X_{t_k}\right]d\ell\right\vert\\
&\leq \dfrac{w^2}{\sigma_0n\sqrt{\Delta_n}}\sum_{k=0}^{n-1}\int_0^1\dfrac{1}{\vert\lambda(\ell)\vert}\left(\E\left[U_k^2\vert X_{t_k}\right]\right)^{1/2}d\ell,
\end{align*}
which converges to zero in $\P^{\theta_0,\sigma_0,\lambda_0}$-probability as $n\to\infty$ by proceeding as in Lemma \ref{lemma5}.

Consequently, the proof of \eqref{c2eql8} is now completed.

\vskip 5pt
{\it Proof of \eqref{c2eql10}.} Observe that
\begin{align*}
\sum_{k=0}^{n-1}\E\left[\xi_{k,n}\eta_{k,n}\vert \widehat{\mathcal{F}}_{t_k}\right]=\sum_{k=0}^{n-1}\E\left[\xi_{k,n}\vert \widehat{\mathcal{F}}_{t_k}\right]\E\left[\eta_{k,n}\vert \widehat{\mathcal{F}}_{t_k}\right]=\dfrac{u^2v^2}{2\sigma_0^2n^2}\sum_{k=0}^{n-1}X_{t_k}^2\int_0^1\dfrac{\sigma_0+\sigma(\ell)}{\sigma(\ell)^3}\ell d\ell.
\end{align*}
Therefore, $\sum_{k=0}^{n-1}(\E[\xi_{k,n}\eta_{k,n}\vert \widehat{\mathcal{F}}_{t_k}]-\E[\xi_{k,n}\vert \widehat{\mathcal{F}}_{t_k}]\E[\eta_{k,n}\vert \widehat{\mathcal{F}}_{t_k}])=0$, for all $n\geq 1$.
\vskip 5pt
{\it Proof of \eqref{c2eql11}.} Using again Lemma \ref{c2lemma8} and Lemma \ref{c3ergodic}, we get that as $n\to\infty$,
\begin{align*}
\sum_{k=0}^{n-1}\E\left[\xi_{k,n}\vert \widehat{\mathcal{F}}_{t_k}\right]\E\left[\beta_{k,n}\vert \widehat{\mathcal{F}}_{t_k}\right]&=\dfrac{u^2w^2}{4\sigma_0^4n^2}\sum_{k=0}^{n-1}X_{t_k}^2+\dfrac{u^3w}{2\sigma_0^4n^2}\sum_{k=0}^{n-1}X_{t_k}^3+\dfrac{u^2w^2}{2\sigma_0^2n}\int_0^1\dfrac{\ell}{\lambda(\ell)}d\ell\\
&\overset{\P^{\theta_0,\sigma_0,\lambda_0}}{\longrightarrow}0.
\end{align*}

Moreover, basic computations yield 
\begin{align*}
&\sum_{k=0}^{n-1}\E\left[\xi_{k,n}\beta_{k,n}\vert \widehat{\mathcal{F}}_{t_k}\right]=\dfrac{uw}{\sigma_0^2n}\sum_{k=0}^{n-1}X_{t_k}+\dfrac{u^2w^2}{4\sigma_0^4n^2}\sum_{k=0}^{n-1}X_{t_k}^2+\dfrac{u^3w}{2\sigma_0^4n^2}\sum_{k=0}^{n-1}X_{t_k}^3\\
&-\dfrac{uw}{\sigma_0n\Delta_n}\sum_{k=0}^{n-1}X_{t_k}\int_0^1\dfrac{1}{\lambda(\ell)} \E\left[\Delta B_{t_{k+1}}\widetilde{\E}_{X_{t_k}}^{\theta_n,\sigma_0,\lambda(\ell)}\left[\widetilde{M}_{t_{k+1}}^{\lambda(\ell)}-\widetilde{M}_{t_k}^{\lambda(\ell)}\big\vert Y_{t_{k+1}}^{\theta_n,\sigma_0,\lambda(\ell)}=X_{t_{k+1}}\right]\big\vert X_{t_k}\right]d\ell\\
&+\dfrac{u^2w^2}{2\sigma_0^2n^2}\sum_{k=0}^{n-1}X_{t_k}^2\int_0^1\dfrac{\ell}{\lambda(\ell)}d\ell,
\end{align*}
which converges to zero in $\P^{\theta_0,\sigma_0,\lambda_0}$-probability as $n\to\infty$. Here, we have used Lemma \ref{c2lemma8}, Lemma \ref{c3ergodic}, \eqref{e1}, \eqref{ergodic2}, and proceeded as for the term $S_{n,3,1}$.

\vskip 5pt
{\it Proof of \eqref{c2eql12}.} Using again Lemma \ref{c2lemma8} and \eqref{ergodic2}, 
\begin{align*}
\sum_{k=0}^{n-1}\E\left[\eta_{k,n}\vert \widehat{\mathcal{F}}_{t_k}\right]\E\left[\beta_{k,n}\vert \widehat{\mathcal{F}}_{t_k}\right]&=\dfrac{w^2v^2}{2\sigma_0^2n}\int_0^1\dfrac{\sigma_0+\sigma(\ell)}{\sigma(\ell)^3}\ell d\ell+\dfrac{uwv^2}{\sigma_0^2n^2}\sum_{k=0}^{n-1}X_{t_k}\int_0^1\dfrac{\sigma_0+\sigma(\ell)}{\sigma(\ell)^3}\ell d\ell\\
&\qquad+\dfrac{w^2v^2}{n}\int_0^1\dfrac{\sigma_0+\sigma(\ell)}{\sigma(\ell)^3}\ell d\ell\int_0^1\dfrac{\ell}{\lambda(\ell)}d\ell,
 \end{align*}
which converges to zero in $\P^{\theta_0,\sigma_0,\lambda_0}$-probability as $n\to\infty$. Next,
\begin{align*}
&\sum_{k=0}^{n-1}\E\left[\eta_{k,n}\beta_{k,n}\vert \widehat{\mathcal{F}}_{t_k}\right]=\dfrac{w^2v^2}{2\sigma_0^2n}\int_0^1\dfrac{\sigma_0+\sigma(\ell)}{\sigma(\ell)^3}\ell d\ell+\dfrac{uwv^2}{\sigma_0^2n^2}\sum_{k=0}^{n-1}X_{t_k}\int_0^1\dfrac{\sigma_0+\sigma(\ell)}{\sigma(\ell)^3}\ell d\ell\\
&+\dfrac{w^2v}{\sqrt{n}}\int_0^1\dfrac{d\ell}{\sigma(\ell)}\int_0^1\dfrac{\ell}{\lambda(\ell)}d\ell+\dfrac{uw\sigma_0^2}{n\Delta_n\sqrt{\Delta_n}}\int_0^1\dfrac{d\ell}{\sigma(\ell)^3}\\
&\times\sum_{k=0}^{n-1}\int_0^1\dfrac{1}{\lambda(\ell)} \E\left[(\Delta B_{t_{k+1}})^2\widetilde{\E}_{X_{t_k}}^{\theta_n,\sigma_0,\lambda(\ell)}\left[\widetilde{M}_{t_{k+1}}^{\lambda(\ell)}-\widetilde{M}_{t_k}^{\lambda(\ell)}\big\vert Y_{t_{k+1}}^{\theta_n,\sigma_0,\lambda(\ell)}=X_{t_{k+1}}\right]\big\vert X_{t_k}\right]d\ell,
\end{align*}
which converges to zero in $\P^{\theta_0,\sigma_0,\lambda_0}$-probability as $n\to\infty$. Here, we have used Lemma \ref{c2lemma8}, \eqref{ergodic2}, and proceeded as for the term $S_{n,3,1}$.

\vskip 5pt
{\it Proof of \eqref{c2eql3}.} Using Lemma \ref{c3ergodic}, as $n\to\infty$,
\begin{align*}
&\sum_{k=0}^{n-1}\E\left[\xi_{k,n}^4\vert \widehat{\mathcal{F}}_{t_k}\right]\leq\dfrac{8u^4}{\sigma_0^8n^2}\sum_{k=0}^{n-1}X_{t_k}^4\left(3\sigma_0^4+\dfrac{u^4X_{t_k}^4}{16n^2}\right)\overset{\P^{\theta_0,\sigma_0,\lambda_0}}{\longrightarrow}0.
\end{align*}

Next, it is easy to check that for some constant $C>0$,
$$
\sum_{k=0}^{n-1}\E\left[\eta_{k,n}^4\vert \widehat{\mathcal{F}}_{t_k}\right]\leq \dfrac{C}{n}.
$$

Finally, applying Jensen's inequality and Lemma \ref{c3ergodic}, we get that
\begin{align*}
&\sum_{k=0}^{n-1}\E\left[\beta_{k,n}^4\vert \widehat{\mathcal{F}}_{t_k}\right]\leq \dfrac{8w^4}{n^2\Delta_n^2\sigma_0^8}\sum_{k=0}^{n-1}\E\left[\left(\sigma_0\Delta B_{t_{k+1}}+\dfrac{w\Delta_n}{2\sqrt{n\Delta_n}}+\dfrac{u\Delta_n}{\sqrt{n\Delta_n}}X_{t_k}\right)^4\big\vert X_{t_k}\right]\\
&\qquad+\dfrac{8w^4}{n^2\Delta_n^2}\sum_{k=0}^{n-1}\int_0^1\dfrac{1}{\lambda(\ell)^4}\E\left[\widetilde{\E}_{X_{t_k}}^{\theta_n,\sigma_0,\lambda(\ell)}\left[\left(\widetilde{M}_{t_{k+1}}^{\lambda(\ell)}-\widetilde{M}_{t_k}^{\lambda(\ell)}\right)^4\big\vert Y_{t_{k+1}}^{\theta_n,\sigma_0,\lambda(\ell)}=X_{t_{k+1}}\right]\big\vert X_{t_k}\right]d\ell,
\end{align*}
which converges to zero in $\P^{\theta_0,\sigma_0,\lambda_0}$-probability as $n\to\infty$, since $\E[(\Delta B_{t_{k+1}})^4\vert X_{t_k}]=3\Delta_n^2$ and 
$$
\E\left[\widetilde{\E}_{X_{t_k}}^{\theta_n,\sigma_0,\lambda(\ell)}\left[\left(\widetilde{M}_{t_{k+1}}^{\lambda(\ell)}-\widetilde{M}_{t_k}^{\lambda(\ell)}\right)^4\big\vert Y_{t_{k+1}}^{\theta_n,\sigma_0,\lambda(\ell)}=X_{t_{k+1}}\right]\big\vert X_{t_k}\right]\leq
C\Delta_n,
$$
for some constant $C>0$ and $n$ large enough, using the same arguments as in Lemma \ref{c2lemma8}.
\end{proof}

Consequently, from Lemmas \ref{expansion}-\ref{main}, the proof of Theorem \ref{c2theorem} is now completed.

\section{Conclusion}
Considering the SDEs whose jump part is characterized by the stable laws makes the problem simpler to handle thanks to the semi-explicitness of the density of such processes (see A\"it-Sahalia and Jacod \cite{AJ07}, and Cl\'ement and Gloter \cite{CG15}). In our context, we have shown that the Malliavin calculus is a powerful tool for the stochastic analysis of the log-likelihood ratio of diffusions with jumps. Besides, we need to condition on the jump structure and use large deviation type results. We believe that the argument we introduced here can be extended to more general cases where the transition density is not explicit, with further arguments. In fact, in \cite{KNT15} we have used the same methodology presented here to treat a multidimensional  ergodic diffusion with jumps whose unknown parameter appears only in the drift coefficient. However, there is an extension of the result of this paper that we should think about in our future work. As we mentioned in the Introduction, the case of general SDEs with jumps whose unknown parameters appear in the drift and diffusion coefficients and in the jump component remains an open and difficult problem. This issue will be treated in future research.

\section{Appendix}

\subsection{Transition density conditioned on the jump structure}
\label{condensity}

For any $t>s$ and $j\geq 0$, we denote by $q_{(j)}^{\theta,\sigma,\lambda}(t-s,x,y)$ the transition density of $X_t^{\theta,\sigma,\lambda}$ conditioned on $X_s^{\theta,\sigma,\lambda}=x$ and $N_t-N_s=j$. The convolution formula for the sum of independent random variables yields
\begin{equation}\label{density}\begin{split}
p^{\theta,\sigma,\lambda}(t-s,x,y)=\sum_{j=0}^{\infty}q_{(j)}^{\theta,\sigma,\lambda}(t-s,x,y)e^{-\lambda (t-s)}\frac{(\lambda(t-s))^j}{j!}.
\end{split}
\end{equation}
First, using equation \eqref{solution}, we have that
\begin{align}
&q_{(0)}^{\theta,\sigma,\lambda}(t-s,x,y)=\P\left(X_t^{\theta,\sigma,\lambda}=y\vert X_s^{\theta,\sigma,\lambda}=x, N_t-N_s=0\right)\notag\\
&=\P\left(\sigma\int_s^te^{-\theta (t-u)}dB_u=y-xe^{-\theta (t-s)}+\frac{\lambda}{\theta}(1-e^{-\theta(t-s)})\vert X_s^{\theta,\sigma,\lambda}=x, N_t-N_s=0\right)\notag\\
&=\dfrac{1}{\sqrt{\frac{\pi}{\theta}\sigma^2(1-e^{-2\theta(t-s)})}}\exp\left\{-\dfrac{\left(y-xe^{-\theta(t-s)}+\frac{\lambda}{\theta}(1-e^{-\theta(t-s)})\right)^2}{\frac{1}{\theta}\sigma^2(1-e^{-2\theta(t-s)})}\right\}.\label{q0}
\end{align}
For any $j\geq 1$ and $s<s_1<\cdots<s_j<t$, we denote by $q_{(j)}^{\theta,\sigma,\lambda}(t-s,x,y;s_1,\ldots,s_j)$ the transition density of $X_t^{\theta,\sigma,\lambda}$ conditioned on $X_s^{\theta,\sigma,\lambda}=x$, $N_t-N_s=j$ and $(T_1^{s,t}=s_1, \ldots, T_j^{s,t}=s_j)$, where $T_1^{s,t}, T_2^{s,t},\ldots$ with $s<T_1^{s,t}<T_2^{s,t}<\cdots<t$ are the jump times on $[s,t]$ of the Poisson process $N$. Given $N_t-N_s=j$, by conditioning on the jump times $(T_1^{s,t},\ldots,T_j^{s,t})$ which are then distributed as the order statistics of $j$ independent uniform random variables on $[s,t]$, we have 
\begin{align}
&q_{(j)}^{\theta,\sigma,\lambda}(t-s,x,y)=\P\left(X_t^{\theta,\sigma,\lambda}=y\vert X_s^{\theta,\sigma,\lambda}=x, N_t-N_s=j\right)\notag\\
&=\int_{\{s<s_1<\cdots<s_j<t\}}\P\left(X_t^{\theta,\sigma,\lambda}=y\vert X_s^{\theta,\sigma,\lambda}=x, N_t-N_s=j, T_1^{s,t}=s_1, \ldots, T_j^{s,t}=s_j\right)\notag\\
&\qquad\times\P\left(T_1^{s,t}\in ds_1,\ldots,T_j^{s,t}\in ds_j\vert X_s^{\theta,\sigma,\lambda}=x, N_t-N_s=j\right)\notag\\
&=\dfrac{j!}{(t-s)^j}\int_{\{s<s_1<\cdots<s_j<t\}}q_{(j)}^{\theta,\sigma,\lambda}(t-s,x,y;s_1,\ldots,s_j)ds_1\cdots ds_j.\label{qjf}
\end{align}
Moreover, using again equation \eqref{solution}, definition of stochastic integral w.r.t. Poisson process and setting $\Sigma:=\frac{1}{\theta}\sigma^2(1-e^{-2\theta(t-s)})$, we get that
\begin{align}
&q_{(j)}^{\theta,\sigma,\lambda}(t-s,x,y;s_1,\ldots,s_j)=\P\left(X_t^{\theta,\sigma,\lambda}=y\vert X_s^{\theta,\sigma,\lambda}=x, N_t-N_s=j, T_1^{s,t}=s_1, \ldots, T_j^{s,t}=s_j\right)\notag\\
&=\P\bigg(\sigma\int_s^te^{-\theta (t-u)}dB_u+\int_s^te^{-\theta (t-u)}dN_u=y-xe^{-\theta (t-s)}\notag\\
&\qquad+\frac{\lambda}{\theta}(1-e^{-\theta(t-s)})\vert X_s^{\theta,\sigma,\lambda}=x, N_t-N_s=j, T_1^{s,t}=s_1, \ldots, T_j^{s,t}=s_j\bigg)\notag\\
&=\P\bigg(\sigma\int_s^te^{-\theta (t-u)}dB_u=y-xe^{-\theta (t-s)}+\frac{\lambda}{\theta}(1-e^{-\theta(t-s)})\notag\\
&\qquad-\left(e^{-\theta (t-s_1)}+\cdots+e^{-\theta (t-s_j)}\right)\vert X_s^{\theta,\sigma,\lambda}=x, N_t-N_s=j, T_1^{s,t}=s_1, \ldots, T_j^{s,t}=s_j\bigg)\notag\\
&=\dfrac{1}{\sqrt{\pi\Sigma}}\exp\left\{-\dfrac{\left(y-xe^{-\theta(t-s)}+\frac{\lambda}{\theta}(1-e^{-\theta(t-s)})-(e^{-\theta (t-s_1)}+\cdots+e^{-\theta (t-s_j)})\right)^2}{\Sigma}\right\}.\label{qj}
\end{align}

For $k \in \{0,...,n-1\}$, consider the events $\widehat{J}_{j,k}:=\{\Delta N_{t_{k+1}}=j\}$ and $\widetilde{J}_{j,k}:=\{\Delta M_{t_{k+1}}=j\}$. By conditioning on all the possible number of jumps and jump times of the Poisson process occurring on $[t_k,t_{k+1}]$, we have the following change of measures via the transition density conditioned on the number of jumps and jump times.
\begin{lemma}\label{change} Let $f$ be any bounded function. For any $k \in \{0,...,n-1\}$, $(\theta,\sigma,\lambda)\in \Theta\times\Sigma\times\Lambda$, 
\begin{equation*} \begin{split}
\E\left[f(X_{t_{k+1}})\vert X_{t_{k}}\right]=\E\left[f(X_{t_{k+1}}^{\theta,\sigma,\lambda})D(\Delta_n,X_{t_k}^{\theta,\sigma,\lambda},X^{\theta,\sigma,\lambda}_{t_{k+1}})\big\vert X_{t_k}^{\theta,\sigma,\lambda}=X_{t_k}\right],
\end{split}
\end{equation*}
where, setting $\Sigma_k^j:=\{(s_1,\ldots,s_j): t_k<s_1<\cdots<s_j<t_{k+1}\}$,
\begin{align*}
&D(\Delta_n,X_{t_k}^{\theta,\sigma,\lambda},X^{\theta,\sigma,\lambda}_{t_{k+1}}):=e^{(\lambda-\lambda_0)\Delta_n}\bigg({\bf{1}}_{\widehat{J}_{0,k}}\frac{q_{(0)}^{\theta_0,\sigma_0,\lambda_0}}{q_{(0)}^{\theta,\sigma,\lambda}}(\Delta_n,X_{t_k}^{\theta,\sigma,\lambda},X^{\theta,\sigma,\lambda}_{t_{k+1}})\\
&\qquad+\sum_{j=1}^{\infty}\left(\frac{\lambda_0}{\lambda}\right)^j\int_{\Sigma_k^j}{\bf{1}}_{\{\widehat{J}_{j,k},s_1,\ldots,s_j\}}\frac{q_{(j)}^{\theta_0,\sigma_0,\lambda_0}}{q_{(j)}^{\theta,\sigma,\lambda}}(\Delta_n,X_{t_k}^{\theta,\sigma,\lambda},X^{\theta,\sigma,\lambda}_{t_{k+1}};s_1,\ldots,s_j)ds_1\cdots ds_j\bigg),
\end{align*}
and $\{\widehat{J}_{j,k},s_1,\ldots,s_j\}:=\{\Delta N_{t_{k+1}}=j, T_1^{t_k,t_{k+1}}=s_1, \ldots, T_j^{t_k,t_{k+1}}=s_j\}$.
\end{lemma}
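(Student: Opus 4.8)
The plan is to start from the right-hand side, which is an expectation under the law with parameters $(\theta,\sigma,\lambda)$, and to reconstruct the left-hand side by disintegrating the conditional law of $X_{t_{k+1}}^{\theta,\sigma,\lambda}$ given $X_{t_k}^{\theta,\sigma,\lambda}=X_{t_k}$ according to the jump structure of $N$ on $[t_k,t_{k+1}]$, matching each contribution with the corresponding term of the convolution formula \eqref{density} written at $(\theta_0,\sigma_0,\lambda_0)$. Since the events $\{\widehat{J}_{j,k}\}_{j\geq 0}$ form a partition and the indicators built into $D$ annihilate every summand but one on each $\widehat{J}_{j,k}$, I would first split
$$\E\left[f(X_{t_{k+1}}^{\theta,\sigma,\lambda})D\big\vert X_{t_k}^{\theta,\sigma,\lambda}=X_{t_k}\right]=\sum_{j=0}^{\infty}\E\left[f(X_{t_{k+1}}^{\theta,\sigma,\lambda})D\,{\bf 1}_{\widehat{J}_{j,k}}\big\vert X_{t_k}^{\theta,\sigma,\lambda}=X_{t_k}\right],$$
and handle the summand indexed by $j$.

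For each fixed $j$, I would condition on the jump structure under $(\theta,\sigma,\lambda)$, using that $\widehat{J}_{j,k}$ has probability $e^{-\lambda\Delta_n}(\lambda\Delta_n)^j/j!$, that given $\widehat{J}_{j,k}$ the ordered jump times are distributed as the order statistics of $j$ independent uniforms on $[t_k,t_{k+1}]$ with density $j!/\Delta_n^j$ on $\Sigma_k^j$, and that given the jump times $(s_1,\ldots,s_j)$ the variable $X_{t_{k+1}}^{\theta,\sigma,\lambda}$ has conditional density $q_{(j)}^{\theta,\sigma,\lambda}(\Delta_n,X_{t_k},\cdot\,;s_1,\ldots,s_j)$. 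The formal device ${\bf 1}_{\{\widehat{J}_{j,k},s_1,\ldots,s_j\}}$ combined with the integration over $\Sigma_k^j$ is then read as the evaluation of the integrand at the actual jump times, so that on $\widehat{J}_{j,k}$ the factor $D$ reduces to $e^{(\lambda-\lambda_0)\Delta_n}(\lambda_0/\lambda)^j\,q_{(j)}^{\theta_0,\sigma_0,\lambda_0}/q_{(j)}^{\theta,\sigma,\lambda}$ (and to $e^{(\lambda-\lambda_0)\Delta_n}\,q_{(0)}^{\theta_0,\sigma_0,\lambda_0}/q_{(0)}^{\theta,\sigma,\lambda}$ for $j=0$). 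Integrating in $y$ against $q_{(j)}^{\theta,\sigma,\lambda}$ cancels the denominator, and collecting the constants via $e^{(\lambda-\lambda_0)\Delta_n}(\lambda_0/\lambda)^j e^{-\lambda\Delta_n}\frac{(\lambda\Delta_n)^j}{j!}\frac{j!}{\Delta_n^j}=e^{-\lambda_0\Delta_n}\lambda_0^j$ leaves exactly
$$e^{-\lambda_0\Delta_n}\lambda_0^j\int_{\Sigma_k^j}\int_{\R}f(y)\,q_{(j)}^{\theta_0,\sigma_0,\lambda_0}(\Delta_n,X_{t_k},y;s_1,\ldots,s_j)\,dy\,ds_1\cdots ds_j.$$

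At this stage I would invoke the integral representation \eqref{qjf}, namely $\int_{\Sigma_k^j}q_{(j)}^{\theta_0,\sigma_0,\lambda_0}(\Delta_n,X_{t_k},y;s_1,\ldots,s_j)\,ds_1\cdots ds_j=\frac{\Delta_n^j}{j!}q_{(j)}^{\theta_0,\sigma_0,\lambda_0}(\Delta_n,X_{t_k},y)$, so that the $j$-th contribution becomes $e^{-\lambda_0\Delta_n}\frac{(\lambda_0\Delta_n)^j}{j!}\int_{\R}f(y)q_{(j)}^{\theta_0,\sigma_0,\lambda_0}(\Delta_n,X_{t_k},y)\,dy$. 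Summing over $j$ and comparing with \eqref{density} at $(\theta_0,\sigma_0,\lambda_0)$ identifies the total with $\int_{\R}f(y)p^{\theta_0,\sigma_0,\lambda_0}(\Delta_n,X_{t_k},y)\,dy=\E[f(X_{t_{k+1}})\vert X_{t_k}]$, which is the desired left-hand side. The main obstacle will be to make the formal symbol ${\bf 1}_{\{\widehat{J}_{j,k},s_1,\ldots,s_j\}}$ rigorous, i.e. to disintegrate the Poisson jump structure on $[t_k,t_{k+1}]$ so that the heuristic ``substitute the jump times'' step is a genuine conditional expectation against the uniform order-statistics kernel; once this is established, the interchange of the sum over $j$ with the expectation is routine by Tonelli, using that $f$ is bounded and that the Gaussian-type densities $q_{(j)}$ in \eqref{q0}--\eqref{qj} are strictly positive so that the ratios are well defined.
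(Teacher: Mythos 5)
Your proposal is correct and is essentially the paper's own proof run in reverse: the paper starts from $\E\left[f(X_{t_{k+1}})\vert X_{t_k}\right]$, disintegrates it over the number of jumps and the jump times of $N$ on $[t_k,t_{k+1}]$ (Poisson weight, order-statistics density $j!/\Delta_n^j$, conditional density $q_{(j)}^{\theta_0,\sigma_0,\lambda_0}$), and then inserts the ratios $q_{(j)}^{\theta_0,\sigma_0,\lambda_0}/q_{(j)}^{\theta,\sigma,\lambda}$ together with the weight adjustment $e^{(\lambda-\lambda_0)\Delta_n}(\lambda_0/\lambda)^j$ to recognize the right-hand side, which is exactly your computation read backwards. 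Your constant bookkeeping ($e^{(\lambda-\lambda_0)\Delta_n}(\lambda_0/\lambda)^j e^{-\lambda\Delta_n}\frac{(\lambda\Delta_n)^j}{j!}\frac{j!}{\Delta_n^j}=e^{-\lambda_0\Delta_n}\lambda_0^j$) and the final reassembly via \eqref{qjf} and \eqref{density} match the paper's identities, so there is no gap.
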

\begin{proof}
Observe that
\begin{align*}
&\E\left[f(X_{t_{k+1}})\vert X_{t_{k}}\right]=\E\left[{\bf{1}}_{\widehat{J}_{0,k}}f(X_{t_{k+1}})\vert X_{t_{k}}\right]+\sum_{j=1}^{\infty}\int_{\Sigma_k^j}\E\left[{\bf{1}}_{\{\widehat{J}_{j,k},s_1,\ldots,s_j\}}f(X_{t_{k+1}})\vert X_{t_{k}}\right]ds_1\cdots ds_j\\
&=\int_{\R}f(y)q_{(0)}^{\theta_0,\sigma_0,\lambda_0}(\Delta_n,X_{t_k},y)dye^{-\lambda_0\Delta_n}\\
&\qquad+\sum_{j=1}^{\infty}\int_{\Sigma_k^j}\int_{\R}f(y)q_{(j)}^{\theta_0,\sigma_0,\lambda_0}(\Delta_n,X_{t_k},y;s_1,\ldots,s_j)dye^{-\lambda_0\Delta_n}\frac{(\lambda_0\Delta_n)^j}{j!}\frac{j!}{\Delta_n^j}ds_1\cdots ds_j\\
&=e^{(\lambda-\lambda_0)\Delta_n}\bigg(\int_{\R}f(y)\frac{q_{(0)}^{\theta_0,\sigma_0,\lambda_0}}{q_{(0)}^{\theta,\sigma,\lambda}}q_{(j)}^{\theta,\sigma,\lambda}(\Delta_n,X_{t_k},y)dye^{-\lambda\Delta_n}\\
&+\sum_{j=1}^{\infty}\left(\frac{\lambda_0}{\lambda}\right)^j\int_{\Sigma_k^j}\int_{\R}f(y)\frac{q_{(j)}^{\theta_0,\sigma_0,\lambda_0}}{q_{(j)}^{\theta,\sigma,\lambda}}q_{(j)}^{\theta,\sigma,\lambda}(\Delta_n,X_{t_k},y;s_1,\ldots,s_j)dye^{-\lambda\Delta_n}\frac{(\lambda\Delta_n)^j}{j!}\frac{j!}{\Delta_n^j}ds_1\cdots ds_j\bigg),
\end{align*}
which gives the desired result.
\end{proof}

Next, we have the first crucial estimate.
\begin{lemma}\label{lemma12} Let $(\theta,\sigma,\lambda), (\overline{\theta},\overline{\sigma},\overline{\lambda})\in \Theta\times\Sigma\times\Lambda$, and set $\sigma_1:=\frac{\overline{\sigma}^2}{\overline{\theta}}(1-e^{-2\overline{\theta}\Delta_n})$ and $\sigma_2:=\frac{\sigma^2}{\theta}(1-e^{-2\theta\Delta_n})$. Then there exists a constant $C>0$ such that for all $k \in \{0,...,n-1\}$ and $j\geq 1$, the following estimates
\begin{align*}
&\E\left[{\bf{1}}_{\{\widehat{J}_{j,k},s_1,\ldots,s_j\}}\left(\frac{q_{(j)}^{\overline{\theta},\overline{\sigma},\overline{\lambda}}}{q_{(j)}^{\theta,\sigma,\lambda}}(\Delta_n,X_{t_k}^{\theta,\sigma,\lambda},X_{t_{k+1}}^{\theta,\sigma,\lambda};s_1,\ldots,s_j)\right)^{p}\big\vert X_{t_k}^{\theta,\sigma,\lambda}=x\right]\leq Ce^{-\lambda\Delta_n}\lambda^j,\\
&\E\left[{\bf{1}}_{\widehat{J}_{0,k}}\left(\frac{q_{(0)}^{\overline{\theta},\overline{\sigma},\overline{\lambda}}}{q_{(0)}^{\theta,\sigma,\lambda}}(\Delta_n,X_{t_k}^{\theta,\sigma,\lambda},X_{t_{k+1}}^{\theta,\sigma,\lambda})\right)^{p}\big\vert X_{t_k}^{\theta,\sigma,\lambda}=x\right]\leq C,
\end{align*}
hold for any $p>1$ if $\sigma_2\geq\sigma_1$, and for any $p\in (1,\frac{\sigma_1}{\sigma_1-\sigma_2})$ if $\sigma_2<\sigma_1$. Moreover, this statement remains valid for $Y^{\theta,\sigma,\lambda}$.
\end{lemma}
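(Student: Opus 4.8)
The plan is to reduce both estimates to a single explicit Gaussian integral and bound it. First I would unfold the conditional expectation exactly as in the proof of Lemma~\ref{change}: conditioning on the event $\{\widehat{J}_{j,k},s_1,\ldots,s_j\}$ contributes the Poisson weight $e^{-\lambda\Delta_n}(\lambda\Delta_n)^j/j!$ together with the order-statistics density $j!/\Delta_n^j$ of the jump times, and given the jump structure $X_{t_{k+1}}^{\theta,\sigma,\lambda}$ has density $q_{(j)}^{\theta,\sigma,\lambda}(\Delta_n,x,\cdot\,;s_1,\ldots,s_j)$. Taking $f=(q_{(j)}^{\overline\theta,\overline\sigma,\overline\lambda}/q_{(j)}^{\theta,\sigma,\lambda})^{p}$ this yields
\[
\E\Big[{\bf 1}_{\{\widehat J_{j,k},s_1,\ldots,s_j\}}\Big(\tfrac{q_{(j)}^{\overline\theta,\overline\sigma,\overline\lambda}}{q_{(j)}^{\theta,\sigma,\lambda}}\Big)^{p}\Big\vert X_{t_k}^{\theta,\sigma,\lambda}=x\Big]=e^{-\lambda\Delta_n}\lambda^{j}\int_{\R}\frac{\big(q_{(j)}^{\overline\theta,\overline\sigma,\overline\lambda}\big)^{p}}{\big(q_{(j)}^{\theta,\sigma,\lambda}\big)^{p-1}}\,dy,
\]
and an analogous identity for $j=0$ with $q_{(0)}$ and weight $e^{-\lambda\Delta_n}$. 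Hence it suffices to bound the integral $I:=\int_{\R}q_1^{\,p}q_2^{\,1-p}\,dy$ by a constant, where $q_1=q_{(j)}^{\overline\theta,\overline\sigma,\overline\lambda}$ and $q_2=q_{(j)}^{\theta,\sigma,\lambda}$.

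By \eqref{q0}--\eqref{qj}, $q_1$ and $q_2$ are Gaussian in $y$ with parameters $\sigma_1,\sigma_2$ and explicit means $m_1,m_2$. Consequently the integrand is the exponential of a quadratic in $y$ whose coefficient of $y^2$ is $\tfrac{p-1}{\sigma_2}-\tfrac{p}{\sigma_1}=-\tfrac{p\sigma_2-(p-1)\sigma_1}{\sigma_1\sigma_2}$. The integral is finite exactly when this is negative, i.e. when $p\sigma_2-(p-1)\sigma_1>0$; solving for $p$ gives precisely the dichotomy in the statement: every $p>1$ works when $\sigma_2\ge\sigma_1$, while $p<\sigma_1/(\sigma_1-\sigma_2)$ is required when $\sigma_2<\sigma_1$.

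Completing the square and integrating then gives the closed form
\[
I=\frac{\sigma_2^{\,p/2}}{\sigma_1^{\,(p-1)/2}}\,\frac{1}{\sqrt{p\sigma_2-(p-1)\sigma_1}}\,\exp\!\Big(\frac{p(p-1)(m_1-m_2)^2}{p\sigma_2-(p-1)\sigma_1}\Big).
\]
The algebraic prefactor I would control using $\sigma_i\asymp\Delta_n$, which follows from $1-e^{-2\theta\Delta_n}\asymp\theta\Delta_n$ and the compactness of $\Theta,\Sigma$: the powers of $\Delta_n$ in $\sigma_2^{p/2}\sigma_1^{-(p-1)/2}(p\sigma_2-(p-1)\sigma_1)^{-1/2}$ cancel, leaving a quantity bounded uniformly in $n$.

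The hard part is the exponential factor. I would expand, from \eqref{qj}, $m_1-m_2=x\big(e^{-\overline\theta\Delta_n}-e^{-\theta\Delta_n}\big)-\big(\tfrac{\overline\lambda}{\overline\theta}(1-e^{-\overline\theta\Delta_n})-\tfrac{\lambda}{\theta}(1-e^{-\theta\Delta_n})\big)+\sum_{i=1}^{j}\big(e^{-\overline\theta(t_{k+1}-s_i)}-e^{-\theta(t_{k+1}-s_i)}\big)$, and estimate every term by the mean value theorem applied to $\theta\mapsto e^{-\theta\tau}$ with $0\le\tau\le\Delta_n$, so that each is $O(\Delta_n)$ with constants depending only on the compact parameter ranges. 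Since the denominator $p\sigma_2-(p-1)\sigma_1$ is itself of order $\Delta_n$, one factor of $\Delta_n$ from $(m_1-m_2)^2$ cancels the denominator, and the remaining task is to keep the exponent bounded uniformly over the jump configuration $(s_1,\ldots,s_j)$, over $k$, and with controlled dependence on $x$ and on the number of jumps $j$. This balancing of the $\Delta_n$-scalings, where the Gaussian form of $q_{(j)}$ and the compactness of $\Theta\times\Sigma\times\Lambda$ are indispensable, is the principal technical obstacle. Finally, the $j=0$ case is identical with $q_{(0)}$ in place of $q_{(j)}$, and the statement for $Y^{\theta,\sigma,\lambda}$ follows verbatim because $Y^{\theta,\sigma,\lambda}$ shares the same conditional Gaussian transition densities.
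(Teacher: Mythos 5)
Your reduction is exactly the paper's: unfolding the conditioning on $\{\widehat{J}_{j,k},s_1,\ldots,s_j\}$ produces the factor $e^{-\lambda\Delta_n}\frac{(\lambda\Delta_n)^j}{j!}\cdot\frac{j!}{\Delta_n^j}=e^{-\lambda\Delta_n}\lambda^j$ times the integral $\int_{\R}\big(q_{(j)}^{\overline{\theta},\overline{\sigma},\overline{\lambda}}\big)^{p}\big(q_{(j)}^{\theta,\sigma,\lambda}\big)^{1-p}dy$, and the paper's entire remaining argument is the observation that, by \eqref{q0} and \eqref{qj}, the coefficient $\frac{p}{\sigma_1}+\frac{1-p}{\sigma_2}$ of $-y^2$ in the exponent is positive precisely under your dichotomy on $p$, so the $dy$ integral is finite; the $j=0$ case and the transfer to $Y^{\theta,\sigma,\lambda}$ are handled identically. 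Your closed-form evaluation of the integral (the prefactor $\sigma_2^{p/2}\sigma_1^{-(p-1)/2}(p\sigma_2-(p-1)\sigma_1)^{-1/2}$, whose $\Delta_n$-powers indeed cancel, and the cross term) is correct, so up to that point you have reproduced, and in fact sharpened, the paper's proof.

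The one step you leave open is the uniform boundedness of $\exp\big(p(p-1)(m_1-m_2)^2/(p\sigma_2-(p-1)\sigma_1)\big)$, and here two things should be said. First, your interim claim that the mean value theorem makes every term of $m_1-m_2$ of order $O(\Delta_n)$ ``with constants depending only on the compact parameter ranges'' is not correct: the term $x\big(e^{-\overline{\theta}\Delta_n}-e^{-\theta\Delta_n}\big)$ is of size $|x|\,|\overline{\theta}-\theta|\,\Delta_n$ and the jump sum is of size $j\,|\overline{\theta}-\theta|\,\Delta_n$, so for $\overline{\theta}\neq\theta$ the exponent is of order $(|x|+j)^2|\overline{\theta}-\theta|^2\Delta_n$, which is \emph{not} bounded uniformly in $x$ and $j$, whereas the right-hand sides $Ce^{-\lambda\Delta_n}\lambda^j$ and $C$ of the lemma carry no $x$-dependence; the exponent is uniformly $O(\Delta_n)$ only when $\overline{\theta}=\theta$ (then $m_1-m_2=\frac{\lambda-\overline{\lambda}}{\theta}(1-e^{-\theta\Delta_n})$), which is the situation, e.g., of the pure-$\lambda$ perturbation where the paper notes $\sigma_1=\sigma_2$. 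Second, this is not a defect peculiar to your attempt: the paper's own proof never computes the integral and passes directly from ``the $dy$ integral is finite'' to ``$\leq C$'', i.e.\ it silently skips exactly the cross term you isolate. So your proposal is the same method, executed more explicitly, and the obstacle you name is genuine — to close it one must either restrict to $\overline{\theta}=\theta$ or let the constant absorb a factor depending on $x$ and $j$ (of the type $(1+|x|^q)$ or $e^{C(|x|+j)^2|\overline{\theta}-\theta|^2\Delta_n}$), which is what the $x$-polynomial bounds in the surrounding Lemmas \ref{lemma14} and \ref{lemma15} in effect tolerate — but it is an obstacle the paper's proof does not resolve either.
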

\begin{proof}
Using \eqref{qj}, we get that for any $p>1$ if $\sigma_2\geq\sigma_1$, and $p\in (1,\frac{\sigma_1}{\sigma_1-\sigma_2})$ if $\sigma_2<\sigma_1$,
\begin{align*}
&\E\left[{\bf{1}}_{\{\widehat{J}_{j,k},s_1,\ldots,s_j\}}\left(\frac{q_{(j)}^{\overline{\theta},\overline{\sigma},\overline{\lambda}}}{q_{(j)}^{\theta,\sigma,\lambda}}(\Delta_n,X_{t_k}^{\theta,\sigma,\lambda},X_{t_{k+1}}^{\theta,\sigma,\lambda};s_1,\ldots,s_j)\right)^{p}\big\vert X_{t_k}^{\theta,\sigma,\lambda}=x\right]\\
&=e^{-\lambda\Delta_n}\frac{(\lambda\Delta_n)^j}{j!}\frac{j!}{\Delta_n^j}\int_{\R}q_{(j)}^{\overline{\theta},\overline{\sigma},\overline{\lambda}}(\Delta_n,x,y;s_1,\ldots,s_j)^{p}q_{(j)}^{\theta,\sigma,\lambda}(\Delta_n,x,y;s_1,\ldots,s_j)^{1-p}dy\\
&=e^{-\lambda\Delta_n}\lambda^j\int_{\R}\dfrac{1}{(\pi\sigma_1)^{\frac{p}{2}}}\dfrac{1}{(\pi\sigma_2)^{\frac{1-p}{2}}}\\
&\quad\times\exp\left\{-p\frac{\left(y-xe^{-\overline{\theta}\Delta_n}+\frac{\overline{\lambda}}{\overline{\theta}}(1-e^{-\overline{\theta}\Delta_n})-(e^{-\overline{\theta}(t_{k+1}-s_1)}+\cdots+e^{-\overline{\theta} (t_{k+1}-s_j)})\right)^2}{\sigma_1}\right\}\\
&\quad\times\exp\left\{-(1-p)\frac{\left(y-xe^{-\theta\Delta_n}+\frac{\lambda}{\theta}(1-e^{-\theta \Delta_n})-(e^{-\theta (t_{k+1}-s_1)}+\cdots+e^{-\theta (t_{k+1}-s_j)})\right)^2}{\sigma_2}\right\}dy\\
&\leq Ce^{-\lambda\Delta_n}\lambda^j,
\end{align*}
for some constant $C>0$, where we use the fact that the $dy$ integral is finite since $\frac{p}{\sigma_1}+\frac{1-p}{\sigma_2}>0$ for any $p>1$ if $\sigma_2\geq\sigma_1$, and $p\in (1,\frac{\sigma_1}{\sigma_1-\sigma_2})$ if $\sigma_2<\sigma_1$. This shows the first inequality. Using \eqref{q0} and the same arguments as above, we conclude the second inequality.
\end{proof}
As in Propositions \ref{c2prop1} and \ref{c2pro2}, we have the following explicit expression for the logarithm derivatives of the transition density conditioned on the number of jumps and jump times.
\begin{lemma}\label{lemma13} For all $(\theta,\sigma,\lambda) \in \Theta\times\Sigma\times\Lambda$, $k \in \{0,...,n-1\}$, $j\geq 1$, $\beta\in\{\theta, \sigma\}$, and $x,y\in\R$,
\begin{align*}
&\dfrac{\partial_{\beta}q_{(j)}^{\theta,\sigma,\lambda}}{q_{(j)}^{\theta,\sigma,\lambda}}(\Delta_n,x,y;s_1,\ldots,s_j)=\dfrac{1}{\Delta_n}\widetilde{\E}_{x}^{\theta,\sigma,\lambda}\left[\delta\left(\partial_{\beta}Y_{t_{k+1}}^{\theta,\sigma,\lambda}(t_k,x)U^{\theta,\sigma,\lambda}(t_k,x)\right)\big\vert Y_{t_{k+1}}^{\theta,\sigma,\lambda}=y,\widetilde{J}_{j,k},s_1,\ldots,s_j\right],\\
&\dfrac{\partial_{\lambda}q_{(j)}^{\theta,\sigma,\lambda}}{q_{(j)}^{\theta,\sigma,\lambda}}(\Delta_n,x,y;s_1,\ldots,s_j)=\widetilde{\E}_{x}^{\theta,\sigma,\lambda}\left[-\dfrac{\Delta W_{t_{k+1}}}{\sigma}+\dfrac{\widetilde{M}_{t_{k+1}}^{\lambda}-\widetilde{M}_{t_k}^{\lambda}}{\lambda}\big\vert Y_{t_{k+1}}^{\theta,\sigma,\lambda}=y,\widetilde{J}_{j,k},s_1,\ldots,s_j\right],\\
&\dfrac{\partial_{\beta}q_{(0)}^{\theta,\sigma,\lambda}}{q_{(0)}^{\theta,\sigma,\lambda}}(\Delta_n,x,y)=\dfrac{1}{\Delta_n}\widetilde{\E}_{x}^{\theta,\sigma,\lambda}\left[\delta\left(\partial_{\beta}Y_{t_{k+1}}^{\theta,\sigma,\lambda}(t_k,x)U^{\theta,\sigma,\lambda}(t_k,x)\right)\big\vert Y_{t_{k+1}}^{\theta,\sigma,\lambda}=y,\widetilde{J}_{0,k}\right],\\
&\dfrac{\partial_{\lambda}q_{(0)}^{\theta,\sigma,\lambda}}{q_{(0)}^{\theta,\sigma,\lambda}}(\Delta_n,x,y)=\widetilde{\E}_{x}^{\theta,\sigma,\lambda}\left[-\dfrac{\Delta W_{t_{k+1}}}{\sigma}+\dfrac{\widetilde{M}_{t_{k+1}}^{\lambda}-\widetilde{M}_{t_k}^{\lambda}}{\lambda}\big\vert Y_{t_{k+1}}^{\theta,\sigma,\lambda}=y,\widetilde{J}_{0,k}\right],
\end{align*}
where the process $U^{\theta,\sigma,\lambda}(t_k,x)=(U^{\theta,\sigma,\lambda}_t(t_k,x), t\in[t_k,t_{k+1}])$ is defined in Proposition \ref{c2prop1}.
\end{lemma}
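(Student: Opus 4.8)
The plan is to reproduce the arguments of Propositions \ref{c2prop1} and \ref{c2pro2} line by line, but to carry out every step conditionally on the jump structure, i.e.\ on the event $\{\widetilde{J}_{j,k}, T_1^{t_k,t_{k+1}}=s_1,\ldots,T_j^{t_k,t_{k+1}}=s_j\}$. The structural fact that makes this legitimate is that the Malliavin derivative $D$ and the Skorohod integral $\delta$ act only on the Wiener space $\Omega^3$ generated by $W$, whereas the jump data $(\Delta M_{t_{k+1}}, T_1^{t_k,t_{k+1}},\ldots)$ is measurable with respect to the independent factor $\Omega^4$ generated by $M$. Once $j$ and $s_1,\ldots,s_j$ are fixed, the flow $(Y_t^{\theta,\sigma,\lambda}(t_k,x))_{t\in[t_k,t_{k+1}]}$ is a functional of $W$ shifted by the deterministic quantities $\sum_i e^{-\theta(t-s_i)}$ and the compensator $-\lambda\int_{t_k}^t du$; since this shift does not depend on $W$, the Malliavin derivative $D_sY_{t_{k+1}}^{\theta,\sigma,\lambda}(t_k,x)$ and hence $U^{\theta,\sigma,\lambda}(t_k,x)$ are unaffected by the conditioning, and the sensitivity processes still satisfy \eqref{pt}--\eqref{ps} pathwise. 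Thus the pure Wiener-space identities used before remain valid $\widetilde{\P}$-almost surely, hence also under the conditional law.

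For $\beta\in\{\theta,\sigma\}$ I would fix a continuously differentiable $f$ with compact support and compute $\partial_{\beta}\widetilde{\E}_x^{\theta,\sigma,\lambda}[f(Y_{t_{k+1}}^{\theta,\sigma,\lambda}(t_k,x))\,|\,\widetilde{J}_{j,k},s_1,\ldots,s_j]$ in two ways. Differentiating under the conditional expectation and applying the chain rule $f'(Y_{t_{k+1}}^{\theta,\sigma,\lambda})=D_t(f(Y_{t_{k+1}}^{\theta,\sigma,\lambda}))U_t^{\theta,\sigma,\lambda}(t_k,x)$ followed by the integration-by-parts formula on $[t_k,t_{k+1}]$, exactly as in the proof of Proposition \ref{c2prop1}, gives $\frac{1}{\Delta_n}\widetilde{\E}_x^{\theta,\sigma,\lambda}[f(Y_{t_{k+1}}^{\theta,\sigma,\lambda})\delta(\partial_{\beta}Y_{t_{k+1}}^{\theta,\sigma,\lambda}(t_k,x)U^{\theta,\sigma,\lambda}(t_k,x))\,|\,\widetilde{J}_{j,k},s_1,\ldots,s_j]$. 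On the other hand, by the defining property of $q_{(j)}^{\theta,\sigma,\lambda}(\Delta_n,x,\cdot;s_1,\ldots,s_j)$ as the conditional density, the same quantity equals $\int_{\R}f(y)\,\partial_{\beta}q_{(j)}^{\theta,\sigma,\lambda}(\Delta_n,x,y;s_1,\ldots,s_j)\,dy$. Using the continuity of $q_{(j)}^{\theta,\sigma,\lambda}$ and $\partial_{\beta}q_{(j)}^{\theta,\sigma,\lambda}$ in $(y,\beta)$, which is immediate from the Gaussian expression \eqref{qj}, together with the flow and Markov properties as in Proposition \ref{c2prop1}, I would disintegrate the conditional expectation against the terminal value $Y_{t_{k+1}}^{\theta,\sigma,\lambda}=y$ and match integrands, obtaining the first formula. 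The case $j=0$ is identical with \eqref{q0} and $\widetilde{J}_{0,k}$ in place of \eqref{qj}, giving the third formula.

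For the $\lambda$-derivatives I would follow Proposition \ref{c2pro2}: apply Girsanov's theorem from Lemma \ref{c2Girsanov2} to rewrite $\widetilde{\E}_x^{\theta,\sigma,\lambda}[f(Y_{t_{k+1}}^{\theta,\sigma,\lambda}(t_k,x))\,|\,\widetilde{J}_{j,k},s_1,\ldots,s_j]$ in terms of $Y^{\theta,\sigma,\lambda_1}$ against $d\widetilde{\P}/d\widetilde{Q}_k^{\theta,\lambda_1,\theta,\lambda,\sigma}$, differentiate in $\lambda$, and evaluate at $\lambda_1=\lambda$. The change of measure stays valid under the extra conditioning, the Poisson factor $-\Delta M_{t_{k+1}}\log\frac{\lambda}{\lambda_1}$ reducing to a deterministic function of $\lambda$ once $\Delta M_{t_{k+1}}=j$ is fixed; differentiating still produces the Radon--Nikodym contribution $-\frac{\Delta W_{t_{k+1}}}{\sigma}+\frac{\widetilde{M}_{t_{k+1}}^{\lambda}-\widetilde{M}_{t_k}^{\lambda}}{\lambda}$, now read conditionally on the jump structure. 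Identifying the result with $\partial_{\lambda}q_{(j)}^{\theta,\sigma,\lambda}$ as above yields the second and fourth formulas.

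The main obstacle I anticipate is the rigorous treatment of conditioning on the measure-zero event $\{T_1^{t_k,t_{k+1}}=s_1,\ldots,T_j^{t_k,t_{k+1}}=s_j\}$: one must work at the level of densities, disintegrating over the order-statistics law of the jump times as in \eqref{qjf}, and justify that differentiation in $\beta$ (resp.\ $\lambda$) commutes with both the conditional expectation and this disintegration. This requires uniform integrability and dominated-convergence arguments, which follow from the explicit Gaussian form \eqref{qj} and the moment bounds of Lemma \ref{estimate}. Once these are in place, the remaining manipulations are precisely those already performed in Propositions \ref{c2prop1} and \ref{c2pro2}.
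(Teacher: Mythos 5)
Your proposal is correct and follows essentially the same route as the paper: the paper likewise reruns the integration-by-parts and Girsanov arguments of Propositions \ref{c2prop1} and \ref{c2pro2} with the jump-structure information inserted, justified exactly by the independence of $W$ and $(M,T_1^{t_k,t_{k+1}},\ldots,T_j^{t_k,t_{k+1}})$ that you identify. The only cosmetic difference is that the paper keeps the indicator ${\bf 1}_{\{\widetilde{J}_{j,k},s_1,\ldots,s_j\}}$ inside \emph{unconditional} expectations and disintegrates against $q_{(j)}^{\theta,\sigma,\lambda}$ only at the last step, the jump-time density factor $e^{-\lambda\Delta_n}\frac{(\lambda\Delta_n)^j}{j!}\frac{j!}{\Delta_n^j}$ cancelling on both sides, which sidesteps the null-event conditioning issue you flag as the main obstacle.
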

\begin{proof} Let $f$ be a continuously differentiable function with compact support. The chain rule of the Malliavin calculus implies that $
f'(Y_{t_{k+1}}^{\theta,\sigma,\lambda}(t_k,x))=D_t(f(Y_{t_{k+1}}^{\theta,\sigma,\lambda}(t_k,x)))U^{\theta,\sigma,\lambda}_t(t_k,x)$, for all $(\theta,\sigma,\lambda)\in \Theta\times\Sigma\times\Lambda$ and $t\in [t_k,t_{k+1}]$, where $U^{\theta,\sigma,\lambda}_t(t_k,x):=(D_tY_{t_{k+1}}^{\theta,\sigma,\lambda}(t_k,x))^{-1}$.

Using the Malliavin calculus integration by parts formula on $[t_k,t_{k+1}]$, and the independence between $W$ and $(M,T_1^{t_k,t_{k+1}}, \ldots, T_j^{t_k,t_{k+1}})$, we get that
\begin{align*}
&\partial_{\beta}\widetilde{\E}\left[{\bf{1}}_{\{\widetilde{J}_{j,k},s_1,\ldots,s_j\}}f(Y_{t_{k+1}}^{\theta,\sigma,\lambda}(t_k,x))\right]=\widetilde{\E}\left[{\bf{1}}_{\{\widetilde{J}_{j,k},s_1,\ldots,s_j\}}f'(Y_{t_{k+1}}^{\theta,\sigma,\lambda}(t_k,x))\partial_{\beta}Y_{t_{k+1}}^{\theta,\sigma,\lambda}(t_k,x)\right]\\
&\qquad=\dfrac{1}{\Delta_n}\widetilde{\E}\left[\int_{t_k}^{t_{k+1}}D_t(f(Y_{t_{k+1}}^{\theta,\sigma,\lambda}(t_k,x)))U^{\theta,\sigma,\lambda}_t(t_k,x)\partial_{\beta}Y_{t_{k+1}}^{\theta,\sigma,\lambda}(t_k,x){\bf{1}}_{\{\widetilde{J}_{j,k},s_1,\ldots,s_j\}}dt\right]\\
&\qquad=\dfrac{1}{\Delta_n}\widetilde{\E}\left[{\bf{1}}_{\{\widetilde{J}_{j,k},s_1,\ldots,s_j\}}f(Y_{t_{k+1}}^{\theta,\sigma,\lambda}(t_k,x))\delta\left(\partial_{\beta}Y_{t_{k+1}}^{\theta,\sigma,\lambda}(t_k,x)U^{\theta,\sigma,\lambda}(t_k,x)\right)\right].
\end{align*}

On the other hand, using the stochastic flow property $Y_t^{\theta,\sigma,\lambda}=Y_t^{\theta,\sigma,\lambda}(s,Y_s^{\theta,\sigma,\lambda})$ for all $0\leq s\leq t$, and the Markov property of diffusion processes, we have that
\begin{equation*}\begin{split}
\partial_{\beta}\widetilde{\E}\left[{\bf{1}}_{\{\widetilde{J}_{j,k},s_1,\ldots,s_j\}}f(Y_{t_{k+1}}^{\theta,\sigma,\lambda}(t_k,x))\right]=\int_{\R}f(y)\partial_{\beta}q_{(j)}^{\theta,\sigma,\lambda}(\Delta_n,x,y;s_1,\ldots,s_j)e^{-\lambda\Delta_n}\frac{(\lambda\Delta_n)^j}{j!}\frac{j!}{\Delta_n^j}dy,
\end{split}
\end{equation*}
and
\begin{align*}
&\widetilde{\E}\left[{\bf{1}}_{\{\widetilde{J}_{j,k},s_1,\ldots,s_j\}}f(Y_{t_{k+1}}^{\theta,\sigma,\lambda}(t_k,x))\delta\left(\partial_{\beta}Y_{t_{k+1}}^{\theta,\sigma,\lambda}(t_k,x)U^{\theta,\sigma,\lambda}(t_k,x)\right)\right]\\
&=\int_{\R}f(y)\widetilde{\E}\left[\delta\left(\partial_{\beta}Y_{t_{k+1}}^{\theta,\sigma,\lambda}(t_k,x)U^{\theta,\sigma,\lambda}(t_k,x)\right)\big\vert Y_{t_{k+1}}^{\theta,\sigma,\lambda}=y,Y_{t_{k}}^{\theta,\sigma,\lambda}=x,\widetilde{J}_{j,k},s_1,\ldots,s_j\right]\\
&\qquad\times q_{(j)}^{\theta,\sigma,\lambda}(\Delta_n,x,y;s_1,\ldots,s_j)e^{-\lambda\Delta_n}\frac{(\lambda\Delta_n)^j}{j!}\frac{j!}{\Delta_n^j}dy.
\end{align*}
This shows the first equality. Moreover, using the same above arguments and proceeding similarly as in the proof of Proposition \ref{c2pro2}, we derive the other expressions.
\end{proof}

As a consequence, we have the following estimates.
\begin{lemma}\label{lemma14} Let $(\theta,\sigma,\lambda), (\overline{\theta},\overline{\sigma},\overline{\lambda})\in \Theta\times\Sigma\times\Lambda$, and set $\sigma_1:=\frac{\overline{\sigma}^2}{\overline{\theta}}(1-e^{-2\overline{\theta}\Delta_n})$ and $\sigma_2:=\frac{\sigma^2}{\theta}(1-e^{-2\theta\Delta_n})$. Then for any $p>1$, $p_1$ and $p_2$ conjugate with $p_1>1$ if $\sigma_2\geq\sigma_1$, and $p_1\in (1,\frac{\sigma_1}{\sigma_1-\sigma_2})$ if $\sigma_2<\sigma_1$, there exist constants $C, q>0$ such that for all $k \in \{0,...,n-1\}$ and $j\geq 1$, 
\begin{align*}
&\E\left[{\bf{1}}_{\{\widehat{J}_{j,k},s_1,\ldots,s_j\}}\left(\frac{\partial_{\theta}q_{(j)}^{\theta,\sigma,\lambda}}{q_{(j)}^{\theta,\sigma,\lambda}}(\Delta_n,X_{t_k}^{\overline{\theta},\overline{\sigma},\overline{\lambda}},X_{t_{k+1}}^{\overline{\theta},\overline{\sigma},\overline{\lambda}};s_1,\ldots,s_j)\right)^{p}\big\vert X_{t_{k}}^{\overline{\theta},\overline{\sigma},\overline{\lambda}}=x\right]\\
&\qquad\leq C\Delta_n^{\frac{p}{2}}e^{(\lambda-\overline{\lambda})\Delta_n}\left(\frac{\overline{\lambda}}{\lambda}\right)^j(e^{-\lambda\Delta_n}\lambda^j)^{\frac{1}{p_1}}\left(1+\vert x\vert^q\right),\\
&\E\left[{\bf{1}}_{\{\widehat{J}_{j,k},s_1,\ldots,s_j\}}\left(\frac{\partial_{\sigma}q_{(j)}^{\theta,\sigma,\lambda}}{q_{(j)}^{\theta,\sigma,\lambda}}(\Delta_n,X_{t_k}^{\overline{\theta},\overline{\sigma},\overline{\lambda}},X_{t_{k+1}}^{\overline{\theta},\overline{\sigma},\overline{\lambda}};s_1,\ldots,s_j)\right)^{p}\big\vert X_{t_{k}}^{\overline{\theta},\overline{\sigma},\overline{\lambda}}=x\right]\\
&\qquad\leq Ce^{(\lambda-\overline{\lambda})\Delta_n}\left(\frac{\overline{\lambda}}{\lambda}\right)^j(e^{-\lambda\Delta_n}\lambda^j)^{\frac{1}{p_1}}\left(1+\vert x\vert^q\right),\\
&\E\left[{\bf{1}}_{\{\widehat{J}_{j,k},s_1,\ldots,s_j\}}\left(\frac{\partial_{\lambda}q_{(j)}^{\theta,\sigma,\lambda}}{q_{(j)}^{\theta,\sigma,\lambda}}(\Delta_n,X_{t_k}^{\overline{\theta},\overline{\sigma},\overline{\lambda}},X_{t_{k+1}}^{\overline{\theta},\overline{\sigma},\overline{\lambda}};s_1,\ldots,s_j)\right)^{p}\big\vert X_{t_{k}}^{\overline{\theta},\overline{\sigma},\overline{\lambda}}=x\right]\\
&\qquad\leq C\Delta_n^{\frac{1}{p_2}}e^{(\lambda-\overline{\lambda})\Delta_n}\left(\frac{\overline{\lambda}}{\lambda}\right)^j(e^{-\lambda\Delta_n}\lambda^j)^{\frac{1}{p_1}},\\
&\E\left[{\bf{1}}_{\widehat{J}_{0,k}}\left(\frac{\partial_{\theta}q_{(0)}^{\theta,\sigma,\lambda}}{q_{(0)}^{\theta,\sigma,\lambda}}(\Delta_n,X_{t_k}^{\overline{\theta},\overline{\sigma},\overline{\lambda}},X_{t_{k+1}}^{\overline{\theta},\overline{\sigma},\overline{\lambda}})\right)^{p}\big\vert X_{t_{k}}^{\overline{\theta},\overline{\sigma},\overline{\lambda}}=x\right]\leq C\Delta_n^{\frac{p}{2}}e^{(\lambda-\overline{\lambda})\Delta_n}\left(1+\vert x\vert^q\right),\\
&\E\left[{\bf{1}}_{\widehat{J}_{0,k}}\left(\frac{\partial_{\sigma}q_{(0)}^{\theta,\sigma,\lambda}}{q_{(0)}^{\theta,\sigma,\lambda}}(\Delta_n,X_{t_k}^{\overline{\theta},\overline{\sigma},\overline{\lambda}},X_{t_{k+1}}^{\overline{\theta},\overline{\sigma},\overline{\lambda}})\right)^{p}\big\vert X_{t_{k}}^{\overline{\theta},\overline{\sigma},\overline{\lambda}}=x\right]\leq Ce^{(\lambda-\overline{\lambda})\Delta_n}\left(1+\vert x\vert^q\right),\\
&\E\left[{\bf{1}}_{\widehat{J}_{0,k}}\left(\frac{\partial_{\lambda}q_{(0)}^{\theta,\sigma,\lambda}}{q_{(0)}^{\theta,\sigma,\lambda}}(\Delta_n,X_{t_k}^{\overline{\theta},\overline{\sigma},\overline{\lambda}},X_{t_{k+1}}^{\overline{\theta},\overline{\sigma},\overline{\lambda}})\right)^{p}\big\vert X_{t_{k}}^{\overline{\theta},\overline{\sigma},\overline{\lambda}}=x\right]\leq C\Delta_n^{\frac{1}{p_2}}e^{(\lambda-\overline{\lambda})\Delta_n}.
\end{align*}
\end{lemma}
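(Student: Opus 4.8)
The plan is to reduce each of the six estimates to two ingredients that are already available: a moment bound for the Skorohod integral $\delta(\partial_\beta Y_{t_{k+1}}^{\theta,\sigma,\lambda}(t_k,x)U^{\theta,\sigma,\lambda}(t_k,x))$ (or, for $\lambda$, for the explicit functional $-\Delta W_{t_{k+1}}/\sigma+(\widetilde M_{t_{k+1}}^{\lambda}-\widetilde M_{t_k}^{\lambda})/\lambda$) computed under the law with parameters $(\theta,\sigma,\lambda)$, and the density-ratio control of Lemma \ref{lemma12}. First I would invoke Lemma \ref{lemma13} to write, for $\beta\in\{\theta,\sigma\}$, $\partial_\beta q_{(j)}^{\theta,\sigma,\lambda}/q_{(j)}^{\theta,\sigma,\lambda}(\Delta_n,x,y;s_1,\ldots,s_j)=\frac{1}{\Delta_n}\widetilde{\E}_x^{\theta,\sigma,\lambda}[\delta(\partial_\beta Y_{t_{k+1}}^{\theta,\sigma,\lambda}(t_k,x)U^{\theta,\sigma,\lambda}(t_k,x))\mid Y_{t_{k+1}}^{\theta,\sigma,\lambda}=y,\widetilde{J}_{j,k},s_1,\ldots,s_j]$, together with the corresponding $\partial_\lambda$ identity; raising to the power $p$ and applying Jensen's inequality moves the power inside the conditional expectation, so that it suffices to bound the associated conditional $p$-th moments.

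Next I would change the measure from the observed process $X^{\overline{\theta},\overline{\sigma},\overline{\lambda}}$ to the base process $X^{\theta,\sigma,\lambda}$. Conditioning on $\{\widehat{J}_{j,k},s_1,\ldots,s_j\}$ turns the outer conditional expectation into an integral against $q_{(j)}^{\overline{\theta},\overline{\sigma},\overline{\lambda}}$ weighted by the jump-structure density $e^{-\overline{\lambda}\Delta_n}\overline{\lambda}^j$, exactly as in the proof of Lemma \ref{change}; inserting the factor $q_{(j)}^{\overline{\theta},\overline{\sigma},\overline{\lambda}}/q_{(j)}^{\theta,\sigma,\lambda}$ and using $e^{-\overline{\lambda}\Delta_n}\overline{\lambda}^j=e^{(\lambda-\overline{\lambda})\Delta_n}(\overline{\lambda}/\lambda)^j\,e^{-\lambda\Delta_n}\lambda^j$ produces the prefactor $e^{(\lambda-\overline{\lambda})\Delta_n}(\overline{\lambda}/\lambda)^j$ seen in the statement. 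I would then apply Hölder's inequality with conjugate exponents $p_1,p_2$ to separate the $pp_2$-moment of the Skorohod integral (now computed under $q_{(j)}^{\theta,\sigma,\lambda}$) from the $p_1$-moment of the density ratio. The density-ratio factor is controlled by Lemma \ref{lemma12}, which both supplies the factor $(e^{-\lambda\Delta_n}\lambda^j)^{1/p_1}$ and forces the admissibility range $p_1\in(1,\frac{\sigma_1}{\sigma_1-\sigma_2})$ when $\sigma_2<\sigma_1$. The remaining powers of $\Delta_n$ are read off from the moment estimates \eqref{es3} and \eqref{es5}: since $\E|\delta(\partial_\theta Y\,U)|^{pp_2}\lesssim\Delta_n^{3pp_2/2}$, dividing by $\Delta_n^{pp_2}$ and taking the $1/p_2$-power yields the order $\Delta_n^{p/2}$, while $\E|\delta(\partial_\sigma Y\,U)|^{pp_2}\lesssim\Delta_n^{pp_2}$ yields order $\Delta_n^{0}$; for $\lambda$ the functional splits into a Brownian part of order $\Delta_n^{1/2}$ and a compensated-Poisson part whose $L^{pp_2}$-size contributes the order $\Delta_n^{1/p_2}$. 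The $j=0$ statements follow from the same argument with \eqref{q0} in place of \eqref{qj} and no integration over the jump times.

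The hard part will be that the moment estimates of Lemma \ref{estimate} are conditional only on $Y_{t_k}^{\theta,\sigma,\lambda}=x$, whereas here I need the $pp_2$-moment of the Skorohod integral conditional on the full jump structure $\{\widetilde{J}_{j,k},s_1,\ldots,s_j\}$, together with correct bookkeeping of the Poisson weight $e^{-\lambda\Delta_n}\lambda^j$ through the change of measure and the Hölder splitting. I would handle this by redoing the computations behind \eqref{es3}--\eqref{es5} conditionally: given the jump times, the solution of \eqref{flow} is a Gaussian process plus the deterministic shifts $\sum_i e^{-\theta(t_{k+1}-s_i)}$, so the conditional moments factorize into Brownian moments (carrying the stated powers of $\Delta_n$) times contributions from the $j$ unit jumps, which enter only polynomially in $j$ and through the polynomial factor $1+|x|^q$. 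The delicate point is then to reconcile this $j$-dependence with the weights $(\overline{\lambda}/\lambda)^j(e^{-\lambda\Delta_n}\lambda^j)^{1/p_1}$ — and to keep the Gaussian-ratio estimate of Lemma \ref{lemma12} uniform in $x$ and in the configuration $s_1,\ldots,s_j$ — so that the series in $j$ arising when these bounds are summed in Lemmas \ref{lemma6} and \ref{lemma7} remains finite.
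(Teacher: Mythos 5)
Your skeleton is in fact the paper's: Lemma \ref{lemma13} plus Jensen's inequality, the change of measure as in Lemma \ref{change} producing the prefactor $\frac{1}{\Delta_n^{p}}e^{(\lambda-\overline{\lambda})\Delta_n}(\overline{\lambda}/\lambda)^j$, then H\"older with conjugate exponents $(p_1,p_2)$, Lemma \ref{lemma12} for the density-ratio factor, and \eqref{es3}, \eqref{es5} for the Skorohod factor. The gap is in how you allocate the jump-structure conditioning in the H\"older step. Writing $\delta$ for the Skorohod integral $\delta(\partial_{\beta}Y_{t_{k+1}}^{\theta,\sigma,\lambda}(t_k,x)U^{\theta,\sigma,\lambda}(t_k,x))$, after the change of measure the quantity to bound is $\widetilde{\E}_{x}^{\theta,\sigma,\lambda}\bigl[\vert\delta\vert^{p}\,{\bf 1}_{\{\widetilde{J}_{j,k},s_1,\ldots,s_j\}}\,(q_{(j)}^{\overline{\theta},\overline{\sigma},\overline{\lambda}}/q_{(j)}^{\theta,\sigma,\lambda})\bigr]$, and the paper applies H\"older so that the indicator stays \emph{entirely} with the density ratio: the factor $\bigl(\widetilde{\E}_{x}^{\theta,\sigma,\lambda}\bigl[{\bf 1}_{\{\widetilde{J}_{j,k},s_1,\ldots,s_j\}}(q_{(j)}^{\overline{\theta},\overline{\sigma},\overline{\lambda}}/q_{(j)}^{\theta,\sigma,\lambda})^{p_1}\bigr]\bigr)^{1/p_1}$ is exactly the quantity that Lemma \ref{lemma12} bounds by $(Ce^{-\lambda\Delta_n}\lambda^j)^{1/p_1}$, while in the other factor the indicator is simply bounded by one, leaving the \emph{unconditional} moment $\bigl(\widetilde{\E}_{x}^{\theta,\sigma,\lambda}[\vert\delta\vert^{pp_2}]\bigr)^{1/p_2}$, which is precisely what \eqref{es3} and \eqref{es5} provide, conditionally only on $Y_{t_k}^{\theta,\sigma,\lambda}=x$. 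So the ``hard part'' you flag --- needing moments of the Skorohod integral conditional on the full jump structure --- never arises.

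Your proposed detour is not merely extra work; as written it fails to deliver the stated inequality. Conditionally on $j$ jumps in $[t_k,t_{k+1}]$ one only has $\E[\vert Y_s^{\theta,\sigma,\lambda}\vert^{p}\mid\cdot\,]\le C_p(1+\vert x\vert^{p}+j^{p})$, and such terms enter $R_2^{\theta,\sigma,\lambda}$, $R_3^{\theta,\sigma,\lambda}$ and $H_4^{\theta,\sigma,\lambda}$, so redoing \eqref{es3}--\eqref{es5} conditionally on the jump times yields bounds carrying polynomial factors in $j$. The statement of Lemma \ref{lemma14} contains no such factors, so your argument would only prove a weaker variant; you acknowledge exactly this (``the delicate point \ldots to reconcile this $j$-dependence'') but leave it unresolved. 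A $j$-polynomially weakened bound would admittedly still be summable against $(\overline{\lambda}/\lambda)^j(e^{-\lambda\Delta_n}\lambda^j)^{1/p_1}$ and so could be pushed through Lemmas \ref{lemma6} and \ref{lemma7}, but as a proof of the lemma as stated there is a genuine gap --- and it disappears entirely once the H\"older splitting is arranged as above.
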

\begin{proof} Applying Lemma \ref{lemma13} and Jensen's inequality, we obtain that for any $j\geq 1$ and $p>1$,
\begin{align*}
&\E\left[{\bf{1}}_{\{\widehat{J}_{j,k},s_1,\ldots,s_j\}}\left(\frac{\partial_{\sigma}q_{(j)}^{\theta,\sigma,\lambda}}{q_{(j)}^{\theta,\sigma,\lambda}}(\Delta_n,X_{t_k}^{\overline{\theta},\overline{\sigma},\overline{\lambda}},X_{t_{k+1}}^{\overline{\theta},\overline{\sigma},\overline{\lambda}};s_1,\ldots,s_j)\right)^{p}\big\vert X_{t_{k}}^{\overline{\theta},\overline{\sigma},\overline{\lambda}}=x\right]=\E\bigg[{\bf{1}}_{\{\widehat{J}_{j,k},s_1,\ldots,s_j\}}\\
&\times\left(\dfrac{1}{\Delta_n}\widetilde{\E}_{x}^{\theta,\sigma,\lambda}\left[\delta\left(\partial_{\sigma}Y_{t_{k+1}}^{\theta,\sigma,\lambda}(t_k,x)U^{\theta,\sigma,\lambda}(t_k,x)\right)\big\vert Y_{t_{k+1}}^{\theta,\sigma,\lambda}=X_{t_{k+1}}^{\overline{\theta},\overline{\sigma},\overline{\lambda}},\widetilde{J}_{j,k},s_1,\ldots,s_j\right]\right)^p\big\vert X_{t_{k}}^{\overline{\theta},\overline{\sigma},\overline{\lambda}}=x\bigg]\\
&\leq\dfrac{1}{\Delta_n^p}\int_{\R}\widetilde{\E}_{x}^{\theta,\sigma,\lambda}\left[\left\vert\delta\left(\partial_{\sigma}Y_{t_{k+1}}^{\theta,\sigma,\lambda}(t_k,x)U^{\theta,\sigma,\lambda}(t_k,x)\right)\right\vert^p\big\vert Y_{t_{k+1}}^{\theta,\sigma,\lambda}=y,\widetilde{J}_{j,k},s_1,\ldots,s_j\right]\\
&\qquad\times q_{(j)}^{\overline{\theta},\overline{\sigma},\overline{\lambda}}(\Delta_n,x,y;s_1,\ldots,s_j)e^{-\overline{\lambda}\Delta_n}\frac{(\overline{\lambda}\Delta_n)^j}{j!}\frac{j!}{\Delta_n^j}dy\\
&=\dfrac{1}{\Delta_n^p}e^{(\lambda-\overline{\lambda})\Delta_n}\left(\frac{\overline{\lambda}}{\lambda}\right)^j\\
&\times\widetilde{\E}_{x}^{\theta,\sigma,\lambda}\left[\left\vert\delta\left(\partial_{\sigma}Y_{t_{k+1}}^{\theta,\sigma,\lambda}(t_k,x)U^{\theta,\sigma,\lambda}(t_k,x)\right)\right\vert^p{\bf{1}}_{\{\widetilde{J}_{j,k},s_1,\ldots,s_j\}}\dfrac{q_{(j)}^{\overline{\theta},\overline{\sigma},\overline{\lambda}}}{q_{(j)}^{\theta,\sigma,\lambda}}(\Delta_n,Y_{t_{k}}^{\theta,\sigma,\lambda},Y_{t_{k+1}}^{\theta,\sigma,\lambda};s_1,\ldots,s_j)\right].
\end{align*}

Next applying H\"older's inequality with $\frac{1}{p_1}+\frac{1}{p_2}=1$, Lemma \ref{lemma12} and \eqref{es5}, we get that for any $j\geq 1$, $p>1$, $p_1>1$ if $\sigma_2\geq\sigma_1$, and $p_1\in (1,\frac{\sigma_1}{\sigma_1-\sigma_2})$ if $\sigma_2<\sigma_1$,
\begin{align*}
&\E\left[{\bf{1}}_{\{\widehat{J}_{j,k},s_1,\ldots,s_j\}}\left(\frac{\partial_{\sigma}q_{(j)}^{\theta,\sigma,\lambda}}{q_{(j)}^{\theta,\sigma,\lambda}}(\Delta_n,X_{t_k}^{\overline{\theta},\overline{\sigma},\overline{\lambda}},X_{t_{k+1}}^{\overline{\theta},\overline{\sigma},\overline{\lambda}};s_1,\ldots,s_j)\right)^{p}\big\vert X_{t_{k}}^{\overline{\theta},\overline{\sigma},\overline{\lambda}}=x\right]\\
&\leq \dfrac{1}{\Delta_n^{p}}e^{(\lambda-\overline{\lambda})\Delta_n}\left(\frac{\overline{\lambda}}{\lambda}\right)^j\left(\widetilde{\E}_{x}^{\theta,\sigma,\lambda}\left[\left\vert\delta\left(\partial_{\sigma}Y_{t_{k+1}}^{\theta,\sigma,\lambda}(t_k,x)U^{\theta,\sigma,\lambda}(t_k,x)\right)\right\vert^{p p_2}\right]\right)^{\frac{1}{p_2}}\\
&\qquad\times \left(\widetilde{\E}_{x}^{\theta,\sigma,\lambda}\left[{\bf{1}}_{\{\widetilde{J}_{j,k},s_1,\ldots,s_j\}}\left(\dfrac{q_{(j)}^{\overline{\theta},\overline{\sigma},\overline{\lambda}}}{q_{(j)}^{\theta,\sigma,\lambda}}(\Delta_n,Y_{t_{k}}^{\theta,\sigma,\lambda},Y_{t_{k+1}}^{\theta,\sigma,\lambda};s_1,\ldots,s_j)\right)^{p_1}\right]\right)^{\frac{1}{p_1}}\\
&\leq Ce^{(\lambda-\overline{\lambda})\Delta_n}\left(\frac{\overline{\lambda}}{\lambda}\right)^j(e^{-\lambda\Delta_n}\lambda^j)^{\frac{1}{p_1}}\left(1+\vert x\vert^q\right),
\end{align*}
for some constants $C, q>0$. This concludes the second inequality. The proof of the others follows along the same lines and is thus omitted.
\end{proof}

As a consequence, we have the following crucial estimate.
\begin{lemma}\label{lemma15} For any random variable $Z$, $k\in \{0,...,n-1\}$ and $j\geq 1$, there exist constants $C, q>0$ such that for $n$ large enough, for any $\overline{q}_1, \overline{q}_2, \overline{q}_3>1$ conjugate, $\overline{p}_1>1$, $\widetilde{q}_1, \widetilde{q}_2, \widetilde{q}_3>1$ conjugate, $\widetilde{p}_1>1$, $q_1, q_2, q_3>1$ conjugate, and $p_1>1$, 
\begin{align*}
&\left\vert\E\left[Z{\bf{1}}_{\{\widehat{J}_{j,k},s_1,\ldots,s_j\}}\bigg(\frac{q_{(j)}^{\theta_0,\sigma_0,\lambda_0}}{q_{(j)}^{\theta_n,\sigma(\ell),\lambda_n}}(\Delta_n,X_{t_k}^{\theta_n,\sigma(\ell),\lambda_n},X^{\theta_n,\sigma(\ell),\lambda_n}_{t_{k+1}};s_1,\ldots,s_j)-1\bigg)\big\vert X_{t_k}^{\theta_n,\sigma(\ell),\lambda_n}=x\right]\right\vert\\
&\leq\dfrac{C}{\sqrt{n}}\left(1+\vert x\vert^q\right)\bigg\{\left(\E\left[\vert Z\vert^{\overline{q}_1}\vert X_{t_k}^{\theta_n,\sigma(\ell),\lambda_n}=x\right]\right)^{\frac{1}{\overline{q}_1}}\left(\frac{\lambda_n}{\lambda_0}\right)^{\frac{j}{\overline{q}_2}}(e^{-\lambda_0\Delta_n}\lambda_0^j)^{\frac{1}{\overline{p}_1 \overline{q}_2}}(e^{-\lambda_n\Delta_n}\lambda_n^j)^{\frac{1}{\overline{q}_3}}\\
&+\left(\E\left[\vert Z\vert^{\widetilde{q}_1}\vert X_{t_k}^{\theta_n,\sigma(\ell),\lambda_n}=x\right]\right)^{\frac{1}{\widetilde{q}_1}}\left(\frac{\lambda_n}{\lambda_0}\right)^{\frac{j}{\widetilde{q}_2}}(e^{-\lambda_0\Delta_n}\lambda_0^j)^{\frac{1}{\widetilde{p}_1 \widetilde{q}_2}}(e^{-\lambda_n\Delta_n}\lambda_n^j)^{\frac{1}{\widetilde{q}_3}}+\int_0^1\Big(\frac{\lambda_n}{\lambda_0+\frac{wh}{\sqrt{n\Delta_n}}}\Big)^{\frac{j}{q_2}}\\
&\times\left(\E\left[\vert Z\vert^{q_1}\vert X_{t_k}^{\theta_n,\sigma(\ell),\lambda_n}=x\right]\right)^{\frac{1}{q_1}}\Big(e^{-(\lambda_0+\frac{wh}{\sqrt{n\Delta_n}})\Delta_n}(\lambda_0+\frac{wh}{\sqrt{n\Delta_n}})^j\Big)^{\frac{1}{p_1 q_2}}(e^{-\lambda_n\Delta_n}\lambda_n^j)^{\frac{1}{q_3}}dh\bigg\},
\end{align*}
and
\begin{align*}
&\left\vert\E\left[Z{\bf{1}}_{\widehat{J}_{0,k}}\left(\frac{q_{(0)}^{\theta_0,\sigma_0,\lambda_0}}{q_{(0)}^{\theta_n,\sigma(\ell),\lambda_n}}(\Delta_n,X_{t_k}^{\theta_n,\sigma(\ell),\lambda_n},X^{\theta_n,\sigma(\ell),\lambda_n}_{t_{k+1}})-1\right)\big\vert X_{t_k}^{\theta_n,\sigma(\ell),\lambda_n}=x\right]\right\vert\\
&\leq \dfrac{C}{\sqrt{n}}\left(1+\vert x\vert^q\right)\bigg\{\left(\E\left[\vert Z\vert^{\overline{q}_1}\vert X_{t_k}^{\theta_n,\sigma(\ell),\lambda_n}=x\right]\right)^{\frac{1}{\overline{q}_1}}+\left(\E\left[\vert Z\vert^{\widetilde{q}_1}\vert X_{t_k}^{\theta_n,\sigma(\ell),\lambda_n}=x\right]\right)^{\frac{1}{\widetilde{q}_1}}\\
&\qquad+\left(\E\left[\vert Z\vert^{q_1}\vert X_{t_k}^{\theta_n,\sigma(\ell),\lambda_n}=x\right]\right)^{\frac{1}{q_1}}\bigg\}.
\end{align*}
\end{lemma}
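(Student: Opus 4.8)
The plan is to reduce both estimates to the logarithmic--derivative bounds of Lemma \ref{lemma14} together with the density--ratio moment bounds of Lemma \ref{lemma12}, by expressing $q_{(j)}^{\theta_0,\sigma_0,\lambda_0}/q_{(j)}^{\theta_n,\sigma(\ell),\lambda_n}-1$ through the fundamental theorem of calculus along a path in parameter space. Writing $q^{(\cdot)}$ for $q_{(j)}^{(\cdot)}$ (resp.\ $q_{(0)}^{(\cdot)}$) and abbreviating $A:=\{\widehat{J}_{j,k},s_1,\ldots,s_j\}$, I would first telescope
\begin{align*}
q^{\theta_0,\sigma_0,\lambda_0}-q^{\theta_n,\sigma(\ell),\lambda_n}&=\left(q^{\theta_0,\sigma_0,\lambda_0}-q^{\theta_n,\sigma_0,\lambda_0}\right)+\left(q^{\theta_n,\sigma_0,\lambda_0}-q^{\theta_n,\sigma(\ell),\lambda_0}\right)\\
&\qquad+\left(q^{\theta_n,\sigma(\ell),\lambda_0}-q^{\theta_n,\sigma(\ell),\lambda_n}\right),
\end{align*}
so that exactly one parameter moves in each difference, with $\lambda$ frozen at $\lambda_0$ in the first two moves and moving last. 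This ordering is dictated by the target bound: the Poisson factors in the $\theta$- and $\sigma$-terms carry the intensity $\lambda_0$, whereas the $\lambda$-term carries the running intensity $\lambda_0+\tfrac{wh}{\sqrt{n\Delta_n}}$ under the visible integral $\int_0^1\cdots dh$.

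For each difference I would apply the fundamental theorem of calculus in the moving parameter, e.g.\ for the $\theta$-move, with $\theta(r):=\theta_0+r(\theta_n-\theta_0)$,
\begin{align*}
\frac{q^{\theta_0,\sigma_0,\lambda_0}-q^{\theta_n,\sigma_0,\lambda_0}}{q^{\theta_n,\sigma(\ell),\lambda_n}}=-\frac{u}{\sqrt{n\Delta_n}}\int_0^1\frac{\partial_\theta q^{\theta(r),\sigma_0,\lambda_0}}{q^{\theta(r),\sigma_0,\lambda_0}}\,\frac{q^{\theta(r),\sigma_0,\lambda_0}}{q^{\theta_n,\sigma(\ell),\lambda_n}}\,dr,
\end{align*}
and analogously for $\sigma$ (rate $\tfrac{\ell v}{\sqrt n}$) and $\lambda$ (rate $\tfrac{w}{\sqrt{n\Delta_n}}$, interpolation $\lambda(h)=\lambda_0+\tfrac{wh}{\sqrt{n\Delta_n}}$). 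The key point is that differentiating the \emph{interpolated} density while dividing by the \emph{reference} density produces the extra factor $q^{\theta(r),\sigma_0,\lambda_0}/q^{\theta_n,\sigma(\ell),\lambda_n}$. I would then multiply by $Z\,\mathbf{1}_A$, take the conditional expectation under the reference law, and split by Hölder's inequality with conjugate exponents $(\overline{q}_1,\overline{q}_2,\overline{q}_3)$ (writing $\mathbf{1}_A=\mathbf{1}_A\cdot\mathbf{1}_A$ to attach the indicator to the last two factors): $Z$ against $\overline{q}_1$, the logarithmic derivative $\partial_\theta q^{\theta(r)}/q^{\theta(r)}$ against $\overline{q}_2$, and the density ratio against $\overline{q}_3$. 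Lemma \ref{lemma14} (with $p=\overline{q}_2$, inner conjugate $\overline{p}_1$) controls the middle factor and yields the $\Delta_n^{1/2}$, the $(\lambda_n/\lambda_0)^{j/\overline{q}_2}$ and $(e^{-\lambda_0\Delta_n}\lambda_0^j)^{1/(\overline{p}_1\overline{q}_2)}$ factors together with the $(1+|x|^q)$ growth, while Lemma \ref{lemma12}, applied to $q^{\theta(r),\sigma_0,\lambda_0}/q^{\theta_n,\sigma(\ell),\lambda_n}$ under the reference (denominator) law, controls the last factor and produces exactly $(e^{-\lambda_n\Delta_n}\lambda_n^j)^{1/\overline{q}_3}$ with the reference intensity $\lambda_n$. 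The $\sigma$- and $\lambda$-moves are treated identically, the $\lambda$-interpolation integral $\int_0^1 dh$ surviving because its Poisson factors depend on $\lambda(h)$.

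Collecting the three moves gives the claimed sum of three terms, and the overall $1/\sqrt n$ comes from rate matching: $\tfrac{u}{\sqrt{n\Delta_n}}\cdot\Delta_n^{1/2}=\tfrac{u}{\sqrt n}$ for $\theta$; $\tfrac{\ell v}{\sqrt n}\cdot 1$ (the $\partial_\sigma$ estimate being $O(1)$) for $\sigma$; and the analogous balance for $\lambda$, where the $\Delta_n^{1/p_2}$ from the $\partial_\lambda$ estimate compensates the $\Delta_n^{-1/2}$ for a suitable choice of the conjugate exponents and $n$ large. The no--jump estimate is the same argument with $j=0$: there are no intensity powers, the density ratio on $\widehat{J}_{0,k}$ has bounded moments by the second bound of Lemma \ref{lemma12}, and $\E[\mathbf{1}_{\widehat{J}_{0,k}}]\le1$, leaving only the three $Z$-moments times $\tfrac{C}{\sqrt n}(1+|x|^q)$. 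The main obstacle I anticipate is the bookkeeping of the three Poisson-type factors and the $\Delta_n$-powers, in particular ensuring that the mismatch between the interpolated and reference densities is absorbed cleanly by Lemma \ref{lemma12} with the correct intensity $\lambda_n$, and that the constraints $\sigma_2\ge\sigma_1$ (or $\overline{q}_3,\widetilde{q}_3,q_3<\sigma_1/(\sigma_1-\sigma_2)$) together with the inner Hölder exponents of Lemma \ref{lemma14} are simultaneously admissible, which is exactly why the statement is asserted only for $n$ large enough.
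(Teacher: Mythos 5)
Your proposal is correct and takes essentially the same route as the paper's own proof: the identical telescoping-plus-mean-value decomposition (the paper happens to move $\sigma$ first, then $\theta$, then $\lambda$, whereas you move $\theta$ first, an immaterial difference since $\lambda$ stays frozen at $\lambda_0$ in both non-$\lambda$ moves, so the Poisson factors come out the same), the same factorization $\partial_{\beta}q^{(r)}/q^{\mathrm{ref}}=(\partial_{\beta}q^{(r)}/q^{(r)})\cdot(q^{(r)}/q^{\mathrm{ref}})$, and the same three-factor H\"older split with Lemma \ref{lemma14} controlling the logarithmic derivative and Lemma \ref{lemma12} controlling the density ratio under the reference law. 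Your rate matching ($\Delta_n^{1/2}$ for $\theta$, $O(1)$ for $\sigma$, the $\Delta_n$-power versus $\Delta_n^{-1/2}$ balance with $p_2q_2<2$ for $\lambda$), the treatment of the $j=0$ case, and the observation that the variance constraints of Lemmas \ref{lemma12}--\ref{lemma14} are what force ``$n$ large enough'' all coincide with the paper's argument.
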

\begin{proof} 
We write $\frac{q_{(j)}^{\theta_0,\sigma_0,\lambda_0}}{q_{(j)}^{\theta_n,\sigma(\ell),\lambda_n}}\equiv\frac{q_{(j)}^{\theta_0,\sigma_0,\lambda_0}}{q_{(j)}^{\theta_n,\sigma(\ell),\lambda_n}}(\Delta_n,X_{t_k}^{\theta_n,\sigma(\ell),\lambda_n},X^{\theta_n,\sigma(\ell),\lambda_n}_{t_{k+1}};s_1,\ldots,s_j)$. Observe that
\begin{align*}
&\frac{q_{(j)}^{\theta_0,\sigma_0,\lambda_0}}{q_{(j)}^{\theta_n,\sigma(\ell),\lambda_n}}-1=\frac{q_{(j)}^{\theta_0,\sigma_0,\lambda_0}-q_{(j)}^{\theta_0,\sigma(\ell),\lambda_0}+q_{(j)}^{\theta_0,\sigma(\ell),\lambda_0}-q_{(j)}^{\theta_n,\sigma(\ell),\lambda_0}+q_{(j)}^{\theta_n,\sigma(\ell),\lambda_0}-q_{(j)}^{\theta_n,\sigma(\ell),\lambda_n}}{q_{(j)}^{\theta_n,\sigma(\ell),\lambda_n}}\\
&=-\int_0^1\left(\frac{\ell v}{\sqrt{n}}\frac{\partial_{\sigma}q_{(j)}^{\theta_0,\sigma_0+\frac{\ell vh}{\sqrt{n}},\lambda_0}}{q_{(j)}^{\theta_n,\sigma(\ell),\lambda_n}}+\frac{u}{\sqrt{n\Delta_n}}\frac{\partial_{\theta}q_{(j)}^{\theta_0+\frac{uh}{\sqrt{n\Delta_n}},\sigma(\ell),\lambda_0}}{q_{(j)}^{\theta_n,\sigma(\ell),\lambda_n}}+\frac{w}{\sqrt{n\Delta_n}}\frac{\partial_{\lambda}q_{(j)}^{\theta_n,\sigma(\ell),\lambda_0+\frac{wh}{\sqrt{n\Delta_n}}}}{q_{(j)}^{\theta_n,\sigma(\ell),\lambda_n}}\right)dh.
\end{align*}
This implies that
\begin{align*}
\left\vert\E\left[Z{\bf{1}}_{\{\widehat{J}_{j,k},s_1,\ldots,s_j\}}\left(\frac{q_{(j)}^{\theta_0,\sigma_0,\lambda_0}}{q_{(j)}^{\theta_n,\sigma(\ell),\lambda_n}}-1\right)\big\vert X_{t_k}^{\theta_n,\sigma(\ell),\lambda_n}=x\right]\right\vert\leq S_1+S_2+S_3,
\end{align*}
where
\begin{align*}
&S_1:=\frac{\vert v\vert}{\sqrt{n}}\int_0^1\left\vert\E\left[Z{\bf{1}}_{\{\widehat{J}_{j,k},s_1,\ldots,s_j\}}\frac{\partial_{\sigma}q_{(j)}^{\theta_0,\sigma_0+\frac{\ell vh}{\sqrt{n}},\lambda_0}}{q_{(j)}^{\theta_0,\sigma_0+\frac{\ell vh}{\sqrt{n}},\lambda_0}} \frac{q_{(j)}^{\theta_0,\sigma_0+\frac{\ell vh}{\sqrt{n}},\lambda_0}}{q_{(j)}^{\theta_n,\sigma(\ell),\lambda_n}}\big\vert X_{t_k}^{\theta_n,\sigma(\ell),\lambda_n}=x\right]\right\vert dh,\\
&S_2:=\frac{\vert u\vert}{\sqrt{n\Delta_n}}\int_0^1\left\vert\E\left[Z{\bf{1}}_{\{\widehat{J}_{j,k},s_1,\ldots,s_j\}}\frac{\partial_{\theta}q_{(j)}^{\theta_0+\frac{uh}{\sqrt{n\Delta_n}},\sigma(\ell),\lambda_0}}{q_{(j)}^{\theta_0+\frac{uh}{\sqrt{n\Delta_n}},\sigma(\ell),\lambda_0}} \frac{q_{(j)}^{\theta_0+\frac{uh}{\sqrt{n\Delta_n}},\sigma(\ell),\lambda_0}}{q_{(j)}^{\theta_n,\sigma(\ell),\lambda_n}}\big\vert X_{t_k}^{\theta_n,\sigma(\ell),\lambda_n}=x\right]\right\vert dh,\\
&S_3:=\frac{\vert w\vert}{\sqrt{n\Delta_n}}\int_0^1\left\vert\E\left[Z{\bf{1}}_{\{\widehat{J}_{j,k},s_1,\ldots,s_j\}}\frac{\partial_{\lambda}q_{(j)}^{\theta_n,\sigma(\ell),\lambda_0+\frac{wh}{\sqrt{n\Delta_n}}}}{q_{(j)}^{\theta_n,\sigma(\ell),\lambda_0+\frac{wh}{\sqrt{n\Delta_n}}}} \frac{q_{(j)}^{\theta_n,\sigma(\ell),\lambda_0+\frac{wh}{\sqrt{n\Delta_n}}}}{q_{(j)}^{\theta_n,\sigma(\ell),\lambda_n}}\big\vert X_{t_k}^{\theta_n,\sigma(\ell),\lambda_n}=x\right]\right\vert dh.
\end{align*}
First, using H\"older's inequality with $\overline{q}_1, \overline{q}_2, \overline{q}_3>1$ conjugate, Lemmas \ref{lemma12} and \ref{lemma14} where notice that $\theta_n$ tends to $\theta_0$, and $\sigma_0+\frac{\ell vh}{\sqrt{n}}$ and $\sigma(\ell)$ tend to $\sigma_0$ as $n\to\infty$, we get that for $n$ large enough, and for any $\overline{p}_1>1$,
\begin{align*}
&S_1\leq \frac{\vert v\vert}{\sqrt{n}}\int_0^1\left(\E\left[\vert Z\vert^{\overline{q}_1}\vert X_{t_k}^{\theta_n,\sigma(\ell),\lambda_n}=x\right]\right)^{\frac{1}{\overline{q}_1}}\\
&\qquad\times\left(\E\left[{\bf{1}}_{\{\widehat{J}_{j,k},s_1,\ldots,s_j\}}\left(\frac{\partial_{\sigma}q_{(j)}^{\theta_0,\sigma_0+\frac{\ell vh}{\sqrt{n}},\lambda_0}}{q_{(j)}^{\theta_0,\sigma_0+\frac{\ell vh}{\sqrt{n}},\lambda_0}}\right)^{\overline{q}_2}\big\vert X_{t_k}^{\theta_n,\sigma(\ell),\lambda_n}=x\right]\right)^{\frac{1}{\overline{q}_2}}\\
&\qquad\times \left(\E\left[{\bf{1}}_{\{\widehat{J}_{j,k},s_1,\ldots,s_j\}}\left(\frac{q_{(j)}^{\theta_0,\sigma_0+\frac{\ell vh}{\sqrt{n}},\lambda_0}}{q_{(j)}^{\theta_n,\sigma(\ell),\lambda_n}}\right)^{\overline{q}_3}\big\vert X_{t_k}^{\theta_n,\sigma(\ell),\lambda_n}=x\right]\right)^{\frac{1}{\overline{q}_3}}dh\\
&\leq C\frac{\vert v\vert}{\sqrt{n}}(\E[\vert Z\vert^{\overline{q}_1}\vert X_{t_k}^{\theta_n,\sigma(\ell),\lambda_n}=x])^{\frac{1}{\overline{q}_1}}e^{-\frac{w}{\overline{q}_2}\sqrt{\frac{\Delta_n}{n}}}\big(\frac{\lambda_n}{\lambda_0}\big)^{\frac{j}{\overline{q}_2}}(e^{-\lambda_0\Delta_n}\lambda_0^j)^{\frac{1}{\overline{p}_1 \overline{q}_2}}(e^{-\lambda_n\Delta_n}\lambda_n^j)^{\frac{1}{\overline{q}_3}}\left(1+\vert x\vert^q\right).
\end{align*}
Next, using H\"older's inequality with $\widetilde{q}_1, \widetilde{q}_2, \widetilde{q}_3>1$ conjugate, Lemmas \ref{lemma12} and \ref{lemma14} where notice that $\theta_0+\frac{uh}{\sqrt{n\Delta_n}}$ and $\theta_n$ tend to $\theta_0$ as $n\to\infty$, we get that for $n$ large enough, and any $\widetilde{p}_1>1$,
\begin{align*}
&S_2\leq \frac{\vert u\vert}{\sqrt{n\Delta_n}}\int_0^1\left(\E\left[\vert Z\vert^{\widetilde{q}_1}\vert X_{t_k}^{\theta_n,\sigma(\ell),\lambda_n}=x\right]\right)^{\frac{1}{\widetilde{q}_1}}\\
&\qquad\times\left(\E\left[{\bf{1}}_{\{\widehat{J}_{j,k},s_1,\ldots,s_j\}}\left(\frac{\partial_{\theta}q_{(j)}^{\theta_0+\frac{uh}{\sqrt{n\Delta_n}},\sigma(\ell),\lambda_0}}{q_{(j)}^{\theta_0+\frac{uh}{\sqrt{n\Delta_n}},\sigma(\ell),\lambda_0}}\right)^{\widetilde{q}_2}\big\vert X_{t_k}^{\theta_n,\sigma(\ell),\lambda_n}=x\right]\right)^{\frac{1}{\widetilde{q}_2}}\\
&\qquad\times \left(\E\left[{\bf{1}}_{\{\widehat{J}_{j,k},s_1,\ldots,s_j\}}\left(\frac{q_{(j)}^{\theta_0+\frac{uh}{\sqrt{n\Delta_n}},\sigma(\ell),\lambda_0}}{q_{(j)}^{\theta_n,\sigma(\ell),\lambda_n}}\right)^{\widetilde{q}_3}\big\vert X_{t_k}^{\theta_n,\sigma(\ell),\lambda_n}=x\right]\right)^{\frac{1}{\widetilde{q}_3}}dh\\
&\leq C\frac{\vert u\vert}{\sqrt{n}}(\E[\vert Z\vert^{\widetilde{q}_1}\vert X_{t_k}^{\theta_n,\sigma(\ell),\lambda_n}=x])^{\frac{1}{\widetilde{q}_1}}e^{-\frac{w}{\widetilde{q}_2}\sqrt{\frac{\Delta_n}{n}}}\big(\frac{\lambda_n}{\lambda_0}\big)^{\frac{j}{\widetilde{q}_2}}(e^{-\lambda_0\Delta_n}\lambda_0^j)^{\frac{1}{\widetilde{p}_1 \widetilde{q}_2}}(e^{-\lambda_n\Delta_n}\lambda_n^j)^{\frac{1}{\widetilde{q}_3}}\left(1+\vert x\vert^q\right).
\end{align*}
Finally, using H\"older's inequality with $q_1, q_2, q_3>1$ conjugate, Lemmas \ref{lemma12} and \ref{lemma14} where notice that $\sigma_1=\sigma_2=\frac{\sigma(\ell)^2}{\theta_n}(1-e^{-2\theta_n\Delta_n})$, we get that for any $p_1, p_2>1$ conjugate,
\begin{align*}
S_3&\leq \frac{\vert w\vert}{\sqrt{n\Delta_n}}\int_0^1\left(\E\left[\vert Z\vert^{q_1}\vert X_{t_k}^{\theta_n,\sigma(\ell),\lambda_n}=x\right]\right)^{\frac{1}{q_1}}\\
&\qquad\times\left(\E\left[{\bf{1}}_{\{\widehat{J}_{j,k},s_1,\ldots,s_j\}}\left(\frac{\partial_{\lambda}q_{(j)}^{\theta_n,\sigma(\ell),\lambda_0+\frac{wh}{\sqrt{n\Delta_n}}}}{q_{(j)}^{\theta_n,\sigma(\ell),\lambda_0+\frac{wh}{\sqrt{n\Delta_n}}}}\right)^{q_2}\big\vert X_{t_k}^{\theta_n,\sigma(\ell),\lambda_n}=x\right]\right)^{\frac{1}{q_2}}\\
&\qquad\times \left(\E\left[{\bf{1}}_{\{\widehat{J}_{j,k},s_1,\ldots,s_j\}}\left(\frac{q_{(j)}^{\theta_n,\sigma(\ell),\lambda_0+\frac{wh}{\sqrt{n\Delta_n}}}}{q_{(j)}^{\theta_n,\sigma(\ell),\lambda_n}}\right)^{q_3}\big\vert X_{t_k}^{\theta_n,\sigma(\ell),\lambda_n}=x\right]\right)^{\frac{1}{q_3}}dh\\
&\leq C\Delta_n^{\frac{1}{p_2 q_2}-\frac{1}{2}}\frac{\vert w\vert}{\sqrt{n}}\int_0^1\left(\E\left[\vert Z\vert^{q_1}\vert X_{t_k}^{\theta_n,\sigma(\ell),\lambda_n}=x\right]\right)^{\frac{1}{q_1}}e^{\frac{1}{q_2}(\frac{wh}{\sqrt{n\Delta_n}}-\frac{w}{\sqrt{n\Delta_n}})}\Big(\frac{\lambda_n}{\lambda_0+\frac{wh}{\sqrt{n\Delta_n}}}\Big)^{\frac{j}{q_2}}\\
&\qquad\times\Big(e^{-(\lambda_0+\frac{wh}{\sqrt{n\Delta_n}})\Delta_n}(\lambda_0+\frac{wh}{\sqrt{n\Delta_n}})^j\Big)^{\frac{1}{p_1 q_2}}(e^{-\lambda_n\Delta_n}\lambda_n^j)^{\frac{1}{q_3}}dh\\
&\leq C\frac{\vert w\vert}{\sqrt{n}}\int_0^1\left(\E\left[\vert Z\vert^{q_1}\vert X_{t_k}^{\theta_n,\sigma(\ell),\lambda_n}=x\right]\right)^{\frac{1}{q_1}}e^{\frac{1}{q_2}(\frac{wh}{\sqrt{n\Delta_n}}-\frac{w}{\sqrt{n\Delta_n}})}\Big(\frac{\lambda_n}{\lambda_0+\frac{wh}{\sqrt{n\Delta_n}}}\Big)^{\frac{j}{q_2}}\\
&\qquad\times\Big(e^{-(\lambda_0+\frac{wh}{\sqrt{n\Delta_n}})\Delta_n}(\lambda_0+\frac{wh}{\sqrt{n\Delta_n}})^j\Big)^{\frac{1}{p_1 q_2}}(e^{-\lambda_n\Delta_n}\lambda_n^j)^{\frac{1}{q_3}}dh,
\end{align*}
where $p_2$ and $q_2$ are chosen in order that $p_2q_2<2$. This concludes the first inequality. The second inequality is proceeded similarly. Thus, the result follows.
\end{proof}

\subsection{Large deviation type estimates}
\label{largeest}

For all $p\geq 1$, $k \in \{0,...,n-1\}$, and $i\in\{0,1\}$, we set $\widehat{J}_{\geq 2,k}:=\{\Delta N_{t_{k+1}}\geq 2\}$, $\widetilde{J}_{\geq 2,k}:=\{\Delta M_{t_{k+1}}\geq 2\}$, and
\begin{align*}
M_{i,p}^{\theta,\sigma,\lambda}:&=\E\left[{\bf 1}_{\widehat{J}_{i,k}}\left((\Delta N_{t_{k+1}})^p-\widetilde{\E}_{X_{t_k}}^{\theta,\sigma,\lambda}\left[(\Delta M_{t_{k+1}})^p\big\vert Y_{t_{k+1}}^{\theta,\sigma,\lambda}=X_{t_{k+1}}\right]\right)^2\big\vert X_{t_k}\right],\\
M_{\geq 2,p}^{\theta,\sigma,\lambda}:&=\E\left[{\bf 1}_{\widehat{J}_{\geq 2,k}}\left((\Delta N_{t_{k+1}})^p-\widetilde{\E}_{X_{t_k}}^{\theta,\sigma,\lambda}\left[(\Delta M_{t_{k+1}})^p\big\vert Y_{t_{k+1}}^{\theta,\sigma,\lambda}=X_{t_{k+1}}\right]\right)^2\big\vert X_{t_k}\right].
\end{align*}
For jump diffusions \cite{KNT14, KNT15}, a large deviation principle is used by conditioning on the jump structure to derive the large deviation type estimates in \cite[Lemma 2.4]{KNT14} and \cite[Lemma 5.4]{KNT15}. For the O-U process \eqref{c2eq1}, the following analogue large deviation type estimates hold.  
\begin{lemma}\label{deviation} Let $(\theta,\sigma,\lambda)\in \Theta\times \Sigma \times\Lambda$ such that $\vert\theta_0-\theta\vert+\vert\lambda_0-\lambda\vert\leq\frac{C_0}{\sqrt{n\Delta_n}}$, $\vert\sigma_0-\sigma\vert\leq\frac{C_0}{\sqrt{n}}$, for some constant $C_0>0$. Then for all $p\geq 1$, $k \in \{0,...,n-1\}$, and $n$ large enough, there exist constants $C, C_1>0$ such that for any $\alpha\in(0,\frac{1}{2})$, $p_1>1$, $q_1>1$ with $p_1q_1<2$, and $\mu_1\in(1,2)$,
\begin{align}
&M_{0,p}^{\theta,\sigma,\lambda}\leq C\left(e^{-C_1\Delta_n^{2\alpha-1}}+\Delta_n^{\frac{2}{p_1q_1}}\right),\label{m0}\\
&M_{1,p}^{\theta,\sigma,\lambda}\leq C\left(e^{-C_1\Delta_n^{2\alpha-1}}+\Delta_n^{\frac{2}{p_1q_1}}\right),\label{m1}\\
&M_{\geq 2,p}^{\theta,\sigma,\lambda}\leq C\Delta_n^{\frac{2}{\mu_1}}.\label{m2}
\end{align}
\end{lemma}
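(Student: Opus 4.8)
The plan is to treat all three estimates by conditioning on the jump structure and reducing each $M_{i,p}^{\theta,\sigma,\lambda}$ to a Gaussian large deviation estimate. First I would note that on $\widehat{J}_{0,k}$ one has $(\Delta N_{t_{k+1}})^p=0$, so that $M_{0,p}^{\theta,\sigma,\lambda}=\E[{\bf 1}_{\widehat{J}_{0,k}}G(X_{t_{k+1}})^2\,\vert\,X_{t_k}]$ with $G(y):=\widetilde{\E}_{X_{t_k}}^{\theta,\sigma,\lambda}[(\Delta M_{t_{k+1}})^p\,\vert\,Y_{t_{k+1}}^{\theta,\sigma,\lambda}=y]$; conditioning the outer expectation on $\{\Delta N_{t_{k+1}}=0\}$ replaces the law of $X_{t_{k+1}}$ by the no-jump density $q_{(0)}^{\theta_0,\sigma_0,\lambda_0}(\Delta_n,X_{t_k},\cdot)$, so that $M_{0,p}^{\theta,\sigma,\lambda}=e^{-\lambda_0\Delta_n}\int_{\R}G(y)^2 q_{(0)}^{\theta_0,\sigma_0,\lambda_0}(\Delta_n,X_{t_k},y)\,dy$. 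Using the convolution formula \eqref{density} together with Bayes' rule, I would write $G(y)=\sum_{m\geq 1}m^p\,q_{(m)}^{\theta,\sigma,\lambda}(\Delta_n,X_{t_k},y)w_m/p^{\theta,\sigma,\lambda}(\Delta_n,X_{t_k},y)$, where $w_m=e^{-\lambda\Delta_n}(\lambda\Delta_n)^m/m!$, and bound the denominator from below by its $m=0$ contribution $q_{(0)}^{\theta,\sigma,\lambda}w_0$. The whole estimate then hinges on the ratios $q_{(m)}^{\theta,\sigma,\lambda}/q_{(0)}^{\theta,\sigma,\lambda}$, which by the explicit Gaussian expressions \eqref{q0}--\eqref{qj} equal $\exp\{(2a(y)J_m-J_m^2)/\Sigma\}$ with $a(y)$ the centred endpoint, $J_m=\sum_i e^{-\theta(t_{k+1}-s_i)}\in[me^{-\theta\Delta_n},m]$ the cumulated jump shift, and $\Sigma=\sigma^2\theta^{-1}(1-e^{-2\theta\Delta_n})\sim 2\sigma^2\Delta_n$.

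The heart of the argument is that the outer conditioning fixes $i=0$ jumps while the inner sum runs over $m\geq 1$ jumps: since these two jump counts differ, each ratio $q_{(m)}/q_{(0)}$ is a shifted Gaussian whose mean is displaced by $J_m\asymp m$ jumps of size $\approx 1$, so that evaluating it at an endpoint $y$ typical for the nearly diffusive law $q_{(0)}^{\theta_0,\sigma_0,\lambda_0}$ is a large deviation. I would make this precise by splitting $\R$ into the central set $\{|a(y)|\leq \Delta_n^{\alpha}\}$ and its complement. On the central set the exponent $(2a(y)J_m-J_m^2)/\Sigma$ is at most $(2\Delta_n^{\alpha}m-cm^2)/\Sigma$, which for $n$ large is bounded by $-c'm^2/\Delta_n$, giving a super-exponentially small contribution after summation in $m$. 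On the complement I would use the Gaussian tail bound $\int_{\{|a(y)|>\Delta_n^{\alpha}\}}q_{(0)}^{\theta_0,\sigma_0,\lambda_0}(\Delta_n,X_{t_k},y)\,dy\leq C e^{-C_1\Delta_n^{2\alpha-1}}$, where $\alpha<1/2$ forces $\Delta_n^{2\alpha-1}\to\infty$ and the hypotheses $|\theta_0-\theta|+|\lambda_0-\lambda|\leq C_0/(n\Delta_n)^{1/2}$ and $n\Delta_n^2\to\infty$ guarantee the mean displacement is $o(\Delta_n^{\alpha})$; this accounts for the term $e^{-C_1\Delta_n^{2\alpha-1}}$.

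The polynomial term $\Delta_n^{2/(p_1q_1)}$ would come from the fact that $G$ is computed under $\P^{\theta,\sigma,\lambda}$ whereas it is integrated against the density $q_{(0)}^{\theta_0,\sigma_0,\lambda_0}$ of a different parameter. To reconcile them I would pass to the $Y^{\theta,\sigma,\lambda}$-process, writing $\int G^2 q_{(0)}^{\theta_0,\sigma_0,\lambda_0}\,dy=\widetilde{\E}[G(Y_{t_{k+1}}^{\theta,\sigma,\lambda})^2\, q_{(0)}^{\theta_0,\sigma_0,\lambda_0}/p^{\theta,\sigma,\lambda}\,\vert\,Y_{t_k}^{\theta,\sigma,\lambda}=X_{t_k}]$, and apply H\"older's inequality with exponents tied to $p_1$ and $q_1$: one factor controls the density ratio through the moment bounds of Lemma \ref{lemma12} (finite precisely because $p_1$ stays below the critical value $\sigma_1/(\sigma_1-\sigma_2)$ when the diffusion parameters differ), while the other is estimated by Jensen's inequality and the Poisson moment bound $\widetilde{\E}[(\Delta M_{t_{k+1}})^{r}]=O(\lambda\Delta_n)$. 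Since $G$ enters $M_{0,p}^{\theta,\sigma,\lambda}$ squared, each of the two factors contributes a power $\Delta_n^{1/(p_1q_1)}$ and they combine into $\Delta_n^{2/(p_1q_1)}$, the constraint $p_1q_1<2$ being exactly what makes this exponent exceed $1$.

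The estimate \eqref{m1} would follow the same scheme, the only new point being the diagonal index: now $(\Delta N_{t_{k+1}})^p=1$ and the outer conditioning fixes $i=1$ jump, so the $m=1$ term of $G$ matches, and I would rewrite the diagonal contribution $1-\widetilde{\E}_{X_{t_k}}^{\theta,\sigma,\lambda}[{\bf 1}_{\widetilde{J}_{1,k}}\,\vert\,y]$ as the complementary conditional probability that $\Delta M_{t_{k+1}}\neq 1$, which again reduces to large deviations of the off-diagonal counts $m\neq 1$. For \eqref{m2} I would simply use $\P(\widehat{J}_{\geq 2,k}\,\vert\,X_{t_k})\leq C\Delta_n^2$ and split the indicator by H\"older's inequality with exponent $\mu_1$, so that $\P(\widehat{J}_{\geq 2,k})^{1/\mu_1}=O(\Delta_n^{2/\mu_1})$ multiplies a bounded moment of $(\Delta N_{t_{k+1}})^p-G$. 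I expect the main obstacle to be the tail/off-diagonal region, where $G$ is itself not small and the naive Jensen bound $G^2\leq\widetilde{\E}[(\Delta M_{t_{k+1}})^{2p}\,\vert\,y]$ is too lossy (it would in fact diverge after the Gaussian change of measure because $q_{(m)}/q_{(0)}$ grows exponentially in $y$); the delicate step is therefore to combine the explicit Gaussian ratios with H\"older's inequality so as to keep the contribution of large endpoints under control, all the while preserving uniformity in $k$ and summability in the jump number $m$.
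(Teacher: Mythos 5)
Your core mechanism is the right one — Bayes' formula, the Gaussian ratios $q_{(m)}^{\theta,\sigma,\lambda}/q_{(0)}^{\theta,\sigma,\lambda}=\exp\{(2a(y)J_m-J_m^2)/\Sigma\}$, and the split of the endpoint space into $\{|a(y)|\le\Delta_n^{\alpha}\}$ and its complement are exactly the large-deviation idea of the paper, your central-region estimate is sound, and your argument for \eqref{m2} coincides with the paper's. But the architecture differs in a way that leaves a genuine gap. The paper first splits the \emph{inner} conditional expectation by jump count, ${\bf 1}_{\widetilde{J}_{0,k}}+{\bf 1}_{\widetilde{J}_{1,k}}+{\bf 1}_{\widetilde{J}_{\geq 2,k}}$: only the count-mismatch pieces (inner $1$ vs.\ outer $0$, inner $0$ vs.\ outer $1$) receive the $J_1/J_2$ endpoint split and produce $e^{-C_1\Delta_n^{2\alpha-1}}$, while the inner $\{\Delta M_{t_{k+1}}\geq 2\}$ piece is treated separately (drop ${\bf 1}_{\widehat{J}_{0,k}}$, change measure by Lemma \ref{change}, H\"older with $p_1$, Jensen, H\"older with $q_1$), and its bound $\Delta_n^{2/(p_1q_1)}$ comes precisely from $\P(\widetilde{J}_{\geq 2,k}\vert Y_{t_k}^{\theta,\sigma,\lambda})\leq(\lambda\Delta_n)^2$: the indicator of \emph{at least two jumps}, kept inside the conditional expectation through all the Jensen/tower steps, is the only available source of a power $\Delta_n^2$. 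You never isolate this event, and your substitute mechanism for the polynomial term cannot work quantitatively. You attribute it to the parameter mismatch and propose H\"older plus the Poisson bound $\widetilde{\E}[(\Delta M_{t_{k+1}})^{r}]=O(\lambda\Delta_n)$, with ``each of the two factors contributing $\Delta_n^{1/(p_1q_1)}$.'' But every moment of $G$ carries only \emph{one} power of $\Delta_n$ (Jensen gives $G^s\le\widetilde{\E}[(\Delta M_{t_{k+1}})^{ps}\,\vert\,\cdot\,]$, and the tower property then yields $O(\lambda\Delta_n)$), so a three-factor H\"older bound $\E[G\cdot G\cdot R]\le \Vert G\Vert_a\Vert G\Vert_a\Vert R\Vert_c$ needs $\frac{2}{a}+\frac{1}{c}=1$, hence $a>2$, hence a contribution $\Delta_n^{2/a}$ with exponent strictly below $1$; getting $\Delta_n^{2/(p_1q_1)}$ with $p_1q_1<2$ this way would require conjugate exponents summing to more than one. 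The exponent exceeding $1$ is not cosmetic: in Lemmas \ref{lemma3} and \ref{lemma5} the bound of Lemma \ref{deviation} is multiplied by $\Delta_n^{-1}$, so $\Delta_n^{2/(p_1q_1)-1}\to 0$ is exactly what is used.

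The second place the gap bites is the tail region, which in your scheme carries the whole burden you removed from the inner-count split, and which you explicitly leave open (``the delicate step''). It can in fact be closed: on $\{|a(y)|>\Delta_n^{\alpha}\}$ apply H\"older in $y$, so that one factor is the Gaussian tail $\bigl(\int_{J_1}q_{(0)}^{\theta_0,\sigma_0,\lambda_0}\,dy\bigr)^{1/q'}\le Ce^{-c\Delta_n^{2\alpha-1}}$ and the other is $\bigl(\int G^{2q}q_{(0)}^{\theta_0,\sigma_0,\lambda_0}\,dy\bigr)^{1/q}$, which is bounded after a change of measure whose density ratio is controlled by Lemma \ref{lemma12} followed by Jensen and Poisson moments. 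Had you done this, your endpoint-region decomposition would actually give a \emph{purely exponential} bound for $M_{0,p}^{\theta,\sigma,\lambda}$ and $M_{1,p}^{\theta,\sigma,\lambda}$, with no polynomial term at all — stronger than the paper's statement, because your split preserves the correlation between the outer conditioning and the inner jump count that the paper deliberately discards on the $\{\Delta M_{t_{k+1}}\geq2\}$ piece. As written, however, the proposal neither closes the tail region nor derives the stated $\Delta_n^{2/(p_1q_1)}$ by a valid computation, so the proof is incomplete.
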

\begin{proof}
We start showing (\ref{m0}). Multiplying the random variable inside the conditional expectation of $M_{0,p}^{\theta,\sigma,\lambda}$ by ${\bf 1}_{\widetilde{J}_{0,k}} +{\bf 1}_{\widetilde{J}_{1,k}}+ {\bf 1}_{\widetilde{J}_{\geq 2,k}}$ and applying Jensen's inequality, we get that $M_{0,p}^{\theta,\sigma,\lambda}\leq 2(M_{0,1,p}^{\theta,\sigma,\lambda}+M_{0,2,p}^{\theta,\sigma,\lambda})$, where 
\begin{align*}
M_{0,1,p}^{\theta,\sigma,\lambda}:&=\E\left[{\bf 1}_{\widehat{J}_{0,k}}\widetilde{\E}_{X_{t_k}}^{\theta,\sigma,\lambda}\left[{\bf 1}_{\widetilde{J}_{1,k}}\big\vert Y_{t_{k+1}}^{\theta,\sigma,\lambda}=X_{t_{k+1}}\right]\big\vert X_{t_k}\right],\\
M_{0,2,p}^{\theta,\sigma,\lambda}:&=\E\left[{\bf 1}_{\widehat{J}_{0,k}}\widetilde{\E}_{X_{t_k}}^{\theta,\sigma,\lambda}\left[{\bf 1}_{\widetilde{J}_{\geq 2,k}}(\Delta M_{t_{k+1}})^{2p}\big\vert Y_{t_{k+1}}^{\theta,\sigma,\lambda}=X_{t_{k+1}}\right]\big\vert X_{t_k}\right].
\end{align*}
Proceeding as in Lemma \ref{change} to change measures, applying H\"older's inequality with $\frac{1}{p_1}+\frac{1}{p_2}=1$, Jensen's inequality, and Lemma \ref{lemma12} where notice that $\theta\to\theta_0$, $\sigma\to\sigma_0$ as $n\to\infty$, we get 
\begin{align*}
M_{0,1,p}^{\theta,\sigma,\lambda}&=\frac{e^{\lambda\Delta_n}}{e^{\lambda_0\Delta_n}}\E\left[{\bf 1}_{\widehat{J}_{0,k}}\frac{q_{(0)}^{\theta_0,\sigma_0,\lambda_0}}{q_{(0)}^{\theta,\sigma,\lambda}}(\Delta_n,X_{t_k}^{\theta,\sigma,\lambda},X_{t_{k+1}}^{\theta,\sigma,\lambda})\widetilde{\E}_{X_{t_k}}^{\theta,\sigma,\lambda}[{\bf 1}_{\widetilde{J}_{1,k}}\big\vert Y_{t_{k+1}}^{\theta,\sigma,\lambda}=X_{t_{k+1}}^{\theta,\sigma,\lambda}]\big\vert X_{t_k}^{\theta,\sigma,\lambda}=X_{t_k}\right]\\
&\leq e^{(\lambda-\lambda_0)\Delta_n}\left(\E\left[{\bf 1}_{\widehat{J}_{0,k}}\left(\frac{q_{(0)}^{\theta_0,\sigma_0,\lambda_0}}{q_{(0)}^{\theta,\sigma,\lambda}}(\Delta_n,X_{t_k}^{\theta,\sigma,\lambda},X_{t_{k+1}}^{\theta,\sigma,\lambda})\right)^{p_1}\big\vert X_{t_k}^{\theta,\sigma,\lambda}=X_{t_k}\right]\right)^{\frac{1}{p_1}}\\
&\qquad\times\left(\E\left[{\bf 1}_{\widehat{J}_{0,k}}\widetilde{\E}_{X_{t_k}}^{\theta,\sigma,\lambda}\left[{\bf 1}_{\widetilde{J}_{1,k}}\big\vert Y_{t_{k+1}}^{\theta,\sigma,\lambda}=X_{t_{k+1}}^{\theta,\sigma,\lambda}\right]\big\vert X_{t_k}^{\theta,\sigma,\lambda}=X_{t_k}\right]\right)^{\frac{1}{p_2}}\\
&\leq C\left(M_{0,1,1,p}^{\theta,\sigma,\lambda}\right)^{\frac{1}{p_2}},
\end{align*}
for some constant $C>0$ and $n$ large enough. Here, using Bayes' formula, we get that
\begin{align*}
M_{0,1,1,p}^{\theta,\sigma,\lambda}:&=\E\left[{\bf 1}_{\widehat{J}_{0,k}}\widetilde{\E}_{X_{t_k}}^{\theta,\sigma,\lambda}\left[{\bf 1}_{\widetilde{J}_{1,k}}\big\vert Y_{t_{k+1}}^{\theta,\sigma,\lambda}=X_{t_{k+1}}^{\theta,\sigma,\lambda}\right]\big\vert X_{t_k}^{\theta,\sigma,\lambda}=X_{t_k}\right]\\
&=\E\left[{\bf 1}_{\widehat{J}_{0,k}}\frac{q_{(1)}^{\theta,\sigma,\lambda}(\Delta_n,X_{t_k}^{\theta,\sigma,\lambda},X_{t_{k+1}}^{\theta,\sigma,\lambda})e^{-\lambda\Delta_n}\lambda\Delta_n}{p^{\theta,\sigma,\lambda}(\Delta_n,X_{t_k}^{\theta,\sigma,\lambda},X_{t_{k+1}}^{\theta,\sigma,\lambda})}\big\vert X_{t_k}^{\theta,\sigma,\lambda}=X_{t_k}\right]\\
&=\int_{\R}\frac{q_{(1)}^{\theta,\sigma,\lambda}(\Delta_n,X_{t_k},y)e^{-\lambda\Delta_n}\lambda\Delta_n}{p^{\theta,\sigma,\lambda}(\Delta_n,X_{t_k},y)}q_{(0)}^{\theta,\sigma,\lambda}(\Delta_n,X_{t_k},y)e^{-\lambda\Delta_n}dy.
\end{align*}

We next divide the $dy$ integral into the subdomains $J_1:=\{y\in\R:\vert y-X_{t_k}e^{-\theta\Delta_n}+\frac{\lambda}{\theta}(1-e^{-\theta \Delta_n})\vert>\Delta_n^{\alpha}\}$ and $J_2:=\{y\in\R:\vert y-X_{t_k}e^{-\theta\Delta_n}+\frac{\lambda}{\theta}(1-e^{-\theta \Delta_n})\vert\leq\Delta_n^{\alpha}\}$, where $\alpha\in(0,\frac{1}{2})$, and call each integral $M_{0,1,1,1,p}^{\theta,\sigma,\lambda}$ and $M_{0,1,1,2,p}^{\theta,\sigma,\lambda}$. We start bounding $M_{0,1,1,1,p}^{\theta,\sigma,\lambda}$. By \eqref{density},
\begin{equation}\label{de1}
p^{\theta,\sigma,\lambda}(\Delta_n,X_{t_k},y)\geq q_{(1)}^{\theta,\sigma,\lambda}(\Delta_n,X_{t_k},y)e^{-\lambda\Delta_n}\lambda\Delta_n.
\end{equation}
Then using \eqref{q0}, the equality $e^{-\vert x\vert^2}=e^{-\frac{\vert x\vert^2}{2}}e^{-\frac{\vert x\vert^2}{2}}$, valid for all $x\in\R$, and $1-e^{-2\theta\Delta_n}\leq 2\theta\Delta_n$, 
\begin{align*}
M_{0,1,1,1,p}^{\theta,\sigma,\lambda}&\leq\int_{J_1}q_{(0)}^{\theta,\sigma,\lambda}(\Delta_n,X_{t_k},y)dy\\
&=\int_{J_1}\dfrac{1}{\sqrt{\frac{\pi}{\theta}\sigma^2(1-e^{-2\theta\Delta_n})}}\exp\left\{-\dfrac{\left(y-X_{t_k}e^{-\theta\Delta_n}+\frac{\lambda}{\theta}(1-e^{-\theta\Delta_n})\right)^2}{\frac{1}{\theta}\sigma^2(1-e^{-2\theta\Delta_n})}\right\}dy\\
&\leq e^{-\frac{1}{4\sigma^2}\Delta_n^{2\alpha-1}}\int_{J_1}\dfrac{1}{\sqrt{\frac{\pi}{\theta}\sigma^2(1-e^{-2\theta\Delta_n})}}\exp\left\{-\dfrac{\left(y-X_{t_k}e^{-\theta\Delta_n}+\frac{\lambda}{\theta}(1-e^{-\theta\Delta_n})\right)^2}{\frac{2}{\theta}\sigma^2(1-e^{-2\theta\Delta_n})}\right\}dy\\
&\leq Ce^{-\frac{1}{4\sigma^2}\Delta_n^{2\alpha-1}},
\end{align*}
for some constant $C>0$, since the $dy$ integral is Gaussian and thus finite. Next, by \eqref{density},
\begin{equation}\label{de2}
p^{\theta,\sigma,\lambda}(\Delta_n,X_{t_k},y)\geq q_{(0)}^{\theta,\sigma,\lambda}(\Delta_n,X_{t_k},y)e^{-\lambda\Delta_n}.
\end{equation}
Then using \eqref{qjf}, \eqref{qj}, and Fubini's theorem,
\begin{align*}
&M_{0,1,1,2,p}^{\theta,\sigma,\lambda}\leq e^{-\lambda\Delta_n}\lambda\Delta_n\int_{J_2}q_{(1)}^{\theta,\sigma,\lambda}(\Delta_n,X_{t_k},y)dy\\
&\leq \int_{t_k}^{t_{k+1}}\int_{J_2}\dfrac{\lambda}{\sqrt{\frac{\pi}{\theta}\sigma^2(1-e^{-2\theta\Delta_n})}}\exp\left\{-\dfrac{\left(y-X_{t_k}e^{-\theta\Delta_n}+\frac{\lambda}{\theta}(1-e^{-\theta\Delta_n})-e^{-\theta (t_{k+1}-s_1)}\right)^2}{\frac{1}{\theta}\sigma^2(1-e^{-2\theta\Delta_n})}\right\}dyds_1.
\end{align*}

Notice that $e^{-\theta\Delta_n}\leq e^{-\theta (t_{k+1}-s_1)}\leq 1$, for  $s_1\in[t_k,t_{k+1}]$. Thus, $e^{-\theta (t_{k+1}-s_1)}\geq \frac{1}{2}$ for $n$ large enough. Moreover, on $J_2$ we have $\vert y-X_{t_k}e^{-\theta\Delta_n}+\frac{\lambda}{\theta}(1-e^{-\theta \Delta_n})\vert\leq\Delta_n^{\alpha}$. Therefore, using the inequality $\vert u+v\vert^2\geq \frac{\vert u\vert^2}{2}-\vert v\vert^2$, valid for all $u, v\in\R$, together with  $\alpha\in(0,\frac{1}{2})$, we deduce 
\begin{align*}
&\left\vert y-X_{t_k}e^{-\theta\Delta_n}+\frac{\lambda}{\theta}(1-e^{-\theta\Delta_n})-e^{-\theta (t_{k+1}-s_1)}\right\vert^2\\
&\geq\dfrac{e^{-2\theta (t_{k+1}-s_1)}}{2}-\left\vert y-X_{t_k}e^{-\theta\Delta_n}+\frac{\lambda}{\theta}(1-e^{-\theta\Delta_n})\right\vert^2\geq\dfrac{1}{8}-\Delta_n^{2\alpha}\geq \dfrac{1}{10},
\end{align*}
for $n$ large enough. Therefore, using again the equality $e^{-\vert x\vert^2}=e^{-\frac{\vert x\vert^2}{2}}e^{-\frac{\vert x\vert^2}{2}}$, valid for all $x\in\R$, and the fact that $1-e^{-2\theta\Delta_n}\leq 2\theta\Delta_n$, we obtain that for $n$ large enough,
\begin{align*}
M_{0,1,1,2,p}^{\theta,\sigma,\lambda}&\leq e^{-\frac{1}{40\sigma^2}\Delta_n^{-1}}\int_{t_k}^{t_{k+1}}\int_{J_2}\dfrac{\lambda}{\sqrt{\frac{\pi}{\theta}\sigma^2(1-e^{-2\theta\Delta_n})}}\\
&\qquad\times\exp\left\{-\dfrac{\left(y-X_{t_k}e^{-\theta\Delta_n}+\frac{\lambda}{\theta}(1-e^{-\theta\Delta_n})-e^{-\theta (t_{k+1}-s_1)}\right)^2}{\frac{2}{\theta}\sigma^2(1-e^{-2\theta\Delta_n})}\right\}dyds_1\\
&\leq Ce^{-\frac{1}{40\sigma^2}\Delta_n^{-1}},
\end{align*}
for some constant $C>0$, since the $dy$ integral is Gaussian and thus finite. Hence, 
\begin{align*}
M_{0,1,1,p}^{\theta,\sigma,\lambda}\leq Ce^{-\frac{1}{40\sigma^2}\Delta_n^{2\alpha-1}},
\end{align*}
This implies that for any $\alpha\in(0,\frac{1}{2})$, $n$ large enough, and for some constants $C, C_1>0$,
\begin{align}\label{m01}
M_{0,1,p}^{\theta,\sigma,\lambda}\leq Ce^{-C_1\Delta_n^{2\alpha-1}},
\end{align}

Next, we set $g(X_{t_{k+1}}):=\widetilde{\E}_{X_{t_k}}^{\theta,\sigma,\lambda}\left[{\bf 1}_{\widetilde{J}_{\geq 2,k}}(\Delta M_{t_{k+1}})^{2p}\big\vert Y_{t_{k+1}}^{\theta,\sigma,\lambda}=X_{t_{k+1}}\right]$. Applying Lemmas \ref{change} and \ref{lemma12}, and H\"older's inequality with $\frac{1}{p_1}+\frac{1}{p_2}=1$, we obtain that for $n$ large enough,
\begin{align*}
&M_{0,2,p}^{\theta,\sigma,\lambda}\leq\E\left[g(X_{t_{k+1}})\big\vert X_{t_k}\right]=e^{(\lambda-\lambda_0)\Delta_n}\bigg\{\E\left[g(X_{t_{k+1}}^{\theta,\sigma,\lambda}){\bf{1}}_{\widehat{J}_{0,k}}\frac{q_{(0)}^{\theta_0,\sigma_0,\lambda_0}}{q_{(0)}^{\theta,\sigma,\lambda}}\big\vert X_{t_k}^{\theta,\sigma,\lambda}=X_{t_k}\right]\\
&\qquad+\sum_{j=1}^{\infty}\left(\frac{\lambda_0}{\lambda}\right)^j\int_{\Sigma_k^j}\E\left[g(X_{t_{k+1}}^{\theta,\sigma,\lambda}){\bf{1}}_{\{\widehat{J}_{j,k},s_1,\ldots,s_j\}}\frac{q_{(j)}^{\theta_0,\sigma_0,\lambda_0}}{q_{(j)}^{\theta,\sigma,\lambda}}\big\vert X_{t_k}^{\theta,\sigma,\lambda}=X_{t_k}\right]ds_1\cdots ds_j\bigg\}\\
&\leq C\left(\E\left[\left\vert g(X_{t_{k+1}}^{\theta,\sigma,\lambda})\right\vert^{p_1}\big\vert X_{t_k}^{\theta,\sigma,\lambda}=X_{t_k}\right]\right)^{\frac{1}{p_1}}\bigg\{\left(\E\left[{\bf{1}}_{\widehat{J}_{0,k}}\left(\frac{q_{(0)}^{\theta_0,\sigma_0,\lambda_0}}{q_{(0)}^{\theta,\sigma,\lambda}}\right)^{p_2}\big\vert X_{t_k}^{\theta,\sigma,\lambda}=X_{t_k}\right]\right)^{\frac{1}{p_2}}\\
&\qquad+\sum_{j=1}^{\infty}\left(\frac{\lambda_0}{\lambda}\right)^j\int_{\Sigma_k^j}\left(\E\left[{\bf{1}}_{\{\widehat{J}_{j,k},s_1,\ldots,s_j\}}\left(\frac{q_{(j)}^{\theta_0,\sigma_0,\lambda_0}}{q_{(j)}^{\theta,\sigma,\lambda}}\right)^{p_2}\big\vert X_{t_k}^{\theta,\sigma,\lambda}=X_{t_k}\right]\right)^{\frac{1}{p_2}}ds_1\cdots ds_j\bigg\}\\
&\leq C\left(\E\left[\left\vert g(X_{t_{k+1}}^{\theta,\sigma,\lambda})\right\vert^{p_1}\big\vert X_{t_k}^{\theta,\sigma,\lambda}=X_{t_k}\right]\right)^{\frac{1}{p_1}}\left\{1+\sum_{j=1}^{\infty}\left(\frac{\lambda_0}{\lambda}\right)^j\frac{\Delta_n^j}{j!}(e^{-\lambda\Delta_n}\lambda^j)^{\frac{1}{p_2}}\right\}\\
&\leq C\left(\P(\widetilde{J}_{\geq 2,k}\vert Y_{t_{k}}^{\theta,\sigma,\lambda}=X_{t_{k}})\right)^{\frac{1}{p_1q_1}}\left(\E\left[(\Delta M_{t_{k+1}})^{2pp_1q_2}\vert Y_{t_{k}}^{\theta,\sigma,\lambda}=X_{t_{k}}\right]\right)^{\frac{1}{p_1q_2}}\\
&\leq C\Delta_n^{\frac{2}{p_1q_1}}\Delta_n^{\frac{1}{p_1q_2}}\leq C\Delta_n^{\frac{2}{p_1q_1}},
\end{align*}
where we have used the finiteness of the sum w.r.t. $j$, the H\"older's inequality with $\frac{1}{q_1}+\frac{1}{q_2}=1$, and chosen $p_1, q_1$ in order that $p_1q_1<2$. This, together with \eqref{m01}, concludes (\ref{m0}). Here
\begin{align*}
\frac{q_{(0)}^{\theta_0,\sigma_0,\lambda_0}}{q_{(0)}^{\theta,\sigma,\lambda}}\equiv\frac{q_{(0)}^{\theta_0,\sigma_0,\lambda_0}}{q_{(0)}^{\theta,\sigma,\lambda}}(\Delta_n,X_{t_k}^{\theta,\sigma,\lambda},X^{\theta,\sigma,\lambda}_{t_{k+1}}), \
\frac{q_{(j)}^{\theta_0,\sigma_0,\lambda_0}}{q_{(j)}^{\theta,\sigma,\lambda}}\equiv\frac{q_{(j)}^{\theta_0,\sigma_0,\lambda_0}}{q_{(j)}^{\theta,\sigma,\lambda}}(\Delta_n,X_{t_k}^{\theta,\sigma,\lambda},X^{\theta,\sigma,\lambda}_{t_{k+1}};s_1,\ldots,s_j).
\end{align*}

We next show (\ref{m1}). As for the term $M_{0,p}^{\theta,\sigma,\lambda}$, multiplying the random variable inside the conditional expectation of $M_{1,p}^{\theta,\sigma,\lambda}$ by ${\bf 1}_{\widetilde{J}_{0,k}} +{\bf 1}_{\widetilde{J}_{1,k}}+ {\bf 1}_{\widetilde{J}_{\geq 2,k}}$, using ${\bf 1}_{\widetilde{J}_{1,k}^c}=1-{\bf 1}_{\widetilde{J}_{1,k}}$ and Jensen's inequality, we have that $M_{1,p}^{\theta,\sigma,\lambda}\leq 2(M_{1,1,p}^{\theta,\sigma,\lambda}+M_{1,2,p}^{\theta,\sigma,\lambda})$, where
\begin{align*}
M_{1,1,p}^{\theta,\sigma,\lambda}:&=\E\left[{\bf 1}_{\widehat{J}_{1,k}}\widetilde{\E}_{X_{t_k}}^{\theta,\sigma,\lambda}\left[{\bf 1}_{\widetilde{J}_{1,k}^c}\big\vert Y_{t_{k+1}}^{\theta,\sigma,\lambda}=X_{t_{k+1}}\right]\big\vert X_{t_k}\right],\\
M_{1,2,p}^{\theta,\sigma,\lambda}:&=\E\left[{\bf 1}_{\widehat{J}_{1,k}}\widetilde{\E}_{X_{t_k}}^{\theta,\sigma,\lambda}\left[{\bf 1}_{\widetilde{J}_{\geq 2,k}}(\Delta M_{t_{k+1}})^{2p}\big\vert Y_{t_{k+1}}^{\theta,\sigma,\lambda}=X_{t_{k+1}}\right]\big\vert X_{t_k}\right].
\end{align*}
Observe that $M_{1,1,p}^{\theta,\sigma,\lambda}=M_{1,1,0,p}^{\theta,\sigma,\lambda}+M_{1,1,2,p}^{\theta,\sigma,\lambda}$, where
\begin{align*}
M_{1,1,0,p}^{\theta,\sigma,\lambda}:&=\E\left[{\bf 1}_{\widehat{J}_{1,k}}\widetilde{\E}_{X_{t_k}}^{\theta,\sigma,\lambda}\left[{\bf 1}_{\widetilde{J}_{0,k}}\big\vert Y_{t_{k+1}}^{\theta,\sigma,\lambda}=X_{t_{k+1}}\right]\big\vert X_{t_k}\right],\\
M_{1,1,2,p}^{\theta,\sigma,\lambda}:&=\E\left[{\bf 1}_{\widehat{J}_{1,k}}\widetilde{\E}_{X_{t_k}}^{\theta,\sigma,\lambda}\left[{\bf 1}_{\widetilde{J}_{\geq 2,k}}\big\vert Y_{t_{k+1}}^{\theta,\sigma,\lambda}=X_{t_{k+1}}\right]\big\vert X_{t_k}\right].
\end{align*}
Proceeding as in Lemma \ref{change} to change the measures, applying H\"older's inequality with $\frac{1}{p_1}+\frac{1}{p_2}=1$, Jensen's inequality, and Lemma \ref{lemma12} where notice that $\theta\to\theta_0$, $\sigma\to\sigma_0$ as $n\to\infty$, we get that for $n$ large enough,
\begin{align*}
M_{1,1,0,p}^{\theta,\sigma,\lambda}&=e^{(\lambda-\lambda_0)\Delta_n}\frac{\lambda_0}{\lambda}\int_{t_k}^{t_{k+1}}\E\bigg[{\bf 1}_{\{\widehat{J}_{1,k},s_1\}}\frac{q_{(1)}^{\theta_0,\sigma_0,\lambda_0}}{q_{(1)}^{\theta,\sigma,\lambda}}(\Delta_n,X_{t_k}^{\theta,\sigma,\lambda},X_{t_{k+1}}^{\theta,\sigma,\lambda};s_1)\\
&\qquad\times\widetilde{\E}_{X_{t_k}}^{\theta,\sigma,\lambda}\left[{\bf 1}_{\widetilde{J}_{0,k}}\big\vert Y_{t_{k+1}}^{\theta,\sigma,\lambda}=X_{t_{k+1}}^{\theta,\sigma,\lambda}\right]\big\vert X_{t_k}^{\theta,\sigma,\lambda}=X_{t_k}\bigg]ds_1\\
&\leq C\left(M_{1,1,0,1,p}^{\theta,\sigma,\lambda}\right)^{\frac{1}{p_2}},
\end{align*}
for some constant $C>0$. Here, using Bayes' formula, we get that
\begin{align*}
M_{1,1,0,1,p}^{\theta,\sigma,\lambda}:&=\E\left[{\bf 1}_{\widehat{J}_{1,k}}\widetilde{\E}_{X_{t_k}}^{\theta,\sigma,\lambda}\left[{\bf 1}_{\widetilde{J}_{0,k}}\big\vert Y_{t_{k+1}}^{\theta,\sigma,\lambda}=X_{t_{k+1}}^{\theta,\sigma,\lambda}\right]\big\vert X_{t_k}^{\theta,\sigma,\lambda}=X_{t_k}\right]\\
&=\int_{\R}\frac{q_{(0)}^{\theta,\sigma,\lambda}(\Delta_n,X_{t_k},y)e^{-\lambda\Delta_n}}{p^{\theta,\sigma,\lambda}(\Delta_n,X_{t_k},y)}q_{(1)}^{\theta,\sigma,\lambda}(\Delta_n,X_{t_k},y)e^{-\lambda\Delta_n}\lambda\Delta_ndy.
\end{align*}
Again we divide the $dy$ integral into the subdomains $J_1:=\{y\in\R:\vert y-X_{t_k}e^{-\theta\Delta_n}+\frac{\lambda}{\theta}(1-e^{-\theta \Delta_n})\vert>\Delta_n^{\alpha}\}$ and $J_2:=\{y\in\R:\vert y-X_{t_k}e^{-\theta\Delta_n}+\frac{\lambda}{\theta}(1-e^{-\theta \Delta_n})\vert\leq\Delta_n^{\alpha}\}$, where $\alpha\in(0,\frac{1}{2})$, and call each integral $M_{1,1,0,1,1,p}^{\theta,\sigma,\lambda}$ and $M_{1,1,0,1,2,p}^{\theta,\sigma,\lambda}$. In the same way the term $M_{0,1,1,1,p}^{\theta,\sigma,\lambda}$ was treated, using \eqref{de1}, \eqref{q0}, the equality $e^{-\vert x\vert^2}=e^{-\frac{\vert x\vert^2}{2}}e^{-\frac{\vert x\vert^2}{2}}$, valid for all $x\in\R$, we get that for any $\alpha\in(0,\frac{1}{2})$, $n$ large enough, and for some constant $C>0$,
 \begin{align*}
M_{1,1,0,1,1,p}^{\theta,\sigma,\lambda}\leq\int_{J_1}q_{(0)}^{\theta,\sigma,\lambda}(\Delta_n,X_{t_k},y)dy\leq Ce^{-\frac{1}{4\sigma^2}\Delta_n^{2\alpha-1}}.
\end{align*}

Next, as for the term $M_{0,1,1,2,p}^{\theta,\sigma,\lambda}$, using \eqref{de2}, \eqref{qjf} and \eqref{qj}, we get that for $n$ large enough,
\begin{align*}
M_{1,1,0,1,2,p}^{\theta,\sigma,\lambda}\leq e^{-\lambda\Delta_n}\lambda\Delta_n\int_{J_2}q_{(1)}^{\theta,\sigma,\lambda}(\Delta_n,X_{t_k},y)dy\leq Ce^{-\frac{1}{40\sigma^2}\Delta_n^{-1}},
\end{align*}
for some constant $C>0$. Therefore, we have shown that $M_{1,1,0,p}^{\theta,\sigma,\lambda}$ satisfies \eqref{m01}.

Proceeding as for the term $M_{0,2,p}^{\theta,\sigma,\lambda}$, we have that $M_{1,2,p}^{\theta,\sigma,\lambda}+M_{1,1,2,p}^{\theta,\sigma,\lambda}\leq C\Delta_n^{\frac{2}{p_1q_1}}$, for all $p_1>1$, $q_1>1$ with $p_1q_1<2$. This concludes (\ref{m1}). Finally, using H\"older's inequality with $\frac{1}{\mu_1}+\frac{1}{\mu_2}=1$ and $\mu_1\in(1,2)$, and $\P(\widehat{J}_{\geq 2,k}\vert X_{t_k})\leq (\lambda\Delta_n)^2$, it is easy to check that $M_{\geq 2,p}^{\theta,\sigma,\lambda}\leq C\Delta_n^{\frac{2}{\mu_1}}$, for some constant $C>0$, which concludes (\ref{m2}). Thus, the result follows.
\end{proof}

\subsection{Change of measures}
The following technical lemma is used several times in Lemmas \ref{lemma5} and \ref{main}.
\begin{lemma}\label{c2lemma8} For all $k \in \{0,...,n-1\}$,  
\begin{equation*}\begin{split}
\E\left[\widetilde{\E}_{X_{t_k}}^{\theta_n,\sigma_0,\lambda(\ell)}\left[\widetilde{M}_{t_{k+1}}^{\lambda(\ell)}-\widetilde{M}_{t_k}^{\lambda(\ell)}\big\vert Y_{t_{k+1}}^{\theta_n,\sigma_0,\lambda(\ell)}=X_{t_{k+1}}\right]\big\vert X_{t_k}\right]=-\dfrac{\ell w}{\sqrt{n\Delta_n}}\Delta_n.
\end{split}
\end{equation*}
\end{lemma}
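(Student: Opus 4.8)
The plan is to collapse the iterated expectation into a single expectation of the compensated jump increment taken under the true law $\P^{\theta_0,\sigma_0,\lambda_0}$. Observe first that the inner conditional expectation is a deterministic function of the observation, say $g(X_{t_{k+1}})$, so that the left‑hand side is $\int_{\R}g(y)\,p^{\theta_0,\sigma_0,\lambda_0}(\Delta_n,X_{t_k},y)\,dy$. Since the two diffusion coefficients both equal $\sigma_0$, the change of measure can be carried out by Girsanov's theorem (Lemma \ref{c2Girsanov2}) instead of the jump‑structure change of measure of Lemma \ref{change}: the only parameters differing between the outer law $(\theta_0,\sigma_0,\lambda_0)$ and the inner law $(\theta_n,\sigma_0,\lambda(\ell))$ are the drift rate and the jump intensity, both of which are absorbed into the exponential Girsanov weight $\tfrac{d\widehat{\P}^{\theta_0,\sigma_0,\lambda_0}}{d\widehat{Q}_k^{\theta_0,\lambda_0,\theta_n,\lambda(\ell),\sigma_0}}$.

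Concretely, I would first write $\widetilde M^{\lambda(\ell)}_{t_{k+1}}-\widetilde M^{\lambda(\ell)}_{t_k}=\Delta M_{t_{k+1}}-\lambda(\ell)\Delta_n$, so that the constant compensator contributes exactly $-\lambda(\ell)\Delta_n$ (using $\int_{\R}p^{\theta_0,\sigma_0,\lambda_0}\,dy=1$) and it remains to handle the jump‑count part $\widetilde\E^{\theta_n,\sigma_0,\lambda(\ell)}_{X_{t_k}}[\Delta M_{t_{k+1}}\mid Y^{\theta_n,\sigma_0,\lambda(\ell)}_{t_{k+1}}=X_{t_{k+1}}]$. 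Next I would apply Lemma \ref{c2Girsanov2} to the outer expectation to realign the outer process with the inner parameters $(\theta_n,\sigma_0,\lambda(\ell))$ at the cost of the Girsanov weight; since the inner and outer laws then coincide, the defining property of the inner conditional expectation together with the tower property reduces the nested expectation to the single expectation of $\Delta M_{t_{k+1}}$ weighted by that density. Undoing the measure change yields $\E^{\theta_0,\sigma_0,\lambda_0}[\Delta N_{t_{k+1}}\mid X_{t_k}]=\lambda_0\Delta_n$, whence the left‑hand side equals $\lambda_0\Delta_n-\lambda(\ell)\Delta_n=(\lambda_0-\lambda(\ell))\Delta_n=-\tfrac{\ell w}{\sqrt{n\Delta_n}}\Delta_n$, which is the claimed value.

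The main obstacle is the collapse step: I must show that only the $\sigma(X_{t_{k+1}})$‑measurable part of the Girsanov weight survives when paired with the centered jump count, i.e.\ that the non‑endpoint‑measurable part of the Radon–Nikodym density is, under the intensity‑$\lambda(\ell)$ law, uncorrelated with $\Delta M_{t_{k+1}}$. This is precisely where the independence of $B$ and $N$ (respectively $W$ and $M$) and the martingale identity $\widehat{\E}[\widetilde N^{\lambda_0}_{t_{k+1}}-\widetilde N^{\lambda_0}_{t_k}\mid\widehat{\mathcal F}_{t_k}]=0$ must be invoked, in the same spirit as the vanishing of the conditional‑expectation terms in the proofs of Lemmas \ref{lemma3} and \ref{lemma5}. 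Once this orthogonality is established the remainder is the elementary identity $\E^{\theta_0,\sigma_0,\lambda_0}[\Delta N_{t_{k+1}}\mid X_{t_k}]=\lambda_0\Delta_n$, while the Gaussian form \eqref{q0}–\eqref{qj} of the conditional transition densities supplies the integrability required to justify interchanging the summation over the number of jumps, the spatial integration and the differentiation in the Girsanov exponent.
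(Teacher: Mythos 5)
Your proposal takes essentially the same route as the paper's proof: apply Girsanov's theorem from Lemma \ref{c2Girsanov2} (valid precisely because both diffusion coefficients equal $\sigma_0$, so the jump-structure change of measure of Lemma \ref{change} is not needed), collapse the nested conditional expectation under the realigned measure, exploit the independence of the compensated jump increment from the Radon--Nikodym weight together with $\E_{\widehat{Q}_k}\big[\tfrac{d\widehat{\P}}{d\widehat{Q}_k}\big\vert X_{t_k}\big]=1$, and finish with the elementary mean computation --- your splitting into $\Delta M_{t_{k+1}}-\lambda(\ell)\Delta_n$ versus the paper's direct use of $\E\big[\widetilde{M}_{t_{k+1}}^{\lambda(\ell)}-\widetilde{M}_{t_k}^{\lambda(\ell)}\big]=-\tfrac{\ell w}{\sqrt{n\Delta_n}}\Delta_n$ is only cosmetic, and your explicit worry about the collapse step is the same (correct) orthogonality point the paper subsumes under ``independence''. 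The only slip is notational: in the paper's convention the realigning measure is $\widehat{Q}_k^{\theta_n,\lambda(\ell),\theta_0,\lambda_0,\sigma_0}$ (the first superscript pair names the target dynamics), not $\widehat{Q}_k^{\theta_0,\lambda_0,\theta_n,\lambda(\ell),\sigma_0}$.
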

\begin{proof}
Using Girsanov's theorem in Lemma \ref{c2Girsanov2}, the independence between $\widetilde{M}_{t_{k+1}}^{\lambda(\ell)}-\widetilde{M}_{t_k}^{\lambda(\ell)}$ and $\frac{d\widehat{\P}}{d \widehat{Q}_k^{\theta_n,\lambda(\ell),\theta_0,\lambda_0,\sigma_0}}$, and the fact that $\E[\widetilde{M}_{t_{k+1}}^{\lambda(\ell)}-\widetilde{M}_{t_k}^{\lambda(\ell)}]=-\frac{\ell w}{\sqrt{n\Delta_n}}\Delta_n$, we obtain that
\begin{align*}
&\E\left[\widetilde{\E}_{X_{t_k}}^{\theta_n,\sigma_0,\lambda(\ell)}\left[\widetilde{M}_{t_{k+1}}^{\lambda(\ell)}-\widetilde{M}_{t_k}^{\lambda(\ell)}\big\vert Y_{t_{k+1}}^{\theta_n,\sigma_0,\lambda(\ell)}=X_{t_{k+1}}\right]\big\vert X_{t_k}\right]\\
&=\E_{\widehat{Q}_k^{\theta_n,\lambda(\ell),\theta_0,\lambda_0,\sigma_0}}\left[\left(\widetilde{M}_{t_{k+1}}^{\lambda(\ell)}-\widetilde{M}_{t_k}^{\lambda(\ell)}\right)\dfrac{d\widehat{\P}}{d \widehat{Q}_k^{\theta_n,\lambda(\ell),\theta_0,\lambda_0,\sigma_0}}\big\vert X_{t_k}\right]\\
&=\E\left[\widetilde{M}_{t_{k+1}}^{\lambda(\ell)}-\widetilde{M}_{t_k}^{\lambda(\ell)}\right]\E_{\widehat{Q}_k^{\theta_n,\lambda(\ell),\theta_0,\lambda_0,\sigma_0}}\left[\dfrac{d\widehat{\P}}{d \widehat{Q}_k^{\theta_n,\lambda(\ell),\theta_0,\lambda_0,\sigma_0}}\big\vert X_{t_k}\right]=-\dfrac{\ell w}{\sqrt{n\Delta_n}}\Delta_n,
\end{align*}
since the second expectation is equal to $1$. Thus, the result follows.
\end{proof}

\noindent{\bf Acknowledgements.} The author would like to thank Professors Arturo Kohatsu-Higa and Eulalia Nualart for fruitful discussions on the subject, and the anonymous referee for all the comments and suggestions that helped to improve the paper. The author acknowledges the financial support from JST-CREST project.

\end{document}